\newcommand{\mc}[1]{\mathcal #1}
\newcommand{\supp}{\operatorname{supp}}
\newcommand{\R}{\mathbb R}
\newcommand{\N}{\mathbb N}
\newcommand{\C}{\mathbb C}
\newcommand{\Z}{\mathbb Z}
\newcommand{\E}{\mathbb E}
\renewcommand{\Re}{\mathop{\text{\upshape{Re}}}}
\newcommand{\ms}[1]{\mathscr{#1}}
\renewcommand{\epsilon}{\varepsilon}
\renewcommand{\bar}[1]{\overline{#1}}
\newcommand{\id}{\mathop{\textrm{id}}\nolimits}
\renewcommand{\tilde}{\widetilde}
\newtheorem{theorem}{Theorem}[section]
\newtheorem{lemma}[theorem]{Lemma}
\newtheorem{corollary}[theorem]{Corollary}
\newtheorem{proposition}[theorem]{Proposition}
\theoremstyle{definition}
\newtheorem{definition}[theorem]{Definition}
\newtheorem{remark}[theorem]{Remark}
\newtheorem{assumption}[theorem]{Assumption}
\numberwithin{equation}{section}
\begin{document}

\title[]{Boundary Value Problems of Elliptic and Parabolic Type with Boundary Data of Negative Regularity}
\author{Felix Hummel}
\address{Technical University of Munich, Faculty of Mathematics, Boltzmannstra{\ss}e 3, 85748 Garching bei M\"unchen, Germany}
\email{hummel@ma.tum.de}
\thanks{}
\date{\today}
\subjclass[2010]{Primary: 35B65; Secondary: 35K52, 35J58, 46E40, 35S05}
\keywords{boundary value problems, boundary data of negative regularity, mixed scales, mixed smoothness, Poisson operators, singularities at the boundary, weights}

\begin{abstract}
We study elliptic and parabolic boundary value problems in spaces of mixed scales with mixed smoothness on the half space. The aim is to solve boundary value problems with boundary data of negative regularity and to describe the singularities of solutions at the boundary. To this end, we derive mapping properties of Poisson operators in mixed scales with mixed smoothness. We also derive $\mathcal{R}$-sectoriality results for homogeneous boundary data in the case that the smoothness in normal direction is not too large.
\end{abstract}

\maketitle

\section{Introduction}
In recent years, there were some efforts to generalize classical results on the bounded $\mathcal{H}^\infty$-calculus (\cite{Denk_et_al_2004,DHP03,Dore_Venni_2002,Duong_1990}) and maximal regularity (\cite{DHP03,DHP07,Denk_Pruess_Zacher_2008,Dore_Venni_1987,Hieber_Pruess_1997}) of elliptic and parabolic equations to cases in which rougher boundary data can be considered. The main tool in order to derive these generalizations are spatial weights, especially power weights of the form
\[
 w_r^{\partial\mathcal{O}}(x):=\operatorname{dist}(x,\partial\mathcal{O})^r \quad(x\in\mathcal{O}),
\]
which measure the distance to the boundary of the domain $\mathcal{O}\subset\R^n$. Including weights which fall outside the $A_p$-range, i.e. weights with $r\notin(-1,p-1)$, provides a huge flexibility concerning the smoothness of the boundary data which can be considered. We refer the reader to \cite{Lindemulder_Veraar_2018} in which the bounded $\mathcal{H}^\infty$-calculus for the shifted Dirichlet Laplacian in $L_p(\mathcal{O},w_r^{\partial\mathcal{O}})$ with $r\in(-1,2p-1)\setminus\{p-1\}$ has been obtained and applications to equations with rough boundary data are given. One even obtains more flexibility if one studies boundary value problems in weighted Besov and Triebel-Lizorkin spaces. Maximal regularity results for the heat equation with inhomogeneous boundary data have been obtained in \cite{Lindemulder_2018}. In \cite{Hummel_Lindemulder_2019} similar results were shown for general elliptic and parabolic boundary value problems.\\
The elliptic and parabolic equations we are interested in are of the form
\begin{align}\label{EllipticBVP}
  \begin{aligned}
 \lambda u -A(D)u&=f\quad\;\text{in }\R^n_+,\\
 B_j(D)u&=g_j\quad\text{on }\R^{n-1}\quad(j=1,\ldots,m),\\
 \end{aligned}
\end{align}
and
\begin{align}\label{ParabolicBVP}
  \begin{aligned}
 \partial_t u -A(D)u&=f\quad\;\text{in }\R\times\R^n_+,\\
 B_j(D)u&=g_j\quad\text{on }\R\times \R^{n-1}\quad(j=1,\ldots,m),
 \end{aligned}
\end{align}
where $A,B_1,\ldots,B_m$ is a homogeneous constant-coefficient parameter-elliptic boundary system, $f$ is a given inhomogeneity and the $g_j$ $(j=1,\ldots,m)$ are given boundary data. Of course, $f$ and the $g_j$ in \eqref{ParabolicBVP} may depend on time. We will also study the case in which \eqref{ParabolicBVP} is supplemented by initial conditions, i.e.
\begin{align}\label{ParabolicIBVP}
  \begin{aligned}
 \partial_t u -A(D)u&=f\quad\;\text{in }(0,T]\times\R^n_+,\\
 B_j(D)u&=g_j\quad\text{on }(0,T]\times \R^{n-1}\quad(j=1,\ldots,m),\\
 u(0,\,\cdot\,)&=u_0
 \end{aligned}
\end{align}
for some $T\in\R_+$. The focus will lie on the systematic treatment of boundary conditions $g_j$ which are only assumed to be tempered distributions. In particular, boundary data of negative regularity will be included. However, we still have some restrictions on the smoothness in time of the boundary data for \eqref{ParabolicIBVP}.\\
One reason for the interest in the treatment of rougher boundary data is that they naturally appear in problems with boundary noise. The fact that white noise terms have negative pathwise regularity (see for example \cite{Aziznejad_Fageot_Unser_2018,Fageot_Fallah_Unser_2017,Veraar_2011}) was one of the main motivations for this work. It was already observed in \cite{DaPrato_Zabczyk_1993} that even in one dimension solutions to equations with Gaussian boundary noise only have negative regularity in an unweighted setting. By introducing weights, this issue was resolved for example in \cite{Alos_Bonaccorsi_2002}. We also refer to \cite{Brzezniak_et_al_2015} in which the singularities at the boundary of solutions of Poisson and heat equation with different kinds of noise are analyzed.\\
One drawback of the methods in \cite{Alos_Bonaccorsi_2002,Brzezniak_et_al_2015,DaPrato_Zabczyk_1993} is that solutions are constructed in a space which is too large for traces to exist, i.e. the operators
\[
 \operatorname{tr}_{\partial\mathcal{O}}B_j(D)\quad(j=1,\ldots,m)
\]
are not well-defined as operators from the space in which the solution is constructed to the space of boundary data. This problem is avoided by using a mild solution concept, which is a valid approach in the classical setting and therefore, it seems reasonable to accept mild solutions as good enough, even though $\operatorname{tr}_{\partial\mathcal{O}}B_j(D)u$ does not make sense on its own.\\
In this paper, we propose a point of view which helps us to give a meaning to $\operatorname{tr}_{\partial\mathcal{O}}B_j(D)u$ in a classical sense. We will exploit that solutions to \eqref{EllipticBVP}, \eqref{ParabolicBVP} and \eqref{ParabolicIBVP} are very smooth in normal directions so that taking traces will easily be possible, even if the boundary data is just given by tempered distributions. This can be seen by studying these equations in spaces of the form $\mathscr{B}^k(\R_{+},\mathscr{A}^s(\R^{n-1}))$, where $\mathscr{A}$ and $\mathscr{B}$ denote certain scales of function spaces with smoothness parameters $s$ and $k$, respectively. The parameter $k$ corresponds to the smoothness in normal direction and will be taken large enough so that we can take traces and the parameter $s$ corresponds to smoothness in tangential directions and will be taken small enough so that $\mathscr{A}^s$ contains the desired boundary data. This way, we will not only be able to give a meaning to $\operatorname{tr}_{\partial\mathcal{O}}B_j(D)u$, but we will also obtain tools which help us to analyze the singularities of solutions at the boundary. This supplements the quantitative analysis in \cite{Hummel_Lindemulder_2019,Lindemulder_2018,Lindemulder_Veraar_2018}.\\
The idea to use spaces with mixed smoothness is quite essential in this paper, even if one refrains from using mixed scales. We refer to \cite[Chapter 2]{Schmeisser_Triebel_1987} for an introduction to spaces with dominating mixed smoothness. It seems like these spaces have not been used in the theory of partial differential equations so far. Nonetheless, we should mention that they are frequently studied in the theory of function spaces and have various applications. In particular, they are a classical tool in approximation theory in a certain parameter range, see for example \cite[Chapter 11]{Trigub_Bellinsky_2004}.\\ \\
This paper is structured in the following way:
\begin{itemize}
 \item Section \ref{Section:Preliminaries} briefly introduces the tools and concepts we use throughout the paper. This includes some notions and results from the geometry of Banach spaces, $\mathcal{R}$-boundedness and weighted function spaces.
 \item In Section \ref{Section:Pseudos} we study pseudo-differential operators in mixed scales with mixed smoothness. This will be important for the treatment of Poisson operators, as we will view them as functions in normal direction with values in the space of pseudo-differential operators of certain order in tangential directions.
 \item Section \ref{Section:Poisson} is the central part of this paper and the basis for the results in the subsequent sections. Therein, we derive various mapping properties of Poisson operators with values in spaces of mixed scales and mixed smoothness.
 \item In Section \ref{Section:Resolvent} we study equation \eqref{EllipticBVP} in spaces of mixed scales and mixed smoothness with homogeneous boundary data, i.e. with $g_j=0$. We derive $\mathcal{R}$-sectoriality of the corresponding operator under the assumption that the smoothness in normal direction is not too high. As a consequence, we also obtain maximal regularity for \eqref{ParabolicIBVP} with $g_j=0$ in the UMD case.
 \item Finally, we apply our techniques to the equations \eqref{EllipticBVP}, \eqref{ParabolicBVP} and \eqref{ParabolicIBVP} in Section \ref{Section:BVP}. We will be able to treat \eqref{EllipticBVP} and \eqref{ParabolicBVP} for arbitrary regularity in space and time. However, for the initial boundary value problem \eqref{ParabolicIBVP} we still have some restrictions concerning the regularity in time of the boundary data.
\end{itemize}

\subsection*{Comments on Localizations}
We should emphasize that we do not address questions of localization or perturbation in this work. Thus, we do not yet study what kind of variable coefficients or lower order perturbations of the operators we can allow. We also do not yet study how our results can be transferred to more general geometries than just the half space. Nonetheless, we want to give some ideas on how one can proceed.\\
The usual approach to transfer results for boundary value problems from the model problem on the half-space with constant coefficients to more general domains with compact smooth boundary and operators with non-constant coefficients is quite technical but standard. One takes a cover of the boundary which is fine enough such that in each chart the equation almost looks like the model problem with just a small perturbation on the coefficients and the geometry. In order to formally treat the local problem in a chart as the model problem, one has to find suitable extensions of the coefficients and the geometry such that the parameter-ellipticity is preserved and such that the coefficients are constant up to a small perturbation. One also carries out similar steps in the interior of the domain, where the equation locally looks like an elliptic or parabolic equation in $\R^n$. The essential step is then to derive perturbation results which justify that these small perturbations do not affect the property which one wants to transfer to the more general situation. Such a localization procedure has been carried out in full detail in \cite{Meyries_PHD-thesis}. We also refer the reader to \cite[Chapter 8]{DHP03}.\\
The localization approach also seems to be reasonable in our situation. However, there is an additional difficulty: As described above we work in spaces of the form $\mathscr{B}^k(\R_{+},\mathscr{A}^s(\R^{n-1}))$ which splits the half-space in tangential and normal directions. Since this splitting uses the geometric structure of the half-space, one might wonder what the right generalization of these spaces to a smooth bounded domain would be. In order to answer this question, we think that the notion of a collar of a manifold with boundary should be useful. More precisely, due to Milnor's collar neighborhood theorem (see for example \cite[Corollary 3.5]{Milnor_1965}) there exists an open neighborhood $U$ of the boundary $\partial M$ of a smooth manifold $M$ which is diffeomorphic to $\partial M\times [0,1)$. This neighborhood $U$ is a so-called collar neighborhood. On $\partial M\times [0,1)$ it is straightforward how to generalize our spaces with mixed smoothness: One could just take $\mathscr{B}^k([0,1),\mathscr{A}^s(\partial M))$. Now one could define the space 
\[
	\mathscr{B}^k(\mathscr{A}^s(U)):=\big\{u\colon U\to \C \;\vert\; u\circ\Phi^{-1}\in \mathscr{B}^k([0,1),\mathscr{A}^s(\partial M))\big\},
\]
where $\Phi\colon U\to \partial M\times [0,1)$ denotes the diffeomorphism provided by the collar neighborhood theorem. It seems natural to endow $\mathscr{B}^k(\mathscr{A}^s(U))$ with the norm
\[
	\|u\|_{\mathscr{B}^k(\mathscr{A}^s(U))}:=\| u\circ\Phi^{-1} \|_{\mathscr{B}^k([0,1),\mathscr{A}^s(\partial M))}.
\]
This definition allows us to give a meaning to the splitting in normal and tangential directions for a neighborhood of the boundary of general domains. It is less clear how to extend this splitting to the interior of the domain. But fortunately, this is also not important in our analysis. Indeed, the solution operators for inhomogeneous boundary data have a strong smoothing effect so that solutions are arbitrarily smooth in the interior of the domain with continuous dependence on the boundary data. Therefore, in the case of smooth coefficients one may work with smooth functions in the interior. To this end, we can take another open set $V\subset \operatorname{int} M$, where $\operatorname{int} M$ denotes the interior of $M$, such that $M=U \cup V$ and $\Phi(U\cap V)\subset\partial M\times[\tfrac{1}{2},1)$.  Moreover, we take $\phi,\psi\colon C^{\infty}(M)$ such that $\supp \phi\subset U$, $\supp \psi\subset V$ and $\phi+\psi\equiv 1$. Finally, we think that on bounded domains the right spaces should be given by
\[
	\{u\colon M\to \C \;\vert\; \phi\cdot u\in \mathscr{B}^k(\mathscr{A}^s(U)),\; \psi\cdot u\in  C^{\infty}(V) \},
\]
since they preserve the splitting in normal and tangential directions close to the boundary and since they use the smoothing effect of the solution operators where the splitting can not be preserved anymore.

\subsection*{Comparison To Other Works}
There are several other works which study boundary value problems with rough boundary data. The approach which seems to be able to treat the most boundary data is the one by Lions and Magenes, see \cite{Lions_Magenes_1972_a, Lions_Magenes_1972_b, Lions_Magenes_1973}. It also allows for arbitrary regularity at the boundary. One of the main points is that the trace operator is extended to the space $D_A^{-r}$ (see \cite[Theorem 6.5]{Lions_Magenes_1972_a}) where it maps into a boundary space with negative regularity. The space $D_A^{-r}$ contains those distributions $u\in H^{-r}(\Omega)$ such that $Au$ is in the dual of the space 
\[
\Xi^{r+2m}:=\big\{u\;\vert\;\operatorname{dist}(\,\cdot\,,\partial\Omega)^{|\alpha|}\partial^{\alpha}u\in L_2(\Omega),\;|\alpha|\leq r+2m\bigg\},
\]
i.e. $\Xi^{r+2m}$ contains functions whose derivatives may have singularities at the boundary with a certain order. This extension of the trace operator was generalized to the scale of H\"ormander spaces in \cite{Anop_Denk_Murach_2020}, where the authors used suitable interpolation techniques. One advantage of our approach compared to \cite{Anop_Denk_Murach_2020,Lions_Magenes_1972_a, Lions_Magenes_1972_b, Lions_Magenes_1973} is that we can give a detailed quantitative analysis of the smoothness and singularities of solutions at the boundary. For example, we show in Theorem~\ref{Thm:MainThm1} that solutions of \eqref{EllipticBVP} are arbitrarily smooth in normal direction if the smoothnes in tangential direction is chosen low enough. Moreover, we describe the singularities at the boundary if the smoothness in tangential direction is too high.\\
Another technique to treat rougher boundary data is to systematically study boundary value problems in weighted spaces. This has for example been carried out in \cite{Hummel_Lindemulder_2019,Lindemulder_2018,Lindemulder_Veraar_2018}. By using power weights in the $A_{\infty}$ range one can push the regularity on the boundary down to almost $0$. However, if one works with $A_{\infty}$ weights, then many Fourier analytic tools are not available anymore in the $L_p$ scale. The situation is better in Besov and Triebel-Lizorkin spaces, where Fourier multiplier techniques can still be applied. This has been used for second order operators with Dirichlet boundary conditions in \cite{Lindemulder_2018} and for general parameter-elliptic and parabolic boundary value problems in \cite{Hummel_Lindemulder_2019}. In both references, maximal regularity with inhomogeneous boundary data has been derived. \cite{Lindemulder_Veraar_2018} derives a bounded $\mathcal{H}^\infty$-calculus for the Dirichlet Laplacian in weighted $L_p$-spaces even for some weights which fall outside the $A_p$ range. The main tool therein to replace Fourier multiplier techniques are variants of Hardy's inequality. The results derived in \cite{Hummel_Lindemulder_2019,Lindemulder_2018, Lindemulder_Veraar_2018} are stronger than the ones we derive here in the sense that we do not derive maximal regularity with inhomogeneous boundary data or a bounded $\mathcal{H}^\infty$-calculus. As in our work, the singularities at the boundary are described by the strength of the weights one has to introduce. However, we can treat much more boundary data since \cite{Hummel_Lindemulder_2019,Lindemulder_2018, Lindemulder_Veraar_2018} are restricted to positive regularity on the boundary.\\
There are also works dealing with rough boundary data for nonlinear equations such as the Navier-Stokes equation, see for example \cite{Amann_2003,Grubb_2001} and references therein. The former reference uses the notion of very weak solutions as well as semigroup and interpolation-extrapolation methods. The latter reference studies the problem in the context of the Boutet de Monvel calculus. Our methods would still have to be extended to nonlinear problems. However, in both of the cited works there are restrictions on the regularity of the boundary data which can be considered.
\subsection*{Notations and Assumptions}
We write $\N=\{1,2,3,\ldots\}$ for the natural numbers starting from $1$ and $\N_0=\{0,1,2,\ldots\}$ for the natural numbers starting from $0$. Throughout the paper we take $n\in\N$ to be the space dimension and write
\[
 \R^n_+:=\{x=(x_1,\ldots,x_n)\in\R^n: x_n>0\}.
\]
If $n=1$ we also just write $\R_+:=\R^1_+$. Given a real number $x\in\R$, we write
\[
 x_+:=[x]_+:=\max\{0,x\}.
\]
We will frequently use the notation with the brackets for sums or differences of real numbers.
Oftentimes, we split $x=(x',x_n)\in\R^{n-1}\times\R$ or in the Fourier image $\xi=(\xi',\xi_n)$ where $x',\xi'\in\R^{n-1}$ refer to the directions tangential to the boundary $\R^{n-1}=\partial\R^{n}_+$ and $x_n,\xi_n\in\R$ refer to the normal directions. Given $x\in\C^n$ or a multi-index $\alpha\in\N_0^n$ we write
\[
 |x|:=\bigg(\sum_{j=1}^n|x_j|^2\bigg)^{1/2}\quad\text{or}\quad|\alpha|=\sum_{j=1}^n|\alpha_j|
\]
for the Euclidean length of $x$ or the $\ell^1$-norm of the multi-index $\alpha$, respectively. Even though this notation is ambiguous, it is convention in the literature and we therefore stick to it. We write
\[
 xy:=x\cdot y:=\sum_{j=1}^n x_j\overline{y}_j\quad(x,y\in\C^n)
\]
for the usual scalar product. The Bessel potential will be denoted by
\[
    \langle x\rangle:=(1+|x|^2)^{1/2}\quad(x\in\C^n).
\]
Given an angle $\phi\in(0,\pi]$ we write
\[
 \Sigma_{\phi}:=\{z\in\C:|\operatorname{arg} z|<\phi\}.
\]
If $M$ is a set then we use the notation 
\[
	\operatorname{pr}_j\colon M^n\to M,\; (a_1,\ldots,a_n)\to a_j\quad(j=1,\ldots,n)
\]
for the canonical projection of $M^n$ to the $j$-th component.\\
Throughout the paper $E$ will denote a complex Banach space on which we impose additional conditions at certain places. The topological dual of a Banach space $E_0$ will be denoted by $E_0'$. By $\mathscr{S}(\R^n;E)$ and $\mathscr{S}'(\R^n;E)$ we denote the spaces of $E$-valued Schwartz functions and $E$-valued tempered distributions, respectively. Given a domain $\mathcal{O}\subset\R^n$, we write $\mathscr{D}(\mathcal{O};E)$ and $\mathscr{D}'(\mathcal{O};E)$ for the spaces of $E$-valued test functions and $E$-valued distributions, respectively. If $E=\C$ in some function space, then we will omit it in the notation. On $\mathscr{S}(\R^n;E)$ we define the Fourier transform
\[
 (\mathscr{F}f)(\xi):=\frac{1}{(2\pi)^{n/2}}\int_{\R^n} e^{-ix\xi} f(x)\,dx\quad(f\in \mathscr{S}(\R^n;E)).
\]
As usual, we extend it to $\mathscr{S}'(\R^n;E)$ by $[\mathscr{F}u](f):=u(\mathscr{F}f)$ for $u\in \mathscr{S}'(\R^n;E)$ and $f\in \mathscr{S}(\R^n)$. Sometimes, we also use the Fourier transform $\mathscr{F}'$ which only acts on the tangential directions, i.e.
\[
 (\mathscr{F}'f)(\xi',x_n):=\frac{1}{(2\pi)^{(n-1)/2}}\int_{\R^n} e^{-ix'\xi'} f(x',x_n)\,dx'\quad(f\in \mathscr{S}(\R^n;E)).
\]
By $\sigma(T)$ and $\rho(T)$ we denote the spectrum and the resolvent set, respectively, of a linear operator $T\colon E\supset D(T)\to E$ defined on the domain $D(T)$. We write $\mathcal{B}(E_0,E_1)$ for the set of all bounded linear operators from the Banach space $E_0$ to the Banach space $E_1$ and we set $\mathcal{B}(E):=\mathcal{B}(E,E)$.\\
If $f,g\colon M\to \R$ map some parameter set $M$ to the reals, then we occasionally write $f\lesssim g$ if there is a constant $C>0$ such that $f(x)\leq C g(x)$ for all $x\in M$. If $f\lesssim g$ and $g\lesssim f$, we also write $f\eqsim g$. We mainly use this notation in longer computations.\\
Now we formulate our assumptions on the operators $A(D),B_1(D),\ldots,B_m(D)$: Let
\[
 A(D)=\sum_{|\alpha|=2m}a_{\alpha}D^{\alpha},\quad B_j(D)=\sum_{|\beta|=m_j} b^j_{\beta}D^{\beta}\quad(j=1,\ldots,m)
\]
for some $m,m_1,\ldots,m_m\in\N$ with $m_j<2m$ $(j=1,\ldots,m)$ and $a_{\alpha},b^j_{\beta}\in\mathcal{B}(E)$. 
\begin{assumption}[Ellipticity and Lopatinskii-Shapiro condition] \label{Assump:ELS} There is a $\phi'\in(0,\pi]$ such that
 \begin{enumerate}[(a)]
  \item $\rho(A(\xi))\subset\Sigma_{\phi'}$ for all $\xi\in\R^n\setminus\{0\}$.
  \item The equation
    \begin{align*}
     \lambda u(x_n)-A(\xi',D_n)u(x_n)&=0\quad\,\,(x_n>0),\\
     B_j(\xi',D_n)u(0)&=g_j\quad(j=1,\ldots,m)
    \end{align*}
has a unique continuous solution $u$ with $\lim_{x\to\infty} u(x)=0$ for all $(\lambda,\xi')\in\Sigma_{\phi'}\times\R^{n-1}$ and all $g=(g_1,\ldots,g_m)\in E^m$.
 \end{enumerate}
We take $\phi\in(0,\phi')$. If time-dependent equations are considered, we assume that $\phi>\pi/2$.
\end{assumption}
Assumption \ref{Assump:ELS} will be a global assumption which we assume to hold true without explicitly mentioning this every time. As we also consider mixed scales in this paper, there will be a lot of different choices of the precise spaces. Moreover, for the Bessel potential scale we will need different assumptions on the weights and the Banach space $E$ than for the Besov scale, Triebel-Lizorkin scale, or their dual scales. Thus, it will be convenient to introduce a notation which covers all these different cases. Some of the notation and notions in the following assumption will be introduced later in Section \ref{Section:Preliminaries}. For the moment, we just mention that $H$ denotes the Bessel potential scale, $B$ the Besov scale, $\mathcal{B}$ its dual scale, $F$ the Triebel-Lizorkin scale and $\mathcal{F}$ its dual scale.

\begin{assumption}\label{Assump:Spaces} Let $E$ be a Banach space, $s_0,s_1,s_2\in\R$, $p_0,p_1,p_2\in[1,\infty)$ and $q_0,q_1,q_2\in[1,\infty]$. Let further $w_0,w_1,w_2$ be weights and $I_{x_n},J_{t}\subset\R$ intervals. In the following $\bullet$ is a placeholder for any suitable choice of parameters. Moreover, by writing $J_{t}$, $I_{x_n}$ and $\R^{n-1}_{x'}$ we indicate with respect to which variable the spaces should be understood. Here, $t$ denotes the time, $x_n$ the normal direction and $x'$ the tangential directions.
\begin{enumerate}[(a)]
 \item\label{SpaceTangential} We take \begin{align*}
  \mathscr{A}^\bullet\in\{H^\bullet_{p_0}(\R^{n-1}_{x'},&w_0;E), B^\bullet_{p_0,q_0}(\R^{n-1}_{x'},w_0;E),F^\bullet_{p_0,q_0}(\R^{n-1}_{x'},w_0;E),\\
  &\mathcal{B}^\bullet_{p_0,q_0}(\R^{n-1}_{x'},w_0;E),\mathcal{F}^\bullet_{p_0,q_0}(\R^{n-1}_{x'},w_0;E)\}.
   \end{align*}
   If $\mathscr{A}^\bullet$ belongs to the Bessel potential scale, we assume that $p_0\in(1,\infty)$, that $E$ is a UMD space and that $w_0$ is an $A_p(\R^{n-1})$ weight. If $\mathscr{A}^\bullet$ belongs to the Besov or Triebel-Lizorkin scale, we assume that $w_0$ is an $A_{\infty}(\R^{n-1})$ weight. If $\mathscr{A}^\bullet$ belongs to the dual scale of Besov or Triebel-Lizorkin scale, we assume that $w_0$ is an $[A_{\infty}(\R^{n-1})]_p'$ weight, $p_0,q_0\in(1,\infty)$ and that $E$ is a UMD space.
   \item We take  \begin{align*}
  \mathscr{B}^{\bullet}(I_{x_n};\mathscr{A}^\bullet)\in\{H^\bullet_{p_1}(I_{x_n},&w_1;\mathscr{A}^\bullet), B^\bullet_{p_1,q_1}(I_{x_n},w_1;\mathscr{A}^\bullet),F^\bullet_{p_1,q_1}(I_{x_n},w_1;\mathscr{A}^\bullet),\\&\mathcal{B}^\bullet_{p_1,q_1}(I_{x_n},w_1;\mathscr{A}^\bullet),\mathcal{F}^\bullet_{p_1,q_1}(I_{x_n},w_1;\mathscr{A}^\bullet)\}.
 \end{align*}
   We impose conditions on $w_1,p_1,q_1$ and $E$ which are analogous to the ones for $w_0,p_0,q_0$ and $E$ in part \eqref{SpaceTangential}.
      \item We take  \begin{align*}
  \mathscr{C}^{\bullet}(J_t;\mathscr{A}^\bullet)\in\{H^\bullet_{p_2}(J_t,&w_2;\mathscr{A}^\bullet), B^\bullet_{p_2,q_2}(J_t,w_2;\mathscr{A}^\bullet),F^\bullet_{p_2,q_2}(J_t,w_2;\mathscr{A}^\bullet),\\&\mathcal{B}^\bullet_{p_2,q_2}(J_t,w_2;\mathscr{A}^\bullet),\mathcal{F}^\bullet_{p_2,q_2}(J_t,w_2;\mathscr{A}^\bullet)\}.
 \end{align*}
   We impose conditions on $w_2,p_2,q_2$ and $E$ which are analogous to the ones for $w_0,p_0,q_0$ and $E$ in part \eqref{SpaceTangential}.
   \item We take  \begin{align*}
  \mathscr{C}^{\bullet}(J_{t};\mathscr{B}^{\bullet}(I_{x_n};\mathscr{A}^\bullet))\in\{H^\bullet_{p_2}(J_{t},w_2&;\mathscr{B}^{\bullet}(I_{x_n};\mathscr{A}^\bullet)), B^\bullet_{p_2,q_2}(J_{t},w_2;\mathscr{B}^{\bullet}(I_{x_n};\mathscr{A}^\bullet)),\\&F^\bullet_{p_2,q_2}(J_{t},w_2;\mathscr{B}^{\bullet}(I_{x_n};\mathscr{A}^\bullet)),
    \}.
 \end{align*}
   We impose conditions on $w_2,p_2,q_2$ and $E$ which are analogous to the ones for $w_0,p_0,q_0$ and $E$ in part \eqref{SpaceTangential}.
\end{enumerate}
Most of the time, we just write $\mathscr{A}^s$, $\mathscr{B}^k(\mathscr{A}^s)$ and $\mathscr{C}^l(\mathscr{B}^k(\mathscr{A}^s))$ instead of $\mathscr{A}^s_{p_0,q_0}(\R^{n-1}_{x'},w_0;E)$, $\mathscr{B}^k_{p_1,q_1}(I_{x_n},w_1;\mathscr{A}^s(\R^{n-1}_{x'},w_0;E))$ and $\mathscr{C}^l_{p_2,q_2}(J_t,w_2;\mathscr{B}^k_{p_1,q_1}(I_{x_n},w_1;\mathscr{A}^s(\R^{n-1}_{x'},w_0;E))$. We mainly do this in order to keep notations shorter. Moreover, most of the time we only work with the smoothness parameter so that adding the other parameters to the notation would be distracting. However, at some places we will still add some of the other parameters if more clarity is needed.
\end{assumption}

Also Assumption \ref{Assump:Spaces} will be global and we use this notation throughout the paper.

\begin{remark}\phantomsection
    \begin{enumerate}[(a)]
        \item  Assumption \ref{Assump:Spaces} is formulated in a way such that we can always apply Mikhlin's theorem, Theorem \ref{Thm:Mikhlin}, and its iterated versions Proposition \ref{Prop:IteratedMikhlin} and Proposition \ref{Prop:IteratedMikhlinRbounded}. If $E$ has to satisfy Pisier's property $(\alpha)$ for some results, we will explicitly mention it.
        \item Note that every $f\in\mathscr{S}'(\R^{n-1})$ is contained in one of the spaces $\mathscr{A}^{\bullet}$ with certain parameters, see for example \cite[Proposition 1]{Kabanava_2008}.
    \end{enumerate}
\end{remark}

%%%%%%%%%%%%%%%%%%%%%%%%%%%%%%%%%%%%%%%%%%%%%%%%%%%%%%%%%%%%%%%%%%%%%%%%%%%%%%%%%%%%%%%%%%%%%%%%%%%%%
%%%%%%%%%%%%%%%%%%%%%   ---------- New Section ------------ %%%%%%%%%%%%%%%%%%%%%%%%%%%%%%%%%%%%%%%%%
%%%%%%%%%%%%%%%%%%%%%%%%%%%%%%%%%%%%%%%%%%%%%%%%%%%%%%%%%%%%%%%%%%%%%%%%%%%%%%%%%%%%%%%%%%%%%%%%%%%%%
\section{Preliminaries}\label{Section:Preliminaries}

\subsection{Some Notions from the Geometry of Banach Spaces} If one wants to transfer theorems from a scalar-valued to a vector-valued situation, then this is oftentimes only possible if one imposes additional geometric assumptions on the Banach space. And since the iterated spaces we introduced in Assumption~\ref{Assump:Spaces} are vector-valued even if we take $E=\R$ or $E=\C$, it should not come as a surprise that we have to introduce some of these geometric notions. We refer the reader to \cite{HvNVW_2016,HvNVW_2017} for an extensive treatment of the notions in this subsection.
\subsubsection{UMD spaces}
The importance of Banach spaces with the property of unconditional martingale differences (UMD spaces) lies in the fact Mikhlin's Fourier multiplier theorem has only been generalized for operator-valued symbols if the underlying Banach spaces are UMD spaces. Therefore, if one wants to work with Fourier multipliers on vector-valued $L_p$-spaces, one is forced to impose this geometric condition.\\
A Banach space $E$ is called UMD space if for all $p\in(1,\infty)$ there is a constant $C>0$ such that for all probability spaces $(\Omega,\mathcal{F},\mathbb{P})$, all $N\in\N$, all $\epsilon_1,\ldots,\epsilon_N\in\C$ with $|\epsilon_1|=\ldots=|\epsilon_N|=1$, all filtrations $(\mathcal{F}_k)_{k=0}^N$ and all martingales $(f_k)_{k=0}^N$ in $L_p(\Omega;E)$ it holds that
\begin{align*}
	\bigg\|\sum_{k=1}^N \epsilon_k (f_k-f_{k-1})\bigg\|_{L_p(\Omega;E)}\leq C \bigg\|\sum_{k=1}^N  f_k-f_{k-1}\bigg\|_{L_p(\Omega;E)}.
\end{align*}
 This is equivalent to $E$ being a Banach space of class $\mathcal{HT}$, which is defined by the boundedness of the Hilbert transform on $L_p(\R;E)$. UMD spaces are always reflexive. Some important examples of UMD spaces are:
\begin{itemize}
 \item Hilbert spaces, in particular the scalar fields $\R,\C$,
 \item the space $L_p(S;E)$ for $p\in(1,\infty)$, a $\sigma$-finite measure space $(S,\mathcal{A},\mu)$ and a UMD space $E$,
 \item the classical function spaces such as Bessel potential spaces $H^{s}_p$, Besov spaces $B^{s}_{p,q}$ and Triebel-Lizorkin spaces $F^{s}_{p,q}$ in the reflexive range as well as their $E$-valued versions if $E$ is a UMD space.
\end{itemize}
\subsubsection{Cotype}
In this work, Banach spaces satisfying a finite cotype assumption could be considered as merely a technical notion that is needed to derive Proposition~\ref{Chapter3::Proposition::PoissonMappingPropertySobolevRBounded} which is a sharper version of Proposition~\ref{Chapter3::Proposition::PoissonMappingPropertySobolevRBoundedEpsilon}. The latter does not need a finite cotype assumption, while we show that it seems to be necessary to derive the former in Proposition~\ref{Prop:CounterExampleEpsilon}. The main reason why we need finite cotype assumptions is that they allow us to use a version of Kahane's contraction principle with function coefficients, see Proposition~\ref{Prop:HytoenenVeraar}.\\
Let $(\Omega,\mathcal{F},\mathbb{P})$ be a probability space. A sequence of random variables $(\epsilon_k)_{k\in\N}$ is called Rademacher sequence if it is an i.i.d. sequence with $\mathbb{P}(\epsilon_k=1)=\mathbb{P}(\epsilon_k=-1)=\frac{1}{2}$ for $k\in\N$. A Banach space $E$ is said to have cotype $q\in[2,\infty]$ if there is a constant $C>0$ such that for all choices of $N\in\N$ and $x_1,\ldots,x_N\in E$ the estimate
\[
 \bigg(\sum_{k=1}^N \|x_k\|^q\bigg)^{1/q}\leq C\bigg(\E\big\|\sum_{k=1}^N\epsilon_kx_k\big\|^q\bigg)^{1/q}
\]
holds with the usual modification for $q=\infty$. We want to remark the following
\begin{itemize}
 \item Every Banach space has cotype $\infty$.
 \item If a Banach space has cotype $q\in[2,\infty)$, then it also has cotype $\tilde{q}\in[q,\infty]$.
 \item No nontrivial Banach space can have cotype $q\in[1,2)$ since even the scalar fields $\R$ and $\C$ do not satisfy this.
 \item If the Banach space $E$ has cotype $q_E$, then $L_p(S;E)$ has cotype $\max\{p,q_E\}$ for every measure space $(S,\mathcal{A},\mu)$.
 \item If the Banach space $E$ has cotype $q_E$, then $H^{s}_p(\R^n;E)$ has cotype $\max\{p,q_E\}$ and $B^{s}_{p,q}(\R^n;E)$ and $F^{s}_{p,q}(\R^n;E)$ have cotype $\max\{p,q,q_E\}$. The same also holds for the weighted variants we introduce later.
\end{itemize}
\subsubsection{Pisier's property $(\alpha)$}
Finally, we also need Pisier's property $(\alpha)$ at some places in this paper. This condition is usually needed if one wants to derive $\mathcal{R}$-boundedness from Mikhlin's multiplier theorem. If one has a set of $\mathcal{R}$-bounded operator-valued symbols, then one needs Pisier's property $(\alpha)$ in order to obtain the $\mathcal{R}$-boundedness of the resulting operator family.\\
A Banach space $E$ has Pisier's property $(\alpha)$ if Kahane's contraction principle also holds for double random sums, i.e. if for two Rademacher sequences $(\epsilon'_i)_{i\in\N}$, $(\epsilon''_j)_{j\in\N}$ on the probability spaces $(\Omega',\mathcal{F}',\mathbb{P}')$ and $(\Omega'',\mathcal{F}'',\mathbb{P}'')$, respectively, there is a constant $C>0$ such that for all $M,N\in\N$, all $(a_{ij})_{1\leq i\leq M,1\leq j\leq N}\subset\C$ with $|a_{ij}|\leq1$ and all $(x_{ij})_{1\leq i\leq M,1\leq j\leq N}\subset E$ the estimate
\[
 \E_{\mathbb{P'}}\E_{\mathbb{P''}}\bigg\| \sum_{i=1}^M\sum_{j=1}^N a_{ij}\epsilon_{i}\epsilon_{j}x_{ij} \bigg\|^2\leq C^2 \E_{\mathbb{P'}}\E_{\mathbb{P''}}\bigg\| \sum_{i=1}^M\sum_{j=1}^N \epsilon_{i}\epsilon_{j}x_{ij} \bigg\|^2
\]
holds. Even though Pisier's property $(\alpha)$ is independent of the UMD property, the examples of spaces with Pisier's property $(\alpha)$ we have in mind are similar:
\begin{itemize}
 \item Hilbert spaces, in particular the scalar fields $\R,\C$,
 \item the space $L_p(S;E)$ for $p\in[1,\infty)$, a measure space $(S,\mathcal{A},\mu)$ and a Banach space $E$ with Pisier's property $(\alpha)$,
 \item the classical function spaces such as Bessel potential spaces $H^{s}_p$, Besov spaces $B^{s}_{p,q}$ and Triebel-Lizorkin spaces $F^{s}_{p,q}$ in the reflexive range as well as their  $E$-valued versions if $E$ has Pisier's property $(\alpha)$.
\end{itemize}

\subsection{\texorpdfstring{$\mathcal{R}$}--bounded Operator Families}
We refer the reader to \cite{DHP03,HvNVW_2017} for introductions to $\mathcal{R}$-bounded operator families. The notion of $\mathcal{R}$--boundedness is frequently needed if one works with vector-valued function spaces. As UMD spaces, it is essential for vector-valued generalizations of Mikhlin's multiplier theorem. But perhaps more importantly, it can be used the derive a necessary and sufficient condition for a closed linear operator $A\colon E\supset D(A)\to E$ on the UMD space $E$ to have the property of maximal regularity. This is the case if and only if it is $\mathcal{R}$--sectorial, i.e. if and only if the set
\[
	\{\lambda(\lambda-A)^{-1}:\lambda\in\C,\,\arg \lambda < \phi\}
\]
for some $\phi>\pi/2$ is $\mathcal{R}$--bounded, see \cite[Theorem 4.2]{Weis_2001}. Here, an operator $A$ is said to have the property of maximal regularity on $[0,T)$, $0<T<\infty$, if the mapping
\[
	W^1_p([0,T);X)\cap L_p([0,T);D(A))\to L_p([0,T);X)\times I_p(A),\;u\mapsto\begin{pmatrix}\partial_t u-Au \\ \gamma_0 u \end{pmatrix}
\]
is an isomorphism of Banach spaces, where $\gamma_0u:= u(0)$ denotes the temporal trace and $I_p(A)$ is the space of admissible initial conditions. It can be described as a real interpolation space of $X$ and $D(A)$ by the relation $I_p(A):=(X,D(A))_{1-1/p.p}$. The above isomorphy is very useful for the treatment of nonlinear parabolic equations, as it allows for the efficient use of fixed point iterations. This approach to nonlinear equations has already been applied many times in the literature.\\
Let us now define $\mathcal{R}$--boundedness: Let $E_0,E_1$ be Banach spaces. A family of operators $\mathcal{T}\subset\mathcal{B}(E_0,E_1)$ is called $\mathcal{R}$-bounded if there is a constant $C>0$ and $p\in[1,\infty)$ such that for a Rademacher sequence $(\epsilon_k)_{k\in\N}$ on a probability space $(\Omega,\mathcal{F},\mathbb{P})$ and all $N\in\N$, $x_1,\ldots,x_N\in E_0$ and $T_1,\ldots,T_N\in\mathcal{T}$ the estimate
\[
 \left\| \sum_{k=1}^N \epsilon_k T_k x_k \right\|_{L_p(\Omega;E_1)}\leq C\left\| \sum_{k=1}^N \epsilon_k  x_k \right\|_{L_p(\Omega;E_0)}
\]
holds. The least admissible constant such that this estimate holds will be denoted by $\mathcal{R}(\mathcal{T})$ or, if we want to emphasize the dependence on the Banach spaces, by $\mathcal{R}_{\mathcal{B}(E_0,E_1)}(\mathcal{T})$. By the Kahane-Khintchine inequalities, the notion of $\mathcal{R}$-boundedness does not depend on $p$. $\mathcal{R}$-boundedness trivially implies uniform boundedness, but the converse does not hold true in general. For Hilbert spaces however, both notions coincide.\\
An equivalent characterization of $\mathcal{R}$-boundedness can be given by using the $\operatorname{Rad}_p(E)$-spaces. They are defined as the space of all sequences $(x_k)_{k\in\N}\subset E$ such that $\sum_{k=1}^{\infty} \epsilon_k  x_k$ converges in $L_p(\Omega;E)$. $\operatorname{Rad}_p(E)$-spaces are endowed with the norm
\[
 \|(x_k)_{k\in\N}\|_{\operatorname{Rad}_p(E)}=\sup_{N\in\N}\left\| \sum_{k=1}^N \epsilon_k  x_k \right\|_{L_p(\Omega;E)}.
\]
Given  $T_1,\ldots,T_N\in\mathcal{B}(E_0,E_1)$ we define
\[
 \operatorname{diag}(T_1,\ldots,T_N)\colon \operatorname{Rad}_p(E_0)\to \operatorname{Rad}_p(E_1),\,(x_k)_{k\in\N}\to (T_kx_k)_{k\in\N}
\]
where $T_k:=0$ for $k>N$. Then, a family of operators $\mathcal{T}\subset\mathcal{B}(E_0,E_1)$ is $\mathcal{R}$-bounded if and only if 
\[
 \{\operatorname{diag}(T_1,\ldots,T_N): N\in\N, T_1,\ldots,T_N\in\mathcal{T}\}\subset \mathcal{B}(\operatorname{Rad}_p(E_0),\operatorname{Rad}_p(E_1))
\]
is uniformly bounded.\\
Let us now collect some results concerning $\mathcal{R}$-boundedness which will be useful in this paper.
\begin{proposition}\label{Prop:HytoenenVeraar}
 Let $E$ be a Banach space with cotype $q\in[2,\infty)$, $(\epsilon_k)_{k\in\N}$ a Rademacher sequence on the probability space $(\Omega,\mathcal{F},\mathbb{P})$ and let $(S,\mathcal{A},\mu)$ be a $\sigma$-finite measure space. For all $\tilde{q}\in(q,\infty]$ there exists a constant $C>0$ such that for all $N\in\N$, $f_1,\ldots,f_N\in L_{\tilde{q}}(S)$ and $x_1,\ldots,x_N\in E$ it holds that
 \begin{align}\label{Eq:Contraction_Principle_Functions}
  \left\|\sum_{k=1}^N \epsilon_k f_k x_k \right\|_{L_{\tilde{q}}(S;L_2(\Omega;E))}\leq C \sup_{1\leq k\leq N} \|f_k\|_{L_{\tilde{q}}(S)}\left\|\sum_{k=1}^N \epsilon_kx_k\right\|_{L_2(\Omega;E)}.
 \end{align}
If $q\in\{2,\infty\}$, then we can also take $\tilde{q}=q$.
\end{proposition}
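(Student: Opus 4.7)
The plan is to first dispatch the endpoint case $\tilde q = \infty$ (and, when $q\in\{2,\infty\}$, also $\tilde q = q$), and then reduce the remaining range $\tilde q\in(q,\infty)$ to a contraction principle for a Rademacher sum inside a Bochner space whose cotype is under control. For $\tilde q=\infty$ a direct application of Kahane's contraction principle at each fixed $s\in S$ yields
\[
\Big\|\sum_{k=1}^N \epsilon_k f_k(s) x_k\Big\|_{L_2(\Omega;E)}
\leq 2\,\max_k|f_k(s)|\cdot\Big\|\sum_{k=1}^N \epsilon_k x_k\Big\|_{L_2(\Omega;E)},
\]
and passing to $\operatorname*{ess\,sup}_{s\in S}$ proves the inequality without any cotype assumption; the case $q=2$, $\tilde q=2$ is immediate from the fact that $E$ must essentially be Hilbertian (Kwapie\'n), so the estimate reduces to the standard pointwise contraction.

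For $\tilde q\in(q,\infty)$ I would first reduce by density to simple coefficients, writing each $f_k = \sum_j a_{k,j}\mathbf 1_{A_j}$ on a common finite family of pairwise disjoint measurable sets of finite measure. Since $\tilde q \geq q\geq 2$, Minkowski's integral inequality gives
\[
\Big\|\sum_k \epsilon_k f_k x_k\Big\|_{L_{\tilde q}(S;L_2(\Omega;E))}
\leq \Big\|\sum_k \epsilon_k f_k x_k\Big\|_{L_2(\Omega;L_{\tilde q}(S;E))},
\]
which recasts the task as bounding a Rademacher sum in the Bochner space $F:=L_{\tilde q}(S;E)$ with coefficient vectors $y_k := f_kx_k$ whose $F$-norm equals $\|f_k\|_{L_{\tilde q}(S)}\|x_k\|_E$. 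One then invokes a Maurey-type contraction principle adapted to cotype-$q$ targets (in the spirit of Hyt\"onen-Veraar): on each level set of the simple representation the scalar Kahane principle is applied, and the resulting contributions are reassembled using the cotype-$q$ inequality for $E$ to dominate $\|\sum_k\epsilon_k y_k\|_{L_2(\Omega;F)}$ by $\sup_k\|f_k\|_{L_{\tilde q}(S)}\cdot\|\sum_k\epsilon_k x_k\|_{L_2(\Omega;E)}$, with a constant depending only on $q$, $\tilde q$ and the cotype constant of $E$.

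The main obstacle, and the reason a pointwise contraction alone is insufficient, is that $\|\max_k|f_k|\|_{L_{\tilde q}(S)}$ can exceed $\sup_k\|f_k\|_{L_{\tilde q}(S)}$ by a factor as large as $N^{1/\tilde q}$, as is seen by taking the $f_k$ to be indicators of disjoint sets of equal measure. The cotype hypothesis on $E$, together with the strict inequality $\tilde q > q$ (relaxed to equality only at the distinguished endpoints $q\in\{2,\infty\}$), is exactly what allows this combinatorial blow-up to be traded against the random cancellation in $\sum_k \epsilon_k x_k$, producing a constant independent of $N$ and of the structure of the supports. Making this trade precise is the essential technical content and is where all the quantitative work sits; the rest of the argument is book-keeping via Fubini and density.
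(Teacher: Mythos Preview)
The paper does not actually prove this proposition: its entire proof is the sentence ``This is one of the statements in \cite[Lemma 3.1]{Hytoenen_Veraar_2009}.'' So there is no argument in the paper to compare against; you are attempting to supply what the paper outsources.

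Your treatment of the endpoint $\tilde q=\infty$ via pointwise Kahane contraction is correct. However, your handling of the endpoint $q=2$, $\tilde q=2$ is wrong: Kwapie\'n's theorem characterizes Hilbert spaces as those having \emph{both} type~$2$ and cotype~$2$. Cotype~$2$ alone does not force $E$ to be Hilbertian (e.g.\ $L_1$ has cotype~$2$), so the reduction ``$E$ must essentially be Hilbertian'' fails, and with it your argument for that case.

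For the main range $\tilde q\in(q,\infty)$ you correctly diagnose the obstruction---pointwise contraction produces $\|\max_k|f_k|\|_{L_{\tilde q}}$, which can exceed $\sup_k\|f_k\|_{L_{\tilde q}}$ by a factor $N^{1/\tilde q}$---but you do not overcome it. The sentence ``one then invokes a Maurey-type contraction principle \ldots\ (in the spirit of Hyt\"onen--Veraar)'' and the closing admission that ``making this trade precise is the essential technical content'' amount to citing the very result you are trying to prove. Decomposing into level sets and applying scalar Kahane on each piece reproduces exactly the $\|\max_k|f_k|\|_{L_{\tilde q}}$ bound you already rejected; the cotype hypothesis has to enter in a genuinely different way, and your outline does not indicate how. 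As it stands, the proposal is an accurate description of the difficulty rather than a proof.
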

\begin{proof}
 This is one of the statements in \cite[Lemma 3.1]{Hytoenen_Veraar_2009}.
\end{proof}

\begin{remark}\label{Rem:HytoenenVeraar}
\begin{enumerate}[(a)]
 \item It was already observed in \cite[Remark 3.3]{Hytoenen_Veraar_2009} that if $\tilde{q}<\infty$, then Proposition \ref{Prop:HytoenenVeraar} can also be formulated as follows: The image of the unit ball $B_{L^{\tilde{q}}(S)}(0,1)$ in $L^{\tilde{q}}(S)$ under the embedding $L^{\tilde{q}}(S)\hookrightarrow \mathcal{B}(E,L^{\tilde{q}}(S;E)),f\mapsto f\otimes(\,\cdot\,)$ is an $\mathcal{R}$-bounded subset of $\mathcal{B}(E,L^{\tilde{q}}(S;E))$.
 \item If $(S,\mathcal{A},\mu)$ is nonatomic and if \eqref{Eq:Contraction_Principle_Functions} holds for all $N\in\N$, $f_1,\ldots,f_N\in L_{\tilde{q}}(S)$ and $x_1,\ldots,x_N\in E$, then $E$ has cotype $\tilde{q}$. This is follows from the statements in \cite[Lemma 3.1]{Hytoenen_Veraar_2009}.
 \item Let $(A,\Sigma,\nu)$ be a $\sigma$-finite measure space. Let further $2\leq \bar{q}<q<\infty$ and let $\bar{E}$ be a Banach space with cotype $\bar{q}$. If $E=L_q(A;\bar{E})$, then \eqref{Eq:Contraction_Principle_Functions} also holds with $\tilde{q}=q$. This was shown in \cite[Remark 3.4]{Hytoenen_Veraar_2009}.
\end{enumerate}
\end{remark}

\begin{proposition}\label{Prop:Rbounded_Interpolation}
Let $(E_0,E_1)$ and $(F_0,F_1)$ be interpolation couples of UMD-spaces, $\Sigma\subset\C$ and $f\colon\Sigma\to\C$. Let further $(T(\lambda))_{\lambda\in\Sigma}\subset \mathcal{B}(E_0+E_1,F_0+F_1)$ be a collection of operators such that
\[
 \mathcal{R}_{\mathcal{B}(E_0,F_0)}(\{T(\lambda):\lambda\in\Sigma\})<M_0,\quad \mathcal{R}_{\mathcal{B}(E_1,F_1)}(\{f(\lambda)T(\lambda):\lambda\in\Sigma\})<M_1
\]
for some $M_0,M_1>0$. We write $E_{\theta}=[E_0,E_1]_{\theta}$ and $F_{\theta}=[F_0,F_1]_{\theta}$ with $\theta\in(0,1)$ for the complex interpolation spaces. Then there is a constant $C>0$ such that
\[
 \mathcal{R}_{\mathcal{B}(E_\theta,F_\theta)}(\{f(\lambda)^{\theta}T(\lambda):\lambda\in\Sigma\})<CM_0^{1-\theta}M_1^{\theta}.
\]
\end{proposition}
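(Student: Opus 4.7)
The plan is to reduce the $\mathcal{R}$-bound to a uniform bound of diagonal operators on $\operatorname{Rad}_p$-spaces and then apply a Stein-type three-lines argument to the analytic family
\[
D_z:=\operatorname{diag}\bigl(f(\lambda_k)^{z}T(\lambda_k)\bigr)_{k=1}^{N}.
\]
By the $\operatorname{Rad}_p$-characterisation of $\mathcal{R}$-boundedness recalled above, it is enough to estimate $\|D_\theta\|_{\operatorname{Rad}_p(E_\theta)\to\operatorname{Rad}_p(F_\theta)}$ uniformly in $N\in\N$ and in $\lambda_1,\ldots,\lambda_N\in\Sigma$. The hypotheses yield $\|D_0\|_{\operatorname{Rad}_p(E_0)\to\operatorname{Rad}_p(F_0)}<M_0$ and $\|D_1\|_{\operatorname{Rad}_p(E_1)\to\operatorname{Rad}_p(F_1)}<M_1$. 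Fixing the principal branch so that $|f(\lambda_k)^{z}|\le|f(\lambda_k)|^{\Re z}e^{\pi|\Im z|}$, Kahane's contraction principle (the scalars $e^{-\pi|t|}f(\lambda_k)^{it}$ have modulus $\le 1$) upgrades these to the boundary estimates
\[
\|D_{it}\|_{\operatorname{Rad}_p(E_0)\to\operatorname{Rad}_p(F_0)}\le e^{\pi|t|}M_0,\qquad \|D_{1+it}\|_{\operatorname{Rad}_p(E_1)\to\operatorname{Rad}_p(F_1)}\le e^{\pi|t|}M_1.
\]

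The next ingredient I would establish is the interpolation identity
\[
[\operatorname{Rad}_p(E_0),\operatorname{Rad}_p(E_1)]_\theta=\operatorname{Rad}_p(E_\theta),
\]
and likewise for $F$. This is where the UMD hypothesis is essential: UMD implies $K$-convexity, so the Rademacher projection is a bounded idempotent on $L_p(\Omega;E_j)$, making $\operatorname{Rad}_p(E_j)$ a complemented subspace of $L_p(\Omega;E_j)$ via one and the same projection. Combining the complemented subspace theorem for complex interpolation with the standard identity $[L_p(\Omega;E_0),L_p(\Omega;E_1)]_\theta=L_p(\Omega;E_\theta)$ yields the claimed equality; reflexivity of UMD spaces gives the analogous statement for duals so that $[\operatorname{Rad}_p(F_0)',\operatorname{Rad}_p(F_1)']_\theta=\operatorname{Rad}_p(F_\theta)'$.

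With these identifications, pick $x$ in the unit ball of $\operatorname{Rad}_p(E_\theta)$ and $y^*$ in the unit ball of $\operatorname{Rad}_p(F_\theta)'$, and choose analytic representatives $x(z),y^*(z)$ on $\overline{S}=\{0\le\Re z\le1\}$ with boundary norms controlled by $1+\varepsilon$. Consider
\[
h(z):=e^{\delta(z-\theta)^2}M_0^{z-1}M_1^{-z}\bigl\langle y^*(z),D_z x(z)\bigr\rangle,
\]
with $\delta>0$ fixed. Since $|e^{\delta(z-\theta)^2}|=e^{\delta[(\Re z-\theta)^2-(\Im z)^2]}$ decays like $e^{-\delta t^2}$ along the lines $\Re z=0,1$, it absorbs the $e^{\pi|t|}$-growth of the boundary bounds on $D_z$; thus $h$ is holomorphic on $S$, continuous and bounded on $\overline{S}$, with $\sup_{\partial S}|h|\le C_\delta$ for some constant $C_\delta$ independent of $N$ and of $\vec\lambda$. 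The three-lines lemma then gives $|h(\theta)|\le C_\delta$, i.e.\ $|\langle y^*,D_\theta x\rangle|\le C_\delta M_0^{1-\theta}M_1^{\theta}$; passing to the supremum over $x,y^*,\vec\lambda,N$ and letting $\varepsilon\to0$ completes the proof.

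The technical heart of the argument, and the step I expect to be most delicate, is establishing the interpolation identity for $\operatorname{Rad}_p$-spaces in the UMD setting and verifying that the representatives $x(z),y^*(z)$ can be chosen so that $z\mapsto\langle y^*(z),D_z x(z)\rangle$ is genuinely holomorphic on the open strip. Potential zeros of $f$ are harmless: if $f(\lambda_k)=0$ the corresponding diagonal entry vanishes identically in $z$ and can be dropped before choosing a branch of the logarithm.
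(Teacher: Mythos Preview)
Your proposal is correct and follows essentially the same route as the paper: reduce the $\mathcal{R}$-bound to a uniform operator-norm bound for the diagonal family $D_z$ on $\operatorname{Rad}_p$-spaces, interpolate via a Stein/three-lines argument, and invoke $[\operatorname{Rad}_p(E_0),\operatorname{Rad}_p(E_1)]_\theta=\operatorname{Rad}_p(E_\theta)$ from the UMD (hence $K$-convex) assumption. The only notable difference is that the paper first uses Kahane's contraction principle to replace $f$ by $|f|$, so that $|f(\lambda_k)^{it}|=1$ on the boundary lines and no Gaussian damping factor is needed; it then cites abstract Stein interpolation and the Kaip--Saal $\operatorname{Rad}_p$-identity as black boxes rather than unpacking them as you do.
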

\begin{proof}
In order to avoid possible ambiguities with complex exponentials, we assume that $f$ takes values in $(0,\infty)$. As a consequence of Kahane's contraction principle (\cite[Theorem 6.1.13]{HvNVW_2017}) we may do this without loss of generality. It suffices to show that
 \[
  \{\operatorname{diag}(f(\lambda_1)^{\theta}T(\lambda_1),\ldots,f(\lambda_N)^{\theta}T(\lambda_N):N\in\N,\lambda_1,\ldots,\lambda_N\in\Sigma\}
 \]
 is a bounded family in $\mathcal{B}(\operatorname{Rad}_p(E_{\theta}),\operatorname{Rad}_p(F_{\theta}))$. Let
 \[
  S:=\{z\in\C: 0\leq\Re z\leq 1\}.
 \]
For fixed $N\in\N$ and  $\lambda_1,\ldots,\lambda_N\in\Sigma$ we define
\begin{align*}
 \mathscr{T}_{\lambda_1,\ldots,\lambda_N}\colon S \to \mathcal{B}(\operatorname{Rad}_p(E_{0})&\cap \operatorname{Rad}_p(E_{1}), \operatorname{Rad}_p(F_{0})+ \operatorname{Rad}_p(F_{1})),\\
 &\,z\mapsto \operatorname{diag}( f(\lambda_1)^z T(\lambda_1),\ldots,f(\lambda_N)^z T(\lambda_N)).
\end{align*}
For fixed $(x_k)_{k\in\N}\in \operatorname{Rad}_p(E_{0})\cap \operatorname{Rad}_p(E_{1})$ the mapping
\begin{align*}
 \mathscr{T}_{\lambda_1,\ldots,\lambda_N}(\,\cdot\,)(x_k)_{k\in\N}\colon S \to \operatorname{Rad}_p(F_{0})+ \operatorname{Rad}_p(F_{1}),
 &\,z\mapsto (f(\lambda_k)^zT(\lambda_k)x_k)_{k\in\N}.
\end{align*}
is continuous, bounded and analytic in the interior of $S$. Again, we used the convention $T(\lambda_k)=0$ for $k>N$. Moreover, by assumption we have that
\[
 \sup_{t\in\R}\|\mathscr{T}_{\lambda_1,\ldots,\lambda_N}(j+it)\|_{\mathcal{B}(\operatorname{Rad}_p(E_{j}),\operatorname{Rad}_p(F_{j}))}<M_j\quad(j\in\{0,1\}).
\]
Thus, it follows from abstract Stein interpolation (see \cite[Theorem 2.1]{Voigt_1992}) that
\[
 \|\mathscr{T}_{\lambda_1,\ldots,\lambda_N}(\theta)\|_{\mathcal{B}(\operatorname{Rad}_p^{\theta}(E_{0},E_1),\operatorname{Rad}_p^{\theta}(F_{0},F_1))}<M_0^{1-\theta}M_1^{\theta},
\]
where we used the shorter notation $\operatorname{Rad}_p^{\theta}(E_{0},E_1)=[\operatorname{Rad}_p(E_{0}),\operatorname{Rad}_p(E_{1})]_{\theta}$ in the subscript. But it was shown in \cite[Corollary 3.16]{Kaip_Saal_2012} that
 \[
  [\operatorname{Rad}_p(E_0),\operatorname{Rad}_p(E_1)]_{\theta}=\operatorname{Rad}_p(E_{\theta})
 \]
with equivalence of norms so that there is a constant $C>0$ such that
\[
 \|\mathscr{T}_{\lambda_1,\ldots,\lambda_N}(\theta)\|_{\mathcal{B}(\operatorname{Rad}_p(E_{\theta}),\operatorname{Rad}_p(F_{\theta}))}<CM_0^{1-\theta}M_1^{\theta}.
\]
Since $N\in\N$ and $\lambda_1,\ldots,\lambda_N\in\Sigma$ were arbitrary, we obtain the assertion.
\end{proof}

\begin{remark}
\begin{enumerate}[(a)]
 \item The proof of Proposition~\ref{Prop:Rbounded_Interpolation} was inspired by the proof of \cite[Lemma 6.9]{HHK_2006}. Note that \cite[Example 6.13]{HHK_2006} shows that Proposition~\ref{Prop:Rbounded_Interpolation}  does not hold true if the complex interpolation functor is replaced by the real one.
 \item In Proposition~\ref{Prop:Rbounded_Interpolation} we only use the UMD assumption in order to show that the interpolation space of two Rademacher spaces coincides with the Rademacher space of the interpolation space of the underlying Banach spaces. This holds more generally for K-convex Banach spaces (see \cite[Theorem 7.4.16]{HvNVW_2017}). We refrain from introducing K-convexity in order not to overload this paper with geometric notions. Note however that UMD spaces are K-convex, see \cite[Example 7.4.8]{HvNVW_2017}.
 \end{enumerate}
\end{remark}

\begin{lemma}\label{Lemma:HolomorphicSectorRBounded}
	Let $\psi\in(0,\pi)$ and let $E_0,E_1$ be Banach spaces. Let further $N\colon\overline{\Sigma}_{\psi}\to\mathcal{B}(E_0,E_1)$ be holomorphic and bounded on $\Sigma_{\psi}$ and suppose that $N|_{\partial\Sigma_\psi}$ has $\mathcal{R}$-bounded range. Then the set
	\[
		\{\lambda^k\big(\tfrac{d}{d\lambda}\big)^kN(\lambda):\lambda\in \overline{\Sigma}_{\psi'}\}
	\]
	is $\mathcal{R}$-bounded for all $\psi'<\psi$ and all $k\in\N_0$.
\end{lemma}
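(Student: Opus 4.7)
The plan is to first upgrade the $\mathcal{R}$-boundedness assumption from the boundary $\partial\Sigma_\psi$ to an intermediate subsector $\Sigma_{\psi''}$ with $\psi' < \psi'' < \psi$, and then to read off the derivative bound from a Cauchy integral on a circle of radius $r|\lambda|$ that stays inside $\Sigma_{\psi''}$. This splits the lemma into a maximum-principle step for $\mathcal{R}$-boundedness and a routine Cauchy estimate.

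For the first step I would transfer to the strip $S_\psi := \{z \in \C : |\Im z| < \psi\}$ via the biholomorphism $\Phi(z) := e^z \colon S_\psi \to \Sigma_\psi \setminus \{0\}$. The function $M := N \circ \Phi$ is bounded and holomorphic on $S_\psi$, continuous up to the closure, and its boundary values $\{M(t \pm i\psi) : t \in \R\} = \{N(\mu) : \mu \in \partial\Sigma_\psi\}$ form an $\mathcal{R}$-bounded set by hypothesis. The Poisson integral representation for bounded holomorphic functions on the strip writes $M(z)$, for $z$ in the open strip, as a positive integral of the two boundary traces against kernels whose combined total mass is one. Pulling this back via $\Phi$ exhibits every $N(\lambda)$ with $\lambda \in \Sigma_{\psi''}$ as a probability-weighted integral average of operators from the $\mathcal{R}$-bounded family $\{N(\mu) : \mu \in \partial\Sigma_\psi\}$. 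A Fubini argument on a product probability space combined with the defining inequality of $\mathcal{R}$-boundedness then shows that such integral averages remain in an $\mathcal{R}$-bounded set with essentially the same constant; equivalently, the closed absolutely convex hull of an $\mathcal{R}$-bounded family is $\mathcal{R}$-bounded. Hence $\{N(\lambda) : \lambda \in \Sigma_{\psi''}\}$ is $\mathcal{R}$-bounded.

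For the second step I set $r := \tfrac{1}{2}\sin(\psi'' - \psi') > 0$. For every $\lambda \in \overline{\Sigma_{\psi'}}$ the closed disk $\overline{B(\lambda, r|\lambda|)}$ lies inside $\Sigma_{\psi''}$, so Cauchy's formula on the boundary circle yields
\[
\lambda^k \Big(\frac{d}{d\lambda}\Big)^k N(\lambda) = \frac{k!}{2\pi\, r^k} \int_0^{2\pi} e^{-ik\theta}\, N\big(\lambda + r|\lambda|\, e^{i\theta}\big)\, d\theta.
\]
Since $|e^{-ik\theta}| = 1$ and $\int_0^{2\pi} d\theta/(2\pi) = 1$, the right-hand side is $k!/r^k$ times an absolutely convex integral average of elements of the $\mathcal{R}$-bounded set constructed in the first step. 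Invoking once more the permanence of $\mathcal{R}$-boundedness under such averages gives the claim. I expect the maximum-principle step to be the main obstacle: a direct Cauchy contour along $\partial\Sigma_\psi$ converges only for $k \geq 1$ (because $N$ is merely bounded, with no decay at infinity), so the case $k = 0$ cannot be obtained without the Poisson-kernel argument that first promotes $\mathcal{R}$-boundedness from the boundary to the intermediate sector $\Sigma_{\psi''}$.
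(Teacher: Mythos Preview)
Your proposal is correct and follows essentially the same route as the paper's reference: the paper defers $k=0$ and $k=1$ to \cite[Example~2.16]{Kunstmann_Weis_2004}, explicitly noting that the $k=0$ case is done via Poisson's formula (exactly your first step), and then obtains higher $k$ by iteration, whereas you handle all $k\ge 1$ in one stroke with the Cauchy formula on the disk $B(\lambda,r|\lambda|)$. Both arguments rest on the same two ingredients---the Poisson representation on the strip and the stability of $\mathcal{R}$-bounds under absolutely convex integral averages---so there is no substantive difference; your direct Cauchy step is if anything slightly tidier than the iteration.

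One cosmetic slip: your displayed Cauchy formula drops the unimodular factor $(\lambda/|\lambda|)^k$ (the contour has radius $r|\lambda|$, not $r\lambda$), so the correct identity is
\[
\lambda^k N^{(k)}(\lambda)=\frac{k!}{2\pi r^k}\Big(\frac{\lambda}{|\lambda|}\Big)^{k}\int_0^{2\pi} e^{-ik\theta}\,N\big(\lambda+r|\lambda|e^{i\theta}\big)\,d\theta.
\]
Since $|(\lambda/|\lambda|)^k e^{-ik\theta}|=1$, this does not affect the absolutely-convex-hull argument.
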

\begin{proof}
	For $k=0$ and $k=1$ the proof is contained in \cite[Example 2.16]{Kunstmann_Weis_2004}. Other values of $k$ can then be obtained by iteration. Note that the boundedness of $N$ is necessary since Poisson's formula, which is used for $k=0$, only holds for bounded functions.
\end{proof}

\begin{definition}
Let $(X,d)$ be a metric space and  $E_0,E_1$ be Banach spaces. Let further $U\subset \R^n$ be open and $k\in\N_0$.
\begin{enumerate}[(a)]
 \item We say that a function $f\colon X\to\mathcal{B}(E_0,E_1)$ is $\mathcal{R}$-continuous if for all $x\in X$ and all $\epsilon>0$ there is a $\delta>0$ such that we have
 \[
  \mathcal{R}_{\mathcal{B}(E_0,E_1)}(\{f(y)-f(x): y\in B(x,\delta)\})<\epsilon.
 \]
We write $C_{\mathcal{R}B}(X,\mathcal{B}(E_0,E_1))$ for the space of all $\mathcal{R}$-continuous functions $f\colon X\to\mathcal{B}(E_0,E_1)$ with $\mathcal{R}$-bounded range.
 \item We say that a function $f\colon X\to\mathcal{B}(E_0,E_1)$ is uniformly $\mathcal{R}$-continuous if for all $\epsilon>0$ there is a $\delta>0$ such that for all $x\in X$ we have
 \[
  \mathcal{R}_{\mathcal{B}(E_0,E_1)}(\{f(y)-f(x): y\in B(x,\delta)\})<\epsilon.
 \]
We write $BUC_{\mathcal{R}}(X,\mathcal{B}(E_0,E_1))$ for the space of all uniformly $\mathcal{R}$-continuous functions $f\colon X\to\mathcal{B}(E_0,E_1)$ with $\mathcal{R}$-bounded range.
\item We write $C_{\mathcal{R}B}^k(U,\mathcal{B}(E_0,E_1))$ for the space of all $f\in C^k(U,\mathcal{B}(E_0,E_1))$ such that $\partial^{\alpha}f\in C_{\mathcal{R}B}(U,\mathcal{B}(E_0,E_1))$ for all $\alpha\in\N_0^n$, $|\alpha|\leq k$. Analogously, we write $BUC_{\mathcal{R}}^k(U,\mathcal{B}(E_0,E_1))$ for the space of all $f\in C^k(U,\mathcal{B}(E_0,E_1))$ such that $\partial^{\alpha}f\in BUC_{\mathcal{R}}(U\mathcal{B}(E_0,E_1))$ for all $\alpha\in\N_0^n$, $|\alpha|\leq k$.
\end{enumerate}
\end{definition}

\begin{definition} Let $U\subset \R^n$ be open, $E_0,E_1$ Banach spaces and $k\in\N_0$. Let further $\mathcal{T}\subset C_{\mathcal{R}B}^k(X,\mathcal{B}(E_0,E_1))$ or $\mathcal{T}\subset BUC_{\mathcal{R}}^k(X,\mathcal{B}(E_0,E_1))$.
 \begin{enumerate}[(a)]
  \item We say that $\mathcal{T}$ is bounded if $$\sup_{f\in\mathcal{T}}\mathcal{R}(f^{(j)}(U):j\in\{0,\ldots,k\})<\infty.$$
\item We say that $\mathcal{T}$  is $\mathcal{R}$-bounded if 
\[
 \mathcal{R}(\{f^{(j)}(x):x\in U, f\in\mathcal{T}, j\in\{0,\ldots,k\}\})<\infty.
\]
 \end{enumerate}
\end{definition}

\subsection{Weighted Function Spaces}
Weights are an important tool to weaken the regularity assumptions on the data which are needed in order to derive well-posedness and a priori estimates for elliptic and parabolic boundary value problems, see for example \cite{Alos_Bonaccorsi_2002, Brzezniak_et_al_2015, Hummel_Lindemulder_2019, Lindemulder_2018, Lindemulder_Veraar_2018}. Power weights, i.e. weights of the form $w_\gamma(x):=\operatorname{dist}(x,\partial\mathcal{O})^\gamma$ which measure the distance to the boundary of the domain $\mathcal{O}\subset\R^n$, are particularly useful for this purpose. Roughly speaking, the larger the value of $\gamma$, the larger may the difference between regularity in the interior and regularity on the boundary be. This way, one can obtain arbitrary regularity in the interior while the regularity of the boundary data may be close to $0$. However, there is an important borderline: If $\gamma\in (-1,p-1)$, where $p$ denotes the integrability parameter of the underlying function space, then $w_\gamma$ is a so-called $A_p$ weight. If $\gamma\geq p-1$, then it is only an $A_{\infty }$ weight. $A_p$ weights are an important class of weights. These weights are exactly the weights $w$ for which the Hardy-Littlewood maximal operator is bounded on $L_p(\mathcal{O},w)$. Consequently, the whole Fourier analytic toolbox can still be used and many results can directly be transferred to the weighted setting. In the $A_{\infty}$-range however, this does not hold any longer. But in order to obtain more flexibility for the regularity of the boundary data which can be considered, one would like to go beyond the borderline and also work with $A_{\infty}$ weights. This is possible if one works with weighted Besov- or Triebel-Lizorkin spaces. As we will explain later, these scales of function spaces allow for a combination of $A_{\infty}$ weights and Fourier multiplier methods. In our analysis, we want to include both cases: We treat the more classical situation with the Bessel potential scale and $A_p$ weights, which include the classical Sobolev spaces, as well as the more flexible situation with Besov and Triebel-Lizorkin scales and $A_{\infty}$ weights.\\
Let us now give the precise definitions: Let $\mathcal{O}\subset\R^n$ be a domain. A weight $w$ on $\mathcal{O}$ is a function $w\colon\mathcal{O}\to[0,\infty]$ which takes values in $(0,\infty)$ almost everywhere with respect to the Lebesgue measure. We mainly work with the classes $A_p$ $(p\in(1,\infty])$. A weight $w$ on $\R^n$ is an element of $A_p$ for $p\in(1,\infty)$ if and only if
\[
 [w]_{A_p}:= \sup_{Q\text{ cube in }\R^n}\bigg(\frac{1}{\lambda(Q)}\int_Q w(x)\,dx\bigg)\bigg(\frac{1}{\lambda(Q)}\int_Q w(x)^{-\frac{1}{p-1}}\,dx\bigg)^{p-1}<\infty.
\]
The quantity $[w]_{A_p}$ is called $A_p$ Muckenhoupt characteristic constant of $w$. We define $A_{\infty}:=\bigcup_{1<p<\infty} A_p$. Moreover, we write $[A_{\infty}]_p'$ for the space of all weights $w$ such that the $p$-dual weight $w^{-\frac{1}{p-1}}$ is in  $A_{\infty}$. We refer to \cite[Chapter 9]{Grafakos_2009} for an introduction to these classes of weights.\\
For $p\in[1,\infty)$, a domain $\mathcal{O}\subset\R^n$, a weight $w$ and a Banach space $E$ the weighted Lebesgue-Bochner space $L_p(\mathcal{O},w;E)$ is defined as the space of all strongly measurable functions $f\colon\mathcal{O}\to E$ such that
\[
 \|f\|_{L_p(\mathcal{O},w;E)}:=\bigg(\int_{\mathcal{O}} \|f(x)\|_{E}^p w(x)\,dx\bigg)^{1/p}<\infty.
\]
We further set $L_{\infty}(\mathcal{O},w;E):=L_{\infty}(\mathcal{O};E)$. In addition, let $ L_1^{loc}(\mathcal{O};E)$ be the space of all locally integrable functions, i.e. strongly measurable functions $f\colon  \mathcal{O}\to E$ such that
\[
	\int_K \|f(x)\|_E\,dx<\infty
\]
 for all compact $K\subset \mathcal{O}$. As usual, functions which coincide on a set of measure $0$ are considered as equal in these spaces. \\
One has to be cautious with the definition of weighted Sobolev spaces. One would like to define them as spaces of distributions such that derivatives up to a certain order can be represented by functions in $L_p(\mathcal{O},w;E)$. But for some weights, the elements of $L_p(\mathcal{O},w;E)$ might not be locally integrable and thus, taking distributional derivatives might not be possible. H\"older's inequality shows that $L_p(\mathcal{O},w;E)\subset L_1^{loc}(\mathcal{O},E)$ if $w^{-\frac{1}{p-1}}\in L_1^{loc}(\mathcal{O})$. We refer to \cite{Kufner_Opic_1984} for further thoughts in this direction.
\begin{definition}
 Let $\mathcal{O}\subset\R^n$ be a domain, $E$ a Banach space, $m\in\N_0$, $p\in[1,\infty)$ and $w$ a weight on $\mathcal{O}$ such that $w^{-\frac{1}{p-1}}\in L_1^{loc}(\mathcal{O})$.
 \begin{enumerate}[(a)]
  \item We define the weighted Sobolev space $W^m_p(\mathcal{O},w;E)$ by
 \[
  W^m_p(\mathcal{O},w;E):=\{f\in L_p(\mathcal{O},w;E)\,|\,\forall\alpha\in\N_0^n,\,|\alpha|\leq m : \partial^\alpha f\in L_p(\mathcal{O},w;E)\}
 \]
and endow with the norm $\|f\|_{W^m_p(\mathcal{O},w;E)}:=\big(\sum_{|\alpha|\leq m} \|f\|_{L_p(\mathcal{O},w;E)}^p\big)^{1/p}$. With the usual modifications, we can also define $W^m_\infty(\mathcal{O},w;E)$.
\item As usual, we define $W^m_{p,0}(\mathcal{O},w;E)$ to be the closure of the space of test functions $\mathscr{D}(\mathcal{O};E)$ in $W^m_{p}(\mathcal{O},w;E)$.
\item Let $E$ be reflexive, $w\in A_p$ and $p,p'\in(1,\infty)$ conjugated H\"older indices, i.e. they satisfy $1=\frac{1}{p}+\frac{1}{p'}$. Then we define the dual scale $W^{-m}_p(\mathcal{O},w;E):=(W^m_{p',0}(\mathcal{O},w^{-\frac{1}{p-1}};E'))'$.
 \end{enumerate}
\end{definition}

We further define weighted Bessel potential, Besov and Triebel-Lizorkin spaces. Since we use the Fourier analytic approach, we already define them as subsets of tempered distributions.

\begin{definition}\label{def:bessel_potential_space}
 Let $E$ be a Banach space, $s\in\R$, $p\in[1,\infty]$ and $w$ a weight on $\R^n$ such that $w^{-\frac{1}{p-1}}\in L_1^{loc}(\R^n)$. Then we define the weighted Bessel potential space $H^s_p(\R^n,w;E)$ by
 \[
  H^s_p(\R^n,w;E):=\{f\in\mathscr{S}'(\R^n;E)\,|\,\langle D \rangle^s f\in L_p(\R^n,w;E)\}
 \]
and endow it with the norm $\|f\|_{H^s_p(\R^n,w;E)}:=\|\langle D\rangle^s f\|_{L_p(\R^n,w;E)}$.
\end{definition}

\begin{definition}
\begin{enumerate}[(a)]
   \item Let $\varphi_0\in\mathscr{D}(\R^n)$ be a smooth function with compact support such that $0\leq\varphi_0\leq1$ and
   \[
     \varphi_0(\xi)=1\quad\text{if } |\xi|\leq1,\qquad  \varphi_0(\xi)=0\quad\text{if }|\xi|\geq 3/2.
   \]
   For $\xi\in\R^n$ and $k\in\N$ let further
   \begin{align*}
    \varphi(\xi)&:=\varphi_0(\xi)-\varphi_0(2\xi),\\
    \varphi_k(\xi)&:=\varphi(2^{-k}\xi).
   \end{align*}
   We call such a sequence $(\varphi_k)_{k\in\N_0}$ smooth dyadic resolution of unity.
   \item Let $E$ be a Banach space and let $(\varphi_k)_{k\in\N_0}$ be a smooth dyadic resolution of unity. On the space of $E$-valued tempered distributions $\mathscr{S}'(\R^n;E)$ we define the sequence of operators $(S_k)_{k\in\N_0}$ by means of
   \[
    S_kf:=\mathscr{F}^{-1}\varphi_k\mathscr{F} f\quad(f\in\mathscr{S}'(\R^n;E)).
   \]
     The sequence $(S_k f)_{k\in\N_0}$ is called dyadic decomposition of $f$.
\end{enumerate}
\end{definition}
By construction, we have that $\mathscr{F}(S_k f)$ has compact support so that $S_k f$ is an analytic function by the Paley-Wiener theorem, see \cite[Theorem 2.3.21]{Grafakos_2008}. Moreover, it holds that $\sum_{k\in\N_0} \varphi_k=1$ so that we have $f=\sum_{k\in\N_0} S_kf$, i.e. $f$ is the limit of a sequence of analytic functions where the limit is taken in the space of tempered distributions. Elements of Besov- and Triebel-Lizorkin spaces even have convergence in a stronger sense, as their definition shows:

\begin{definition}\label{def:besov_triebel_lizorkin_space}
 Let $(\varphi_k)_{k\in\N_0}$ be a smooth dyadic resolution of unity. Let further $E$ be a Banach space, $w$ a weight, $s\in\R$, $p\in[1,\infty)$ and $q\in[1,\infty]$.
 \begin{enumerate}[(a)]
  \item We define the weighted Besov space $B^s_{p,q}(\R^n,w;E)$ by 
  \[
   B^s_{p,q}(\R^n,w;E):=\{f\in\mathscr{S}'(\R^n,E): \|f\|_{B^s_{p,q}(\R^n,w;E)}<\infty\}
  \] where
  \[
      \|f\|_{B^s_{p,q}(\R^n,w;E)}:=\|(2^{sk}\mathscr{F}^{-1}\varphi_k\mathscr{F}f)_{k\in\N_0}\|_{\ell^q(L_p(\R^n,w;E))}.
  \]
  \item We define the weighted Triebel-Lizorkin space $F^s_{p,q}(\R^n,w;E)$ by $$F^s_{p,q}(\R^n,w;E):=\{f\in\mathscr{S}'(\R^n;E): \|f\|_{F^s_{p,q}(\R^n,w;E)}<\infty\}$$ where
  \[
      \|f\|_{F^s_{p,q}(\R^n,w;E)}:=\|(2^{sk}\mathscr{F}^{-1}\varphi_k\mathscr{F}f)_{k\in\N_0}\|_{L_p(\R^n,w;\ell^q(E)))}.
  \]
 \end{enumerate}
\end{definition}
It is well-known, that these spaces do not depend on the choice of the dyadic resolution of unity if $w$ is an $A_{\infty}$-weight. In this case, different choices lead to equivalent norms, see for example \cite[Proposition 3.4]{Meyries_Veraar_2012}. In fact, the condition on the weight can be weakened: In \cite{Rychkov_2001} it was shown that one also obtains the independence of the dyadic resolution of unity in the case of so-called $A_{\infty}^{loc}$ weights. 

\begin{definition}\label{def:dual_besov_triebel_lizorkin_space}
 Let $E$ be a reflexive Banach space, $w\in [A_{\infty}]_p'$, $s\in\R$ and $p,q\in(1,\infty)$. We define the dual scales of Besov and Triebel-Lizorkin scale by
 \[
 	\mathcal{B}^{s}_{p,q}(\R^n,w;E):=(B^{-s}_{p',q'}(\R^n,w^{-\frac{1}{p-1}};E'))',\quad \mathcal{F}^{s}_{p,q}(\R^n,w;E):=(F^{-s}_{p',q'}(\R^n,w^{-\frac{1}{p-1}};E'))',
 \]
 where $p',q'$ denote the conjugated H\"older indices.
\end{definition}

\begin{remark}
The main reason for us to include the dual scales in our considerations is the following: If $w$ is additionally an admissible weight in the sense of \cite[Section 1.4.1.]{Schmeisser_Triebel_1987}, then we have $\mathcal{B}^{s}_{p,q}(\R^n,w)=B^{s}_{p,q}(\R^n,w)$ and $\mathcal{F}^{s}_{p,q}(\R^n,w)=F^{s}_{p,q}(\R^n,w)$. Therefore, we can also treat weighted Besov- and Triebel-Lizorkin spaces with weights that are outside the $A_{\infty}$ range. Formulating this in terms of dual scales allows us to transfer Fourier multiplier theorems without any additional effort just by duality. The main example we have in mind will be $w(x)=\langle x\rangle^{d}$ with arbitrary $d\in\R$. We will make use of this in a forthcoming paper on equations with boundary noise.
\end{remark}

\begin{proposition}\label{Prop:Contraction_Principle_Functions}
 Recall that Assumption \ref{Assump:Spaces} holds true and suppose that $E$ has cotype $q_E\in[2,\infty)$. Let further $(S,\Sigma,\mu)$ be a $\sigma$-finite measure space, $(\epsilon_k)_{k\in\N}$ a Rademacher sequence on the probability space $(\Omega,\mathcal{F},\mathbb{P})$, $s\in\R$ and $p_0,q_0\in(1,\infty)$. Consider one of the following cases:
 \begin{enumerate}[(a)]
  \item $\mathscr{A}^\bullet$ stands for the Bessel potential scale and $p\in(\max\{q_E,p_0\},\infty)$. Moreover, we allow $p=\max\{q_E,p_0\}$ if $q_E<p_0$ or if $E$ is a Hilbert space and $p_0=2$.
  \item $\mathscr{A}^\bullet$ stands for the Besov scale and $p\in(\max\{q_E,p_0,q_0\},\infty)$. Moreover, we allow $p=\max\{q_E,p_0,q_0\}$ if $q_E< p_0\leq q_0$ or if $E$ is a Hilbert space and $p_0=q_0=2$.
  \item $\mathscr{A}^\bullet$ stands for the Triebel-Lizorkin scale and $p\in(\max\{q_E,p_0,q_0\},\infty)$. Moreover, we allow $p=\max\{q_E,p_0,q_0\}$ if $q_E< q_0\leq p_0$ or if $E$ is a Hilbert space and $p_0=q_0=2$.
 \end{enumerate}
Then the images of balls with finite radius in $L_p(S)$ under the embedding
\[
 L_p(S)\hookrightarrow \mathcal{B}(\mathscr{A}^s,L_p(S;\mathscr{A}^s)), f\mapsto f\otimes(\,\cdot\,)
\]
are $\mathcal{R}$-bounded. More precisely, there is a constant $C>0$ such that for all $N\in\N$, $g_1,\ldots,g_N\in \mathscr{A}^s$ and all $f_1,\ldots,f_N\in L_p(S)$ we have the estimate
\[
 \left\| \sum_{k=1}^N \epsilon_k f_k \otimes g_k\right\|_{L_p(\Omega;L_p(S;\mathscr{A}^s))}\leq C\sup_{k=1,\ldots,n}\|f\|_{L_p(S)}\left\| \sum_{k=1}^N \epsilon_k  g_k\right\|_{L_p(\Omega;\mathscr{A}^s)}.
\]
\end{proposition}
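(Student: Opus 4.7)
The plan is to reduce the assertion to Proposition~\ref{Prop:HytoenenVeraar} applied with $\mathscr{A}^s$ in place of the Banach space $E$. First, by Fubini I would rewrite $L_p(\Omega;L_p(S;\mathscr{A}^s))=L_p(S;L_p(\Omega;\mathscr{A}^s))$, and invoke the Kahane--Khintchine inequalities to replace the $L_p(\Omega;\mathscr{A}^s)$-norm of a Rademacher sum by the equivalent $L_2(\Omega;\mathscr{A}^s)$-norm on both sides of the desired estimate. This reduces the claim to verifying \eqref{Eq:Contraction_Principle_Functions} with $E$ replaced by $\mathscr{A}^s$ and $\tilde{q}=p$; the stated $\mathcal{R}$-boundedness conclusion then follows from Remark~\ref{Rem:HytoenenVeraar}(a).

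Next, I would recall from Subsection~2.1 that the cotype of $\mathscr{A}^s$ equals $\max\{p_0,q_E\}$ on the Bessel potential scale and $\max\{p_0,q_0,q_E\}$ on the Besov and Triebel--Lizorkin scales. If $p$ is strictly greater than this cotype, Proposition~\ref{Prop:HytoenenVeraar} applies directly and settles the strict cases at once. The main obstacle is the endpoint cases in which $p$ equals the cotype of $\mathscr{A}^s$, which must be treated individually.

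In the Hilbert space endpoint ($E$ a Hilbert space with $p_0=2$ in the Bessel potential case, resp.\ $p_0=q_0=2$ in the Besov and Triebel--Lizorkin cases), the space $\mathscr{A}^s$ is itself a Hilbert space, hence has cotype $q=2\in\{2,\infty\}$, so the endpoint $\tilde{q}=q=2$ of Proposition~\ref{Prop:HytoenenVeraar} is admissible. The remaining endpoints I would handle via Remark~\ref{Rem:HytoenenVeraar}(c) by realising $\mathscr{A}^s$ as a closed subspace (in fact a retract, via a dyadic resolution of unity) of an $L_p(A;\bar E)$ space in which $\bar E$ has cotype strictly below $p$. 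Concretely, for the Bessel potential endpoint $p=p_0>q_E$ the Fourier multiplier $\langle D\rangle^s$ furnishes an isomorphism $\mathscr{A}^s\cong L_{p_0}(\R^{n-1},w_0;E)$, so one takes $\bar E=E$ of cotype $q_E<p$; for the Besov endpoint $p=q_0$ with $q_E<p_0\leq q_0$ one embeds $\mathscr{A}^s$ as a complemented subspace of $\ell^{q_0}(L_{p_0}(\R^{n-1},w_0;E))$, taking $\bar E=L_{p_0}(\R^{n-1},w_0;E)$ (of cotype $p_0<p$) when $p_0<q_0$ and $\bar E=E$ (of cotype $q_E<p$) when $p_0=q_0$; the Triebel--Lizorkin endpoint with $q_E<q_0\leq p_0=p$ is analogous using the embedding into $L_{p_0}(\R^{n-1},w_0;\ell^{q_0}(E))$ with $\bar E=\ell^{q_0}(E)$ or $\bar E=E$ according to whether $q_0<p_0$ or $q_0=p_0$. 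Since the estimate in \eqref{Eq:Contraction_Principle_Functions} trivially descends from the ambient $L_p(A;\bar E)$ to any closed subspace endowed with an equivalent norm, this would complete the argument.
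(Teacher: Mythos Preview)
Your proposal is correct and follows essentially the same approach as the paper: both reduce the strict cases to Proposition~\ref{Prop:HytoenenVeraar} via the cotype of $\mathscr{A}^s$, and both handle the non-Hilbert endpoints via Remark~\ref{Rem:HytoenenVeraar}(c) (the paper just cites \cite[Remark~3.4]{Hytoenen_Veraar_2009} without spelling out the $L_p(A;\bar E)$ realisations you give). The only minor deviation is in the Hilbert endpoint, where the paper simply invokes that $\mathcal{R}$-boundedness and uniform boundedness coincide on Hilbert spaces, whereas you argue via the cotype-$2$ endpoint of Proposition~\ref{Prop:HytoenenVeraar}; both are immediate.
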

\begin{proof}
 The cases $p\in(\max\{q_E,p_0\},\infty)$ in the Bessel potential case and $p\in(\max\{q_E,p_0,q_0\},\infty)$ in the Besov and Triebel-Lizorkin case follow from the result by Hyt\"onen and Veraar, Proposition \ref{Prop:HytoenenVeraar}, as in these cases $\mathscr{A}^s$ has cotype $\max\{q_E,p_0\}$ and $\max\{q_E,p_0,q_0\}$, respectively, see for example \cite[Proposition 7.1.4]{HvNVW_2017}. The Hilbert space cases follow directly since uniform boundedness and $\mathcal{R}$-boundedness coincide. The other cases in which $p=\max\{q_E,p_0\}$ or $p=\max\{q_E,p_0,q_0\}$ are allowed follow by Fubini's theorem together with the Kahane-Khintchine inequalities as in \cite[Remark 3.4]{Hytoenen_Veraar_2009}.
\end{proof}

For the mapping properties we derive later on, it is essential that we can use Mikhlin's multiplier theorem. There are many versions of this theorem available. For our purposes, the following will be sufficient.

\begin{theorem}\phantomsection \label{Thm:Mikhlin}\begin{enumerate}[(a)]
 \item\label{Thm:MikhlinA_p} Let $E$ be a UMD space, $p\in(1,\infty)$, $s\in\R$ and let $w$ be an $A_p$ weight. Let $m\in C^n(\R^n\setminus\{0\};\mathcal{B}(E))$ such that
 \[
  \kappa_m:=\mathcal{R}\big(\{|\xi|^{|\alpha|}D^{\alpha}m(\xi):\xi\in\R^n\setminus\{0\},|\alpha|\leq n\}\big)<\infty.
 \]
Then we have that
\[
 \|\mathscr{F}^{-1} m \mathscr{F} \|_{\mathcal{B}(H^s_p(\R^n,w;E))}\leq C\kappa_m
\]
with a constant $C>0$ only depending on $n$, $p$ and $E$.
\item\label{Thm:MikhlinA_pAlpha} Suppose that $E$ is a UMD space with Pisier's property $(\alpha)$. Let $p\in(1,\infty)$ and $w\in A_p(\R^n)$. Let further $\mathcal{T}\subset C^{n}(\R^n\setminus\{0\},\mathcal{B}(E))$. Then there is a constant $C>0$ independent of $\mathcal{T}$ such that
\[
 \mathcal{R}_{\mathcal{B}(H^s_p(\R^n,w;E))}(\{\mathscr{F}^{-1}m\mathscr{F}:m\in\mathcal{T}\})\leq C\kappa_{\mathcal{T}}
\]
where
\[
 \kappa_{\mathcal{T}}:=\mathcal{R}_{\mathcal{B}(E)}(\{|\xi|^{|\alpha|} D^{\alpha}m(\xi):\xi\in\R^n\setminus\{0\},\alpha\in\N_0^n,|\alpha|\leq n, m\in\mathcal{T}\}).
\]
\item\label{Thm:MikhlinBesovTriebel} Let $E$ be a Banach space, $p\in(1,\infty)$, $q\in[1,\infty]$ and $s\in\R$. Let further $w$ be an $A_{\infty}$ weight, $m\in C^{\infty}(\R^n,\mathcal{B}(E))$ and $\mathscr{A}^s_{p,q}(\R^n,w;E)\in\{B^s_{p,q}(\R^n,w;E),F^s_{p,q}(\R^n,w;E)\}$. Then there is a natural number $N\in\N$ and a constant $C>0$ not depending on $m$ such that
\[
 \|\mathscr{F}^{-1} m \mathscr{F} \|_{\mathcal{B}(\mathscr{A}^s_{p,q}(\R^n,w;E))}\leq C\kappa_m
\]
where
\[
 \kappa_m:=\sup_{|\alpha|\leq N}\sup_{\xi\in\R^n} \|\langle\xi\rangle^{|\alpha|}D^{\alpha}m(\xi)\|_{\mathcal{B}(E)}.
\]
The same holds if $E$ is reflexive, $p,q\in(1,\infty)$, $w\in[A_\infty]_p'$ and $\mathscr{A}^s_{p,q}(\R^n,w;E)\in\{\mathcal{B}^s_{p,q}(\R^n,w;E),\mathcal{F}^s_{p,q}(\R^n,w;E)\}$.
\item\label{Thm:MikhlinBesovTriebelRBounded} Let $E$ be a Banach space, $p\in(1,\infty)$, $q\in[1,\infty]$ and $s\in\R$. Let further $w$ be an $A_{\infty}$ weight, $\mathcal{T}\subset C^{\infty}(\R^n,\mathcal{B}(E))$ and $\mathscr{A}^s_{p,q}(\R^n,w;E)\in\{B^s_{p,q}(\R^n,w;E),F^s_{p,q}(\R^n,w;E)\}$. Then there is an $N\in\N$ and a constant $C>0$ independent of $\mathcal{T}$ such that
\[
 \mathcal{R}(\{\mathscr{F}^{-1} m \mathscr{F} : m\in\mathcal{T}\}\leq C \kappa_{\mathcal{T}}
\]
where
\[
 \kappa_{\mathcal{T}}:=\sup_{|\alpha|\leq N}\mathcal{R}(\{\langle\xi\rangle^{|\alpha|}D^{\alpha}m(\xi):\xi\in\R^n,m\in\mathcal{T}\}).
\]
The same holds if $E$ is a UMD space, $p,q\in(1,\infty)$, $w\in[A_\infty]_p'$ and $\mathscr{A}^s_{p,q}(\R^n,w;E)\in\{\mathcal{B}^s_{p,q}(\R^n,w;E),\mathcal{F}^s_{p,q}(\R^n,w;E)\}$.
\end{enumerate}
\end{theorem}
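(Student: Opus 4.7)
The plan is to treat all four parts as specializations or refinements of classical Mikhlin-type theorems, applied to the different scales of function spaces entering Assumption~\ref{Assump:Spaces}. Parts \eqref{Thm:MikhlinA_p} and \eqref{Thm:MikhlinA_pAlpha} are handled by a lifting argument together with the standard vector-valued multiplier theorem on $L_p$, while parts \eqref{Thm:MikhlinBesovTriebel} and \eqref{Thm:MikhlinBesovTriebelRBounded} are proved by a direct Littlewood-Paley argument, and the statements for the dual scales follow by duality.

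For parts \eqref{Thm:MikhlinA_p} and \eqref{Thm:MikhlinA_pAlpha}, I would first reduce to the case $s=0$. Since by Definition~\ref{def:bessel_potential_space} the operator $\langle D\rangle^s$ is an isometric isomorphism $H^s_p(\R^n,w;E)\to L_p(\R^n,w;E)$, it suffices to bound $\mathscr{F}^{-1}\tilde m\mathscr{F}$ on $L_p(\R^n,w;E)$ for the conjugated symbol $\tilde m(\xi):=\langle\xi\rangle^s m(\xi)\langle\xi\rangle^{-s}$. Leibniz's rule together with $|D^\beta\langle\xi\rangle^{\pm s}|\lesssim\langle\xi\rangle^{\pm s-|\beta|}$ shows that $\tilde m$ satisfies a Mikhlin-type hypothesis of the same strength as $m$, with constant depending only on $n$ and $s$. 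The $L_p$-bound then follows from the weighted operator-valued Mikhlin multiplier theorem in UMD spaces with $A_p$ weights; for part \eqref{Thm:MikhlinA_p} compare \cite[Section~4]{Kunstmann_Weis_2004}, and for part \eqref{Thm:MikhlinA_pAlpha} Pisier's property $(\alpha)$ enters at exactly the step where $\mathcal{R}$-boundedness of the parameter family of symbols $\{|\xi|^{|\alpha|}D^\alpha m(\xi):m\in\mathcal{T}\}$ is upgraded to $\mathcal{R}$-boundedness of the family of Fourier multiplier operators, by allowing us to absorb the auxiliary Rademacher sum indexed by the parameter $m$ into the primary one indexed by the input vectors.

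For parts \eqref{Thm:MikhlinBesovTriebel} and \eqref{Thm:MikhlinBesovTriebelRBounded}, I would work directly with the Littlewood-Paley decomposition from Definition~\ref{def:besov_triebel_lizorkin_space}. Fix a smooth dyadic resolution of unity $(\varphi_k)_{k\in\N_0}$ and a thickening $(\tilde\varphi_k)_{k\in\N_0}$ with $\tilde\varphi_k\equiv 1$ on $\supp\varphi_k$ and finitely overlapping supports. The identity
\[
\mathscr{F}^{-1}\varphi_k\mathscr{F}\,\mathscr{F}^{-1}m\mathscr{F}\,f=\mathscr{F}^{-1}(\varphi_k\cdot m)\mathscr{F}\,\mathscr{F}^{-1}\tilde\varphi_k\mathscr{F}\,f
\]
reduces the problem to uniform (respectively, uniform $\mathcal{R}$-) bounds on the frequency-localized multipliers $\varphi_k\cdot m$, which follow from the assumed pointwise (respectively, $\mathcal{R}$-) estimates on the derivatives of $m$ via a Peetre maximal function argument. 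This argument requires the weight only to lie in $A_\infty$, since any such weight is in some $A_{p_0}$ and one may apply the scalar Hardy-Littlewood maximal inequality on $L_{p_0}(\R^n,w)$; combined with a Nikolskii-type estimate for functions with compact Fourier support this yields the needed dyadic control. Taking $\ell^q$ norms in the appropriate order (outside for Besov, inside together with the vector-valued maximal inequality for Triebel-Lizorkin) produces \eqref{Thm:MikhlinBesovTriebel}; compare \cite{Meyries_Veraar_2012}. Part \eqref{Thm:MikhlinBesovTriebelRBounded} is then obtained by carrying out the same computation inside the Rademacher-randomized norm, invoking Kahane's contraction principle and the fact that $\operatorname{Rad}_p$ commutes with the $\ell^q$ structure up to constants. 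Finally, the dual scales $\mathcal{B}^s_{p,q}$ and $\mathcal{F}^s_{p,q}$ are handled by dualizing against the corresponding statement for $B^{-s}_{p',q'}$ and $F^{-s}_{p',q'}$ with the weight $w^{-1/(p-1)}\in A_\infty$, using Definition~\ref{def:dual_besov_triebel_lizorkin_space} and the fact that the adjoint of $\mathscr{F}^{-1}m\mathscr{F}$ is the Fourier multiplier with symbol $m^*(-\,\cdot\,)$, which satisfies the same hypotheses.

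The main technical point is bookkeeping in the Peetre maximal estimate for \eqref{Thm:MikhlinBesovTriebel} and \eqref{Thm:MikhlinBesovTriebelRBounded}, namely showing that the bound on the localized multipliers is summable (or $\mathcal{R}$-summable) uniformly over the dyadic scales in the $A_\infty$-weighted setting; once this is in place, the $\mathcal{R}$-refinements and the duality step are comparatively routine. Throughout, no step uses more than what is available from the references already cited in the excerpt, so the proof is essentially a verification that the ingredients combine correctly for the scales appearing in Assumption~\ref{Assump:Spaces}.
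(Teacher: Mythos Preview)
Your proposal is correct and follows essentially the same approach as the paper: reduce parts \eqref{Thm:MikhlinA_p} and \eqref{Thm:MikhlinA_pAlpha} to $s=0$ by the Bessel-potential isometry and invoke the weighted operator-valued Mikhlin theorem, treat \eqref{Thm:MikhlinBesovTriebel} and \eqref{Thm:MikhlinBesovTriebelRBounded} via the Littlewood--Paley description, and handle the dual scales by duality. The paper is much terser, proceeding almost entirely by citation---in particular it points to \cite[Theorem~1.2]{Fackler_Hytoenen_Lindemulder_2018} for the $A_p$-weighted $L_p$ case in \eqref{Thm:MikhlinA_p}, to \cite[5.2(b)]{Kunstmann_Weis_2004} for \eqref{Thm:MikhlinA_pAlpha}, to \cite[Paragraph~2.3.7]{Triebel_1983} combined with \cite[Proposition~2.4]{Meyries_Veraar_2012} for \eqref{Thm:MikhlinBesovTriebel}, and to \cite[Lemma~2.4]{Hummel_Lindemulder_2019} for \eqref{Thm:MikhlinBesovTriebelRBounded}---whereas you sketch the Peetre-maximal-function mechanism behind these references.

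One small simplification: since $\langle\xi\rangle^{\pm s}$ is scalar, your conjugated symbol $\tilde m(\xi)=\langle\xi\rangle^{s}m(\xi)\langle\xi\rangle^{-s}$ equals $m(\xi)$ identically, so the Leibniz-rule verification is unnecessary; the reduction to $s=0$ is literally the identity $\langle D\rangle^{s}\mathscr{F}^{-1}m\mathscr{F}=\mathscr{F}^{-1}m\mathscr{F}\langle D\rangle^{s}$. Also note that for the $A_p$-weighted statement in \eqref{Thm:MikhlinA_p} the precise reference is \cite{Fackler_Hytoenen_Lindemulder_2018} rather than \cite{Kunstmann_Weis_2004}, whose Section~4 treats the unweighted case.
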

\begin{proof}
 Part \eqref{Thm:MikhlinA_p} with $s=0$ is contained in \cite[Theorem 1.2]{Fackler_Hytoenen_Lindemulder_2018}. The general case $s\in\R$ follows from $s=0$ by decomposing $m(\xi)=\langle\xi\rangle^{-s}m(\xi)\langle\xi\rangle^{s}$ and by using the definition of Bessel potential spaces. Part \eqref{Thm:MikhlinA_pAlpha} can be derived as \cite[5.2 (b)]{Kunstmann_Weis_2004}. The scalar-valued, unweighted version of part \eqref{Thm:MikhlinBesovTriebel} is contained in \cite[Paragraph 2.3.7]{Triebel_1983}. But the proof therein can be transferred to our situation by using \cite[Proposition 2.4]{Meyries_Veraar_2012}. Part \eqref{Thm:MikhlinBesovTriebelRBounded} is the isotropic version of \cite[Lemma 2.4]{Hummel_Lindemulder_2019}. The statements concerning the dual scales follow by duality. In the $\mathcal{R}$-bounded case we refer the reader to \cite[Proposition 8.4.1]{HvNVW_2017}.
\end{proof}

\begin{remark}
	For the dual scales in Theorem \ref{Thm:Mikhlin} \eqref{Thm:MikhlinBesovTriebelRBounded} it is actually not necessary to assume that $E$ is a UMD space. Instead \cite[Proposition 8.4.1]{HvNVW_2017} shows that K-convexity is good enough. But since we did not introduce K-convexity, we only stated the less general version here.
\end{remark}

\begin{remark}
 Later on, we sometimes want to apply Mikhlin's theorem for $m$ taking values in $\mathcal{B}(E^N,E^M)$ with certain $N,M\in\N$ instead of $\mathcal{B}(E)$. Note however that we can identify $\mathcal{B}(E^N,E^M)\simeq\mathcal{B}(E)^{M\times N}$. Hence, one can apply Mikhlin's theorem for each component and the statements of Theorem \ref{Thm:Mikhlin} transfer to the case in which $m$ takes values in $\mathcal{B}(E^N,E^M)$.
\end{remark}

Later on, we will also use parameter-dependent versions of our function spaces. They are natural to work with in the context of the parameter-dependent Boutet de Monvel calculus. And since we use elements of this parameter-dependent calculus, these spaces are also useful in our setting.
\begin{definition}
  Recall that Assumption \ref{Assump:Spaces} holds true. Let $\mu\in\C$ and $s,s_0\in\R$. Then we define the parameter-dependent weighted spaces
 \begin{align*}
  \mathscr{A}^{s,\mu,s_0}(\R^n,w;E)&:=\langle D,\mu\rangle^{s_0-s} \mathscr{A}^{s_0}(\R^n,w;E),\quad \|\cdot\|_{\mathscr{A}^{s,\mu,s_0}(\R^n,w;E)}:=\|\langle D,\mu\rangle^{s-s_0}\cdot\|_{\mathscr{A}^{s_0}(\R^n,w;E)},
 \end{align*}
where $\langle D,\mu\rangle:=\mathscr{F}^{-1}\langle\xi,\mu\rangle\mathscr{F}=\mathscr{F}^{-1}(1+|\xi|^2+|\mu|^2)^{1/2}\mathscr{F}$.
\end{definition}

\begin{lemma}\label{Lemma:EquivalenceParameterDependentNorms}
Let  $\mu\in\C$ and $s,s_0\in\R$. We have the estimates
\begin{align*}
 \|\cdot\|_{\mathscr{A}^{s,\mu,s_0}}\eqsim \|\cdot\|_{\mathscr{A}^{s}}+\langle\mu\rangle^{s-s_0} \|\cdot\|_{\mathscr{A}^{s_0}}\qquad&\text{if }s-s_0\geq0,\\ %\label{FS:prop:ParameterDependenceEqui1}\\
 \|\cdot\|_{\mathscr{A}^{s,\mu,s_0}} \lesssim \|\cdot\|_{\mathscr{A}^{s}} \lesssim \langle\mu\rangle^{s_0-s}\|\cdot\|_{\mathscr{A}^{s,\mu,s_0}}\qquad&\text{if }s-s_0\leq0.%\label{FS:prop:ParameterDependenceEqui2}
\end{align*}
\end{lemma}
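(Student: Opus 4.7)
The plan is to reduce everything to pointwise comparisons of the symbol $\langle\xi,\mu\rangle^{s-s_0}$ with $\langle\xi\rangle^{s-s_0}$ and $\langle\mu\rangle^{s-s_0}$, and then to convert these pointwise comparisons into operator-norm equivalences via Mikhlin's multiplier theorem (Theorem~\ref{Thm:Mikhlin}), using the fact that $\langle D\rangle^{s-s_0}\colon\mathscr{A}^{s}\to\mathscr{A}^{s_0}$ is an isomorphism in all of the scales covered by Assumption~\ref{Assump:Spaces}.

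First I would record the elementary identity $\langle\xi,\mu\rangle^2=\langle\xi\rangle^2+|\mu|^2\eqsim\langle\xi\rangle^2+\langle\mu\rangle^2$, which immediately gives
\[
\langle\xi,\mu\rangle^{s-s_0}\eqsim\langle\xi\rangle^{s-s_0}+\langle\mu\rangle^{s-s_0}\qquad\text{if }s-s_0\geq 0,
\]
and, using in addition that $\langle\xi,\mu\rangle\leq\sqrt{2}\,\langle\xi\rangle\langle\mu\rangle$ together with $\langle\xi,\mu\rangle\geq\langle\xi\rangle$,
\[
\langle\mu\rangle^{s-s_0}\langle\xi\rangle^{s-s_0}\lesssim\langle\xi,\mu\rangle^{s-s_0}\leq\langle\xi\rangle^{s-s_0}\qquad\text{if }s-s_0\leq 0.
\]

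Next I would introduce, in the case $s-s_0\geq 0$, the symbols
\[
m_\mu(\xi):=\frac{\langle\xi,\mu\rangle^{s-s_0}}{\langle\xi\rangle^{s-s_0}+\langle\mu\rangle^{s-s_0}},\qquad m_\mu^{-1}(\xi):=\frac{\langle\xi\rangle^{s-s_0}+\langle\mu\rangle^{s-s_0}}{\langle\xi,\mu\rangle^{s-s_0}},
\]
and check that differentiation in $\xi$ lands on expressions of the form (a polynomial in $\xi$)$\cdot\langle\xi,\mu\rangle^{-2}$ or $\cdot\langle\xi\rangle^{-2}$, so that an induction on $|\alpha|$ yields
\[
\sup_{\mu\in\C}\sup_{\xi\neq 0}|\xi|^{|\alpha|}|\partial_\xi^{\alpha}m_\mu(\xi)|+\sup_{\mu\in\C}\sup_{\xi\neq 0}|\xi|^{|\alpha|}|\partial_\xi^{\alpha}m_\mu^{-1}(\xi)|<\infty
\]
for every $\alpha\in\N_0^{n-1}$. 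Applying Theorem~\ref{Thm:Mikhlin} (in the appropriate variant for $\mathscr{A}^\bullet$) then gives the operator identity
\[
\|\langle D,\mu\rangle^{s-s_0}f\|_{\mathscr{A}^{s_0}}\eqsim\|(\langle D\rangle^{s-s_0}+\langle\mu\rangle^{s-s_0})f\|_{\mathscr{A}^{s_0}}\eqsim\|\langle D\rangle^{s-s_0}f\|_{\mathscr{A}^{s_0}}+\langle\mu\rangle^{s-s_0}\|f\|_{\mathscr{A}^{s_0}},
\]
uniformly in $\mu\in\C$. The lifting isomorphism $\langle D\rangle^{s-s_0}\colon\mathscr{A}^{s}\to\mathscr{A}^{s_0}$ converts the right-hand side into $\|f\|_{\mathscr{A}^s}+\langle\mu\rangle^{s-s_0}\|f\|_{\mathscr{A}^{s_0}}$, which is the first claim.

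For the case $s-s_0\leq 0$ I would argue analogously with the symbol $\tilde m_\mu(\xi):=\langle\xi,\mu\rangle^{s-s_0}/\langle\xi\rangle^{s-s_0}$, which is bounded by $1$ uniformly in $\mu$ and satisfies Mikhlin's condition uniformly in $\mu$, to get $\|\langle D,\mu\rangle^{s-s_0}f\|_{\mathscr{A}^{s_0}}\lesssim\|\langle D\rangle^{s-s_0}f\|_{\mathscr{A}^{s_0}}\eqsim\|f\|_{\mathscr{A}^s}$. For the reverse estimate I would use the symbol $\langle\mu\rangle^{s-s_0}\langle\xi\rangle^{s-s_0}/\langle\xi,\mu\rangle^{s-s_0}$, which again satisfies Mikhlin's condition uniformly in $\mu$ by the same induction, to obtain $\langle\mu\rangle^{s-s_0}\|f\|_{\mathscr{A}^s}\lesssim\|\langle D,\mu\rangle^{s-s_0}f\|_{\mathscr{A}^{s_0}}$, i.e. $\|f\|_{\mathscr{A}^s}\lesssim\langle\mu\rangle^{s_0-s}\|f\|_{\mathscr{A}^{s,\mu,s_0}}$.

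The only delicate point is the uniform-in-$\mu$ Mikhlin estimate, and this is essentially a bookkeeping exercise: each $\xi$-derivative of $\langle\xi,\mu\rangle^{s-s_0}$ produces an extra factor $\xi_j\langle\xi,\mu\rangle^{-2}$, and since $|\xi_j|\leq\langle\xi,\mu\rangle$ one picks up at worst a factor $\langle\xi,\mu\rangle^{-1}\leq\langle\xi\rangle^{-1}\lesssim|\xi|^{-1}$ for $|\xi|\geq 1$; on $|\xi|\leq 1$ all symbols considered are smooth and bounded uniformly in $\mu$. In particular no assumption beyond those of Assumption~\ref{Assump:Spaces} is needed, so the statement holds in all scales simultaneously.
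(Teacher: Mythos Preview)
Your proposal is correct and follows essentially the same route as the paper: the same multiplier $m_\mu(\xi)=\langle\xi,\mu\rangle^{s-s_0}/(\langle\xi\rangle^{s-s_0}+\langle\mu\rangle^{s-s_0})$ is used in the case $s\geq s_0$, the same pair of one-sided multipliers $\langle\xi,\mu\rangle^{s-s_0}/\langle\xi\rangle^{s-s_0}$ and its reciprocal (rescaled by $\langle\mu\rangle^{s-s_0}$) handle the case $s\leq s_0$, and in both cases the uniform Mikhlin bounds are verified by the same induction on $|\alpha|$ tracking powers of $\langle\xi\rangle$ and $\langle\xi,\mu\rangle$. The paper carries out the induction a bit more explicitly, but the argument is the same.
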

\begin{proof}
 Assumption \ref{Assump:Spaces} is formulated in a way such that we can apply our versions of the Mikhlin multiplier theorem, Theorem \ref{Thm:Mikhlin} \eqref{Thm:MikhlinA_p} and \eqref{Thm:MikhlinBesovTriebel}. Let first $s\geq s_0$. Note that the function
 \[
  m\colon \R^n\times\C\to \R,\,(\xi,\mu)\mapsto \frac{\langle\xi,\mu\rangle^{s-s_0}}{\langle\xi\rangle^{s-s_0}+\langle\mu\rangle^{s-s_0}}
 \]
 satisfies the condition from Theorem \ref{Thm:Mikhlin} uniformly in $\mu$. Indeed, by induction it follows that $\partial^{\alpha}_{\xi}m(\xi,\mu)$ $(\alpha\in\N_0^n)$ is a linear combination of terms of the form
 \[
  p_{\beta,i,j,k}(\xi,\mu)=\xi^\beta\langle\xi,\mu\rangle^{s-s_0-i}\langle\xi\rangle^{(s-s_0-2)j-k}(\langle\xi\rangle^{s-s_0}+\langle\mu\rangle^{s-s_0})^{-1-j}
 \]
 for some $\beta\in\N_0^n$ and $i,k,j\in\N_0$ such that $|\alpha|=i+2j+k-|\beta|$. But each of these terms satisfies
 \begin{align*}
  \langle\xi\rangle^{|\alpha|}|p_{\beta,i,j,k}(\xi,\mu)|&=m(\xi,\mu)\langle\xi\rangle^{|\alpha|}|\xi^\beta|\langle\xi,\mu\rangle^{-i}\langle\xi\rangle^{(s-s_0-2)j-k}(\langle\xi\rangle^{s-s_0}+\langle\mu\rangle^{s-s_0})^{-j}\\
  &\lesssim  m(\xi,\mu)\langle\xi\rangle^{|\alpha|+|\beta|-i-2j-k+(s-s_0)j-(s-s_0)j}\\
  &\lesssim  m(\xi,\mu)\\
  &\lesssim 1.
 \end{align*}
Hence, $(m(\cdot,\mu))_{\mu\in\C}$ is a bounded family of Fourier multipliers. Using this, we obtain
 \begin{align*}
  \|u\|_{\mathscr{A}^{s,\mu,s_0}}&=\|\langle D,\mu\rangle^{s-s_0}u\|_{\mathscr{A}^{s_0}}=\big\|m(D,\mu)(\langle D\rangle^{s-s_0}+\langle\mu\rangle^{s-s_0})u\big\|_{\mathscr{A}^{s_0}}\lesssim \big\|(\langle D\rangle^{s-s_0}+\langle\mu\rangle^{s-s_0})u\big\|_{\mathscr{A}^{s_0}}\\
  &\leq \|u\|_{\mathscr{A}^{s}}+\langle\mu\rangle^{s-s_0}\|u\|_{\mathscr{A}^{s_0}}=\bigg\|\frac{\langle D\rangle^{s-s_0}}{\langle D,\mu\rangle^{s-s_0}}\langle D,\mu\rangle^{s-s_0}u\bigg\|_{\mathscr{A}^{s_0}}+\bigg\|\frac{\langle \mu\rangle^{s-s_0}}{\langle D,\mu\rangle^{s-s_0}}\langle D,\mu\rangle^{s-s_0}u\bigg\|_{\mathscr{A}^{s_0}}\\
  &\lesssim \|\langle D,\mu\rangle^{s-s_0}u\|_{\mathscr{A}^{s_0}}= \|u\|_{\mathscr{A}^{s,\mu,s_0}}
 \end{align*}
for $s-s_0\geq0$ and
\begin{align*}
 \|u\|_{\mathscr{A}^{s,\mu,s_0}}&=\|\langle D,\mu\rangle^{s-s_0}u\|_{\mathscr{A}^{s_0}}=\bigg\|\frac{\langle D,\mu\rangle^{s-s_0}}{\langle D\rangle^{s-s_0}}\langle D\rangle^{s-s_0}u\bigg\|_{\mathscr{A}^{s_0}}\lesssim\|\langle D\rangle^{s-s_0}u\|_{\mathscr{A}^{s_0}}\\
 &=\|u\|_{\mathscr{A}^{s}}=\bigg\|\frac{\langle D\rangle^{s-s_0}}{\langle D,\mu\rangle^{s-s_0}}\langle D,\mu\rangle^{s-s_0}u\bigg\|_{\mathscr{A}^{s_0}}\lesssim \langle\mu\rangle^{s_0-s}\|u\|_{\mathscr{A}^{s,\mu,s_0}}
\end{align*}
for $s-s_0\leq0$.
\end{proof}

In this paper, we also consider function spaces on open intervals $I$. In this case, we can just define them by restriction.
\begin{definition}
 Let be $I\subset\R$ an open interval. Then we define $(\mathscr{A}^{\bullet}(I,w;E),\|\cdot\|_{\mathscr{A}^{\bullet}(I,w;E)})$ by
 \[
  \mathscr{A}^{\bullet}(I,w;E)=\{f|_I:f\in\mathscr{A}^{\bullet}(I,w;E)\},\quad\|g\|_{\mathscr{A}^{\bullet}(I,w;E)}:=\inf_{f\in\mathscr{A}^{\bullet}(\R,w;E), f|_I=g} \|f\|_{\mathscr{A}^{\bullet}(\R,w;E)}.
 \]
 We use the same definition for $\mathscr{B}^{\bullet}$ and $\mathscr{C}^{\bullet}$.
\end{definition}

\begin{remark}
 Recall that we defined $W^{-m}_p(\mathcal{O},w;E)$ as the dual of $W^m_{p'}(\mathcal{O},w^{-\frac{1}{p-1}};E')$ and not by restriction. In the scalar-valued unweighted setting both definitions coincide, see \cite[Section 2.10.2]{Triebel_1978}. We believe that the same should hold true under suitable assumptions in the weighted vector-valued setting. But since this is not important for this work, we do not investigate this any further.
\end{remark}

\begin{lemma}\label{Lemma:EmbeddingNegitveSmoothness}
 Let $s\in\R$, $p\in(1,\infty)$, $r\in(-1,p-1)$ and $l\in\N$. Suppose that $\mathscr{A}^{s}$ is reflexive. Then we have the continuous embeddings
 \begin{align*}
  L_p(\R_+,|\operatorname{pr}_n|^{r+lp}; &\mathscr{A}^s)\hookrightarrow  W^{-l}_{p}(\R_+,|\operatorname{pr}_n|^{r}; \mathscr{A}^s)
 \end{align*}
\end{lemma}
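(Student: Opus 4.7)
The plan is to exploit the duality definition of the negative Sobolev space: with $v:=x_n^{-r/(p-1)}$ the $p$-dual weight of $x_n^{r}$,
\[
W^{-l}_p(\R_+,x_n^{r};\mathscr{A}^s)=\bigl(W^{l}_{p',0}(\R_+,v;(\mathscr{A}^s)')\bigr)'.
\]
Thus it suffices to show that for $f\in L_p(\R_+,x_n^{r+lp};\mathscr{A}^s)$ the natural Bochner pairing
\[
\langle f,\phi\rangle:=\int_0^\infty \langle f(x_n),\phi(x_n)\rangle_{\mathscr{A}^s,(\mathscr{A}^s)'}\,dx_n
\]
defines a bounded linear functional on $W^{l}_{p',0}(\R_+,v;(\mathscr{A}^s)')$ with operator norm controlled by $\|f\|_{L_p(\R_+,x_n^{r+lp};\mathscr{A}^s)}$. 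Reflexivity of $\mathscr{A}^s$ ensures the pairing is the canonical one and is compatible with the identification of $f$ as a distribution.

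First I would split the weight as $1=x_n^{(r+lp)/p}\cdot x_n^{-(r+lp)/p}$ and apply Hölder with exponents $p,p'$. Using the identity $(r+lp)p'/p = r/(p-1)+lp'$, the remaining weight on the $\phi$-side factors as $v\cdot x_n^{-lp'}$, which yields
\[
|\langle f,\phi\rangle|\leq \|f\|_{L_p(\R_+,x_n^{r+lp};\mathscr{A}^s)}\cdot \bigl\|x_n^{-l}\phi\bigr\|_{L_{p'}(\R_+,v;(\mathscr{A}^s)')}.
\]

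Second, I would control the last factor by the iterated weighted Hardy inequality. The scalar one-step Hardy inequality $\int_0^\infty |\psi|^{p'} x^{\alpha-p'}\,dx\leq C\int_0^\infty |\psi'|^{p'}x^\alpha\,dx$ (valid for $\psi\in \mathscr{D}(\R_+)$ whenever $\alpha\neq p'-1$) is applied $l$ times with shifted exponents $\alpha_k=-r/(p-1)+(k+1)p'$ for $k=0,\ldots,l-1$, using at each step the preceding vanishing $\phi^{(j)}(0)=0$, which is encoded in the $0$-subscript of $W^{l}_{p',0}$ via density of $\mathscr{D}(\R_+;(\mathscr{A}^s)')$. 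Translating the exceptional values back in terms of $r$, the admissibility condition becomes $r\notin\{p-1,2p-1,\ldots,lp-1\}$, and all of these lie outside the hypothesis $r\in(-1,p-1)$, so every iteration succeeds. The vector-valued version reduces to the scalar case applied pointwise to $x_n\mapsto \|\phi(x_n)\|_{(\mathscr{A}^s)'}$, whose derivative is dominated almost everywhere by the norm of the Bochner derivative of $\phi$. This yields
\[
\bigl\|x_n^{-l}\phi\bigr\|_{L_{p'}(\R_+,v;(\mathscr{A}^s)')}\lesssim \|\partial_{x_n}^{l}\phi\|_{L_{p'}(\R_+,v;(\mathscr{A}^s)')}\leq \|\phi\|_{W^{l}_{p',0}(\R_+,v;(\mathscr{A}^s)')}.
\]
Combining the two estimates completes the embedding.

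The main obstacle is the iterated Hardy inequality: a naive iteration runs into consecutive non-overlapping weight intervals, so one must use the shifted version in which each step uses a different weight power, and one must verify that none of the shifted weights hit the forbidden critical exponent. The interval $(-1,p-1)$ is precisely what guarantees this (the critical values form the arithmetic progression $(j+1)p-1$, $j\geq 0$, which starts at the right endpoint). A minor technical point is that $W^{l}_{p',0}$-elements vanish at the boundary together with enough derivatives in a trace sense, which is needed to justify each Hardy step by density from $\mathscr{D}(\R_+;(\mathscr{A}^s)')$.
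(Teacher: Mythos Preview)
Your approach is essentially the same as the paper's: both reduce by duality to the embedding
\[
W^{l}_{p',0}(\R_+,x_n^{r'};(\mathscr{A}^{s})')\hookrightarrow L_{p'}(\R_+,x_n^{r'-lp'};(\mathscr{A}^{s})'),\qquad r'=-\tfrac{r}{p-1},
\]
which is an iterated weighted Hardy inequality. The paper simply cites this embedding from \cite[Corollary 3.4]{Lindemulder_Veraar_2018}, while you spell out the iteration and the H\"older pairing explicitly; there is no substantive difference in strategy.

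One bookkeeping correction: your stated exponents $\alpha_k=r'+(k+1)p'$ go in the wrong direction. Tracking the iteration from $\int|\phi|^{p'}x^{r'-lp'}\,dx$ up to $\int|\phi^{(l)}|^{p'}x^{r'}\,dx$, the successive $\alpha$'s are $r'-(l-1)p',\,r'-(l-2)p',\ldots,r'$, and the excluded values of $r$ come out as $-1,-p-1,\ldots,-(l-1)p-1$ rather than $p-1,2p-1,\ldots,lp-1$. Since both progressions lie outside the open interval $(-1,p-1)$, your conclusion is unaffected and the proof stands.
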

\begin{proof}
 We should note that almost the same proof was given in \cite{Lindemulder_2018}. By duality, it suffices to prove
 \[
  W^{l}_{p',0}(\R_+,|\operatorname{pr}_n|^{r'};(\mathscr{A}^{s})')\hookrightarrow L_{p'}(\R_+,|\operatorname{pr}_n|^{r'-lp'};(\mathscr{A}^{s})')
 \]
where $r'=-\frac{r}{p-1}$ and $p'=\frac{p}{p-1}$. But this is a special case of \cite[Corollary 3.4]{Lindemulder_Veraar_2018}.
\end{proof}

%%%%%%%%%%%%%%%%%%%%%%%%%%%%%%%%%%%%%%%%%%%%%%%%%%%%%%%%%%%%%%%%%%%%%%%%%%%%%%%%%%%%%%%%%%%%%%%%%%%%%
%%%%%%%%%%%%%%%%%%%%%   ---------- New Section ------------ %%%%%%%%%%%%%%%%%%%%%%%%%%%%%%%%%%%%%%%%%
%%%%%%%%%%%%%%%%%%%%%%%%%%%%%%%%%%%%%%%%%%%%%%%%%%%%%%%%%%%%%%%%%%%%%%%%%%%%%%%%%%%%%%%%%%%%%%%%%%%%%

\section{Pseudo-differential Operators in Mixed Scales} \label{Section:Pseudos}
Now we briefly introduce some notions and notations concerning pseudo-differential operators. Since we only use the $x$-independent case in the following, we could also formulate our results in terms of Fourier multipliers. However, parameter-dependent H\"ormander symbol classes provide a suitable framework for the formulation of our results. In the case of parameter-dependent symbols, we oftentimes consider spaces of smooth functions on an open set $U\subset \R^n\times\C$. In this case, we identify $\C\simeq\R^2$ and understand the differentiability in the real sense. If we want to understand it in the complex sense, we say \textit{holomorphic} instead of \textit{smooth}.

\begin{definition}
 Let $Z$ be a Banach space, $d\in\R$, $\Sigma\subset\C$ open and $\vartheta\colon\Sigma\to(0,\infty)$ a function.
 \begin{enumerate}[(a)]
  \item The space of parameter-independent H\"ormander symbols $S^d(\R^n;Z)$ of order $d$ is the space of all smooth functions $p\in C^{\infty}(\R^n;Z)$ such that
 \[
  \|p\|^{(d)}_k:=\sup_{\xi\in\R^n, \atop\alpha\in\N_0^n, |\alpha|\leq k} \langle\xi\rangle^{-(d-|\alpha|)} \|D^{\alpha}_{\xi}p(\xi)\|_{Z}<\infty
 \]
for all $k\in\N_0$.
  \item The space of parameter-dependent H\"ormander symbols $S^{d,\vartheta}(\R^n\times\Sigma;Z)$ of order $d$ is the space of all smooth functions $p\in C^{\infty}(\R^n\times\Sigma;Z)$ such that
 \[
  \|p\|^{(d,\vartheta)}_k:=\sup_{\alpha\in\N_0^n,\,\gamma\in\N_0^2\atop |\alpha|+|\gamma|\leq k}\sup_{\xi\in\R^n,\mu\in\Sigma} \vartheta(\mu)^{-1}\langle\xi,\mu\rangle^{-(d-|\alpha|_1-|\gamma|_1)} \|D^{\alpha}_{\xi}D_{\mu}^{\gamma}p(\xi,\mu)\|_{Z}<\infty
 \]
for all $k\in\N_0$. If $\vartheta=1$, then we also omit it in the notation.
 \end{enumerate}
\end{definition}
Actually, if one omits the weight function $\vartheta$, then the latter symbol class is the special case of parameter-dependent H\"ormander symbols with regularity $\infty$. Usually, one also includes the regularity parameter $\nu$ in the notation of the symbol class, so that the notation $S^{d,\infty}(\R^n\times\Sigma;Z)$ is more common in the literature. But since the symbols in this paper always have infinite regularity, we omit $\infty$ in the notation.\\
For the Bessel potential case, $\mathcal{R}$-bounded versions of these symbol classes are useful.
\begin{definition}
 Let $E$ be a Banach space, $N,M\in\N$, $d\in\R$, $\Sigma\subset\C$ open and $\vartheta\colon\Sigma\to(0,\infty)$ a function.
 \begin{enumerate}[(a)]
  \item By $S^d_{\mathcal{R}}(\R^n;\mathcal{B}(E^N,E^M))$ we denote the space of all smooth functions $p\in C^{\infty}(\R^n;\mathcal{B}(E^N,E^M))$ such that
 \[
  \|p\|^{(d)}_{k,\mathcal{R}}:=\mathcal{R}\big\{\langle\xi\rangle^{-(d-|\alpha|_1)}D^{\alpha}_{\xi}p(\xi):\xi\in\R^n, \alpha\in\N_0^n, |\alpha|\leq k \big\}<\infty
 \]
for all $k\in\N_0$.
  \item By $S^{d,\vartheta}_{\mathcal{R}}(\R^n\times\Sigma;\mathcal{B}(E^N,E^M))$ we denote the space of all smooth functions $p\in C^{\infty}(\R^n\times\Sigma;\mathcal{B}(E^N,E^M))$ such that
 \[
  \|p\|^{(d,\vartheta)}_{k,\mathcal{R}}:=\mathcal{R}\big\{\vartheta(\mu)^{-1}\langle\xi,\mu\rangle^{-(d-|\alpha|-|\gamma|)} D^{\alpha}_{\xi}D_{\mu}^{\gamma}p(\xi,\mu): \xi\in\R^n,\mu\in\Sigma,\alpha\in\N_0^n,\gamma\in\N_0^2,|\alpha|+|\gamma|\leq k\big\}<\infty
 \]
for all $k\in\N_0$. If $\vartheta=1$, then we also omit it in the notation.
 \end{enumerate}
\end{definition}

\begin{remark}\phantomsection \label{Rem:SymbolClasses} 
 \begin{enumerate}[(a)]
  \item It seems like $\mathcal{R}$-bounded versions of the usual H\"ormander symbol classes have first been considered in the Ph.D. thesis of \v{S}trkalj. We also refer to \cite{Portal_Strkalj_2006}.
  \item It was observed in \cite{Denk_Krainer_2007} that also the $\mathcal{R}$-bounded symbol classes are Fr\'echet spaces.
  \item Since uniform bounds can be estimated by $\mathcal{R}$-bounds, we have the continuous embeddings 
  \[
   S^d_{\mathcal{R}}(\R^n;\mathcal{B}(E^N,E^M))\hookrightarrow S^d(\R^n;\mathcal{B}(E^N,E^M)),\quad S^{d,\vartheta}_{\mathcal{R}}(\R^n\times\Sigma;\mathcal{B}(E^N,E^M))\hookrightarrow S^{d,\vartheta}(\R^n\times\Sigma;\mathcal{B}(E^N,E^M)).
  \]
 \item Since uniform boundedness and $\mathcal{R}$-boundedness for a set of scalars are equivalent, we have that
 \[
  S^d(\R^n;\C)\hookrightarrow S^d_{\mathcal{R}}(\R^n;\mathcal{B}(E^N)),\quad S^{d,\vartheta}(\R^n\times\Sigma;\C)\hookrightarrow S^{d,\vartheta}_{\mathcal{R}}(\R^n\times\Sigma;\mathcal{B}(E^N)).
 \]
 \item\label{Rem:SymbolClasses:Product} Given $d_1,d_2\in\R$, $\vartheta_1,\vartheta_2\colon\Sigma\to(0,\infty)$ and $N_1,N_2,N_3\in\N$ we have the continuous bilinear mappings
 \begin{align*}
 &S^{d_2}(\R^n;\mathcal{B}(E^{N_2},E^{N_3}))\times S^{d_1}(\R^n;\mathcal{B}(E^{N_1},E^{N_2}))\to S^{d_1+d_2}(\R^n;\mathcal{B}(E^{N_1},E^{N_3})),\,(p_2,p_1)\mapsto p_2p_1,\\
 &S^{d_2,\vartheta_2}(\R^n\times\Sigma;\mathcal{B}(E^{N_2},E^{N_3}))\times S^{d_1,\vartheta_1}(\R^n\times\Sigma;\mathcal{B}(E^{N_1},E^{N_2}))\\
 &\qquad\qquad\qquad\qquad\qquad\qquad\to S^{d_1+d_2,\vartheta_1\cdot\vartheta_2}(\R^n\times\Sigma;\mathcal{B}(E^{N_1},E^{N_3})),\,(p_2,p_1)\mapsto p_2p_1.
 \end{align*}
 The same properties also hold for the $\mathcal{R}$-bounded versions.
 \item\label{Rem:SymbolClasses:Derivative} The differential operator $\partial^{\alpha}$ with $\alpha\in\N_0^n$ is a continuous linear operator
  \begin{align*}
 S^{d}(\R^n;\mathcal{B}(E^{N},E^{M}))\to S^{d-|\alpha|}(\R^n;\mathcal{B}(E^{N},E^{M})),\,p\mapsto \partial^{\alpha}p,\\
 S^{d,\vartheta}(\R^n\times\Sigma;\mathcal{B}(E^{N},E^{M}))\to S^{d-|\alpha|,\vartheta}(\R^n\times\Sigma;\mathcal{B}(E^{N},E^{M})),\,p\mapsto \partial^{\alpha}p.
 \end{align*}
The same properties also hold for the $\mathcal{R}$-bounded versions.
\item \label{Rem:SymbolClasses:DependentIndependent} One could also view parameter-independent symbol classes as a subset of a parameter-dependent symbol classes with bounded $\Sigma\subset\C$ which consists of those symbols which do not depend on the parameter $\mu$. Hence, the statements we formulate for parameter-dependent symbol classes in the following also hold in a similar way in the parameter-independent case.
 \end{enumerate}
\end{remark}

\begin{definition}
 Let $E$ be a Banach space, $d\in\R$ and $\Sigma\subset\C$ open. Let further $p\in S^d(\R^n\times\Sigma;\mathcal{B}(E^{N},E^{M}))$. Then we define the corresponding pseudo-differential operator by
 \[
  (P_{\mu}f)(x):=(\operatorname{op}[p(\,\cdot\,,\mu)]f)(x):=[\mathscr{F}^{-1}p(\,\cdot\,\mu)\mathscr{F}f](x)=\frac{1}{(2\pi)^{n/2}}\int_{\R^n}e^{ix\xi}p(\xi,\mu)\widehat{f}(\xi)\,d\xi.
 \]
for $f\in\mathscr{S}(\R^n;E^N)$.
\end{definition}

Since we only consider $x$-independent symbols, the mapping properties of such pseudo-differential operators are an easy consequence of Mikhlin's theorem.

\begin{proposition}\label{Prop:PseudoMappingProperties}
 Let $N,M\in\N$, $s,s_0,d\in\R$, $\Sigma\subset\C$ open and $\vartheta\colon\Sigma\to(0,\infty)$ a function. Consider one of the following two cases
  \begin{enumerate}[(a)]
     \item\label{Prop:PseudoMappingProperties:PseudoBessel} $\mathscr{A}^{\bullet}$ belongs to the Bessel potential scale and $S^{d,\vartheta}_{\mathscr{A}}(\R^{n}\times\Sigma;\mathcal{B}(E^N,E^M))=S^{d,\vartheta}_{\mathcal{R}}(\R^{n}\times\Sigma;\mathcal{B}(E^N,E^M))$.
  \item\label{Prop:PseudoMappingProperties:PseudoBesovTriebel} $\mathscr{A}^{\bullet}$ belongs to the Besov or the Triebel-Lizorkin scale and $S^{d,\vartheta}_{\mathscr{A}}(\R^{n}\times\Sigma;\mathcal{B}(E^N,E^M))=S^{d,\vartheta}(\R^{n}\times\Sigma;\mathcal{B}(E^N,E^M))$.
 \end{enumerate}
Then the mapping
 \[
  S^{d,\vartheta}_{\mathscr{A}}(\R^{n}\times\Sigma;\mathcal{B}(E^N,E^M))\times \mathscr{A}^{s+d,\mu,s_0}(\R^{n},w_0,E^N)\to \mathscr{A}^{s,\mu,s_0}(\R^{n},w_0,E^M),\; (p,f)\mapsto \operatorname{op}[p(\,\cdot\,,\mu)]f
 \]
 defined by extension from $\mathscr{S}(\R^{n},E^N)$ to $\mathscr{A}^{s+d,\mu,s_0}(\R^{n},w_0,E^N)$ is bilinear and continuous. Moreover, there is a constant $C>0$ independent of $\vartheta$ such that
 \[
  \| \operatorname{op}[p(\,\cdot\,,\mu)]f\|_{ \mathscr{A}^{s,\mu,s_0}(\R^{n},w_0,E^N)} \leq C \vartheta(\mu)\|p\|^{(d,\vartheta)}_N\|f\|_{ \mathscr{A}^{s+d,\mu,s_0}(\R^{n},w_0,E^M)}
 \]
  for all $\mu\in\Sigma$.
\end{proposition}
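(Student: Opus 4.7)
The plan is to reduce everything to a single application of Mikhlin's multiplier theorem on $\mathscr{A}^{s_0}$ by conjugating with parameter-dependent Bessel potentials. By the very definition of $\mathscr{A}^{s,\mu,s_0}$, the claim is equivalent to the uniform bound
\[
    \|M_\mu\|_{\mathcal{B}(\mathscr{A}^{s_0}(\R^{n},w_0;E^N),\,\mathscr{A}^{s_0}(\R^{n},w_0;E^M))} \lesssim \vartheta(\mu)\,\|p\|^{(d,\vartheta)}_N
\]
for some fixed $N\in\N$, where
\[
    M_\mu := \langle D,\mu\rangle^{s-s_0} \circ \operatorname{op}[p(\cdot,\mu)] \circ \langle D,\mu\rangle^{s_0-s-d}.
\]
Since the scalar symbols $\langle\xi,\mu\rangle^{\alpha}$ commute with the operator-valued $p(\xi,\mu)$, $M_\mu$ is itself the pseudo-differential operator with symbol $m(\xi,\mu) := \langle\xi,\mu\rangle^{-d}\,p(\xi,\mu)$.

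Next I would verify that $m(\cdot,\mu)$ is a Mikhlin multiplier uniformly in $\mu$, with multiplier constant $\lesssim\vartheta(\mu)$. The Leibniz rule, combined with the standard derivative bound $|D_\xi^\beta \langle\xi,\mu\rangle^{-d}|\lesssim \langle\xi,\mu\rangle^{-d-|\beta|}$ and the defining symbol estimate for $p\in S^{d,\vartheta}$, yields
\[
    \langle\xi\rangle^{|\alpha|}\|D_\xi^\alpha m(\xi,\mu)\|_{\mathcal{B}(E^N,E^M)} \lesssim \vartheta(\mu)\,\|p\|^{(d,\vartheta)}_{|\alpha|},
\]
using the trivial inequality $\langle\xi\rangle\leq \langle\xi,\mu\rangle$. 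The same computation carries through with $\mathcal{R}$-bounds in place of operator norms in case~\eqref{Prop:PseudoMappingProperties:PseudoBessel}, since multiplying an $\mathcal{R}$-bounded family by a uniformly bounded scalar family preserves $\mathcal{R}$-boundedness by Kahane's contraction principle.

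Applying Theorem~\ref{Thm:Mikhlin}~\eqref{Thm:MikhlinA_p} in case~\eqref{Prop:PseudoMappingProperties:PseudoBessel} and Theorem~\ref{Thm:Mikhlin}~\eqref{Thm:MikhlinBesovTriebel} in case~\eqref{Prop:PseudoMappingProperties:PseudoBesovTriebel} then delivers the desired bound on $M_\mu$, with a constant independent of $\mu$. The resulting a~priori estimate for $\operatorname{op}[p(\cdot,\mu)]$ on $\mathscr{S}(\R^n;E^N)$ extends to the full space $\mathscr{A}^{s+d,\mu,s_0}$ either by density or, in the non-separable cases such as the $q=\infty$ Besov scale, by a standard duality argument. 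The only mild point to watch is keeping the scalar Bessel potential factors on the correct side throughout, so that no commutator terms arise; beyond this, the argument is essentially bookkeeping about symbol classes.
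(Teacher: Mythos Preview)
Your proof is correct and follows essentially the same route as the paper: both reduce to bounding $\operatorname{op}[\langle\cdot,\mu\rangle^{-d}p(\cdot,\mu)]$ on $\mathscr{A}^{s_0}$ via Mikhlin's theorem, with the parameter-dependent Bessel potentials absorbed into the definition of the spaces. The paper phrases the symbol estimate as the continuity of $p\mapsto p\langle\cdot,\mu\rangle^{-d}$ from $S^{d,\vartheta}_{\mathscr{A}}$ to $S^{0,\vartheta}_{\mathscr{A}}$ (a consequence of Remark~\ref{Rem:SymbolClasses}~\eqref{Rem:SymbolClasses:Product}), whereas you verify the Mikhlin condition directly via the Leibniz rule; these are the same computation.
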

\begin{proof}
 It is obvious that the mapping is bilinear. For the continuity, we note that
 \[
  S^{d,\vartheta}_{\mathscr{A}}(\R^n\times\Sigma;\mathcal{B}(E^N,E^M))\to S^{0,\vartheta}_{\mathscr{A}}(\R^n\times\Sigma;\mathcal{B}(E^N,E^M)), p\mapsto[(\xi,\mu)\mapsto p(\xi,\mu)\langle\xi,\mu\rangle^{-d}]
 \]
is continuous. Hence, by Mikhlin's theorem there is an $N'\in\N$ such that
\begin{align*}
 \| \operatorname{op}[p(\,\cdot\,,\mu)]f\|_{ \mathscr{A}^{s,\mu,s_0}(\R^{n},w_0,E^M)}&=\| \operatorname{op}[\langle\cdot,\mu\rangle^{s_0-s}]\operatorname{op}[p(\,\cdot\,,\mu)\langle\cdot,\mu\rangle^{-d}]\operatorname{op}[\langle\cdot,\mu\rangle^{d+s-s_0}]f\|_{ \mathscr{A}^{s,\mu,s_0}(\R^{n},w_0,E^M)}\\
 &= \|\operatorname{op}[p(\,\cdot\,,\mu)\langle\cdot,\mu\rangle^{-d}]\operatorname{op}[\langle\cdot,\mu\rangle^{d+s-s_0}]f\|_{ \mathscr{A}^{s_0}(\R^{n},w_0,E^M)}\\
 &\lesssim \vartheta(\mu)\|p\|^{(d,\vartheta)}_{N',\mathscr{A}}\|\operatorname{op}[\langle\cdot,\mu\rangle^{d+s-s_0}]f\|_{ \mathscr{A}^{s_0}(\R^{n},w_0,E^N)}\\
 &=  \vartheta(\mu)\|p\|^{(d,\vartheta)}_{N',\mathscr{A}}\|f\|_{ \mathscr{A}^{s+d,\mu,s_0}(\R^{n},w_0,E^N)}.
\end{align*}
This also shows the asserted estimate.
\end{proof}

We can also formulate an $\mathcal{R}$-bounded version of Proposition \ref{Prop:PseudoMappingProperties} without the parameter-dependence of the function spaces.
\begin{proposition}\label{Prop:PseudoMappingProperties_R_bounded}
 Let $N,M\in\N$, $s,d\in\R$, $\Sigma\subset\C$ open and $\vartheta\colon\Sigma\to(0,\infty)$ a function. Consider one of the following two cases
  \begin{enumerate}[(a)]
     \item\label{Prop:PseudoMappingProperties_R_bounded:PseudoBessel} $\mathscr{A}^{\bullet}$ belongs to the Bessel potential scale and $E$ satisfies Pisier's property $(\alpha)$ in addition to the assumptions in Assumption \ref{Assump:Spaces}.
  \item\label{Prop:PseudoMappingProperties_R_bounded:PseudoBesovTriebel} $\mathscr{A}^{\bullet}$ belongs to the Besov or the Triebel-Lizorkin scale.
 \end{enumerate}
Then the mapping
 \[
  S^{d,\vartheta}_{\mathcal{R}}(\R^{n}\times\Sigma;\mathcal{B}(E^N,E^M))\times \mathscr{A}^{s+d}(\R^{n},w_0;E^N)\to \mathscr{A}^{s}(\R^{n},w_0;E^M),\; (p,f)\mapsto \operatorname{op}[p(\,\cdot\,,\mu)]f
 \]
 defined by extension from $\mathscr{S}(\R^n,E^N)$ to $\mathscr{A}^{s,\mu,s_0}(\R^{n},w_0;E^N)$ is bilinear and continuous. Moreover, there is a constant $C>0$ independent of $\vartheta$ such that
 \[
  \mathcal{R}_{\mathcal{B}(\mathscr{A}^{s+d}(\R^{n},w_0;E^N),\mathscr{A}^{s}(\R^{n},w_0;E^M))}\big(\{\vartheta(\mu)^{-1}\langle\mu\rangle^{-d_+}\operatorname{op}[p(\,\cdot\,,\mu)]:\mu\in\Sigma\}\big)\leq C \|p\|^{(d,\vartheta)}_N.
 \]
\end{proposition}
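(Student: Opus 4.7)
The plan is to reduce to the order-zero case on the fixed space $\mathscr{A}^s$ via the Bessel-type lift $\langle D\rangle^d$, and then invoke the $\mathcal{R}$-bounded version of Mikhlin's multiplier theorem: Theorem \ref{Thm:Mikhlin}\eqref{Thm:MikhlinA_pAlpha} in case \eqref{Prop:PseudoMappingProperties_R_bounded:PseudoBessel} (where Pisier's property $(\alpha)$ is precisely what makes this available), and Theorem \ref{Thm:Mikhlin}\eqref{Thm:MikhlinBesovTriebelRBounded} in case \eqref{Prop:PseudoMappingProperties_R_bounded:PseudoBesovTriebel}.

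First I would factor $\operatorname{op}[p(\,\cdot\,,\mu)]=\operatorname{op}[q(\,\cdot\,,\mu)]\circ\langle D\rangle^d$, where $q(\xi,\mu):=p(\xi,\mu)\langle\xi\rangle^{-d}$, and use that $\langle D\rangle^d\colon\mathscr{A}^{s+d}\to\mathscr{A}^s$ is an isomorphism (by definition in the Bessel potential case and by the lift property in the Besov/Triebel-Lizorkin and their dual scales). Since composition with a fixed bounded operator preserves $\mathcal{R}$-boundedness, it suffices to prove
\[
\mathcal{R}_{\mathcal{B}(\mathscr{A}^{s}(\R^{n},w_0;E^N),\mathscr{A}^{s}(\R^{n},w_0;E^M))}\big(\{\vartheta(\mu)^{-1}\langle\mu\rangle^{-d_+}\operatorname{op}[q(\,\cdot\,,\mu)]:\mu\in\Sigma\}\big)\lesssim \|p\|^{(d,\vartheta)}_{N}
\]
for some sufficiently large $N$.

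Next, by the $\mathcal{R}$-bounded Mikhlin theorem the above reduces to verifying the $\mathcal{R}$-bounded symbol estimate
\[
\mathcal{R}\big(\{\langle\xi\rangle^{|\alpha|}D_\xi^\alpha[\vartheta(\mu)^{-1}\langle\mu\rangle^{-d_+}q(\xi,\mu)]:\xi\in\R^n,\,\mu\in\Sigma,\,|\alpha|\leq N\}\big)\lesssim \|p\|^{(d,\vartheta)}_{N}.
\]
Applying the Leibniz rule to $q=p\cdot\langle\cdot\rangle^{-d}$ and using the standard scalar symbol estimate $|D_\xi^{\alpha-\beta}\langle\xi\rangle^{-d}|\lesssim\langle\xi\rangle^{-d-|\alpha-\beta|}$ splits each term as a product of a scalar factor
\[
c_{\alpha,\beta}(\xi,\mu):=\langle\mu\rangle^{-d_+}\langle\xi\rangle^{|\beta|-d}\langle\xi,\mu\rangle^{d-|\beta|}
\]
times the operator-valued symbol $\vartheta(\mu)^{-1}\langle\xi,\mu\rangle^{-(d-|\beta|)}D_\xi^{\beta}p(\xi,\mu)$, whose $\mathcal{R}$-bound over $\xi\in\R^n,\mu\in\Sigma,|\beta|\leq N$ is controlled by $\|p\|^{(d,\vartheta)}_{N}$ by assumption. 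A short case distinction, based on $\langle\xi\rangle\leq\langle\xi,\mu\rangle\leq\langle\xi\rangle\langle\mu\rangle$, shows $|c_{\alpha,\beta}(\xi,\mu)|\lesssim 1$ uniformly in $\xi,\mu$: for $|\beta|\geq d$ one has $\langle\xi\rangle^{|\beta|-d}\langle\xi,\mu\rangle^{d-|\beta|}\leq 1$ and $\langle\mu\rangle^{-d_+}\leq 1$; for $|\beta|<d$ one has $\langle\xi,\mu\rangle^{d-|\beta|}\leq\langle\xi\rangle^{d-|\beta|}\langle\mu\rangle^{d-|\beta|}$, leaving $\langle\mu\rangle^{d-d_+-|\beta|}\leq 1$ (using $d\geq 0$ in that regime). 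Kahane's contraction principle then transfers the $\mathcal{R}$-bound through multiplication by this uniformly bounded scalar.

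The main obstacle, and the place where the statement is tight, is exactly this last estimate: if $d>0$, the quotient $\langle\xi\rangle^{d}/\langle\xi,\mu\rangle^{d}$ degenerates like $\langle\mu\rangle^{-d}$ when $\xi$ is small compared to $\mu$, so without the compensating factor $\langle\mu\rangle^{-d_+}$ the $\mathcal{R}$-bound would blow up as $|\mu|\to\infty$. This forces the precise appearance of $\langle\mu\rangle^{-d_+}$ (rather than $\langle\mu\rangle^{-d}$ or $1$) in the statement. For $d\leq 0$ no such degeneracy occurs and $d_+=0$ is correct. Everything else is routine: once the symbol-level $\mathcal{R}$-estimate is in place, the continuity of the bilinear map, and the dependence on just finitely many seminorms of $p$, follow from the estimates produced by $\mathcal{R}$-Mikhlin.
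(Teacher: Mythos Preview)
Your proof is correct and follows essentially the same strategy as the paper: reduce to an order-zero multiplier on $\mathscr{A}^s$ and apply the $\mathcal{R}$-bounded Mikhlin theorem (Theorem~\ref{Thm:Mikhlin}\eqref{Thm:MikhlinA_pAlpha} resp.\ \eqref{Thm:MikhlinBesovTriebelRBounded}), with the crux being the uniform scalar bound $\langle\mu\rangle^{-d_+}\langle\xi\rangle^{|\beta|-d}\langle\xi,\mu\rangle^{d-|\beta|}\lesssim 1$. The only difference is cosmetic: the paper normalizes with the parameter-dependent lift $\langle\xi,\mu\rangle^{-d}$ and then verifies that the leftover scalar multiplier $m(\xi,\mu)=\langle\mu\rangle^{-d_+}\langle\xi,\mu\rangle^{d}\langle\xi\rangle^{-d}$ satisfies Mikhlin's condition uniformly in $\mu$, whereas you normalize with the fixed lift $\langle\xi\rangle^{-d}$ (so that one factor is a $\mu$-independent isomorphism) and absorb the same scalar estimate into the Leibniz expansion of $q=p\langle\cdot\rangle^{-d}$.
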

\begin{proof}
Note that $m(\,\cdot\,,\mu):=[\xi\mapsto \langle\mu\rangle^{-d_+}\langle\xi,\mu\rangle^d\langle\xi\rangle^{-d}]$ satisfies Mikhlin's condition uniformly in $\mu$. Indeed, by induction on $|\alpha|$ one gets that $\partial^{\alpha}\langle\mu\rangle^{-d_+}\langle\xi,\mu\rangle^d\langle\xi\rangle^{-d}$ is a linear combination of terms of the form
\[
 p_{j,k}(\xi,\mu)=\xi^{\beta}\langle\xi,\mu\rangle^{d-2j}\langle\xi\rangle^{-d-2k}\langle\mu\rangle^{-d_+}
\]
for some $\beta\in\N_0^{n-1}$, $j,k\in\N_0$ with $|\alpha|=2j+2k-|\beta|$. For such a term we obtain
\begin{align*}
 \langle\xi\rangle^{|\alpha|}|p_{j,k}(\xi,\mu)|&=m(\xi,\mu)|\xi^{\beta}|\langle\xi,\mu\rangle^{-2j}\langle\xi\rangle^{|\alpha|-2k}\\
 &\leq m(\xi,\mu)\langle\xi\rangle^{|\alpha|+|\beta|-2j-2k}\\
 &\lesssim 1.
\end{align*}
Hence, by Mikhlin's theorem there is an $N'\in\N$ such that
\begin{align*}
  &\quad\mathcal{R}_{\mathcal{B}(\mathscr{A}^{s+d}(\R^{n-1},w_0;E^N),\mathscr{A}^{s}(\R^{n-1},w_0;E^M))}\big(\{\vartheta(\mu)^{-1}\langle\mu\rangle^{-d_+}\operatorname{op}[p(\,\cdot\,,\mu)]:\mu\in\Sigma\}\big)\\
  &= \mathcal{R}_{\mathcal{B}(\mathscr{A}^{s+d}(\R^{n-1},w_0;E^N),\mathscr{A}^{s}(\R^{n-1},w_0;E^M))}\big(\{\vartheta(\mu)^{-1}\operatorname{op}[p(\,\cdot\,,\mu)\langle\cdot,\mu\rangle^{-d}]\operatorname{op}[\langle\mu\rangle^{-d_+} \langle\cdot,\mu\rangle^{d}]:\mu\in\Sigma\}\big)\\
 &\lesssim\mathcal{R}_{\mathcal{B}(\mathscr{A}^{s}(\R^{n-1},w_0;E^N),\mathscr{A}^{s}(\R^{n-1},w_0;E^M))}\big(\{\vartheta(\mu)^{-1}\operatorname{op}[p(\,\cdot\,,\mu)\langle\cdot,\mu\rangle^{-d}]:\mu\in\Sigma\}\big)\\
 &\lesssim \|p\|^{(d,\vartheta)}_{N'}.
\end{align*}
\end{proof}

\begin{proposition}[Iterated version of Mikhlin's theorem]\label{Prop:IteratedMikhlin}
 Let $s,k\in\R$ and let $E$ be a Banach space. Consider one of the following cases with Assumption \ref{Assump:Spaces} in mind, $\mathscr{B}$ being defined on $I_{x_n}=\R$ and with $m\in L_{\infty}(\R^n;\mathcal{B}(E))$ being smooth enough:
 \begin{enumerate}[(a)]
  \item\label{Prop:IteratedMikhlin:TLTL} Neither $\mathscr{A}$ nor $\mathscr{B}$ stands for the Bessel potential scale. For $N\in\N_0$ we define
  \[
   \kappa_{m,N}:=\sup\big\{\langle\xi'\rangle^{|\alpha'|}\langle\xi_n\rangle^{\alpha_n}\|\partial_{\xi}^{\alpha}m(\xi',\xi_n)\|_{\mathcal{B}(E)}: \alpha\in\N_0^{n},|\alpha|\leq N, \xi\in\R^{n}\big\}.
  \]
  \item\label{Prop:IteratedMikhlin:BTL} $\mathscr{A}$ stands for the Bessel potential scale and $\mathscr{B}$ does not stand for the Bessel potential scale. For $N\in\N_0$ we define
   \[
   \kappa_{m,N}:=\sup_{\xi_n\in\R,\alpha_n\in\N_0,\alpha_n\leq N}\mathcal{R}\big\{|\xi'|^{|\alpha'|}\langle\xi_n\rangle^{\alpha_n}\partial_{\xi}^{\alpha}m(\xi',\xi_n): \alpha'\in\N_0^{n-1},|\alpha'|\leq N, \xi'\in\R^{n-1}\big\}.
  \]
  \item\label{Prop:IteratedMikhlin:TLB} $\mathscr{B}$ stands for the Bessel potential scale and $\mathscr{A}$ does not stand for the Bessel potential scale. For $N\in\N_0$ we define
     \[
   \kappa_{m,N}:=\mathcal{R}\big\{\langle\xi'\rangle^{|\alpha'|}|\xi_n|^{\alpha_n}\partial_{\xi}^{\alpha}m(\xi',\xi_n): \alpha\in\N_0^{n},|\alpha|\leq N, \xi\in\R^{n}\big\}.
  \]
  \item\label{Prop:IteratedMikhlin:BB} Both $\mathscr{A}$ and $\mathscr{B}$ stand for the Bessel potential scale and $E$ satisfies Pisier's property $(\alpha)$. For $N\in\N_0$ we define
 \[
   \kappa_{m,N}:=\mathcal{R}\big\{|\xi'|^{|\alpha'|}|\xi_n|^{\alpha_n}\partial_{\xi}^{\alpha}m(\xi',\xi_n): \alpha\in\N_0^{n},|\alpha|\leq N, \xi\in\R^{n}\big\}.
  \]
 \end{enumerate}
 There is an $N\in\N_0$ and a constant $C>0$ independent of $m$ such that
 \[
  \|\operatorname{op}[m]\|_{\mathcal{B}(\mathscr{B}^{k}(\mathscr{A}^s))}\leq C\kappa_{m,N}.
 \]
\end{proposition}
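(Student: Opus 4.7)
The plan is to factorise $\operatorname{op}[m]$ according to the splitting of variables: since $m=m(\xi',\xi_n)$ is a function of the joint variable, we may write
\begin{equation*}
 \operatorname{op}[m] \;=\; \mathscr{F}_{x_n\to\xi_n}^{-1}\circ M(\,\cdot\,)\circ \mathscr{F}_{x_n\to\xi_n},\qquad M(\xi_n):=\operatorname{op}_{x'}[m(\,\cdot\,,\xi_n)]\in \mathcal{B}(\mathscr{A}^s).
\end{equation*}
Thus $\operatorname{op}[m]$ is itself a Fourier multiplier in the normal variable $x_n$ acting on $\mathscr{B}^k(\R;\mathscr{A}^s)$, with operator-valued symbol $M$. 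The strategy is then to apply Theorem~\ref{Thm:Mikhlin} twice: first in the tangential direction to realise each $M(\xi_n)$ (and each $\xi_n$-derivative thereof) as a bounded operator on $\mathscr{A}^s$, and then in the normal direction to conclude boundedness of the resulting operator-valued multiplier on $\mathscr{B}^k(\R;\mathscr{A}^s)$.

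The inner step rests on the identity $\partial_{\xi_n}^{\alpha_n}M(\xi_n)=\operatorname{op}_{x'}[\partial_{\xi_n}^{\alpha_n}m(\,\cdot\,,\xi_n)]$, obtained by differentiating under the Fourier integral. When $\mathscr{A}$ belongs to the Besov or Triebel--Lizorkin scale (cases \eqref{Prop:IteratedMikhlin:TLTL} and \eqref{Prop:IteratedMikhlin:BTL}), part \eqref{Thm:MikhlinBesovTriebel} of Theorem~\ref{Thm:Mikhlin} turns the uniform control on $\langle\xi'\rangle^{|\alpha'|}\partial_{\xi'}^{\alpha'}\partial_{\xi_n}^{\alpha_n}m$ encoded in $\kappa_{m,N}$ into bounds on $\partial_{\xi_n}^{\alpha_n}M(\xi_n)$ in $\mathcal{B}(\mathscr{A}^s)$; when $\mathscr{A}$ is the Bessel potential scale (cases \eqref{Prop:IteratedMikhlin:TLB} and \eqref{Prop:IteratedMikhlin:BB}), one uses instead parts \eqref{Thm:MikhlinA_p} and \eqref{Thm:MikhlinA_pAlpha}, which require the factors $|\xi'|^{|\alpha'|}$ and $\mathcal{R}$-bounds rather than uniform bounds. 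In case \eqref{Prop:IteratedMikhlin:BB} one needs to pass from an $\mathcal{R}$-bounded family of tangential symbols to an $\mathcal{R}$-bounded family of tangential multipliers, and it is precisely here that Theorem~\ref{Thm:Mikhlin}\eqref{Thm:MikhlinA_pAlpha} and thus Pisier's property $(\alpha)$ enters.

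The outer step is the same argument with $\xi_n$ in place of $\xi'$ applied to the operator-valued symbol $M\colon\R\to\mathcal{B}(\mathscr{A}^s)$: if $\mathscr{B}$ is Besov or Triebel--Lizorkin (cases \eqref{Prop:IteratedMikhlin:TLTL} and \eqref{Prop:IteratedMikhlin:BTL}) only uniform bounds on $\langle\xi_n\rangle^{\alpha_n}\|\partial_{\xi_n}^{\alpha_n}M(\xi_n)\|_{\mathcal{B}(\mathscr{A}^s)}$ are needed via Theorem~\ref{Thm:Mikhlin}\eqref{Thm:MikhlinBesovTriebel}, whereas if $\mathscr{B}$ is Bessel potential (cases \eqref{Prop:IteratedMikhlin:TLB} and \eqref{Prop:IteratedMikhlin:BB}) Theorem~\ref{Thm:Mikhlin}\eqref{Thm:MikhlinA_p}--\eqref{Thm:MikhlinA_pAlpha} requires $\mathcal{R}$-boundedness of $\{|\xi_n|^{\alpha_n}\partial_{\xi_n}^{\alpha_n}M(\xi_n)\}$, which is produced by the inner step using the $\mathcal{R}$-bounded multiplier theorems Theorem~\ref{Thm:Mikhlin}\eqref{Thm:MikhlinBesovTriebelRBounded} (in case \eqref{Prop:IteratedMikhlin:TLB}) and Theorem~\ref{Thm:Mikhlin}\eqref{Thm:MikhlinA_pAlpha} (in case \eqref{Prop:IteratedMikhlin:BB}). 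The main obstacle is a bookkeeping one: in each of the four cases one has to verify that the joint quantity $\kappa_{m,N}$ is strong enough so that, for every fixed $\alpha_n$ and $\xi_n$, the tangential Mikhlin hypothesis is satisfied with a constant that is itself either uniform or $\mathcal{R}$-bounded as $\xi_n$ varies (with the correct weight $\langle\xi_n\rangle^{\alpha_n}$ or $|\xi_n|^{\alpha_n}$), so that the two applications of Mikhlin's theorem chain together cleanly; once this is tracked, the asserted estimate $\|\operatorname{op}[m]\|_{\mathcal{B}(\mathscr{B}^k(\mathscr{A}^s))}\le C\kappa_{m,N}$ drops out for some $N$ depending only on $n$ and on the scales involved.
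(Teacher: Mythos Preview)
Your approach is essentially the paper's: view $\operatorname{op}[m]$ as a one-dimensional Fourier multiplier in $x_n$ with operator-valued symbol $M(\xi_n)=\operatorname{op}_{x'}[m(\cdot,\xi_n)]$, identify $\partial_{\xi_n}^{\alpha_n}M$ with $\operatorname{op}_{x'}[\partial_{\xi_n}^{\alpha_n}m]$, and apply Theorem~\ref{Thm:Mikhlin} twice (the paper spells out only case~\eqref{Prop:IteratedMikhlin:BB} and says the others are analogous).

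There is, however, a labeling slip in your inner-step paragraph: you say $\mathscr{A}$ is Besov/Triebel--Lizorkin in cases \eqref{Prop:IteratedMikhlin:TLTL} and \eqref{Prop:IteratedMikhlin:BTL} and Bessel potential in \eqref{Prop:IteratedMikhlin:TLB} and \eqref{Prop:IteratedMikhlin:BB}, but the statement has (b) and (c) swapped. In case \eqref{Prop:IteratedMikhlin:BTL}, $\mathscr{A}$ \emph{is} the Bessel potential scale (hence the $|\xi'|^{|\alpha'|}$ and the $\mathcal{R}$-bound in its $\kappa_{m,N}$), while in case \eqref{Prop:IteratedMikhlin:TLB}, $\mathscr{A}$ is \emph{not} (hence the $\langle\xi'\rangle^{|\alpha'|}$). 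Your outer-step paragraph assigns the labels correctly, so this is purely a local fix.

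One further remark: the paper does not dispose of $\partial_{\xi_n}^{\alpha_n}\operatorname{op}_{x'}[m(\cdot,\xi_n)]=\operatorname{op}_{x'}[\partial_{\xi_n}^{\alpha_n}m(\cdot,\xi_n)]$ by ``differentiating under the Fourier integral'' but by a difference-quotient argument in the operator norm on $\mathcal{B}(\mathscr{A}^s)$, invoking Mikhlin on the tangential symbol $\tfrac{1}{h}(m(\cdot,\xi_n+h)-m(\cdot,\xi_n))-\partial_n m(\cdot,\xi_n)$ and using the uniform continuity in $\xi_n$ that the boundedness of the next-higher derivatives provides. Your phrasing is a little quick on this point.
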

\begin{proof}
First, we note that $\operatorname{op}[\partial_{\xi_n}^{\alpha_n}m(\,\cdot,\xi_n)]=\partial_{\xi_n}^{\alpha_n}\operatorname{op}[m(\,\cdot,\xi_n)]$, $\alpha_n\in\N$, if $m$ is smooth enough. Indeed, let $\epsilon>0$ be small enough and $h\in (-\epsilon,\epsilon)$. Then we have
\begin{align*}
 &\quad \big\|\operatorname{op}\big[\tfrac{1}{h}(m(\,\cdot,\xi_n+h)-m(\,\cdot,\xi_n))-\partial_nm(\,\cdot,\xi_n) \big]\big\|_{\mathcal{B}(\mathscr{A}^s,\mathscr{A}^s)}\\
 &\leq C\sup_{\alpha'\in\N_0^{n-1},|\alpha'|\leq N'}\sup_{\xi'\in\R^{n-1}}\|\langle\xi'\rangle^{|\alpha'|}\partial_{\xi'}^{\alpha'}[\tfrac{1}{h}(m(\xi',\xi_n+h)-m(\xi',\xi_n))-\partial_nm(\xi',\xi_n) ]\|_{\mathcal{B}(E)}\\
  &= C\sup_{\alpha'\in\N_0^{n-1},|\alpha'|\leq N'}\sup_{\xi'\in\R^{n-1}}\bigg\|\langle\xi'\rangle^{|\alpha'|}\partial_{\xi'}^{\alpha'}\bigg[\int_0^1\partial_n m(\xi',\xi_n+sh)-\partial_n m(\xi',\xi_n)\,ds\bigg]\bigg\|_{\mathcal{B}(E)}\\
 &\leq C\sup_{\alpha'\in\N_0^{n-1},|\alpha'|\leq N'}\sup_{\xi'\in\R^{n-1}}\sup_{s\in[0,1]}\langle\xi'\rangle^{|\alpha'|}\|\partial_{\xi'}^{\alpha'}\partial_n[m(\xi',\xi_n+s h)-m(\xi',\xi_n)]\|_{\mathcal{B}(E)}
\end{align*}
Now we can use the uniform continuity of $$\R^{n-1}\times(-\epsilon,\epsilon)\to\mathcal{B}(E),\,(\xi',h)\mapsto \langle\xi'\rangle^{|\alpha'|}\partial_{\xi'}^{\alpha'}\partial_n m(\xi',\xi_n+h))$$ to see that we have convergence to $0$ as $h\to0$ in the above estimate. The uniform continuity follows from the boundedness of the derivatives (if $m$ is smooth enough). For derivatives of order $\alpha_n\geq2$ we can apply the same argument to $\partial_{\xi_n}^{\alpha_n-1}m$.\\
 The idea is now to apply Miklhin's theorem twice. For example in case \eqref{Prop:IteratedMikhlin:BB} one obtains
 \begin{align*}
 \| \operatorname{op}[m]\|_{\mathcal{B}(\mathscr{B}^{k}(\mathscr{A}^s)))}&\lesssim\mathcal{R}_{\mathcal{B}(\mathscr{A}^s,\mathscr{A}^s)}\big(\{|\xi_n|^{\alpha_n}\partial_{\xi_n}^{\alpha_n}\operatorname{op}[m(\,\cdot,\xi_n)]:\alpha_n\in\N,\alpha_n\leq N_n,\xi_n\in\R\}\big)\\
 &=\mathcal{R}_{\mathcal{B}(\mathscr{A}^s,\mathscr{A}^s)}\big(\{\operatorname{op}[|\xi_n|^{\alpha_n}\partial_{\xi_n}^{\alpha_n}m(\,\cdot,\xi_n)]:\alpha_n\in\N,\alpha_n\leq N_n,\xi_n\in\R\}\big)\\
 &\lesssim \kappa_{m,N}
\end{align*}
by Theorem \ref{Thm:Mikhlin} \eqref{Thm:MikhlinA_pAlpha} for $N_n,N\in\N_0$ large enough. The other cases are obtained analogously.
\end{proof}

There also is an $\mathcal{R}$-bounded version of Proposition \ref{Prop:IteratedMikhlin}
\begin{proposition}[Iterated $\mathcal{R}$-bounded version of Mikhlin's theorem]\label{Prop:IteratedMikhlinRbounded}
 Let $s,k\in\R$ and let $E$ be a Banach space. Consider one of the following cases with Assumption \ref{Assump:Spaces} in mind and with $\mathcal{M}\subset C^{\tilde{N}}(\R^n\setminus\{0\};\mathcal{B}(E))$ with $\tilde{N}\in\N_0$ being large enough:
 \begin{enumerate}[(a)]
  \item\label{Prop:IteratedMikhlinRbounded:TLTL} Neither $\mathscr{A}$ nor $\mathscr{B}$ stands for the Bessel potential scale. For $N\in\N_0$ we define
  \[
   \kappa_{\mathcal{M},N}:=\mathcal{R}\big\{\langle\xi'\rangle^{|\alpha'|}\langle\xi_n\rangle^{\alpha_n}\partial_{\xi}^{\alpha}m(\xi): m\in\mathcal{M},\alpha\in\N_0^{n},|\alpha|\leq N, \xi\in\R^{n}\big\}.
  \]
  \item\label{Prop:IteratedMikhlinRbounded:BTL} $\mathscr{A}$ stands for the Bessel potential scale, $\mathscr{B}$ does not stand for the Bessel potential scale and $E$ satisfies Pisier's property $(\alpha)$. For $N\in\N_0$ we define
   \[
   \kappa_{\mathcal{M},N}:=\mathcal{R}\big\{|\xi'|^{|\alpha'|}\langle\xi_n\rangle^{\alpha_n}\partial_{\xi}^{\alpha}m(\xi): m\in\mathcal{M},\alpha\in\N_0^{n},|\alpha|\leq N, \xi\in\R^{n}\big\}.
  \]
  \item\label{Prop:IteratedMikhlinRbounded:TLB} $\mathscr{B}$ stands for the Bessel potential scale, $\mathscr{A}$ does not stand for the Bessel potential scale and $E$ satisfies Pisier's property $(\alpha)$. For $N\in\N_0$ we define
       \[
   \kappa_{\mathcal{M},N}:=\mathcal{R}\big\{\langle\xi'\rangle^{|\alpha'|}|\xi_n|^{\alpha_n}\partial_{\xi}^{\alpha}m(\xi): m\in\mathcal{M},\alpha\in\N_0^{n},|\alpha|\leq N, \xi\in\R^{n}\big\}.
  \]
  \item\label{Prop:IteratedMikhlinRbounded:BB} Both $\mathscr{A}$ and $\mathscr{B}$ stand for the Bessel potential scale and $E$ satisfies Pisier's property $(\alpha)$. For $N\in\N_0$ we define
 \[
   \kappa_{\mathcal{M},N}:=\mathcal{R}\big\{|\xi'|^{|\alpha'|}|\xi_n|^{\alpha_n}\partial_{\xi}^{\alpha}m(\xi',\xi_n):m\in\mathcal{M}, \alpha\in\N_0^{n},|\alpha|\leq N, \xi\in\R^{n}\big\}.
  \]
 \end{enumerate}
 There is an $N\in\N_0$ and a constant $C>0$ such that
 \[
  \mathcal{R}(\{\operatorname{op}[m]:m\in\mathcal{M}\})\leq C\kappa_m\quad\text{in }\mathcal{B}(\mathscr{B}^{k}(\mathscr{A}^s)).
 \]
\end{proposition}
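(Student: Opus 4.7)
The plan is to mimic the argument for Proposition~\ref{Prop:IteratedMikhlin} but at every step replace uniform Fourier-multiplier estimates by $\mathcal{R}$-bounded ones, using Theorem~\ref{Thm:Mikhlin}\eqref{Thm:MikhlinA_pAlpha} for the Bessel potential scale (which is where Pisier's property $(\alpha)$ enters) and Theorem~\ref{Thm:Mikhlin}\eqref{Thm:MikhlinBesovTriebelRBounded} for the Besov, Triebel--Lizorkin and dual scales. First, exactly as in the proof of Proposition~\ref{Prop:IteratedMikhlin}, one checks that for $m\in\mathcal{M}$ smooth enough and any $\alpha_n\in\N_0$,
\[
 \operatorname{op}[\partial_{\xi_n}^{\alpha_n}m(\,\cdot\,,\xi_n)]=\partial_{\xi_n}^{\alpha_n}\operatorname{op}[m(\,\cdot\,,\xi_n)]\quad\text{in }\mathcal{B}(\mathscr{A}^s),
\]
the difference-quotient argument being unaffected by working with a family $\mathcal{M}$ rather than a single symbol.

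Next, view $\operatorname{op}[m]$ as the Fourier multiplier on $\mathscr{B}^k(\R_{x_n};\mathscr{A}^s)$ whose operator-valued symbol is $\xi_n\mapsto\operatorname{op}[m(\,\cdot\,,\xi_n)]\in\mathcal{B}(\mathscr{A}^s)$. Applying the $\mathcal{R}$-bounded Mikhlin theorem in the $x_n$-variable (case \eqref{Thm:MikhlinA_pAlpha} of Theorem~\ref{Thm:Mikhlin} when $\mathscr{B}$ is the Bessel potential scale, and case \eqref{Thm:MikhlinBesovTriebelRBounded} otherwise) to the parametrized family of symbols indexed by $m\in\mathcal{M}$ yields, for some $N_n\in\N_0$ depending only on $\mathscr{B}$,
\[
 \mathcal{R}_{\mathcal{B}(\mathscr{B}^k(\mathscr{A}^s))}(\{\operatorname{op}[m]:m\in\mathcal{M}\})\lesssim \mathcal{R}_{\mathcal{B}(\mathscr{A}^s)}\!\left(\big\{\vartheta_n(\xi_n)^{\alpha_n}\partial_{\xi_n}^{\alpha_n}\operatorname{op}[m(\,\cdot\,,\xi_n)]:m\in\mathcal{M},\,\alpha_n\leq N_n,\,\xi_n\in\R\big\}\right),
\]
where $\vartheta_n(\xi_n)=|\xi_n|$ if $\mathscr{B}$ is the Bessel potential scale and $\vartheta_n(\xi_n)=\langle\xi_n\rangle$ otherwise; the use of the $\mathcal{R}$-bounded Bessel-potential Mikhlin at this step is what forces Pisier's property $(\alpha)$ in cases \eqref{Prop:IteratedMikhlinRbounded:TLB} and \eqref{Prop:IteratedMikhlinRbounded:BB}. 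By the first step this equals the $\mathcal{R}$-bound of $\{\operatorname{op}[\vartheta_n(\xi_n)^{\alpha_n}\partial_{\xi_n}^{\alpha_n}m(\,\cdot\,,\xi_n)]\}$.

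Now apply the $\mathcal{R}$-bounded Mikhlin theorem in the tangential variables $\xi'$, this time to the enlarged family $\{\vartheta_n(\xi_n)^{\alpha_n}\partial_{\xi_n}^{\alpha_n}m(\,\cdot\,,\xi_n):m\in\mathcal{M},\,\alpha_n\leq N_n,\,\xi_n\in\R\}$. Again one uses \eqref{Thm:MikhlinA_pAlpha} when $\mathscr{A}$ is Bessel potential (requiring Pisier's $(\alpha)$, which accounts for cases \eqref{Prop:IteratedMikhlinRbounded:BTL} and \eqref{Prop:IteratedMikhlinRbounded:BB}) and \eqref{Thm:MikhlinBesovTriebelRBounded} otherwise. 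One obtains an $N'\in\N_0$ such that
\[
 \mathcal{R}_{\mathcal{B}(\mathscr{A}^s)}(\{\operatorname{op}[\vartheta_n(\xi_n)^{\alpha_n}\partial_{\xi_n}^{\alpha_n}m(\,\cdot\,,\xi_n)]\})\lesssim \mathcal{R}\!\left(\big\{\vartheta'(\xi')^{|\alpha'|}\vartheta_n(\xi_n)^{\alpha_n}\partial_{\xi'}^{\alpha'}\partial_{\xi_n}^{\alpha_n}m(\xi):m\in\mathcal{M},|\alpha|\leq N',\xi\in\R^n\big\}\right),
\]
with $\vartheta'(\xi')=|\xi'|$ in the Bessel potential case and $\vartheta'(\xi')=\langle\xi'\rangle$ otherwise. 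Setting $N:=\max\{N',N_n\}$ and combining the two $\mathcal{R}$-bounds gives the desired estimate by $C\kappa_{\mathcal{M},N}$ in each of the four cases. The main technical point, and really the only obstacle beyond bookkeeping, is to verify that $\mathcal{R}$-boundedness genuinely propagates through the iteration; this is precisely the content of Theorem~\ref{Thm:Mikhlin}\eqref{Thm:MikhlinA_pAlpha}/\eqref{Thm:MikhlinBesovTriebelRBounded} and is exactly why Pisier's property $(\alpha)$ must be imposed whenever a Bessel potential scale appears.
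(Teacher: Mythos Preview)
Your proposal is correct and follows exactly the approach the paper intends: the paper's own proof simply says ``This follows by the same proof as [Proposition~\ref{Prop:IteratedMikhlin}]. One just has to use the $\mathcal{R}$-bounded versions of Mikhlin's theorem.'' You have written out precisely that argument, correctly identifying which part of Theorem~\ref{Thm:Mikhlin} to invoke at each step and why Pisier's property~$(\alpha)$ enters exactly when a Bessel potential scale is involved.
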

\begin{proof}
 This follows by the same proof as \ref{Prop:IteratedMikhlin}. One just has to use the $\mathcal{R}$-bounded versions of Mikhlin's theorem.
\end{proof}

\begin{lemma}[Lifting Property for Mixed Scales]\label{Lemma:Lifting_Property}
 Let $s,k,t_0,t_1\in\R$. Then 
 \[
  \langle D_n\rangle^{t_0}\langle D'\rangle^{t_1}\colon \mathscr{B}^{k+t_0}(\mathscr{A}^{s+t_1})\stackrel{\simeq}{\rightarrow} \mathscr{B}^{k}(\mathscr{A}^{s})
 \]
is an isomorphism of Banach spaces
\end{lemma}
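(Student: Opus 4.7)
The strategy is to factor $\langle D_n\rangle^{t_0}\langle D'\rangle^{t_1}=\langle D_n\rangle^{t_0}\circ \langle D'\rangle^{t_1}$ and observe that the two factors act on disjoint groups of variables. Since they commute, it suffices to show that each factor is individually a topological isomorphism between the appropriate mixed-smoothness spaces, with inverse given by the obvious negative-order lifting. The candidate inverse of the whole operator is then $\langle D'\rangle^{-t_1}\langle D_n\rangle^{-t_0}$.

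\textbf{Step 1 (scalar lifting in the tangential scale).} The plan is first to verify, for each of the five admissible choices of $\mathscr{A}^\bullet$, that
\[
\langle D'\rangle^{t_1}\colon \mathscr{A}^{s+t_1}(\R^{n-1}_{x'},w_0;E)\stackrel{\simeq}{\longrightarrow}\mathscr{A}^{s}(\R^{n-1}_{x'},w_0;E)
\]
is an isomorphism with inverse $\langle D'\rangle^{-t_1}$. For the Bessel potential scale this is the definition (Definition~\ref{def:bessel_potential_space}). For the Besov and Triebel-Lizorkin scales it follows from Theorem~\ref{Thm:Mikhlin}\eqref{Thm:MikhlinBesovTriebel}, since the symbols $\langle\xi'\rangle^{\pm t_1}$ trivially satisfy the required Mikhlin estimates. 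For the dual scales one dualizes these statements, using that $\langle D'\rangle^{t_1}$ is symbolically self-adjoint and Definition~\ref{def:dual_besov_triebel_lizorkin_space}.

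\textbf{Step 2 (lifting in the vector-valued normal scale).} Because $\langle D'\rangle^{t_1}$ acts only on the $x'$-variable, it is applied pointwise in $x_n$ to the values of a function $u\colon I_{x_n}\to \mathscr{A}^{s+t_1}$. The $\mathscr{B}$-norm is built from Fourier multipliers in $x_n$ taking values in the tangential scale, and thus depends functorially on the coefficient Banach space. Consequently the pointwise extension
\[
\langle D'\rangle^{t_1}\colon \mathscr{B}^{k+t_0}(I_{x_n},w_1;\mathscr{A}^{s+t_1})\stackrel{\simeq}{\longrightarrow} \mathscr{B}^{k+t_0}(I_{x_n},w_1;\mathscr{A}^{s})
\]
is an isomorphism with inverse $\langle D'\rangle^{-t_1}$. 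Similarly, applying Step~1 with the roles of $\mathscr{A}$ and $\mathscr{B}$ interchanged and the coefficient Banach space replaced by $\mathscr{A}^s$, one obtains that
\[
\langle D_n\rangle^{t_0}\colon \mathscr{B}^{k+t_0}(I_{x_n},w_1;\mathscr{A}^{s})\stackrel{\simeq}{\longrightarrow} \mathscr{B}^{k}(I_{x_n},w_1;\mathscr{A}^{s})
\]
is an isomorphism with inverse $\langle D_n\rangle^{-t_0}$. Here the structural hypotheses bundled into Assumption~\ref{Assump:Spaces} (reflexivity, UMD when $\mathscr{B}$ is the Bessel potential scale, the appropriate class of $w_1$) are exactly what is required to apply the relevant scalar lifting to $\mathscr{A}^s$-valued distributions.

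\textbf{Step 3 (composition).} Composing the two isomorphisms yields
\[
\langle D_n\rangle^{t_0}\langle D'\rangle^{t_1}\colon \mathscr{B}^{k+t_0}(\mathscr{A}^{s+t_1})\stackrel{\simeq}{\longrightarrow}\mathscr{B}^{k}(\mathscr{A}^{s}),
\]
with two-sided inverse $\langle D'\rangle^{-t_1}\langle D_n\rangle^{-t_0}$, which coincides with $\langle D_n\rangle^{-t_0}\langle D'\rangle^{-t_1}$ because the two Fourier multipliers commute on $\mathscr{S}'(\R^n;E)$ and the composition is independent of the order of application on each intermediate space.

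\textbf{Main obstacle.} The only real work is the case analysis in Step~1, in particular handling the dual scales $\mathcal{B}^\bullet$, $\mathcal{F}^\bullet$ where the lifting is not given directly by a Mikhlin multiplier statement in the excerpt but has to be deduced by duality; one must check that $\langle D'\rangle^{t_1}$ sends the predual test-function class into itself and that duality pairs are preserved. Once the five lifting properties are in hand, Steps~2 and~3 are purely formal.
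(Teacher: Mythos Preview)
Your overall strategy---factor $\langle D_n\rangle^{t_0}\langle D'\rangle^{t_1}$, prove that each factor is a lifting isomorphism on the relevant scale, then compose---is exactly the paper's proof. The paper also treats the Bessel potential case by definition and then handles the remaining cases by an external reference, so structurally there is nothing to add.

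There is, however, a slip in your Step~1 for the Besov and Triebel--Lizorkin scales. Theorem~\ref{Thm:Mikhlin}\eqref{Thm:MikhlinBesovTriebel} as stated in the paper gives boundedness of $\operatorname{op}[m]$ on a \emph{fixed} space $\mathscr{A}^s_{p,q}$ under the condition $\kappa_m=\sup_{|\alpha|\le N}\sup_{\xi}\|\langle\xi\rangle^{|\alpha|}D^\alpha m(\xi)\|<\infty$. The symbol $m(\xi')=\langle\xi'\rangle^{t_1}$ does \emph{not} satisfy this for $t_1>0$ (it is unbounded), so you cannot conclude the lifting $\mathscr{A}^{s+t_1}\to\mathscr{A}^{s}$ directly from that theorem. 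What is needed is a genuine lifting statement that changes the smoothness index; the paper supplies this by citing \cite[Proposition~3.9]{Meyries_Veraar_2012} for the weighted $E$-valued $B/F$ scales. Your duality argument for $\mathcal{B},\mathcal{F}$ is then fine once the predual lifting is in place. So the fix is simply to replace the appeal to Theorem~\ref{Thm:Mikhlin}\eqref{Thm:MikhlinBesovTriebel} in Step~1 by the appropriate lifting result (or reprove it via the Littlewood--Paley definition, using that $\langle\xi'\rangle^{t_1}\varphi_k(\xi')\langle\xi'\rangle^{-t_1}2^{-kt_1}\cdot 2^{kt_1}$ leads to a uniformly bounded family of multipliers).
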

\begin{proof}
 If $\mathscr{A}^\bullet$ or $\mathscr{B}^{\bullet}$ belongs to the Bessel potential scale, then it follows from the definition of Bessel potential spaces that
 \[
 \langle D'\rangle^{t_1} \colon\mathscr{A}^{s+t_1}\stackrel{\simeq}{\rightarrow} \mathscr{A}^{s}\quad\text{or} \quad\langle D_n\rangle^{t_0}\colon \mathscr{B}^{k+t_0}(\mathscr{A}^{s})\stackrel{\simeq}{\rightarrow} \mathscr{B}^{k}(\mathscr{A}^{s}),
 \]
respectively. In the other cases, this is the statement of \cite[Proposition 3.9]{Meyries_Veraar_2012}. Composing the two mapings yields the assertion.
\end{proof}

\begin{proposition}\label{Prop:BesselPotentialInMixedScales}
Let $s,k\in\R$ and $t\geq0$. Suppose that $E$ has Pisier's property $(\alpha)$ if both $\mathscr{A}$ and $\mathscr{B}$ belong to the Bessel potential scale. Then
\[
 \langle D\rangle^{t}\colon  \mathscr{B}^{k+t}(\mathscr{A}^{s})\cap \mathscr{B}^{k}(\mathscr{A}^{s+t})\stackrel{\simeq}{\rightarrow}\mathscr{B}^k(\mathscr{A}^s)
\]
is an isomorphism of Banach spaces.
\end{proposition}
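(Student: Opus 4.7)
The plan is to realize $\langle D\rangle^t$ and its inverse $\langle D\rangle^{-t}$ as compositions of lifting operators with Fourier multipliers that satisfy the iterated Mikhlin conditions of Proposition~\ref{Prop:IteratedMikhlin}. All the required norm estimates then follow from combining that proposition with Lemma~\ref{Lemma:Lifting_Property}, and the assumption on Pisier's property $(\alpha)$ enters exactly because case (d) of Proposition~\ref{Prop:IteratedMikhlin} demands it.

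For the boundedness of $\langle D\rangle^t$ into $\mathscr{B}^k(\mathscr{A}^s)$, I would exploit the elementary identity
\[
 \langle\xi\rangle^t = m_{\ast}(\xi)\bigl(\langle\xi'\rangle^t+\langle\xi_n\rangle^t\bigr),\qquad m_{\ast}(\xi):=\frac{\langle\xi\rangle^t}{\langle\xi'\rangle^t+\langle\xi_n\rangle^t},
\]
which yields $\langle D\rangle^t u = \operatorname{op}[m_{\ast}]\bigl(\langle D'\rangle^t u + \langle D_n\rangle^t u\bigr)$. A routine induction, analogous to the one in the proof of Lemma~\ref{Lemma:EquivalenceParameterDependentNorms}, shows that $\partial^{\alpha}_{\xi}m_{\ast}$ is a linear combination of terms of the form
\[
 \xi^{\beta}\langle\xi\rangle^{t-2i}\langle\xi'\rangle^{tj-2k}\langle\xi_n\rangle^{tl-2m}\bigl(\langle\xi'\rangle^t+\langle\xi_n\rangle^t\bigr)^{-1-j-l},
\]
from which one reads off that $m_{\ast}$ satisfies the homogeneous iterated Mikhlin condition of Proposition~\ref{Prop:IteratedMikhlin} in any of the four cases (a)--(d). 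Hence $\operatorname{op}[m_{\ast}]\in\mathcal{B}(\mathscr{B}^k(\mathscr{A}^s))$, and the lifting property gives
\[
 \|\langle D\rangle^t u\|_{\mathscr{B}^k(\mathscr{A}^s)}\lesssim \|\langle D'\rangle^t u\|_{\mathscr{B}^k(\mathscr{A}^s)} + \|\langle D_n\rangle^t u\|_{\mathscr{B}^k(\mathscr{A}^s)} \eqsim \|u\|_{\mathscr{B}^k(\mathscr{A}^{s+t})} + \|u\|_{\mathscr{B}^{k+t}(\mathscr{A}^s)}.
\]

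For the inverse direction I would show that $\langle D\rangle^{-t}$ maps $\mathscr{B}^k(\mathscr{A}^s)$ continuously into both components of the intersection. By Lemma~\ref{Lemma:Lifting_Property} this reduces to the boundedness of the multiplier operators $\operatorname{op}[\langle\xi_n\rangle^t\langle\xi\rangle^{-t}]$ and $\operatorname{op}[\langle\xi'\rangle^t\langle\xi\rangle^{-t}]$ on $\mathscr{B}^k(\mathscr{A}^s)$. Since $t\geq 0$, both symbols are uniformly bounded by $1$, and differentiating them produces terms with strictly more decay in the relevant variable, so the iterated Mikhlin conditions are again immediate.

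Finally, since $\langle D\rangle^t$ and $\langle D\rangle^{-t}$ are mutual inverses on $\mathscr{S}'(\R^n;E)$, the two continuous maps constructed above are inverse to one another, giving the desired isomorphism. The only real technical step is the verification of the Mikhlin conditions in the \emph{homogeneous} form required by the Bessel potential cases (parts (b)--(d) of Proposition~\ref{Prop:IteratedMikhlin}), where one has to check that derivatives in $\xi'$ or $\xi_n$ do not destroy the necessary vanishing at the corresponding coordinate hyperplane; this is where the Pisier $(\alpha)$ hypothesis intervenes indirectly by allowing one to invoke case (d).
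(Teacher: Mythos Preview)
Your proposal is correct and follows essentially the same route as the paper: the paper defines the same multiplier $f=m_\ast$, verifies the iterated Mikhlin bounds via an induction on the form of $\partial^\alpha f$, and then handles the reverse estimate exactly as you do using the multipliers $\langle\xi'\rangle^t\langle\xi\rangle^{-t}$ and $\langle\xi_n\rangle^t\langle\xi\rangle^{-t}$. One small clarification: your final remark about checking ``vanishing at the corresponding coordinate hyperplane'' is unnecessary, since the inhomogeneous bound $\langle\xi'\rangle^{|\alpha'|}\langle\xi_n\rangle^{\alpha_n}|\partial^\alpha m|\lesssim 1$ automatically implies the homogeneous versions in cases~(b)--(d) of Proposition~\ref{Prop:IteratedMikhlin} (because $|\xi'|\le\langle\xi'\rangle$ and $|\xi_n|\le\langle\xi_n\rangle$).
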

\begin{proof}
    By the assumptions we imposed on $\mathscr{B}^{\bullet}(\mathscr{A}^\bullet)$, we can apply Mikhlin's theorem. We define
    \[
     f\colon \R^{n}\to\R,\;\xi\mapsto \frac{\langle\xi\rangle^{t}}{\langle\xi'\rangle^{t}+\langle\xi_n\rangle^{t}}
    \]
which satisfies the assumptions of Proposition \ref{Prop:IteratedMikhlin}. Indeed, by induction we have that $\partial_{\xi}^{\alpha}f$ is a linear combination of terms of the form
\begin{align*}
	\xi^{\beta}\langle\xi\rangle^{t-i'-i_n}\langle\xi'\rangle^{(t-2)j'-k'}\langle\xi_n\rangle^{(t-2)j_n-k_n}(\langle\xi'\rangle^t+\langle\xi_n\rangle^t)^{-1-j'-j_n}
\end{align*}
for some $\beta\in\N_0^n$, $i',i_n,j',j_n,k',k_n\in\N_0$ such that $\alpha_n=i_n+2j_n+k_n-\beta_n$ and $|\alpha'|=i'+2j'+k'-|\beta'|$. But for such a term we have that
\begin{align*}
	&\quad|\langle\xi'\rangle^{|\alpha'|}\langle\xi_n\rangle^{\alpha_n}\xi^{\beta}\langle\xi\rangle^{t-i'-i_n}\langle\xi'\rangle^{(t-2)j'-k'}\langle\xi_n\rangle^{(t-2)j_n-k_n}(\langle\xi'\rangle^t+\langle\xi_n\rangle^t)^{-1-j'-j_n}|\\
	&= |f(\xi)\langle\xi'\rangle^{|\alpha'|}\langle\xi_n\rangle^{\alpha_n}\xi^{\beta}\langle\xi\rangle^{-i'-i_n}\langle\xi'\rangle^{(t-2)j'-k'}\langle\xi_n\rangle^{(t-2)j_n-k_n}(\langle\xi'\rangle^t+\langle\xi_n\rangle^t)^{-j'-j_n}|\\
	&\leq f(\xi)[\langle\xi'\rangle^{|\alpha'|+|\beta'|-i'-2j'-k'}\langle\xi_n\rangle^{\alpha_n+\beta_n-i_n-2j_n-k_n}][\langle\xi'\rangle^{tj'}\langle\xi_n\rangle^{tj_n}(\langle\xi'\rangle^t+\langle\xi_n\rangle^t)^{-j'-j_n}]\\
	&\leq f(\xi)\\
	&\leq 1.
\end{align*}
This shows that $f$ satisfies the assumptions of Proposition \ref{Prop:IteratedMikhlin}. Therefore, we obtain
\begin{align*}
 \|\langle D\rangle^{t}u\|_{\mathscr{B}^k(\mathscr{A}^s)}&=\|f(D)(\langle D'\rangle^{t}+\langle D_n\rangle^{t})u \|_{\mathscr{B}^k(\mathscr{A}^s)}\lesssim  \|(\langle D'\rangle^{t}+\langle D_n\rangle^{t})u \|_{\mathscr{B}^k(\mathscr{A}^s)}\\
 &\lesssim\max\{\|u \|_{\mathscr{B}^{k+t}(\mathscr{A}^{s})},\|u \|_{\mathscr{B}^{k}(\mathscr{A}^{s+t})}\}
\end{align*}
as well as
\begin{align*}
\max\{\|u \|_{\mathscr{B}^{k+t}(\mathscr{A}^{s})},\|u \|_{\mathscr{B}^{k}(\mathscr{A}^{s+t})}\} &\leq  \|\langle D_n\rangle^t u \|_{\mathscr{B}^{k}(\mathscr{A}^{s})}+\|\langle D'\rangle^t u \|_{\mathscr{B}^{k}(\mathscr{A}^{s})}\\
&= \bigg\|\frac{\langle D'\rangle^{t}}{\langle D\rangle^{t}}\langle D\rangle^{t}u \bigg\|_{\mathscr{B}^k(\mathscr{A}^s)}+\bigg\|\frac{\langle D_n\rangle^{t}}{\langle D\rangle^{t}}\langle D\rangle^{t}u \bigg\|_{\mathscr{B}^k(\mathscr{A}^s)}\\
 &\lesssim \|\langle D\rangle^{t}u \|_{\mathscr{B}^k(\mathscr{A}^s)}.
\end{align*}
This proves the assertion.
\end{proof}

\begin{proposition}\label{Prop:Pseudo_Iterated_Mapping_Properties}
 Let $s,k,d\in\R$. Let
 \[
  S^d_{\mathscr{B},\mathscr{A}}(\R^n,\mathcal{B}(E)):=\begin{cases}
                                                     S^d(\R^n,\mathcal{B}(E))&\text{ if neither }\mathscr{A}\text{ nor }\mathscr{B}\text{ stands for the Bessel potential scale},\\
                                                     S^d_{\mathcal{R}}(\R^n,\mathcal{B}(E))&\text{ otherwise}.
                                                    \end{cases}
 \]
Suppose that $E$ has Pisier's property $(\alpha)$ if both $\mathscr{A}$ and $\mathscr{B}$ belong to the Bessel potential scale
 \begin{enumerate}[(a)]
  \item If $d\leq 0$, then
    \[
      S^d_{\mathscr{B},\mathscr{A}}(\R^n,\mathcal{B}(E)) \times \mathscr{B}^k(\mathscr{A}^s) \to \mathscr{B}^{k-d}(\mathscr{A}^s)\cap \mathscr{B}^k(\mathscr{A}^{s-d}),\;(p,u)\mapsto \operatorname{op}[p] u
    \]
 is bilinear and continuous.
 \item If $d\geq0$, then 
     \[
      S^d_{\mathscr{B},\mathscr{A}}(\R^n,\mathcal{B}(E)) \times(\mathscr{B}^{k+d}(\mathscr{A}^s)\cap \mathscr{B}^k(\mathscr{A}^{s+d})) \to \mathscr{B}^k(\mathscr{A}^s),\;(p,u)\mapsto \operatorname{op}[p] u
    \]
    is bilinear and continuous.
 \end{enumerate}
\begin{proof}
 By writing $p(\xi)=\frac{p(\xi)}{\langle\xi\rangle^d}\langle\xi\rangle^d$ and using Proposition \ref{Prop:BesselPotentialInMixedScales} we only have to treat the case $d=0$. But this case is included in the iterated version of Mikhlin's theorem, Proposition \ref{Prop:IteratedMikhlin}. Indeed, for a symbol $p\in S^0(\R^n,\mathcal{B}(E))$ we have
 \[
  \sup_{\xi\in\R^n\atop \alpha\in\N_0^n,|\alpha|_1\leq k} \langle\xi'\rangle^{|\alpha'|}\langle\xi_n\rangle^{|\alpha_n|}\|\partial_{\xi}^{\alpha}p(\xi)\|_{\mathcal{B}(E)}\leq   \sup_{\xi\in\R^n\atop \alpha\in\N_0^n,|\alpha|\leq k} \langle\xi\rangle^{|\alpha|}\|\partial_{\xi}^{\alpha}p(\xi)\|_{\mathcal{B}(E)}<\infty
 \]
for all $k\in\N_0$. If $p\in S^0_{\mathcal{R}}(\R^n,\mathcal{B}(E))$ we can use Kahane's contraction principle in order to obtain
\begin{align*}
 &\mathcal{R}\big\{\langle\xi'\rangle^{|\alpha'|}\langle\xi_n\rangle^{|\alpha_n|}\partial_{\xi}^{\alpha}p(\xi): \alpha\in\N_0^n,|\alpha|\leq k,\xi\in\R^n\big\}\\
&\leq  \mathcal{R}\big\{\langle\xi\rangle^{|\alpha|}\partial_{\xi}^{\alpha}p(\xi): \alpha\in\N_0^n,|\alpha|\leq k,\xi\in\R^n\big\}.
\end{align*}
\end{proof}

\end{proposition}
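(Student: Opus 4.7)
My plan is to reduce both parts of the proposition to the case $d=0$ by means of the lifting isomorphism from Proposition \ref{Prop:BesselPotentialInMixedScales}, and then to verify that a zeroth order symbol in $S^0_{\mathscr{B},\mathscr{A}}(\R^n,\mathcal{B}(E))$ automatically satisfies the hypotheses of the iterated Mikhlin theorem, Proposition \ref{Prop:IteratedMikhlin} (respectively Proposition \ref{Prop:IteratedMikhlinRbounded} when an $\mathcal{R}$-bound is demanded by one of the Bessel potential scales).

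For the reduction in case (b), where $d\geq 0$, I would factor
\[
\operatorname{op}[p]=\operatorname{op}\!\big[p(\xi)\langle\xi\rangle^{-d}\big]\circ\langle D\rangle^{d}.
\]
By Remark \ref{Rem:SymbolClasses}\eqref{Rem:SymbolClasses:Product}, and since the scalar multiplier $\langle\xi\rangle^{-d}$ lies in $S^{-d}$ and embeds into the $\mathcal{R}$-bounded class for $\mathcal{B}(E)$-valued symbols, the symbol $p(\xi)\langle\xi\rangle^{-d}$ belongs to $S^{0}_{\mathscr{B},\mathscr{A}}(\R^{n},\mathcal{B}(E))$. Proposition \ref{Prop:BesselPotentialInMixedScales} furnishes an isomorphism $\langle D\rangle^{d}\colon\mathscr{B}^{k+d}(\mathscr{A}^{s})\cap\mathscr{B}^{k}(\mathscr{A}^{s+d})\stackrel{\simeq}{\to}\mathscr{B}^{k}(\mathscr{A}^{s})$, so matters reduce to showing boundedness of $\operatorname{op}[q]$ on $\mathscr{B}^{k}(\mathscr{A}^{s})$ for $q\in S^{0}_{\mathscr{B},\mathscr{A}}$. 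Case (a) is entirely symmetric: composing on the left with $\langle D\rangle^{-d}$ (an iso $\mathscr{B}^{k-d}(\mathscr{A}^{s})\cap \mathscr{B}^{k}(\mathscr{A}^{s-d})\to\mathscr{B}^{k}(\mathscr{A}^{s})$) replaces $p$ by the zeroth order symbol $\langle\xi\rangle^{-d}p(\xi)$.

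It remains to handle the case $d=0$. The crucial elementary observation is that
\[
\langle\xi'\rangle^{|\alpha'|}\langle\xi_{n}\rangle^{\alpha_{n}}\leq\langle\xi\rangle^{|\alpha|},\qquad |\xi'|^{|\alpha'|}|\xi_{n}|^{\alpha_{n}}\leq\langle\xi\rangle^{|\alpha|},
\]
so that the isotropic bound $\langle\xi\rangle^{|\alpha|}\|\partial^{\alpha}_{\xi}p(\xi)\|_{\mathcal{B}(E)}\lesssim 1$ coming from $p\in S^{0}$ immediately dominates each of the four mixed quantities $\kappa_{m,N}$ demanded by Proposition \ref{Prop:IteratedMikhlin}. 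In the three cases where at least one of $\mathscr{A},\mathscr{B}$ belongs to the Bessel potential scale, the iterated Mikhlin theorem requires an $\mathcal{R}$-bound; here I invoke Kahane's contraction principle with the scalar weights $\langle\xi'\rangle^{|\alpha'|}\langle\xi_{n}\rangle^{\alpha_{n}}/\langle\xi\rangle^{|\alpha|}\leq 1$ (respectively the analogue with $|\cdot|$'s) to pass from the $\mathcal{R}$-bound $\{\langle\xi\rangle^{|\alpha|}\partial^{\alpha}_{\xi}p(\xi)\}$ coming from $p\in S^{0}_{\mathcal{R}}$ to the mixed $\mathcal{R}$-bound required by Proposition \ref{Prop:IteratedMikhlinRbounded}. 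Pisier's property $(\alpha)$ on $E$ enters exactly where Proposition \ref{Prop:IteratedMikhlinRbounded} needs it, namely in the fully Bessel potential case where both $\mathscr{A}$ and $\mathscr{B}$ are of Bessel potential type.

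The main technical point is really just the bookkeeping around which of the four cases (TL-TL, B-TL, TL-B, B-B) is in force and which iterated Mikhlin variant applies; there is no serious analytic obstacle once the dominating inequality above is recognized and the factorization through $\langle D\rangle^{\pm d}$ is in place.
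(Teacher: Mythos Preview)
Your proposal is correct and follows essentially the same route as the paper: factor through $\langle D\rangle^{\pm d}$ via Proposition~\ref{Prop:BesselPotentialInMixedScales} to reduce to $d=0$, then feed the zeroth order symbol into the iterated Mikhlin theorem using the elementary domination $\langle\xi'\rangle^{|\alpha'|}\langle\xi_n\rangle^{\alpha_n}\leq\langle\xi\rangle^{|\alpha|}$ together with Kahane's contraction principle for the $\mathcal{R}$-bounded cases. One small remark: Proposition~\ref{Prop:IteratedMikhlin} alone suffices in all four cases (its cases (b)--(d) already take $\mathcal{R}$-bounds as input), so invoking Proposition~\ref{Prop:IteratedMikhlinRbounded} is unnecessary here and would impose Pisier's property~$(\alpha)$ in the mixed Bessel/non-Bessel cases where the statement does not assume it.
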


%%%%%%%%%%%%%%%%%%%%%%%%%%%%%%%%%%%%%%%%%%%%%%%%%%%%%%%%%%%%%%%%%%%%%%%%%%%%%%%%%%%%%%%%%%%%%%
%%%%%%%%%%%%%%%%%%%%%%%%%%%%%%%%%%%%% New Section %%%%%%%%%%%%%%%%%%%%%%%%%%%%%%%%%%%%%%%%%%%%
%%%%%%%%%%%%%%%%%%%%%%%%%%%%%%%%%%%%%%%%%%%%%%%%%%%%%%%%%%%%%%%%%%%%%%%%%%%%%%%%%%%%%%%%%%%%%%

\section{Poisson Operators in Mixed Scales}\label{Section:Poisson}

Consider equation \eqref{EllipticBVP} with $f=0$, i.e. 
\begin{align}
\begin{aligned}\label{Eq:EllipticBVPf=0}
 \lambda u -A(D)u&=0\quad\;\text{in }\R^n_+,\\
 B_j(D)u&=g_j\quad\text{on }\R^{n-1}.
 \end{aligned}
\end{align}
Recall that we always assume that the ellipticity condition and the Lopatinskii-Shapiro condition are satisfied in the sector $\Sigma_{\phi'}\setminus\{0\}$ with $\phi'\in(0,\pi)$ and that $\phi\in(0,\phi')$. The solution operators of \eqref{Eq:EllipticBVPf=0} which map the boundary data $g=(g_1,\ldots,g_m)$ to the solution $u$ are called Poisson operators. This notion comes from the Boutet de Monvel calculus, where Poisson operators are part of the so-called singular Green operator matrices. These matrices were introduced to extend the idea of pseudo-differential operators to boundary value problems. They allow for a unified treatment of boundary value problems and their solution operators, since both of them are contained in the algebra of singular Green operator matrices. In this work however, we do not need this theory in full generality. Instead, we just focus on Poisson operators.\\
We will use a solution formula for the Poisson operator corresponding to \eqref{Eq:EllipticBVPf=0} which was derived in the classical work \cite[Proposition 6.2]{DHP03} by Denk, Hieber and Pr\"uss. In order to derive this formula, a Fourier transform in the tangential directions of \eqref{Eq:EllipticBVPf=0} is applied. This yields a linear ordinary differential equation of order $2m$ at each point in the frequency space. This ordinary differential equation is then transformed to a linear first order system which can easily be solved by an exponential function if one knows the values of $$U(\xi',0):=(\mathscr{F}'u(\xi',0),\partial_n\mathscr{F}'u(\xi',0),\ldots,\partial_n^{2m-1}\mathscr{F}'u(\xi',0)).$$ The Lopatinskii-Shapiro condition ensures that those vectors $U(\xi',0)$ which yield a stable solution can be uniquely determined from $(\mathscr{F}'g_1,\ldots,\mathscr{F}'g_m)$. The operator which gives this solution is denoted by
\[
	M(\xi',\lambda)\colon E^m\to E^{2m},\,(\mathscr{F}'g_1(\xi'),\ldots,\mathscr{F}'g_m(\xi'))\mapsto U(\xi',0).
\]
Now one just has to take the inverse Fourier transform of the solution and a projection to the first component. The latter is necessary to come back from the solution of the first order system to the solution of the higher order equation.\\
This would already be enough to derive a good solution formula. However, in \cite{DHP03} an additional rescaling was introduced so that compactness arguments can be applied. More precisely, the variables $\rho(\xi',\lambda)=\langle \xi',|\lambda|^{1/2m}\rangle=(1+|\xi'|^2+|\lambda|^{1/m})^{1/2}$, $b=\xi'/\rho$ and $\sigma=\lambda/\rho^{2m}$ are introduced in the Fourier image. The solution formula is then written in terms of $(\rho,b,\sigma)$ instead of $(\xi',\lambda)$. For this reformulation it is crucial that the operators $A,B_1,\ldots,B_m$ are homogeneous, i.e. that there are no lower order terms. And even though this rescaling makes the formulas more involved, the compactness arguments which can be used as a consequence are very useful.\\
After carrying out all these steps, the solution can be represented by
\[
 u(x)=\operatorname{pr}_1[\operatorname{Poi}(\lambda)g](x)
\]
where
\begin{itemize}
 \item $\operatorname{pr}_1\colon E^{2m}\to E,\,w\mapsto w_1$ is the projection onto the first component,
 \item $g=(g_1,\ldots,g_m)^T$ and the operator $\operatorname{Poi}(\lambda)\colon E^m\to E^{2m}$ is given by 
 \begin{align}\label{Eq:FormulaPoisson}
 	\begin{aligned}
 		\left[\operatorname{Poi}(\lambda)g\right](x):=\big[(\mathscr{F}')^{-1} e^{i\rho A_0(b,\sigma)x_n}M(b,\sigma)\hat{g}_{\rho}\big](x').
 	\end{aligned}
 \end{align}
 \item $\mathscr{F}'$ is the Fourier transform along $\R^{n-1}$, i.e. in tangential direction,
 \item $A_0$ is a smooth function with values in $\mathcal{B}(E^{2m},E^{2m})$ which one obtains from $\lambda-A(\xi',D_n)$ after a reduction to a first order system,
 \item $M$ is a smooth function with values in $\mathcal{B}(E^{m},E^{2m})$ which maps the values of the boundary operators applied to the stable solution $v$ to the vector containing its derivatives at $x_n=0$ up to the order $2m-1$, i.e. $$(B_1(D)v(0),\ldots, B_m(D)v(0))^T\mapsto (v(0),\partial_n v(0),\ldots, \partial_n^{2m-1} v(0))^T,$$
 \item $\rho$ is a positive parameter that can be chosen in different ways in dependence of $\xi'$ and $\lambda$. In our case, it will be given by $\rho(\xi',\lambda)=\langle \xi',|\lambda|^{1/2m}\rangle=(1+|\xi'|^2+|\lambda|^{1/m})^{1/2}$,
 \item $b=\xi'/\rho$, $\sigma=\lambda/\rho^{2m}$ and $\hat{g}_{\rho}=((\mathscr{F}'g_1)/\rho^{m_1},\ldots,(\mathscr{F}'g_m)/\rho^{m_m})^T$,
\end{itemize}
Again, we want to emphasize that $b$, $\sigma$, and $\rho$ depend on $\xi'$ and $\lambda$. We only neglect this dependence in the notation for the sake of readability.\\
Another operator that we will use later is the spectral projection $\ms{P}_{-}$ of the matrix $A_0$ to the part of the spectrum that lies above the real line. This spectral projection has the property that $\ms{P}_-(b,\sigma)M(b,\sigma)=M(b,\sigma)$.\\
For our purposes, we will rewrite the above representation in the following way: For $j=1,\ldots,m$ we write
  \[
   M_{\rho,j}(b,\sigma)\hat{g}_j := M(b,\sigma)\frac{\hat{g}_j\otimes e_j}{\rho^{m_j}},
  \]
  where $\hat{g}_j\otimes e_j$ denotes the $m$-tuple whose $j$-th component equals to  $\hat{g}_j$ and whose other components all equal to $0$, as well as
  \[
   [\operatorname{Poi}_j(\lambda)g_j](x):=\big[(\mathscr{F}')^{-1} e^{i\rho A_0(b,\sigma)x_n}M_{\rho,j}(b,\sigma)\hat{g}_j\big](x')
  \]
so that we obtain
\begin{align}\label{Boutet:Equation:SolutionFormulaPoisson}
  u= \operatorname{pr}_1\operatorname{Poi}(\lambda)g=\operatorname{pr}_1\sum_{j=1}^m\operatorname{Poi}_j(\lambda)g_j.
\end{align}
\begin{remark}
	If we look at the formula \eqref{Eq:FormulaPoisson} we can already see that the solution operator is actually just an exponential function in normal direction. As such, it should be arbitrarily smooth. Of course, one has to think about with respect to which topology in tangential direction this smoothness should be understood. It is the aim of this section to analyze this carefully. We treat \eqref{Eq:FormulaPoisson} as a function of $x_n$ with values in the space of pseudo-differential operators in tangential direction. Since \eqref{Eq:FormulaPoisson} is exponentially decaying in $\xi'$ if $x_n>0$, the pseudo-differential operators will have order $-\infty$, i.e. they are smoothing. Hence, the solutions will also be arbitrarily smooth, no matter how rough the boundary data is. However, the exponential decay becomes slower as one approaches $x_n=0$. This will lead to singularities if \eqref{Eq:FormulaPoisson} is considered as a function of $x_n$ with values in the space of pseudo-differential operators of a fixed order. In the following, we will study how strong these singularities are depending on the regularity in normal and tangential directions in which the singularities are studied. The answer will be given in Theorem~\ref{Thm:MainThm1}. Therein, one may choose the regularity in normal direction $k$, the integrability in normal direction $p$, the regularity in tangential direction $t$ of the solution and the regularity $s$ of the boundary data. Then the parameter $r$ in the relation $r-p[t+k-m_j-s]_+>-1$ gives a description of the singularity at the boundary, since it is the power of the power weight which one has to add to the solution space such that the Poisson operator is a well-defined continuous operator between the spaces one has chosen.
\end{remark}
In the following, we oftentimes substitute $\mu=\lambda^{1/2m}$ for homogeneity reasons. If $\lambda$ is above the real line, then we take $\mu$ to be the first of these roots, and if $\lambda$ is below the real line, we take $\mu$ to be the last of these roots. If $\lambda>0$, then we just take the ordinary positive root.

\begin{definition}
 A domain $\mathcal{O}$ is called plump with parameters $R>0$ and $\delta\in(0,1]$ if for all $x\in\mathcal{O}$ and all $r\in(0,R]$ there exists a $z\in \mathcal{O}$ such that
 \[
  B(z,\delta r)\subset B(x,r)\cap \mathcal{O}
 \]

\end{definition}

\begin{lemma}\label{ASymbolOrdnung0}
 Let $E_0,E_1$ be a Banach spaces. As described above, we take $\mu=\lambda^{1/2m}$ so that $\rho(\xi',\mu)=\langle\xi',\mu\rangle$, $b=\xi'/\rho$ and $\sigma=\mu^{2m}/\rho^{2m}$. Let $U\subset\R^{n-1}\times\Sigma_{\phi}$ be a plump and bounded environment of the range of $(b,\sigma)$. Then the mapping
 \[
  BUC^{\infty}(U,\mathcal{B}(E_0,E_1))\to S^0_{\mathcal{R}}(\R^{n-1}\times\Sigma_{\phi/2m},\mathcal{B}(E_0,E_1)), A \mapsto A\circ(b,\sigma)
 \]
 is well-defined and continuous.
\end{lemma}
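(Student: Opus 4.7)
The plan is to expand $\partial_{\xi'}^{\alpha'}\partial_\mu^\gamma(A\circ(b,\sigma))$ via the chain rule and exploit the rapid decay of the derivatives of the scalar symbols $b$ and $\sigma$. First I would verify that with $\rho(\xi',\mu)=\langle\xi',\mu\rangle$, $b=\xi'/\rho$ and $\sigma=\mu^{2m}/\rho^{2m}$, one has
\[
|\partial_{\xi'}^{\alpha'}\partial_\mu^\gamma b(\xi',\mu)|+|\partial_{\xi'}^{\alpha'}\partial_\mu^\gamma \sigma(\xi',\mu)|\lesssim \langle\xi',\mu\rangle^{-(|\alpha'|+|\gamma|)}
\]
for all multi-indices. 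This is immediate by induction, since every differentiation of a factor $\rho^{-r}$ yields either a derivative of $\mu^{2m}$ or $\xi'$ (which drops the homogeneity by one) or an extra factor $\xi_j/\rho^2$ or $\mu/\rho^2$, each losing exactly one power of $\langle\xi',\mu\rangle$.

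Next, Faà di Bruno's formula gives a finite representation
\[
\partial_{\xi'}^{\alpha'}\partial_\mu^\gamma\big(A\circ(b,\sigma)\big)(\xi',\mu)=\sum_{|\beta|\le|\alpha'|+|\gamma|}c_\beta\,\big((\partial^\beta A)\circ(b,\sigma)\big)(\xi',\mu)\cdot q_\beta(\xi',\mu),
\]
where each $q_\beta$ is a product of derivatives of the components of $b$ and $\sigma$ with total order $|\alpha'|+|\gamma|$. By the first step, $|q_\beta(\xi',\mu)|\lesssim\langle\xi',\mu\rangle^{-(|\alpha'|+|\gamma|)}$. Multiplying through by $\langle\xi',\mu\rangle^{|\alpha'|+|\gamma|}$ turns each summand into a uniformly bounded scalar multiplied by $(\partial^\beta A)(b(\xi',\mu),\sigma(\xi',\mu))$.

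The key observation for the $\mathcal R$-bound is that $|b|<1$ and $|\sigma|\le1$, so the range of $(b,\sigma)$ is bounded, its closure $K$ is compact in $\mathbb R^{n-1}\times\mathbb C$, and by assumption $K\subset\overline U$. Since $A\in BUC^\infty(U,\mathcal B(E_0,E_1))$, each $\partial^\beta A$ is bounded and uniformly continuous on $U$ and thus extends continuously to $\overline U$; consequently $(\partial^\beta A)(K)$ is a norm-compact — and hence $\mathcal R$-bounded — subset of $\mathcal B(E_0,E_1)$. The main obstacle is precisely this step: passing from norm-compactness to $\mathcal R$-boundedness is nontrivial outside Hilbert spaces and relies on approximation by finite $\epsilon$-nets combined with Kahane's contraction principle (this is where plumpness of $U$ becomes useful, since it lets one invoke Stein-type extension results for $BUC^\infty$-functions on $\overline U$). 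Once this is in place, Kahane's contraction principle lets us absorb the bounded scalar factors $q_\beta$ and conclude
\[
\|A\circ(b,\sigma)\|^{(0)}_{k,\mathcal R}\lesssim\sum_{|\beta|\le k}\mathcal R\bigl\{(\partial^\beta A)(p):p\in K\bigr\}<\infty,
\]
for every $k\in\mathbb N_0$, with constants that are continuous functions of the $BUC^\infty$-seminorms of $A$. This gives both membership in $S^0_{\mathcal R}$ and continuity of the assignment $A\mapsto A\circ(b,\sigma)$ as a map of Fréchet spaces.
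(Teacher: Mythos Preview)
Your overall strategy—expanding via the chain rule, observing that the scalar coefficients $q_\beta$ lie in $S^{-(|\alpha'|+|\gamma|)}$, and then establishing $\mathcal R$-boundedness of $\{(\partial^\beta A)\circ(b,\sigma)\}$—coincides with the paper's inductive argument. The difference lies in how the $\mathcal R$-bound is obtained.

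The paper invokes plumpness of $U$ together with \cite[Theorem 8.5.21]{HvNVW_2017}, which asserts that a bounded $C^1$-function on a plump domain has $\mathcal R$-bounded range (the plumpness is used to connect points by paths of controlled length so that one can integrate the derivative). This gives $\mathcal R$-boundedness of $\partial^\beta A$ on all of $U$.

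Your route is different and in fact more elementary: you only need $\mathcal R$-boundedness on the compact set $K=\overline{\operatorname{ran}(b,\sigma)}$, and you obtain it from the fact that $(\partial^\beta A)(K)$ is norm-compact (continuous image of a compact set) and that relatively compact subsets of $\mathcal B(E_0,E_1)$ are always $\mathcal R$-bounded. This is correct, and it shows that for this lemma the plumpness hypothesis is not actually needed. Your parenthetical remark that ``this is where plumpness of $U$ becomes useful'' is therefore misplaced: in your argument the extension of $\partial^\beta A$ to $\overline U$ follows simply from uniform continuity, and no Stein-type extension is required. Plumpness only enters if one follows the paper's approach of bounding the range on the whole of $U$ rather than just on $K$.
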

\begin{proof}
 A similar proof was carried out in \cite[Proposition 4.21]{Hummel_Lindemulder_2019}. We combine this proof with \cite[Theorem 8.5.21]{HvNVW_2017} in order to obtain the $\mathcal{R}$-bounded version.\\
 Let $A\in BUC^{\infty}(U,\mathcal{B}(E_0,E_1))$. By induction on $|\alpha'|+|\gamma|$ we show that $D_{\xi'}^{\alpha'}D_{\mu}^\gamma (A\circ(b,\sigma))$ is a linear combination of terms of the form $(D_{\xi'}^{\tilde{\alpha}'}D_{\mu}^{\tilde{\gamma}}A)\circ(b,\sigma)\cdot f$ with $f\in S^{-|\alpha'|-|\gamma|}_{\mathcal{R}}(\R^{n-1}\times\Sigma_{\phi/2m})$, $\tilde{\alpha}'\in\N_0^{n-1}$ and $\tilde{\gamma}\in\N_0$. It follows from \cite[Theorem 8.5.21]{HvNVW_2017} that this is true for $|\alpha'|+|\gamma|=0$. So let $j\in\{1,\ldots,n-1\}$. By the induction hypothesis, we have that $D_{\xi'}^{\alpha'}D_{\mu}^\gamma (A\circ(b,\sigma))$ is a linear combination of terms of the form $(D_{\xi'}^{\tilde{\alpha}'}D_{\mu}^{\tilde{\gamma}}A)\circ(b,\sigma)\cdot f$ with $f\in S^{-|\alpha'|-|\gamma|}_{\mathcal{R}}(\R^{n-1}\times\Sigma_{\phi/2m})$, $\tilde{\alpha}'\in\N_0^{n-1}$ and $\tilde{\gamma}\in\N_0$. Hence, for $D_{\xi_j}D_{\xi'}^{\alpha'}D_{\mu}^{\gamma}$ it suffices to treat the summands separately, i.e. we consider $D_{\xi_j}((D_{\xi'}^{\tilde{\alpha}'}D_{\mu}^{\tilde{\gamma}}A)\circ(b,\sigma)\cdot f)$. By the product rule and the chain rule, we have
 \begin{align*}
  &\quad\;D_{\xi_j}((D_{\xi'}^{\tilde{\alpha}'}D_{\mu}^{\tilde{\gamma}}A)\circ(b,\sigma)\cdot f)\\
  &=(D_{\xi'}^{\tilde{\alpha}'}D_{\mu}^{\tilde{\gamma}}A)\circ(b,\sigma))(D_jf)+\bigg(\sum_{l=2}^{n}D_{\xi_j}( \tfrac{\xi'_l}{\rho})\cdot f \cdot [(D_l D_{\xi'}^{\widetilde{\alpha}'}D_{\mu}^{\tilde{\gamma}} A)\circ(b,\sigma)]\bigg)\\
  &\quad+D_{\xi_j}( \tfrac{\mu^{2m}}{\rho^{2m}})\cdot f \cdot [(D_l D_{\xi'}^{\widetilde{\alpha}'}D_{\mu}^{\tilde{\gamma}} A)\circ(b,\sigma)]
 \end{align*}
 By the induction hypothesis and Remark \ref{Rem:SymbolClasses} \eqref{Rem:SymbolClasses:Product} and \eqref{Rem:SymbolClasses:Derivative} we have that $$(D_{\xi_j}f),(D_{\xi_j}\frac{\xi_1'}{\rho})f,\ldots,(D_{\xi_j} \frac{\xi'_{n-1}}{\rho})f,(D_{\xi_j} \tfrac{\mu^{2m}}{\rho^{2m}}) f\in S^{-|\alpha'|-|\gamma|-1}_{\mathcal{R}}(\R^{n-1}\times\Sigma_{\phi/2m}).$$
 The same computation for $D_{\mu_1}$ and $D_{\mu_2}$ instead of $D_{\xi_j}$ also shows the desired behavior and hence, the induction is finished.\\
Now we use \cite[Theorem 8.5.21]{HvNVW_2017} again: Since $U$ is plump we have that $A$ and all its derivatives have an $\mathcal{R}$-bounded range on $U$. Therefore, the terms $(D_{\xi'}^{\tilde{\alpha}'}D_{\mu}^{\tilde{\gamma}}A)\circ(b,\sigma)\cdot f$ from above satisfy
\[
	\mathcal{R}_{\mathcal{B}(E_0,E_1)}\{\langle\xi',\mu\rangle^{|\alpha'|+|\gamma|}(D_{\xi'}^{\tilde{\alpha}'}D_{\mu}^{\tilde{\gamma}}A)\circ(b,\sigma)\cdot f(\xi',\mu):(\xi',\mu)\in\R^{n-1}\times\Sigma_{\phi/2m}\}
\]
which shows the assertion.
\end{proof}

\begin{corollary}\phantomsection \label{Cor:R_bounded_Exponential_Weight}
\begin{enumerate}[(a)]
\item \label{Cor:R_bounded_Exponential_Weight_Continuity}There is a constant $c>0$ such that the mapping
\[
 \R_+\to S^{0}_{\mathcal{R}}(\R^{n-1}\times\Sigma_{\phi/2m};\mathcal{B}(E^{2m})), y\mapsto[(\xi',\mu)\mapsto e^{cy}e^{iA_0(b,\sigma)y}\mathscr{P}_{-}(b,\sigma)]
\]
is bounded and and uniformly continuous.
 \item \label{Cor:R_bounded_Exponential_Weight_R_Bounds}There are constants $C,c>0$ such that
 \[
  \mathcal{R}(\{e^{c\rho x_n}e^{i\rho A_0(b,\sigma)x_n}\mathscr{P}_-(b,\sigma):(\xi',\mu)\in\R^{n-1}\times\Sigma_{\phi/2m}\})<C
 \]
 for all $x_n\geq0$
 \end{enumerate}
\end{corollary}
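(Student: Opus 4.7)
The plan is to exploit the exponential decay of $e^{iA_0(b,\sigma)y}\mathscr{P}_-(b,\sigma)$. First, I would combine the parameter-ellipticity and Lopatinskii--Shapiro conditions (Assumption~\ref{Assump:ELS}) with compactness of $\overline{U}$ in $\R^{n-1}\times\Sigma_{\phi'}$ (using $\phi<\phi'$) to extract $c>0$ such that the spectrum of $A_0(b,\sigma)|_{\operatorname{Ran}\mathscr{P}_-(b,\sigma)}$ lies in $\{\operatorname{Im}\lambda\geq 2c\}$ for every $(b,\sigma)\in\overline{U}$. Picking a bounded contour $\Gamma\subset\{\operatorname{Im}\lambda\geq 2c\}$ that encloses all these spectra, the Dunford functional calculus gives
\[
A_y(b,\sigma):=e^{cy}e^{iA_0(b,\sigma)y}\mathscr{P}_-(b,\sigma)=\frac{1}{2\pi i}\oint_\Gamma e^{(c+i\lambda)y}(\lambda-A_0(b,\sigma))^{-1}\mathscr{P}_-(b,\sigma)\,d\lambda.
\]
Because $|e^{(c+i\lambda)y}|\leq e^{-cy}\leq 1$ on $\Gamma$ and $(b,\sigma)\mapsto(\lambda-A_0(b,\sigma))^{-1}\mathscr{P}_-(b,\sigma)$ is uniformly $BUC^\infty(U)$-bounded in $\lambda\in\Gamma$, differentiating under the integral shows that $\{A_y:y\geq 0\}$ is bounded in $BUC^\infty(U;\mathcal{B}(E^{2m}))$. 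Differentiating also in $y$ only pulls down the $\Gamma$-bounded factor $(c+i\lambda)$, so $y\mapsto A_y$ is Lipschitz into $BUC^\infty(U;\mathcal{B}(E^{2m}))$. Lemma~\ref{ASymbolOrdnung0} then furnishes a continuous linear map $BUC^\infty(U;\mathcal{B}(E^{2m}))\to S^0_{\mathcal{R}}(\R^{n-1}\times\Sigma_{\phi/2m};\mathcal{B}(E^{2m}))$, which transfers both boundedness and uniform continuity to the symbol class.

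\textbf{Part (b).} Using the same contour, I write
\[
e^{c\rho x_n}e^{i\rho A_0(b,\sigma)x_n}\mathscr{P}_-(b,\sigma)=\frac{1}{2\pi i}\oint_\Gamma e^{(c+i\lambda)\rho x_n}(\lambda-A_0(b,\sigma))^{-1}\mathscr{P}_-(b,\sigma)\,d\lambda.
\]
Lemma~\ref{ASymbolOrdnung0} applied to the $\lambda$-uniform $BUC^\infty$-bounded family $(b,\sigma)\mapsto(\lambda-A_0(b,\sigma))^{-1}\mathscr{P}_-(b,\sigma)$ gives $\mathcal{R}$-boundedness of $\{(\lambda-A_0(b(\xi',\mu),\sigma(\xi',\mu)))^{-1}\mathscr{P}_-(b,\sigma):(\xi',\mu)\in\R^{n-1}\times\Sigma_{\phi/2m},\,\lambda\in\Gamma\}$. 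The scalar factor $e^{(c+i\lambda)\rho x_n}$ has modulus at most $1$ uniformly in $(\xi',\mu)$ and $x_n\geq 0$, so Kahane's contraction principle preserves the $\mathcal{R}$-bound after multiplication. The standard fact that integration of an $\mathcal{R}$-bounded family along a bounded contour produces an $\mathcal{R}$-bounded image then concludes the argument with a bound independent of $x_n\geq 0$.

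\textbf{Main obstacle.} The one genuinely non-formal step is the uniform spectral gap: producing a common $c>0$ on all of $\overline{U}$ requires not only continuity of $A_0$ and $\mathscr{P}_-$ in $(b,\sigma)$ but also the strict sector inclusion $\overline{\Sigma_\phi}\setminus\{0\}\subset\Sigma_{\phi'}$ built into Assumption~\ref{Assump:ELS}. Once the gap is in hand, the rest is a routine manipulation of the Dunford calculus combined with Lemma~\ref{ASymbolOrdnung0}.
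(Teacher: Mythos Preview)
Your proof is correct and takes a genuinely different route from the paper. The paper argues for part~(a) by an explicit induction on $|\alpha'|+|\gamma|$: it shows that every derivative $\partial_{\xi'}^{\alpha'}\partial_{\mu}^{\gamma}\big(e^{iA_0(b,\sigma)y}\mathscr{P}_-(b,\sigma)\big)$ is a finite linear combination of terms of the form $f(\xi',\mu)\,e^{iA_0 y}\mathscr{P}_-\,g(\xi',\mu)\,y^{p}$ with smooth $f,g$, and then uses the spectral gap to absorb the polynomial factor $y^{p}$ into the exponential decay. Continuity in $y$ is obtained by the same argument applied to $(e^{iA_0 h}-\operatorname{id})e^{iA_0 y}\mathscr{P}_-$. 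Part~(b) in the paper is just the one-line observation that substituting $y=\rho x_n$ places the range of $(\xi',\mu)\mapsto e^{c\rho x_n}e^{i\rho A_0 x_n}\mathscr{P}_-$ inside the range of the $BUC^{\infty}$ map on $\R_+\times U$ established in~(a).

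Your Dunford-integral approach sidesteps the induction entirely: differentiating under the bounded contour $\Gamma$ immediately gives the $BUC^{\infty}(U)$-bounds and, via the bounded factor $(c+i\lambda)$, Lipschitz dependence on $y$. This is more economical and makes the exponential decay visible in one line. The paper's inductive structure, on the other hand, is not wasted effort: exactly this decomposition of derivatives resurfaces in Lemma~\ref{Chapter3::Lemma::PoissonSymbolDerivative} and Proposition~\ref{Chapter3::Proposition::PoissonSymbolOrderSingularity}, so the paper is effectively setting up a template it reuses later.

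One small imprecision in your part~(b): Lemma~\ref{ASymbolOrdnung0} applied to the $\lambda$-uniformly bounded family only yields, for each fixed $\lambda\in\Gamma$, an $\mathcal{R}$-bound over $(\xi',\mu)$ that is uniform in $\lambda$ --- not directly the joint $\mathcal{R}$-bound over $(\xi',\mu,\lambda)$ you claim. This is harmless, however: the Minkowski-type estimate
\[
\Big(\E\Big\|\sum_k\epsilon_k\int_\Gamma S_\lambda(\xi'_k,\mu_k)x_k\,d\lambda\Big\|^p\Big)^{1/p}
\le \int_\Gamma\Big(\E\Big\|\sum_k\epsilon_k S_\lambda(\xi'_k,\mu_k)x_k\Big\|^p\Big)^{1/p}\,|d\lambda|
\]
shows that a uniform-in-$\lambda$ $\mathcal{R}$-bound $M$ already suffices to give the integrated family an $\mathcal{R}$-bound $|\Gamma|\,M$ independent of $x_n$. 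Alternatively, one may simply enlarge $U$ to $U\times V_\Gamma$ for a plump neighbourhood $V_\Gamma$ of $\Gamma$ and invoke the ingredient behind Lemma~\ref{ASymbolOrdnung0} directly.
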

\begin{proof}
\begin{enumerate}[(a)]
\item By Lemma \ref{ASymbolOrdnung0} it suffices to show that there is a plump environment $U$ of the range $(b,\sigma)$ such that
\[
 \R_+\to BUC^{\infty}(U,\mathcal{B}(E^{2m})), y\mapsto [(\xi',\mu)\mapsto e^{cy}e^{iA_0(\xi,\mu)y}\mathscr{P}_{-}(\xi,\mu)]
\]
is bounded and continuous. We can for example take
\[
 U=\big\{\big(\tfrac{\theta \xi'}{\rho},\tfrac{\theta\mu^{2m}}{\rho^{2m}}\big): \xi'\in\R^{n-1},\,\mu\in\Sigma_{\phi/2m},\,\theta\in (\tfrac{1}{2},2)\big\}.
\]
Obviously, this set contains the range of $(b,\sigma)$ and it is smooth and relatively compact. By this compactness, it follows as in \cite[Section 6]{DHP03} (mainly because of the spectral gap (6.11)) that there is a constant $c>0$ such that
\[
 \sup_{y\geq0,\, (\xi',\mu)\in U}\| e^{2cy}e^{i A_0(\xi',\mu)y}\mathscr{P}_-(\xi',\mu) \|_{\mathcal{B}(E^{2m})}<\infty.
\]
Now, we show by induction on $|\alpha'|+|\gamma|$ that $\partial_{\xi'}^{\alpha'}\partial_{\mu}^{\gamma}e^{i A_0(\xi',\mu)y}\mathscr{P}_-(\xi',\mu)$ is a linear combination of terms of the form
\begin{align}\label{eq:derivative_linear_combination}
f(\xi',\mu) e^{i A_0(\xi',\mu)y}\mathscr{P}_-(\xi',\mu) g(\xi',\mu)y^p
\end{align}
where $f,g\colon\R^{n-1}\times\Sigma_{\phi/2m}\to\mathcal{B}(E^{2m})$ are holomorphic and $p\in\N_0$. Obviously this is true for $|\alpha'|+|\gamma|=0$. For the induction step, we can directly use the induction hypothesis and consider a term of the form \eqref{eq:derivative_linear_combination}. Since
\begin{align*}
 e^{i A_0(\xi',\mu)y}\mathscr{P}_-(\xi',\mu)&=e^{i A_0(\xi',\mu)y}\mathscr{P}_-(\xi',\mu)^2\\&=\mathscr{P}_-(\xi',\mu)e^{i A_0(\xi',\mu)y}\mathscr{P}_-(\xi',\mu)=\mathscr{P}_-(\xi',\mu)e^{i A_0(\xi',\mu)y}
\end{align*}
one can directly verify that the derivatives $\partial_{\xi_j}\partial_{\xi'}^{\alpha'}\partial_{\mu}^{\gamma}e^{i A_0(\xi',\mu)y}\mathscr{P}_-(\xi',\mu)$ $(j=1,\ldots,n-1)$ and $\partial_{\mu_i}\partial_{\xi'}^{\alpha'}\partial_{\mu}^{\gamma}e^{i A_0(\xi',\mu)y}\mathscr{P}_-(\xi',\mu)$ $(i\in\{1,2\})$ are again a linear combination of terms of the form \eqref{eq:derivative_linear_combination}. But for such a term we have
\begin{align*}
 \| f e^{i A_0y}\mathscr{P}_- gy^p \|_{BUC(U,\mathcal{B}(E^{2m}))}\leq C e^{-cy}
\end{align*}
for some constant $C>0$ and all $y\geq0$. This shows that
\[
 \big([(\xi',\mu)\mapsto e^{cy}e^{iA_0(\xi,\mu)y}\mathscr{P}_{-}(\xi,\mu)]\big)_{y\geq0}
\]
satisfies the assumptions of Lemma \ref{ASymbolOrdnung0} uniformly in $y$ so that the boundedness follows. The continuity follows from applying the same argument to $$(e^{iA_0(\xi,\lambda)h}-\operatorname{id}_{E^{2m}})e^{iA_0(\xi,\lambda)y}\mathscr{P}_-(\xi',\mu)$$ for small $|h|$, $h\in\R$. Note that $(\xi,\lambda)$ only runs through a relatively compact set again so that $e^{iA_0(\xi,\lambda)h}-\operatorname{id}_{E^{2m}}\to 0$ in $BUC^{\infty}(U)$ as $h\to\infty$.
\item This follows from the first part by substituting $y=\rho x_n$.
\end{enumerate}
\end{proof}

\begin{lemma}\label{Chapter3::Lemma::PoissonSymbolDerivative}
 Let $n_1,n_2\in\R$ and $c>0$. Moreover, let $f_0\in S^{n_1}_{\mathcal{R}}(\R^{n-1}\times\Sigma_{\phi/2m},\mathcal{B}(E^{2m}))$ and $g\in S_{\mathcal{R}}^{n_2}(\R^{n-1}\times\Sigma_{\phi/2m},\mathcal{B}(E,E^{2m}))$. Then for all $\alpha\in\N_0^{n+1}$ we have that  $\partial_{\xi',\mu}^{\alpha}f_0e^{c\rho x_n+i\rho A_0(b,\sigma)x_n}\mathscr{P}_-(b,\sigma)g_0$ is a linear combination of terms of the form $$f_{\alpha}e^{c\rho x_n+i\rho A_0(b,\sigma)x_n}\mathscr{P}_-(b,\sigma)g_{\alpha}x_n^k$$ where $f_{\alpha}\in S_{\mathcal{R}}^{n_1-d_1}(\R^{n-1}\times\Sigma_{\phi/2m},\mathcal{B}(E^{2m}))$, $g_{\alpha}\in S^{n_2-d_2}_{\mathcal{R}}(\R^{n-1}\times\Sigma_{\phi/2m},\mathcal{B}(E,E^{2m}))$ and $k+d_1+d_2=|\alpha|$.
\end{lemma}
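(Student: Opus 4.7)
I would prove the lemma by induction on $|\alpha|$. The base case $|\alpha|=0$ is immediate: the expression itself has the required form with $f_\alpha=f_0$, $g_\alpha=g_0$ and $k=d_1=d_2=0$.

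For the inductive step, writing $\partial^\alpha_{\xi',\mu}=\partial_z\partial^\beta$ with $|\beta|=|\alpha|-1$ and $z$ one of the variables $\xi_1,\ldots,\xi_{n-1},\mu_1,\mu_2$, it suffices by linearity to compute $\partial_z T$ for a single term of the inductively established form
\[
	T=f_\beta\,e^{c\rho x_n+i\rho A_0(b,\sigma)x_n}\mathscr{P}_-(b,\sigma)\,g_\beta\,x_n^k,
\]
with $f_\beta\in S^{n_1-d_1}_{\mathcal{R}}$, $g_\beta\in S^{n_2-d_2}_{\mathcal{R}}$ and $k+d_1+d_2=|\beta|$. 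The Leibniz rule produces three summands. If $\partial_z$ falls on $f_\beta$ or on $g_\beta$, then by Remark~\ref{Rem:SymbolClasses}\eqref{Rem:SymbolClasses:Derivative} the derivative lives in $S^{n_1-d_1-1}_{\mathcal{R}}$ respectively $S^{n_2-d_2-1}_{\mathcal{R}}$, so we obtain a term with parameters $(k,d_1+1,d_2)$ or $(k,d_1,d_2+1)$, matching $k+d_1+d_2=|\alpha|$.

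The substantive case is when $\partial_z$ lands on the middle factor. Substituting $y=\rho x_n$ and setting $\Psi(\beta,\tau,y):=e^{cy+iA_0(\beta,\tau)y}\mathscr{P}_-(\beta,\tau)$, this factor equals $\Psi(b(\xi',\mu),\sigma(\xi',\mu),\rho(\xi',\mu)x_n)$. The chain rule gives
\[
	\partial_z\bigl[\Psi(b,\sigma,\rho x_n)\bigr]=\sum_{l=1}^{n-1}(\partial_{\beta_l}\Psi)(\partial_z b_l)+(\partial_\tau\Psi)(\partial_z\sigma)+(\partial_y\Psi)\,(\partial_z\rho)\,x_n.
\]
The third term is transparent: $\partial_y\Psi=(c+iA_0(b,\sigma))\Psi$; by Lemma~\ref{ASymbolOrdnung0} applied to the bounded smooth function $(\beta,\tau)\mapsto c+iA_0(\beta,\tau)$ on a plump environment of the range of $(b,\sigma)$, one has $c+iA_0(b,\sigma)\in S^0_{\mathcal{R}}$, while $\partial_z\rho\in S^0_{\mathcal{R}}$ because $\rho\in S^1_{\mathcal{R}}$. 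Absorbing the prefactor into $f_\beta$ produces a new term with parameters $(k+1,d_1,d_2)$. For the first two terms I would invoke the structural identity established inside the proof of Corollary~\ref{Cor:R_bounded_Exponential_Weight}, namely that $\partial_{\beta,\tau}\Psi$ is a linear combination of terms $f(\beta,\tau)\Psi(\beta,\tau,y)g(\beta,\tau)y^p$ with $f,g$ holomorphic on the bounded plump $U$; after composition with $(b,\sigma)$, Lemma~\ref{ASymbolOrdnung0} yields $f\circ(b,\sigma),g\circ(b,\sigma)\in S^0_{\mathcal{R}}$, and $\partial_zb_l,\partial_z\sigma\in S^{-1}_{\mathcal{R}}$. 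The resulting $y^p=(\rho x_n)^p$ is split as $\rho^p\in S^p_{\mathcal{R}}$ absorbed into the left symbol factor and $x_n^p$ kept as a polynomial in $x_n$, which gives parameters $(k+p, d_1+1-p, d_2)$ and, symmetrically, $(k+p,d_1,d_2+1-p)$.

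The main obstacle is purely bookkeeping: verifying that every route by which $\partial_z$ acts on $\Psi$ preserves the exact structural form $f_\alpha\,e^{c\rho x_n+i\rho A_0(b,\sigma)x_n}\mathscr{P}_-(b,\sigma)\,g_\alpha\,x_n^k$, including the fact that $e^{siA_0\rho x_n}\mathscr{P}_-$ factors on the left of the middle factor must be consolidated via the commutation identities $[A_0,\mathscr{P}_-]=0$ and $\mathscr{P}_-^2=\mathscr{P}_-$ so that no spurious intermediate exponentials remain. Once this is in place, each application of $\partial_z$ increases the invariant $k+d_1+d_2$ by exactly $1$, completing the induction.
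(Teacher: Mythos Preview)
Your proposal is correct and follows essentially the same approach as the paper: induction on $|\alpha|$, Leibniz rule for the outer factors, chain rule for the middle exponential, and Lemma~\ref{ASymbolOrdnung0} to recognize the composed factors $f\circ(b,\sigma)$, $g\circ(b,\sigma)$, $c+iA_0(b,\sigma)$ as $S^0_{\mathcal{R}}$ symbols. The paper's own proof merely states that the induction goes through via Lemma~\ref{ASymbolOrdnung0} and refers to \cite[Lemma~4.22]{Hummel_Lindemulder_2019} for the details you have spelled out; your invocation of the structural identity from the proof of Corollary~\ref{Cor:R_bounded_Exponential_Weight} is exactly the analogue of that reference, and your bookkeeping (including allowing $d_1+1-p$ to be negative, which the statement permits) is consistent with how the identity $k+d_1+d_2=|\alpha|$ is used downstream in Proposition~\ref{Chapter3::Proposition::PoissonSymbolOrderSingularity}.
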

\begin{proof}
 This can be shown by induction on $|\alpha|$. Using Lemma \ref{ASymbolOrdnung0}, the proof of \cite[Lemma 4.22]{Hummel_Lindemulder_2019} carries over to our setting.
\end{proof}

\begin{proposition}\label{Chapter3::Proposition::PoissonSymbolOrderSingularity}
 Let $\zeta,\delta\geq0$, $k\in\N_0$ and $\vartheta(c,x_n,\mu)=x_n^{-\delta}|\mu|^{-\zeta} e^{-c|\mu|x_n}$ for $c,x_n,\mu\in\R_+$.
 \begin{enumerate}[(a)]
  \item \label{PSOS::Part1} For all $l\in\N_0$ there are constants $C,c>0$ such that
\begin{align}\label{Eq:Poisson_R_Bounds}
 \| (\xi',\mu)\mapsto D_{x_n}^ke^{c\rho x_n}e^{i\rho A_0(b,\sigma)x_n}M_j(b,\sigma)\tfrac{1}{\rho^{m_j}}\|^{(k+\zeta-m_j-\delta,\vartheta(c/2,x_n,\cdot\,))}_{l,\mathcal{R}}<C
\end{align}
for all $x_n\in\R_+$.
\item \label{PSOS::Part2} The mapping
 \[
  \R_+\to S^{k+\zeta-m_j-\delta}_{\mathcal{R}}(\R^{n-1}\times\Sigma_{\phi/2m};\mathcal{B}(E,E^{2m})), x_n\mapsto [(\xi',\mu)\mapsto x_n^{\delta} e^{c\rho x_n}D_{x_n}^ke^{i\rho A_0(b,\sigma)x_n}M_j(b,\sigma)\tfrac{1}{\rho^{m_j}}]
 \]
is continuous.
\item \label{PSOS::Part3} If $f\in BUC(\R_+,\C)$ with $f(0)=0$, then
 \[
   \overline{\R_+}\to S^{k+\zeta-m_j-\delta}_{\mathcal{R}}(\R^{n-1}\times\Sigma_{\phi/2m};\mathcal{B}(E,E^{2m})), x_n\mapsto [(\xi',\mu)\mapsto f(x_n)x_n^{\delta} e^{c\rho x_n}D_{x_n}^ke^{i\rho A_0(b,\sigma)x_n}M_j(b,\sigma)\tfrac{1}{\rho^{m_j}}]
 \]
 is uniformly continuous.
 \item  \label{PSOS::Part4} If $\epsilon\in(0,\delta]$, then
 \[
   \overline{\R_+}\to S^{k+\zeta+\epsilon-m_j-\delta}_{\mathcal{R}}(\R^{n-1}\times\Sigma_{\phi/2m};\mathcal{B}(E,E^{2m})), x_n\mapsto [(\xi',\mu)\mapsto x_n^{\delta} e^{c\rho x_n}D_{x_n}^ke^{i\rho A_0(b,\sigma)x_n}M_j(b,\sigma)\tfrac{1}{\rho^{m_j}}]
 \]
 is uniformly continuous.
 \end{enumerate} 
\end{proposition}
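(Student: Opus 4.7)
The plan is to reduce all four parts to a single common symbol expansion and then specialize. First I would observe that $D_{x_n}^k e^{c\rho x_n} e^{i\rho A_0(b,\sigma)x_n} = \rho^k (A_0(b,\sigma)-ic)^k e^{c\rho x_n + i\rho A_0(b,\sigma) x_n}$ and that $M_j(b,\sigma) = \mathscr{P}_-(b,\sigma)M_j(b,\sigma)$, so the full symbol equals $\rho^{k-m_j}(A_0-ic)^k e^{c\rho x_n + i\rho A_0 x_n}\mathscr{P}_- M_j$ evaluated at $(b,\sigma)$. By Lemma \ref{ASymbolOrdnung0} the three smooth factors composed with $(b,\sigma)$ lie in $S^0_{\mathcal{R}}(\R^{n-1}\times\Sigma_{\phi/2m})$. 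Applying Lemma \ref{Chapter3::Lemma::PoissonSymbolDerivative} then expands $D^\alpha_{\xi',\mu}$ of the symbol into a linear combination of terms $f_\alpha e^{c\rho x_n + i\rho A_0 x_n}\mathscr{P}_- g_\alpha \cdot x_n^l$ with $f_\alpha \in S^{k-d_1}_{\mathcal{R}}$, $g_\alpha \in S^{-m_j-d_2}_{\mathcal{R}}$, and $l + d_1 + d_2 = |\alpha|$. Choose $c_0 > 0$ from Corollary \ref{Cor:R_bounded_Exponential_Weight} so that $e^{c_0\rho x_n}e^{i\rho A_0 x_n}\mathscr{P}_-$ is $\mathcal{R}$-bounded uniformly in $x_n \geq 0$, and pick $c$ in the proposition small enough that $3c/2 < c_0$; then $e^{c\rho x_n+i\rho A_0 x_n}\mathscr{P}_- = e^{-(c_0-c)\rho x_n}\cdot[e^{c_0\rho x_n+i\rho A_0 x_n}\mathscr{P}_-]$ is a spare scalar decay times a uniformly $\mathcal{R}$-bounded family.

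For part (a), combining the symbol-order estimates, Corollary \ref{Cor:R_bounded_Exponential_Weight}, and Kahane's contraction principle reduces the weighted $\mathcal{R}$-bound of each summand to the scalar supremum
\[
\sup_{(\xi',\mu) \in \R^{n-1}\times\Sigma_{\phi/2m}} (x_n\rho)^{\delta+l}\,(|\mu|/\rho)^{\zeta}\, e^{(c/2)|\mu|x_n - (c_0-c)\rho x_n}.
\]
Using $|\mu| \leq \rho$ and $\zeta \geq 0$, this is dominated by $\sup (x_n\rho)^{\delta+l} e^{-(c_0-3c/2)\rho x_n}$, which after the change of variable $s=\rho x_n$ is a finite constant depending only on $\delta+l$ and $c_0-3c/2$, independently of $x_n > 0$.

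For parts (b), (c), and (d) I work in the unweighted symbol class and use the same expansion. In part (b), each $\partial_{x_n}$ applied to the symbol produces an extra factor of $c\rho$, $i\rho A_0$, or $\delta x_n^{-1}$, each raising the symbol order by at most $1$; for $x_n$ in a compact subset of $(0,\infty)$ the spare decay $e^{-(c_0-c)\rho x_n}$ absorbs the extra $\rho$, so $\partial_{x_n}[\,\text{symbol}\,]$ is uniformly in $S^{k+\zeta-m_j-\delta}_{\mathcal{R}}$ on compact subsets, giving the asserted continuity (indeed smoothness). Part (c) follows by combining part (b) for $x_n$ bounded away from $0$ with the bound $\|f(x_n)[\,\text{symbol}\,]\|_{S^{k+\zeta-m_j-\delta}_{\mathcal{R}}} \lesssim |f(x_n)|\to 0$ as $x_n \to 0^+$, which uses $f(0)=0$ and uniform continuity of $f$, together with $|f(x_n)-f(y_n)|\to 0$ for $x_n,y_n$ both small. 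For part (d) the extra $\rho^{-\epsilon}$ in the order normalization turns the scalar supremum into $\sup_{\xi',\mu}(x_n\rho)^{\delta+l}\rho^{-\epsilon} e^{-(c_0-c)\rho x_n}$; writing $\rho^{-\epsilon}=(\rho x_n)^{-\epsilon} x_n^{\epsilon}$ and using $\delta+l-\epsilon \geq 0$ (since $\epsilon \leq \delta$) bounds this by $x_n^\epsilon \sup (\rho x_n)^{\delta+l-\epsilon} e^{-(c_0-c)\rho x_n} \lesssim x_n^\epsilon$. This vanishes as $x_n\to 0^+$, so the symbol extends continuously at $x_n=0$ with value $0$; combined with part (b) on compact subsets of $(0,\infty)$ and the exponential decay as $x_n\to\infty$, this yields uniform continuity on $\overline{\R_+}$.

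The main obstacle is the bookkeeping between $|\mu|$, which governs the weight $\vartheta$, and $\rho = \langle\xi',\mu\rangle$, which governs the symbol's exponential decay: one must choose $c$ small enough that after subtracting $(c/2)|\mu|x_n$ a strictly positive fraction of the $\rho x_n$-decay from Corollary \ref{Cor:R_bounded_Exponential_Weight} remains, and the bound $|\mu| \leq \rho$ is what makes this absorption possible. The second delicate point, essential in part (d), is that $x_n^\delta$ by itself does not produce vanishing at $x_n=0$ in any symbol seminorm without the extra $\epsilon$; the slack $\rho^{-\epsilon}=(\rho x_n)^{-\epsilon} x_n^\epsilon$ is precisely what converts the $x_n$-uniform estimate of part (a) into one that actually tends to zero at the boundary.
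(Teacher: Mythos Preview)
Your argument is correct, and for part~(a) it coincides with the paper's proof: the same decomposition via Lemma~\ref{Chapter3::Lemma::PoissonSymbolDerivative}, the same use of Corollary~\ref{Cor:R_bounded_Exponential_Weight}, and the same reduction to the scalar quantity $(x_n\rho)^{\delta+l}(|\mu|/\rho)^{\zeta}e^{(c/2)|\mu|x_n-(c_0-c)\rho x_n}$ absorbed by $|\mu|\leq\rho$.

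For parts (b)--(d) your route is genuinely different from the paper's. The paper estimates the increment $F(x_n+h)-F(x_n)$ directly by factoring out $[e^{(c+iA_0)\rho h}-\operatorname{id}_{E^{2m}}]$ and splitting the $\mathcal R$-bound at $\rho=h^{-1/2}$: the small-$\rho$ part is handled by the uniform $\mathcal R$-continuity in Corollary~\ref{Cor:R_bounded_Exponential_Weight}\eqref{Cor:R_bounded_Exponential_Weight_Continuity}, while the large-$\rho$ part carries a factor $e^{-cx_n/(4\sqrt{h})}$ that vanishes for fixed $x_n>0$ (and in (c) is killed at $x_n=0$ by $f$). You instead argue that $\partial_{x_n}F$ is uniformly bounded in $S^{k+\zeta-m_j-\delta}_{\mathcal R}$ on compact subsets of $(0,\infty)$ (the extra $\rho$ being absorbed by the spare decay), hence $F$ is locally Lipschitz there; then for (c) and (d) you combine this with $\|f(x_n)F(x_n)\|\to 0$ at both endpoints and invoke the one-point compactification $[0,\infty]$ to get uniform continuity. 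Your approach is cleaner conceptually and avoids the frequency splitting entirely; the paper's approach is more self-contained in that it never passes through a derivative-in-$x_n$ estimate. One small point you should make explicit: the step ``$\partial_{x_n}F$ bounded in $S^d_{\mathcal R}$ on $K$ $\Rightarrow$ $F$ Lipschitz in $S^d_{\mathcal R}$ on $K$'' goes through the integral representation $F(x_n+h)-F(x_n)=\int_0^h\partial_sF(x_n+s)\,ds$ together with the fact that the strong closed absolutely convex hull of an $\mathcal R$-bounded set is again $\mathcal R$-bounded with comparable constant; this is standard, but it is precisely what upgrades your pointwise derivative bound to an $\mathcal R$-bounded-seminorm Lipschitz estimate.
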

\begin{proof}
\begin{enumerate}[(a)]
  \item By Lemma \ref{Chapter3::Lemma::PoissonSymbolDerivative}, we have that $D_{\xi'}^{\alpha'}D_{\mu}^{\gamma} e^{i\rho x_n}e^{i\rho A_0(b,\sigma)x_n}M_{j}(b,\sigma)\frac{1}{\rho^{m_j}}$ is a linear combination of terms of the form 
  \[
   f_{\alpha',\gamma}e^{c\rho x_n+i\rho A_0(b,\sigma)x_n}\ms{P}_-(b,\sigma)g_{\alpha',\gamma}x_n^{p} 
  \]
where $f_{\alpha',\gamma}\in S^{-d_1}(\R^{n-1}\times\R_+,\mc{B }( E^{2m},E^{2m}))$, $g_{\alpha',\gamma}\in S^{-m_{j}-d_2}(\R^{n-1}\times\R_+,\mc{B}(E,E^{2m}))$ and $p+d_1+d_2=|\alpha'|+|\gamma|$. But for such a term, we have that

\begin{align*}
 &\quad\; \mathcal{R}\big(\{\vartheta(\tfrac{c}{2},x_n,\mu)^{-1}\rho^{-k-\zeta+m_j+\delta +|\alpha'|+|\gamma|}D_{x_n}^kf_{\alpha',l}(\xi',\mu)e^{c\rho x_n}e^{i\rho A_0(b,\sigma)x_n}\\
 &\qquad\qquad\qquad\qquad\ms{P}_-(b,\sigma)g_{\alpha',l}(\xi',\mu)x_n^p : (\xi',\mu)\in\R^{n-1}\times\Sigma_{\phi/2m}\}\big) \\
  &\leq C  \sum_{\tilde{k}=0}^k \mathcal{R}\big(\{\vartheta(\tfrac{c}{2},x_n,\mu)^{-1}\rho^{-k-\zeta+m_j+\delta +|\alpha'|+|\gamma|}f_{\alpha',l}(\xi',\mu)e^{c\rho x_n}e^{i\rho A_0(b,\sigma)x_n}\\
  &\qquad\qquad\qquad\qquad\ms{P}_-(b,\sigma)[c\rho+i\rho A_0(b,\sigma)]^{k-\tilde{k}}g_{\alpha',l}(\xi',\mu)x_n^{[p-\tilde{k}]_+}: (\xi',\mu)\in\R^{n-1}\times\Sigma_{\phi/2m}\}\big)\\
  &\leq C  \sum_{\tilde{k}=0}^k\mathcal{R}\big(\{\vartheta(\tfrac{c}{2},x_n,\mu)^{-1} \rho^{-\zeta+\delta-\tilde{k}-d_1-d_2+|\alpha'|+|\gamma|}e^{-c\rho x_n}x_n^{[p-\tilde{k}]_+}: (\xi',\mu)\in\R^{n-1}\times\Sigma_{\phi/2m}\}\big)\\
  &\leq C  \mathcal{R}\big(\{\vartheta(\tfrac{c}{2},x_n,\mu)^{-1}x_n^{-\delta}|\mu|^{-\zeta}\rho^{-d_1-d_2-p+|\alpha'|+|\gamma|}e^{-\tfrac{c}{2}\rho x_n}: (\xi',\mu)\in\R^{n-1}\times\Sigma_{\phi/2m}\}\big)\\
  &\leq C   
\end{align*}
From the second to the third line we used Lemma \ref{ASymbolOrdnung0} and Corollary \ref{Cor:R_bounded_Exponential_Weight} \eqref{Cor:R_bounded_Exponential_Weight_R_Bounds}. This gives \eqref{Eq:Poisson_R_Bounds}.
\item Again, we consider a term of the form \[
   f_{\alpha',\gamma}e^{i\rho A_0(b,\sigma)x_n}\ms{P}_-(b,\sigma)g_{\alpha',\gamma}x_n^{p} 
  \]
where $f_{\alpha',\gamma}\in S^{-d_1}(\R^{n-1}\times\R_+,\mc{B }( E^{2m},E^{2m}))$, $g_{\alpha',\gamma}\in S^{-m_{j}-d_2}(\R^{n-1}\times\R_+,\mc{B}(E,E^{2m}))$ and $p+d_1+d_2=|\alpha'|+|\gamma|$. By the same computation as in part \eqref{PSOS::Part1} we obtain
\begin{align*}
 &\quad\; \mathcal{R}\big(\{\vartheta(\tfrac{c}{2},x_n,\mu)^{-1}\rho^{-k-\zeta+m_j+\delta +|\alpha'|+|\gamma|}D_{x_n}^kf_{\alpha',l}(\xi',\mu)[e^{c\rho (x_n+h)+i\rho A_0(b,\sigma)(x_n+h)}-e^{c\rho x_n+i\rho A_0(b,\sigma)x_n}]:\\
 &\qquad\qquad\qquad\qquad\qquad\qquad\qquad\qquad\qquad\qquad\qquad\qquad(\xi',\mu)\in\R^{n-1}\times\Sigma_{\phi/2m}\})\\
  &\leq C  \mathcal{R}\big(\{\vartheta(\tfrac{c}{2},x_n,\mu)^{-1}x_n^{-\delta}|\mu|^{-\zeta}\rho^{-d_1-d_2-p+|\alpha'|+|\gamma|}[e^{c\rho h+i\rho A_0(b,\sigma)h}-\operatorname{id}_{E^{2m}}] e^{-\tfrac{3c}{4}\rho x_n}: (\xi',\mu)\in\R^{n-1}\times\Sigma_{\phi/2m}\}\big)\\
  &= C   \mathcal{R}\big(\{[e^{c\rho h+i\rho A_0(b,\sigma)h}-\operatorname{id}_{E^{2m}}] e^{-\tfrac{c}{4}\rho x_n}: (\xi',\mu)\in\R^{n-1}\times\Sigma_{\phi/2m}\}\big)\\
  &\leq C \mathcal{R}\big(\{[e^{c\rho h+i\rho A_0(b,\sigma)h}-\operatorname{id}_{E^{2m}}] e^{-\tfrac{c}{4}\rho x_n}: (\xi',\mu)\in\R^{n-1}\times\Sigma_{\phi/2m}, \rho\leq \tfrac{1}{\sqrt{h}}\}\big) \\
  &\quad+C \mathcal{R}\big(\{[e^{c\rho h+i\rho A_0(b,\sigma)h}-\operatorname{id}_{E^{2m}}] e^{-\tfrac{cx_n}{4\sqrt{h}}}: (\xi',\mu)\in\R^{n-1}\times\Sigma_{\phi/2m}, \rho\geq \tfrac{1}{\sqrt{h}}\}\big).
\end{align*}
From Corollary~\ref{Cor:R_bounded_Exponential_Weight}~\eqref{Cor:R_bounded_Exponential_Weight_Continuity} it follows that the first $\mathcal{R}$-bound tends to $0$ as $h\to0$. By Corollary~\ref{Cor:R_bounded_Exponential_Weight}~\eqref{Cor:R_bounded_Exponential_Weight_R_Bounds} it holds that 
\[
 \mathcal{R}(\{e^{c\rho h+i\rho A_0(b,\sigma)h}-\operatorname{id}_{E^{2m}}:(\xi',\mu)\in\R^{n-1}\times\Sigma_{\phi/2m}, \rho\geq \tfrac{1}{\sqrt{h}}\})<\infty
\]
and and since $x_n>0$ also the second $\mathcal{R}$-bound tends to $0$ as $h\to0$. This shows the desired continuity.
\item This follows by the same computation as in part \eqref{PSOS::Part2}. However, without $f$ there would be no continuity at $x_n=0$ as the second $\mathcal{R}$-bound
\[
 \mathcal{R}\big(\{[e^{c\rho h+i\rho A_0(b,\sigma)h}-\operatorname{id}_{E^{2m}}] e^{-\tfrac{cx_n}{2\sqrt{h}}}: (\xi',\mu)\in\R^{n-1}\times\Sigma_{\phi/2m}, \rho\geq \tfrac{1}{\sqrt{h}}\}\big)
\]
does not tend to $0$ as $h\to0$ for $x_n=0$. By adding $f$ though, we obtain the desired continuity.
\item This follows from part \eqref{PSOS::Part3} with $f(x_n)=x_n^{\epsilon}$ for $x_n$ close to $0$.
\end{enumerate}
\end{proof}

Given a topological spaces $Z_0,Z_1$ and $z\in Z_0$, we now write
\[
 \operatorname{ev}_{z}\colon C(Z_0;Z_1)\to Z_1, f\mapsto f(z)
\]
for the evaluation map at $z$.

\begin{corollary}\label{Chapter3::Corollary::PoissonEvalEstimate}
 Let $k\in\N_0$, $p_0\in(1,\infty)$, $q_0\in[1,\infty]$, $\zeta\geq0$ and $s_0,s,\tilde{t}\in\R$.
 \begin{enumerate}[(a)]
  \item There are constants $C,c>0$ such that for all $x_n>0$ and all $\lambda\in\Sigma_{\phi}$ we have the parameter-dependent estimate
 \begin{align*}
  \| [D_{x_n}^k\operatorname{Poi}_j(\lambda)& f](\cdot,x_n)\|_{\mathscr{A}^{\tilde{t}+m_j-k-\zeta,|\lambda|^{1/2m},s_0}(\R^{n-1},w,E^{2m})}\\&\leq Cx_n^{-[\tilde{t}-s]_+}|\lambda|^{-\zeta/2m}e^{-c|\lambda|^{1/2m}x_n}\|f\|_{\mathscr{A}^{s,|\lambda|^{1/2m},s_0}(\R^{n-1},w,E)}\quad(f\in\mathscr{S}(\R^{n-1},E)).
 \end{align*}
 \item\label{PEE:Part2} There is a constant $c>0$ such that for all $\lambda\in\Sigma_{\phi}$ we have that
 \[
  K(\lambda):=[x_n\mapsto x_n^{[\tilde{t}-s]_+}|\lambda|^{\frac{\zeta-[-[\tilde{t}-s]_+-m_j+k+\zeta]_+}{2m}}e^{c|\lambda|^{1/2m} x_n}\operatorname{ev}_{x_n}D_{x_n}^k\operatorname{Poi}_j(\lambda)]
 \]
 is an element of $$C_{\mathcal{R}B}\big(\R_+;\mathcal{B}(\mathscr{A}^s(\R^{n-1},w;E^{2m}),\mathscr{A}^{\tilde{t}+m_j-k-\zeta}(\R^{n-1},w;E^{2m}))\big).$$ Moreover, for all $\sigma>0$ we have that the set $\{K(\lambda):\lambda\in\Sigma_{\phi},\,|\lambda|\geq\sigma\}$ is $\mathcal{R}$-bounded in $C_{\mathcal{R}B}\big(\R_+;\mathcal{B}(\mathscr{A}^s(\R^{n-1},w;E^{2m}),\mathscr{A}^{\tilde{t}+m_j-k-\zeta}(\R^{n-1},w;E^{2m}))\big)$.
  \item Let $f\in BUC([0,\infty),\C)$ such that $f(0)=0$. There is a constant $c>0$ such that for all $\lambda\in\Sigma_{\phi}$ we have that
 \[
  K_f(\lambda):=[x_n\mapsto f(x_n)x_n^{[\tilde{t}-s]_+}|\lambda|^{\frac{\zeta-[-[\tilde{t}-s]_+-m_j+k+\zeta]_+}{2m}}e^{c|\lambda|^{1/2m} x_n}\operatorname{ev}_{x_n}D_{x_n}^k\operatorname{Poi}_j(\lambda)]
 \]
 is an element of $$BUC_{\mathcal{R}}\big(\R_+;\mathcal{B}(\mathscr{A}^s(\R^{n-1},w;E^{2m}),\mathscr{A}^{\tilde{t}+m_j-k-\zeta}(\R^{n-1},w;E^{2m}))\big).$$ Moreover, for all $\sigma>0$ we have that the set $\{K_f(\lambda):\lambda\in\Sigma_{\phi},\,|\lambda|\geq\sigma\}$ is $\mathcal{R}$-bounded in $BUC_{\mathcal{R}}\big(\R_+;\mathcal{B}(\mathscr{A}^s(\R^{n-1},w;E^{2m}),\mathscr{A}^{\tilde{t}+m_j-k-\zeta}(\R^{n-1},w;E^{2m}))\big)$.
  \item Let $\epsilon>0$ and let $K(\lambda)$ be defined as in Part \eqref{PEE:Part2}. Then $K(\lambda)$
 is an element of $$BUC_{\mathcal{R}}\big(\R_+;\mathcal{B}(\mathscr{A}^s(\R^{n-1},w;E^{2m}),\mathscr{A}^{\tilde{t}+m_j-\epsilon-k-\zeta}(\R^{n-1},w;E^{2m}))\big).$$ Moreover, for all $\sigma>0$ we have that the set $\{K(\lambda):\lambda\in\Sigma_{\phi},\,|\lambda|\geq\sigma\}$ is $\mathcal{R}$-bounded in $BUC_{\mathcal{R}}\big(\R_+;\mathcal{B}(\mathscr{A}^s(\R^{n-1},w;E^{2m}),\mathscr{A}^{\tilde{t}+m_j-\epsilon-k-\zeta}(\R^{n-1},w;E^{2m}))\big)$.
 \end{enumerate}
\end{corollary}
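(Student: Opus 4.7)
The plan is to deduce all four parts by combining the $\mathcal{R}$-bounded symbol estimates of Proposition~\ref{Chapter3::Proposition::PoissonSymbolOrderSingularity} with the parameter-dependent pseudo-differential mapping properties from Propositions~\ref{Prop:PseudoMappingProperties} and~\ref{Prop:PseudoMappingProperties_R_bounded}. Throughout I set $\delta:=[\tilde t-s]_+$ and $d:=k+\zeta-m_j-\delta$.

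For part~(a), Proposition~\ref{Chapter3::Proposition::PoissonSymbolOrderSingularity}(a) produces, for some $c>0$ uniform in $x_n\geq 0$, a symbol
\[
 p_{x_n}(\xi',\mu):= e^{c\rho x_n}D_{x_n}^ke^{i\rho A_0(b,\sigma)x_n}M_j(b,\sigma)\rho^{-m_j}\in S^{d,\vartheta(c/2,x_n,\cdot)}_{\mathcal R}(\R^{n-1}\times\Sigma_{\phi/2m};\mathcal B(E,E^{2m}))
\]
with $\vartheta(c,x_n,\mu)=x_n^{-\delta}|\mu|^{-\zeta}e^{-c|\mu|x_n}$. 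Applying Proposition~\ref{Prop:PseudoMappingProperties} with target smoothness $\tilde t+m_j-k-\zeta$ gives
\[
 \|\operatorname{op}[p_{x_n}]f\|_{\mathscr A^{\tilde t+m_j-k-\zeta,|\lambda|^{1/2m},s_0}}\lesssim x_n^{-\delta}|\lambda|^{-\zeta/2m}e^{-c|\lambda|^{1/2m}x_n/2}\|f\|_{\mathscr A^{\tilde t-\delta,|\lambda|^{1/2m},s_0}}.
\]
Since $\operatorname{ev}_{x_n}D_{x_n}^k\operatorname{Poi}_j(\lambda)=\operatorname{op}[e^{-c\rho x_n}]\circ\operatorname{op}[p_{x_n}]$ and the Fourier multiplier $\operatorname{op}[e^{-c\rho x_n}]$ has norm $\lesssim e^{-c|\mu|x_n/2}$ on $\mathscr A^{\bullet,\mu,s_0}$ (factor its symbol as $e^{-c|\mu|x_n/2}\cdot e^{c|\mu|x_n/2-c\rho x_n}$ and note that $|\mu|\leq\rho$ together with $x_n^{N}e^{-\epsilon\rho x_n}\lesssim\rho^{-N}$ makes the second factor a bounded $S^0$ symbol uniformly in $x_n\geq 0$), I obtain the claim after the embedding $\mathscr A^{s,\mu,s_0}\hookrightarrow \mathscr A^{\tilde t-\delta,\mu,s_0}$, which holds with uniform norm $1$ because $\tilde t-\delta=\min\{\tilde t,s\}\leq s$.

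For parts~(b)--(d), the same reduction works with Proposition~\ref{Prop:PseudoMappingProperties_R_bounded} in place of Proposition~\ref{Prop:PseudoMappingProperties}. Its normalizing factor $\vartheta(\mu)^{-1}\langle\mu\rangle^{-d_+}=x_n^\delta|\mu|^\zeta\langle\mu\rangle^{-d_+}e^{c|\mu|x_n}$ matches, on $\{|\lambda|\geq\sigma\}$, the scalar prefactor appearing in the definition of $K(\lambda)$ once one uses $\langle\mu\rangle\eqsim|\mu|$ and observes that $d_+=[-[\tilde t-s]_+-m_j+k+\zeta]_+$; this yields the $\mathcal R$-boundedness of $\{K(\lambda)(x_n):x_n\in\R_+,\,|\lambda|\geq\sigma\}$. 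Pointwise ($\mathcal R$-)continuity of $x_n\mapsto K(\lambda)(x_n)$ follows by applying the same operator-level argument to the symbol increments in $x_n$ and invoking, for part~(b), Proposition~\ref{Chapter3::Proposition::PoissonSymbolOrderSingularity}(b); for part~(c), Proposition~\ref{Chapter3::Proposition::PoissonSymbolOrderSingularity}(c), in which the vanishing of $f$ at $0$ upgrades pointwise to uniform continuity including $x_n=0$; and for part~(d), Proposition~\ref{Chapter3::Proposition::PoissonSymbolOrderSingularity}(d), which gives uniform continuity up to $x_n=0$ at the cost of an $\varepsilon$ loss in tangential smoothness.

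The main technical obstacle will be the careful bookkeeping of the three exponential factors at play—the symbol-level weight $e^{c\rho x_n}$ absorbed into $\vartheta$, the Fourier multiplier $\operatorname{op}[e^{-c\rho x_n}]$ restoring the actual Poisson symbol, and the operator-side scalar $e^{c|\lambda|^{1/2m}x_n}$ isolated in the statement—as well as the passage from $\langle\mu\rangle$ to $|\mu|$, which is valid only on $\{|\lambda|\geq\sigma\}$ and therefore forces the $\sigma$-restriction in parts~(b)--(d). Choosing the constant $c>0$ in the final statement sufficiently small relative to the one produced by Proposition~\ref{Chapter3::Proposition::PoissonSymbolOrderSingularity} absorbs all remaining discrepancies between these various exponential scales.
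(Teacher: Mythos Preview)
Your proposal is correct and follows essentially the same route as the paper: both deduce all four parts by feeding the symbol estimates of Proposition~\ref{Chapter3::Proposition::PoissonSymbolOrderSingularity} into Propositions~\ref{Prop:PseudoMappingProperties} and~\ref{Prop:PseudoMappingProperties_R_bounded}, with the embedding $\mathscr{A}^{s}\hookrightarrow\mathscr{A}^{\tilde t-[\tilde t-s]_+}$ handling the passage from source regularity $s$ to the natural source $\tilde t-\delta$. The paper is slightly more economical in that it absorbs $e^{-c\rho x_n}$ directly into the symbol class (via the product rule $S^0\cdot S^d_{\mathcal R}\subset S^d_{\mathcal R}$) rather than splitting it off as a separate Fourier multiplier, but this is a cosmetic difference; note also that the embedding constant is merely uniformly bounded, not literally equal to~$1$.
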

\begin{proof}
\begin{enumerate}[(a)]
 \item By Proposition \ref{Chapter3::Proposition::PoissonSymbolOrderSingularity} we have that 
\begin{align*}
 \big(D_{x_n}^k e^{i\rho A_0(b,\sigma)x_n}\frac{M_{j}(b,\sigma)}{\rho^{m_j}}\big)_{x_n>0}&\subset S^{\zeta+k-m_{j}-[t-s]_+}_{\mathcal{R}}(\R^{n-1}\times\Sigma_{\phi/2m};\mc{B}(E,E^{2m}))\\
 &\subset S^{\zeta+k-m_{j}-\tilde{t}+s}_{\mathcal{R}}(\R^{n-1}\times\Sigma_{\phi/2m};\mc{B}(E,E^{2m})).
\end{align*}
Therefore, it follows from \eqref{Eq:Poisson_R_Bounds} together with the mapping properties for parameter-dependent pseudo-differential operators, Proposition~\ref{Prop:PseudoMappingProperties}, that $\operatorname{ev}_{x_n}D_{x_n}^k\operatorname{Poi}_j(\lambda)$ maps $\mathscr{A}^{s,|\lambda|^{1/2m},s_0}(\R^{n-1},w,E)$ into $\mathscr{A}^{\tilde{t}+m_j-k-\zeta,|\lambda|^{1/2m},s_0}(\R^{n-1},w,E^{2m})$ with a bound on the operator norms which is given by $Cx_n^{-[\tilde{t}-s]_+}|\lambda|^{-\zeta/2m} e^{-c|\lambda|^{1/2m}x_n}$ for all $\tilde{t},s\in\R$, $x_n>0$ and all $\zeta\geq0$.
\item We use Proposition \ref{Chapter3::Proposition::PoissonSymbolOrderSingularity} \eqref{PSOS::Part1} together with Proposition \ref{Prop:PseudoMappingProperties_R_bounded}. Then we obtain
\begin{align*}
 &\quad\mathcal{R}_{\mathcal{B}(\mathscr{A}^s(\R^{n-1},w;E^{2m}),\mathscr{A}^{\tilde{t}+m_j-k-\zeta}(\R^{n-1},w;E^{2m})}\big(\{x_n^{[\tilde{t}-s]_+}|\lambda|^{\frac{\zeta-[-[\tilde{t}-s]_+-m_j+k+\zeta]_+}{2m}}e^{c|\lambda|^{1/2m}x_n}\\
   &\qquad\qquad\qquad\operatorname{ev}_{x_n}D_{x_n}^k\operatorname{Poi}_j(\lambda):\lambda\in\Sigma_{\phi},\,|\lambda|\geq\sigma\}\big)\\
 &\leq\mathcal{R}_{\mathcal{B}(\mathscr{A}^{\tilde{t}-[\tilde{t}-s]_+}(\R^{n-1},w;E^{2m}),\mathscr{A}^{\tilde{t}+m_j-k-\zeta}(\R^{n-1},w;E^{2m}))}\big(\{x_n^{[\tilde{t}-s]_+}|\lambda|^{\frac{\zeta-[-[\tilde{t}-s]_+-m_j+k+\zeta]_+}{2m}}e^{c|\lambda|^{1/2m}x_n}\\
   &\qquad\qquad\qquad\operatorname{ev}_{x_n}D_{x_n}^k\operatorname{Poi}_j(\lambda):\lambda\in\Sigma_{\phi},\,|\lambda|\geq\sigma\}\big)\\
 &\leq C_{\sigma}.
\end{align*}
This shows that
\[
 \mathcal{R}(\{[K(\lambda)](x_n)\colon x_n\geq0,\,\lambda\in\Sigma_{\phi},\,|\lambda|\geq\sigma\})<\infty
\]
in $\mathcal{B}(\mathscr{A}^s(\R^{n-1},w;E^{2m}),\mathscr{A}^{\tilde{t}+m_j-k-\zeta}(\R^{n-1},w;E^{2m}))$ it remains to show that the $K(\lambda)$ are $\mathcal{R}$-continuous. But this follows from the continuity statement in Proposition \ref{Chapter3::Proposition::PoissonSymbolOrderSingularity} \eqref{PSOS::Part2} together with Proposition \ref{Prop:PseudoMappingProperties_R_bounded}.
\item This follows as Part \eqref{PEE:Part2} but with Proposition \ref{Chapter3::Proposition::PoissonSymbolOrderSingularity} \eqref{PSOS::Part3} instead of Proposition \ref{Chapter3::Proposition::PoissonSymbolOrderSingularity} \eqref{PSOS::Part2}.
\item This follows as Part \eqref{PEE:Part2} but with Proposition \ref{Chapter3::Proposition::PoissonSymbolOrderSingularity} \eqref{PSOS::Part4} instead of Proposition \ref{Chapter3::Proposition::PoissonSymbolOrderSingularity} \eqref{PSOS::Part2}.
\end{enumerate}
\end{proof}

\begin{proposition}\label{Chapter3::Proposition::PoissonMappingPropertySobolev}
 Consider the situation of Corollary \ref{Chapter3::Corollary::PoissonEvalEstimate} and let $p\in[1,\infty)$, $r\in\R$. In order to shorten the formulas, we write $\gamma_1=r-p[\tilde{t}-s]_+$ and $\gamma_2=p\zeta-p[-[\tilde{t}-s]_+-m_j+ k+\zeta]_+$. Suppose that $\gamma_1>-1$. Then, for all $\sigma>0$ and all there is a constant $C>0$ such that for all $\lambda\in\Sigma_{\phi}$  with $|\lambda|\geq\sigma$ and all $f\in \mathscr{A}^{s}_p(\R^{n-1},w;E)$ it holds that
  \begin{align*}
   \|  \operatorname{Poi}_{j}(\lambda)f\|_{W^{k}_{p}(\R_+,|\operatorname{pr}_n|^r,\mathscr{A}^{t+m_j- k-\zeta}(\R^{n-1},w;E^{2m}))}\leq  C|\lambda|^{\frac{-1-\gamma_1-\gamma_2}{2mp}} \|f\|_{\mathscr{A}^{s}_p(\R^{n-1},w;E)}.
 \end{align*}
\end{proposition}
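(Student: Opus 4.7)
The plan is to reduce the claim to the pointwise (in $x_n$) operator bound already established in Corollary~\ref{Chapter3::Corollary::PoissonEvalEstimate} and then integrate in $x_n$ against the weight $|\operatorname{pr}_n|^r$. Concretely, part~(b) of that corollary (applied to ordinary, not parameter-dependent, norms since one may absorb the $|\lambda|$-power in the definition of $K(\lambda)$) gives, for some $C,c>0$ depending on $\sigma$, the pointwise estimate
\[
 \|[D_{x_n}^k\operatorname{Poi}_j(\lambda)f](\cdot,x_n)\|_{\mathscr{A}^{\tilde{t}+m_j-k-\zeta}}\leq C\,x_n^{-[\tilde{t}-s]_+}|\lambda|^{-\frac{\zeta-[-[\tilde{t}-s]_+-m_j+k+\zeta]_+}{2m}}e^{-c|\lambda|^{1/2m}x_n}\|f\|_{\mathscr{A}^s}
\]
for every $x_n>0$ and every $\lambda\in\Sigma_\phi$ with $|\lambda|\geq\sigma$.

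Next I would raise this inequality to the $p$-th power, multiply by $x_n^r$, and integrate over $\R_+$. This produces the factor $|\lambda|^{-\gamma_2/(2m)}\|f\|_{\mathscr{A}^s}^p$ in front of
\[
 \int_0^{\infty} x_n^{r-p[\tilde{t}-s]_+}e^{-cp|\lambda|^{1/2m}x_n}\,dx_n = \int_0^{\infty} x_n^{\gamma_1}e^{-cp|\lambda|^{1/2m}x_n}\,dx_n.
\]
The hypothesis $\gamma_1>-1$ is exactly what makes this integral finite; the substitution $y=|\lambda|^{1/2m}x_n$ evaluates it to $\Gamma(\gamma_1+1)(cp)^{-\gamma_1-1}|\lambda|^{-(1+\gamma_1)/(2m)}$. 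Combining and taking the $p$-th root yields
\[
 \|D_{x_n}^k\operatorname{Poi}_j(\lambda)f\|_{L_p(\R_+,|\operatorname{pr}_n|^r;\mathscr{A}^{\tilde{t}+m_j-k-\zeta})} \leq C\,|\lambda|^{-\frac{1+\gamma_1+\gamma_2}{2mp}}\|f\|_{\mathscr{A}^s},
\]
which is the desired estimate for the top-order term of the $W^k_p$-norm.

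For the lower-order terms $D_{x_n}^{k'}\operatorname{Poi}_j(\lambda)$ with $0\leq k'<k$, I would apply the very same corollary with the pair $(k,\zeta)$ replaced by $(k',\zeta+k-k')$, so that the target smoothness $\tilde{t}+m_j-k'-(\zeta+k-k')=\tilde{t}+m_j-k-\zeta$ remains unchanged and $\gamma_1$ is unaffected; the bookkeeping above then produces the exponent $-\frac{1+\gamma_1+\gamma_2+p(k-k')}{2mp}$ on $|\lambda|$, and since $|\lambda|\geq\sigma$ the extra factor $|\lambda|^{-(k-k')/(2m)}\leq\sigma^{-(k-k')/(2m)}$ can be absorbed into the constant. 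Summing over $k'=0,\ldots,k$ then gives the full $W^k_p$-estimate. No step presents a genuine obstacle: everything is dictated by the pointwise bound from Corollary~\ref{Chapter3::Corollary::PoissonEvalEstimate}(b) and the elementary gamma-integral, with the only place one must be careful being the verification that the exponent of $x_n$ in the integrand equals $\gamma_1$ and that the convergence condition $\gamma_1>-1$ has been imposed.
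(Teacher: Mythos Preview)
Your proposal is correct and follows essentially the same approach as the paper: both apply the pointwise bound from Corollary~\ref{Chapter3::Corollary::PoissonEvalEstimate}, raise to the $p$-th power, integrate $x_n^{\gamma_1}e^{-cp|\lambda|^{1/2m}x_n}$ over $\R_+$, and rescale. The only cosmetic difference is in the treatment of the lower-order derivatives: the paper keeps $\zeta$ fixed, applies the pointwise bound with $k$ replaced by $l\leq k$ (landing in the stronger space $\mathscr{A}^{\tilde{t}+m_j-l-\zeta}$), and then embeds into $\mathscr{A}^{\tilde{t}+m_j-k-\zeta}$ before observing the worst $|\lambda|$-power occurs at $l=k$; you instead shift $(k,\zeta)\mapsto(k',\zeta+k-k')$ so the target space is unchanged and absorb the extra $|\lambda|^{-(k-k')/(2m)}$ using $|\lambda|\geq\sigma$. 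Both are straightforward bookkeeping and lead to the same conclusion.
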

\begin{proof}
  We use Corollary \ref{Chapter3::Corollary::PoissonEvalEstimate} and obtain
  \begin{align*}
 &\quad \|  \operatorname{Poi}_{j}(\lambda)f\|_{W^{k}_{p}(\R_+,|\operatorname{pr}_n|^r,\mathscr{A}^{t+m_j- k-\zeta}(\R^{n-1},w;E^{2m}))}^p \\
  &= \sum_{l=0}^{k}\int_0^{\infty} \|[D_{x_n}^l\operatorname{Poi}_{j}(\lambda)f](\,\cdot\,,x_n) \|_{\mathscr{A}^{t+m_j- k-\zeta}(\R^{n-1},w;E^{2m})}^px_n^r\,dx_n\\
  &  \leq C \|f\|_{\mathscr{A}^{s}_p(\R^{n-1},w;E)}^p\sum_{l=0}^{k}|\lambda|^{\frac{p[-[t-s]_+-m_j+ l-\zeta]_+-p\zeta}{2m}}\int_0^{\infty} x_n^{\gamma_1}  e^{-cp|\lambda|^{1/2m}x_n}\,dx_n\\
  &  \leq C |\lambda|^{\frac{-\gamma_2}{2m}}\|f\|_{\mathscr{A}^{s}_p(\R^{n-1},w;E)}^p\int_0^{\infty} x_n^{\gamma_1}  e^{-c|\lambda|^{1/2m}x_n}\,dx_n\\
    &  \leq C|\lambda|^{\frac{-1-\gamma_1-\gamma_2}{2m}} \|f\|_{\mathscr{A}^{s}_p(\R^{n-1},w;E)}^p\int_{0}^{\infty} y_n^{\gamma_1}  e^{-cy_n}\,dy_n\\
      &  \leq C|\lambda|^{\frac{-1-\gamma_1-\gamma_2}{2m}} \|f\|_{\mathscr{A}^{s}_p(\R^{n-1},w;E)}^p
 \end{align*}
 for all $f\in \mathscr{A}^{s}_p(\R^{n-1},w;E)$.
 \end{proof}

\begin{proposition}\label{Chapter3::Proposition::PoissonMappingPropertySobolevRBoundedEpsilon}
 Consider the situation of Corollary \ref{Chapter3::Corollary::PoissonEvalEstimate} and let $p\in[1,\infty)$, $r\in\R$. Again we write $\gamma_1=r-p[\tilde{t}-s]_+$ as well as $\gamma_2=p\zeta-p[-[\tilde{t}-s]_+-m_j+ k+\zeta]_+$. Suppose that $\gamma_1>-1$ and take $\epsilon\in(0,1+\gamma_1)$. Then, for all $\sigma>0$ and all there is a constant $C>0$ such that for all $\epsilon\geq0$ 
  \begin{align*}
   \mathcal{R}\big(\{|\lambda|^{\frac{1+\gamma_1+\gamma_2-\epsilon}{2mp}}\operatorname{Poi}_{j}(\lambda):\lambda\in\Sigma_{\phi},\,|\lambda|\geq\sigma \}\big)\leq C,
 \end{align*}
 where the $\mathcal{R}$-bounds are taken in $\mathcal{B}(\mathscr{A}^{s}(\R^{n-1},w;E),W^{k}_{p}(\R_+,|\operatorname{pr}_n|^{r};\mathscr{A}^{\tilde{t}+m_j- k-\zeta}(\R^{n-1},w;E^{2m}))).$
\end{proposition}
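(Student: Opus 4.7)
The strategy mirrors the proof of Proposition~\ref{Chapter3::Proposition::PoissonMappingPropertySobolev}, but replaces the pointwise norm estimates by $\mathcal{R}$-bounds delivered by Corollary~\ref{Chapter3::Corollary::PoissonEvalEstimate}. Fix $\lambda_1,\ldots,\lambda_N\in\Sigma_\phi$ with $|\lambda_i|\geq\sigma$, elements $f_1,\ldots,f_N\in\mathscr{A}^s(\R^{n-1},w;E)$, and a Rademacher sequence $(\epsilon_i)$. Since $\mathcal{R}$-bounds are independent of the integrability exponent by Kahane--Khintchine, I work in $L_p(\Omega;W^k_p(\R_+,|\operatorname{pr}_n|^r;\mathscr{A}^{\tilde t+m_j-k-\zeta}))$ and exchange $\E$ with the $x_n$-integration via Fubini, so that it suffices to bound
\[
\sum_{l=0}^k\int_0^\infty \E\bigg\|\sum_{i=1}^N\epsilon_i\,|\lambda_i|^{\frac{1+\gamma_1+\gamma_2-\epsilon}{2mp}}\,D_{x_n}^l\operatorname{Poi}_j(\lambda_i)f_i\bigg\|_{\mathscr{A}^{\tilde t+m_j-l-\zeta}}^p\,x_n^r\,dx_n.
\]

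For each $l\in\{0,\ldots,k\}$ set $\zeta_l:=\zeta-[-[\tilde t-s]_+-m_j+l+\zeta]_+$. Part~\eqref{PEE:Part2} of Corollary~\ref{Chapter3::Corollary::PoissonEvalEstimate} produces an $\mathcal{R}$-bounded family $\{[K_l(\lambda)](x_n):\lambda\in\Sigma_\phi,\,|\lambda|\geq\sigma,\,x_n\geq 0\}$ in $\mathcal{B}(\mathscr{A}^s,\mathscr{A}^{\tilde t+m_j-l-\zeta})$ satisfying
\[
D_{x_n}^l\operatorname{Poi}_j(\lambda)=x_n^{-[\tilde t-s]_+}|\lambda|^{-\zeta_l/(2m)}e^{-c|\lambda|^{1/2m}x_n}\,[K_l(\lambda)](x_n).
\]
Applying Kahane's contraction principle to pull the scalar prefactors out of the Rademacher sum, and then using the $\mathcal{R}$-bound of the $K_l$-family, each integrand is dominated by
\[
C\,x_n^{-p[\tilde t-s]_+}\Big(\sup_{|\lambda|\geq\sigma}|\lambda|^{\alpha_l}e^{-cp|\lambda|^{1/2m}x_n}\Big)\cdot \E\bigg\|\sum_{i=1}^N\epsilon_i f_i\bigg\|_{\mathscr{A}^s}^p,\qquad \alpha_l:=\frac{1+\gamma_1+\gamma_2-\epsilon}{2m}-\frac{p\zeta_l}{2m}.
\]

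It then remains to verify that the scalar integral
\[
\int_0^\infty x_n^{r-p[\tilde t-s]_+}\sup_{|\lambda|\geq\sigma}|\lambda|^{\alpha_l}e^{-cp|\lambda|^{1/2m}x_n}\,dx_n
\]
is finite. Setting $t=|\lambda|^{1/2m}$ and maximising $t\mapsto t^{2m\alpha_l}e^{-cpt\,x_n}$ on $[\sigma^{1/2m},\infty)$ yields the elementary estimate
\[
\sup_{|\lambda|\geq\sigma}|\lambda|^{\alpha_l}e^{-cp|\lambda|^{1/2m}x_n}\lesssim x_n^{-2m\alpha_l}+\sigma^{\alpha_l}e^{-cp\sigma^{1/2m}x_n},
\]
where the first term dominates for small $x_n$ and the second for large $x_n$. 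Inserting $\gamma_1=r-p[\tilde t-s]_+$ one computes $-2m\alpha_l-p[\tilde t-s]_++r=-1-\gamma_2+\epsilon+p\zeta_l$, so the integral splits as
\[
\lesssim \int_0^{X_0} x_n^{-1-\gamma_2+\epsilon+p\zeta_l}\,dx_n+\int_{X_0}^\infty x_n^{\gamma_1}e^{-cp\sigma^{1/2m}x_n}\,dx_n
\]
for some threshold $X_0=X_0(\alpha_l,\sigma)$. The second integral converges by the exponential decay and $\gamma_1>-1$, while the first converges because $p\zeta_l\geq p\zeta_k=\gamma_2$ for $l\leq k$, so the exponent is at least $-1+\epsilon>-1$.

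The delicate point is precisely the critical index $l=k$: there one has the equality $p\zeta_k=\gamma_2$, so without any $\epsilon$-loss the first exponent would be exactly $-1$ and a logarithmic singularity at $x_n=0$ would appear. Thus the concession $\epsilon>0$ is what buys the $\mathcal{R}$-boundedness, while the restriction $\epsilon<1+\gamma_1$ keeps the overall $\lambda$-scaling consistent; this explains the trade-off with the non-$\mathcal{R}$-bounded Proposition~\ref{Chapter3::Proposition::PoissonMappingPropertySobolev}, where no such loss is incurred.
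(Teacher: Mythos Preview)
Your argument is correct and follows the same route as the paper: Fubini to swap expectation and the $x_n$-integral, Corollary~\ref{Chapter3::Corollary::PoissonEvalEstimate}\eqref{PEE:Part2} to factor off an $\mathcal{R}$-bounded family, Kahane's contraction principle to strip the scalar prefactors, and then a scalar integrability check in $x_n$. The only cosmetic difference is in that last step: you explicitly optimise $t\mapsto t^{2m\alpha_l}e^{-cpt\,x_n}$ and split the integral at a threshold $X_0$, whereas the paper uses the one-line bound $(|\lambda|^{1/2m}x_n)^{1+\gamma_1-\epsilon}e^{-\frac{cp}{6}|\lambda|^{1/2m}x_n}\lesssim 1$ to pass directly to $x_n^{-1+\epsilon}e^{-\frac{cp}{3}\sigma^{1/2m}x_n}$ (absorbing the lower-order terms $l<k$ into the $\sigma$-dependent constant rather than tracking each $\alpha_l$). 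Both arrive at the same conclusion, and your observation that the critical case $l=k$ produces the borderline exponent $-1$ without the $\epsilon$ is exactly the content of the paper's subsequent remark.
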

\begin{proof}
 Let $(\epsilon_l)_{l\in\N}$ be a Rademacher sequence on the probability space $(\Omega,\mathcal{F},\mathbb{P})$ and let $N\in\N$, $\lambda_1,\ldots,\lambda_N\in\Sigma_{\phi}$ and $f_1,\ldots,f_N\in\mathscr{A}^s(\R^{n-1},w;E)$. Using Corollary \ref{Chapter3::Corollary::PoissonEvalEstimate} and Kahane's contraction principle we obtain
 {\allowdisplaybreaks{
  \begin{align*}
   & \;\;\;\,\left\|\sum_{l=1}^N \epsilon_l|\lambda_l|^{\frac{1+\gamma_1+\gamma_2-\epsilon}{2mp}}\operatorname{Poi}_{j}(\lambda_l) f_l \right\|_{L_p(\Omega;W^k_p(\R_+,|\operatorname{pr}_n|^{r},\mathscr{A}^{\tilde{t}+m_j- k-\zeta}(\R^{n-1},w;E^{2m})))}\\
    &\eqsim  \left\|\sum_{l=1}^N \epsilon_l|\lambda_l|^{\frac{1+\gamma_1+\gamma_2-\epsilon}{2mp}}\operatorname{Poi}_{j}(\lambda_l) f_l \right\|_{W^k_p(\R_+,|\operatorname{pr}_n|^{r};L_p(\Omega;\mathscr{A}^{\tilde{t}+m_j- k-\zeta}(\R^{n-1},w;E^{2m})))}\\
    &\eqsim  \sum_{\tilde{k}=0}^k\left\|\sum_{l=1}^N \epsilon_l|\lambda_l|^{\frac{1+\gamma_1+\gamma_2-\epsilon}{2mp}}D_{x_n}^{\tilde{k}}\operatorname{Poi}_{j}(\lambda_l) f_l \right\|_{L_p(\R_+,|\operatorname{pr}_n|^{r};L_p(\Omega;\mathscr{A}^{\tilde{t}+m_j- k-\zeta}(\R^{n-1},w;E^{2m})))}\\
    &\lesssim_{\sigma}\left\|x_n\mapsto\sum_{l=1}^N \epsilon_l|\lambda_l|^{\frac{1+\gamma_1-\epsilon}{2mp}}x_n^{-[\tilde{t}-s]_+} e^{-\frac{c}{2}|\lambda_l|^{1/2m}x_n}f_l \right\|_{L_p(\R_+,|\operatorname{pr}_n|^{r};L_p(\Omega;\mathscr{A}^{s}(\R^{n-1},w;E)))}\\
   &\lesssim\left(\int_{0}^{\infty} \max_{l=1,\ldots,N}\{|\lambda_l|^{\frac{1+\gamma_1-\epsilon}{2m}}e^{-p\frac{c}{2}|\lambda_l|^{1/2m}x_n} x_n^{\gamma_1}\}\,dx_n\right)^{1/p}\left\|\sum_{l=1}^N \epsilon_lf_l \right\|_{L_p(\Omega;\mathscr{A}^{s}(\R^{n-1},w;E))}\\
   &\lesssim\left(\int_{0}^{\infty} \max_{l=1,\ldots,N}\{e^{-p\frac{c}{3}|\lambda_l|^{1/2m}x_n} x_n^{-1+\epsilon}\}\,dx_n\right)^{1/p}\left\|\sum_{l=1}^N \epsilon_lf_l \right\|_{L_p(\Omega;\mathscr{A}^{s}(\R^{n-1},w;E))}\\
   &\leq\left(\int_{0}^{\infty} e^{-p\frac{c}{3}\sigma^{1/2m}x_n} x_n^{-1+\epsilon}\,dx_n\right)^{1/p}\left\|\sum_{l=1}^N \epsilon_lf_l \right\|_{L_p(\Omega;\mathscr{A}^{s}(\R^{n-1},w;E))}\\
   &\leq C\left\|\sum_{l=1}^N \epsilon_lf_l \right\|_{L_p(\Omega;\mathscr{A}^{s}(\R^{n-1},w;E))}
 \end{align*}}}
 for all $N\in\N$, all $\lambda_1,\ldots,\lambda_N\in\Sigma_{\phi}$ and all $f_1,\ldots,f_N\in\mathscr{A}^s(\R^{n-1},w;E)$. This is the desired estimate.
\end{proof}

\begin{remark}
 Comparing Proposition \ref{Chapter3::Proposition::PoissonMappingPropertySobolev} and Proposition \ref{Chapter3::Proposition::PoissonMappingPropertySobolevRBoundedEpsilon} one might wonder if one can omit the $\epsilon$ in Proposition \ref{Chapter3::Proposition::PoissonMappingPropertySobolevRBoundedEpsilon}. After having applied Kahane's contraction principle in the proof of Proposition \ref{Chapter3::Proposition::PoissonMappingPropertySobolevRBoundedEpsilon} it seems like the $\epsilon$ is necessary. Roughly speaking, taking $(\lambda_l)_{l\in\N}$ such that this sequence is dense in $\Sigma_{\phi}\setminus\overline{B(0,\sigma)}$ will cause $\max_{l=1,\ldots,N}\{|\lambda_l|^{\frac{\gamma_1+1}{2m}}e^{-p\frac{c}{2}|\lambda_l|^{1/2m}x_n} x_n^{\gamma_1}\}$ to have a singularity of the form $x_n^{-1}$ at $x_n=0$ if $N\to\infty$. Indeed, taking $|\lambda_l|^{1/2m}$ close to $x_n^{-1}$ yields that $|\lambda_l|^{\frac{\gamma_1+1}{2m}}e^{-p\frac{c}{2}|\lambda_l|^{1/2m}x_n} x_n^{\gamma_1}$ is close to $x_n^{-1} e^{-pc/2}$. Hence, the integral $\left(\int_{0}^{\infty} \max_{l=1,\ldots,N}\{|\lambda_l|^{\frac{\gamma_1+1}{2m}}e^{-p\frac{c}{2}|\lambda_l|^{1/2m}x_n} x_n^{\gamma_1}\}\,dx_n\right)^{1/p}$ will tend to $\infty$ as $N\to\infty$. Thus, if one wants to remove the $\epsilon$, it seems like one should not apply Kahane's contraction principle as it is applied in the proof of Proposition \ref{Chapter3::Proposition::PoissonMappingPropertySobolevRBoundedEpsilon}. This can for example be avoided under a cotype assumption on $E$ together with a restriction on $p$, as Proposition \ref{Chapter3::Proposition::PoissonMappingPropertySobolevRBounded} shows. However, there are some cases in which the $\epsilon$ can not be removed. We will show this in Proposition \ref{Prop:CounterExampleEpsilon}.
\end{remark}

\begin{proposition}\label{Chapter3::Proposition::PoissonMappingPropertySobolevRBounded}
 Consider the situation of Corollary \ref{Chapter3::Corollary::PoissonEvalEstimate} and let $r\in\R$. Suppose that $E$ has finite cotype $q_E$. Suppose that the assumptions of Proposition \ref{Prop:Contraction_Principle_Functions} hold true. Again, we define $\gamma_1=r-p[\tilde{t}-s]_+$ as well as $\gamma_2=p\zeta-p[-[\tilde{t}-s]_+-m_j+ k+\zeta]_+$. Suppose that $\gamma_1>-1$. Then for all $\sigma>0$ there is a constant $C>0$ such that
  \begin{align*}
   \mathcal{R}\big(\{|\lambda|^{\frac{1+\gamma_1+\gamma_2}{2mp}}\operatorname{Poi}_{j}(\lambda):\lambda\in\Sigma_{\phi},\,|\lambda|\geq\sigma \}\big)\leq C,
 \end{align*}
 where $\mathcal{R}$-bounds are taken in $\mathcal{B}(\mathscr{A}^{s}(\R^{n-1},w;E),W^{k}_{p}(\R_+,|\operatorname{pr}_n|^{r};\mathscr{A}^{\tilde{t}+m_j- k-\zeta}(\R^{n-1},w;E^{2m}))).$
\end{proposition}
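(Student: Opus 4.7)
The plan is to avoid the pointwise application of Kahane's contraction principle that caused the $\epsilon$-loss in Proposition~\ref{Chapter3::Proposition::PoissonMappingPropertySobolevRBoundedEpsilon}. Instead, I would factor $|\lambda|^{\frac{1+\gamma_1+\gamma_2}{2mp}}D_{x_n}^{\tilde k}\operatorname{Poi}_j(\lambda)$ as an operator-valued pointwise multiplication post-composed with the tensor product against a scalar $x_n$-function. The cotype hypothesis then allows me to invoke Proposition~\ref{Prop:Contraction_Principle_Functions}, whose upper bound involves the $L_p$-norm (not the sup) of the scalar coefficients: this is exactly what absorbs the non-integrable $x_n^{-1}$ singularity near the boundary.

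By definition of the $W^k_p$-norm it suffices to prove $\mathcal{R}$-boundedness of
\begin{align*}
\big\{|\lambda|^{\frac{1+\gamma_1+\gamma_2}{2mp}}D_{x_n}^{\tilde k}\operatorname{Poi}_j(\lambda):\lambda\in\Sigma_\phi,\,|\lambda|\geq\sigma\big\}
\end{align*}
from $\mathscr{A}^s(\R^{n-1},w;E)$ into $L_p(\R_+,|\operatorname{pr}_n|^r;\mathscr{A}^{\tilde t+m_j-k-\zeta})$ for each $\tilde k\in\{0,\ldots,k\}$. For such $\tilde k$ I apply Corollary~\ref{Chapter3::Corollary::PoissonEvalEstimate}~\eqref{PEE:Part2} with $\zeta$ replaced by $\zeta+k-\tilde k$ (keeping $\tilde t$ and the target smoothness unchanged), obtaining the factorization $|\lambda|^{\frac{1+\gamma_1+\gamma_2}{2mp}}D_{x_n}^{\tilde k}\operatorname{Poi}_j(\lambda)=\tilde K_{\tilde k}(\lambda)\circ J_{\tilde k}(\lambda)$, where $J_{\tilde k}(\lambda)g:=h^{\tilde k}_\lambda\otimes g$ with the scalar function
\begin{align*}
h^{\tilde k}_\lambda(x_n):=x_n^{-[\tilde t-s]_+}\,|\lambda|^{\frac{1+\gamma_1}{2mp}-\frac{k-\tilde k}{2m}}\,e^{-c|\lambda|^{1/2m}x_n},
\end{align*}
and $\tilde K_{\tilde k}(\lambda)$ is the pointwise multiplication $(\tilde K_{\tilde k}(\lambda)u)(x_n):=[K_{\tilde k}(\lambda)](x_n)u(x_n)$ on $L_p(\R_+,|\operatorname{pr}_n|^r;\mathscr{A}^s)$.

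For the scalar factor I would compute $\|h^{\tilde k}_\lambda\|_{L_p(\R_+,|\operatorname{pr}_n|^r)}^p$ directly: the substitution $y=|\lambda|^{1/2m}x_n$ absorbs every power of $|\lambda|$, and the remaining integral $\int_0^\infty y^{\gamma_1}e^{-pcy}\,dy$ converges precisely because $\gamma_1>-1$. Thus $\|h^{\tilde k}_\lambda\|_{L_p(\R_+,|\operatorname{pr}_n|^r)}$ is bounded uniformly in $|\lambda|\geq\sigma$, and Proposition~\ref{Prop:Contraction_Principle_Functions} (whose hypotheses are the ones we assumed) yields $\mathcal{R}$-boundedness of $\{J_{\tilde k}(\lambda):|\lambda|\geq\sigma\}$ into $L_p(\R_+,|\operatorname{pr}_n|^r;\mathscr{A}^s)$. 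For the operator factor, Corollary~\ref{Chapter3::Corollary::PoissonEvalEstimate}~\eqref{PEE:Part2} provides $\mathcal{R}$-boundedness of $\{[K_{\tilde k}(\lambda)](x_n):x_n\in\R_+,\,|\lambda|\geq\sigma\}$ in $\mathcal{B}(\mathscr{A}^s,\mathscr{A}^{\tilde t+m_j-k-\zeta})$. A short Fubini argument—rewriting $L_p(\Omega;L_p(\R_+,|\operatorname{pr}_n|^r;\,\cdot\,))$ as $L_p(\R_+,|\operatorname{pr}_n|^r;L_p(\Omega;\,\cdot\,))$ and applying the $\mathcal{R}$-bound inside the $x_n$-integral—lifts this to $\mathcal{R}$-boundedness of $\{\tilde K_{\tilde k}(\lambda):|\lambda|\geq\sigma\}$ between the corresponding $L_p$-valued spaces. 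Since composition of $\mathcal{R}$-bounded families is again $\mathcal{R}$-bounded, the factorization closes the argument.

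The only real obstacle is bookkeeping: one has to track the exponents produced by Corollary~\ref{Chapter3::Corollary::PoissonEvalEstimate}~\eqref{PEE:Part2} against the prescribed total power $|\lambda|^{\frac{1+\gamma_1+\gamma_2}{2mp}}$ and verify that the restrictions on $p$, $q_E$, $p_0$, $q_0$ hidden in the hypothesis "the assumptions of Proposition~\ref{Prop:Contraction_Principle_Functions} hold true" really cover all three cases of Assumption~\ref{Assump:Spaces}; once this matching is done, the proof is a clean composition of two independent $\mathcal{R}$-bounds and avoids Kahane contraction with a time-dependent coefficient altogether.
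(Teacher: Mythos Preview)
Your proof is correct and follows essentially the same approach as the paper's: both use Corollary~\ref{Chapter3::Corollary::PoissonEvalEstimate}~\eqref{PEE:Part2} combined with a Fubini argument to handle the operator-valued factor pointwise in $x_n$, and then invoke Proposition~\ref{Prop:Contraction_Principle_Functions} (in place of Kahane's contraction principle) so that only the $L_p$-norm of the scalar function $x_n\mapsto |\lambda|^{\frac{1+\gamma_1}{2mp}}x_n^{-[\tilde t-s]_+}e^{-c|\lambda|^{1/2m}x_n}$ appears, which is uniformly bounded by the substitution $y=|\lambda|^{1/2m}x_n$. The only cosmetic difference is that you package the argument as a composition of two $\mathcal{R}$-bounded families $\tilde K_{\tilde k}(\lambda)\circ J_{\tilde k}(\lambda)$, whereas the paper writes it as a single chain of inequalities.
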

\begin{proof}
Let $(\epsilon_l)_{l\in\N}$ be a Rademacher sequence on the probability space $(\Omega,\mathcal{F},\mathbb{P})$ and let $N\in\N$, $\lambda_1,\ldots,\lambda_N\in\Sigma_{\phi}$ and $f_1,\ldots,f_N\in\mathscr{A}^s(\R^{n-1},w;E)$. Using Corollary \ref{Chapter3::Corollary::PoissonEvalEstimate} and Proposition \ref{Prop:Contraction_Principle_Functions} we obtain
 {\allowdisplaybreaks{
  \begin{align*}
   & \;\;\;\,\left\|\sum_{l=1}^N \epsilon_l|\lambda_l|^{\frac{1+\gamma_1+\gamma_2}{2mp}}\operatorname{Poi}_{j}(\lambda_l) f_l \right\|_{L_p(\Omega;W^k_p(\R_+,|\operatorname{pr}_n|^{r},\mathscr{A}^{\tilde{t}+m_j- k-\zeta}(\R^{n-1},w;E^{2m})))}\\
    &\eqsim  \left\|\sum_{l=1}^N \epsilon_l|\lambda_l|^{\frac{1+\gamma_1+\gamma_2}{2mp}}\operatorname{Poi}_{j}(\lambda_l) f_l \right\|_{W^k_p(\R_+,|\operatorname{pr}_n|^{r};L_p(\Omega;\mathscr{A}^{\tilde{t}+m_j- k-\zeta}(\R^{n-1},w;E^{2m})))}\\
    &\eqsim  \sum_{\tilde{k}=0}^k\left\|\sum_{l=1}^N \epsilon_l|\lambda_l|^{\frac{1+\gamma_1+\gamma_2}{2mp}}D_{x_n}^{\tilde{k}}\operatorname{Poi}_{j}(\lambda_l) f_l \right\|_{L_p(\R_+,|\operatorname{pr}_n|^{r};L_p(\Omega;\mathscr{A}^{\tilde{t}+m_j- k-\zeta}(\R^{n-1},w;E^{2m})))}\\
    &\lesssim_{\sigma}\left\|x_n\mapsto\sum_{l=1}^N \epsilon_l|\lambda_l|^{\frac{1+\gamma_1}{2mp}}x_n^{-[\tilde{t}-s]_+} e^{-\frac{c}{2}|\lambda_l|^{1/2m}x_n}f_l \right\|_{L_p(\R_+,|\operatorname{pr}_n|^{r};L_p(\Omega;\mathscr{A}^{s}(\R^{n-1},w;E)))}\\
   &\lesssim\max_{l=1,\ldots,N}\left(\int_{0}^{\infty} |\lambda_l|^{\frac{1+\gamma_1}{2m}}e^{-p\frac{c}{2}|\lambda_l|^{1/2m}x_n} x_n^{\gamma_1}\,dx_n\right)^{1/p}\left\|\sum_{l=1}^N \epsilon_lf_l \right\|_{L_p(\Omega;\mathscr{A}^{s}(\R^{n-1},w;E))}\\
   &=\left(\int_{0}^{\infty} e^{-p\frac{c}{2}y} y^{\gamma_1}\,dy\right)^{1/p}\left\|\sum_{l=1}^N \epsilon_lf_l \right\|_{L_p(\Omega;\mathscr{A}^{s}(\R^{n-1},w;E))}\\
   &\leq C\left\|\sum_{l=1}^N \epsilon_lf_l \right\|_{L_p(\Omega;\mathscr{A}^{s}(\R^{n-1},w;E))}
 \end{align*}}}
 for all $N\in\N$, all $\lambda_1,\ldots,\lambda_N\in\Sigma_{\phi}$ and all $f_1,\ldots,f_N\in\mathscr{A}^s(\R^{n-1},w;E)$. This is the desired estimate.
\end{proof}

Let us now see what can happen if the cotype assumption is not satisfied.
\begin{lemma}\label{Lemma:AuxiliarySpace}
 Let $\tilde{\mathscr{A}}^s$ be defined by
    \[
     \tilde{\mathscr{A}}^s:=\{u\in\mathscr{A}^s: \operatorname{supp}\mathscr{F}u\subset \overline{B(0,1)} \}
    \]
where $B(0,1)$ denotes the ball with center $0$ and radius $1$. We endow $\tilde{\mathscr{A}}^s$ with the norm $\|\cdot\|_{\mathscr{A}^s}$. Then $\tilde{\mathscr{A}}^s$ is a Banach space.
\end{lemma}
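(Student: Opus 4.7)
The plan is to show that $\tilde{\mathscr{A}}^s$ is a closed subspace of the Banach space $\mathscr{A}^s$; since closed subspaces of Banach spaces are Banach spaces with the inherited norm, this gives the conclusion. That $\mathscr{A}^s$ itself is a Banach space is part of the standing Assumption \ref{Assump:Spaces}, as every scale listed there (Bessel potential, Besov, Triebel--Lizorkin, and their dual scales) is complete under its natural norm.

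To verify closedness, I would take a sequence $(u_k)_{k \in \N} \subset \tilde{\mathscr{A}}^s$ with $u_k \to u$ in $\mathscr{A}^s$ and show that $\operatorname{supp} \mathscr{F}u \subset \overline{B(0,1)}$. The key ingredient is the continuous embedding $\mathscr{A}^s \hookrightarrow \mathscr{S}'(\R^{n-1};E)$, which holds for each of the admissible scales (for the Bessel potential, Besov and Triebel--Lizorkin scales this is standard; for the dual scales it follows by duality from $\mathscr{S}(\R^{n-1};E') \hookrightarrow \mathscr{A}^{-s}_{p',q'}(\R^{n-1},w^{-1/(p-1)};E')$). Consequently $u_k \to u$ in $\mathscr{S}'(\R^{n-1};E)$, and because the Fourier transform is continuous on $\mathscr{S}'$, we also have $\mathscr{F}u_k \to \mathscr{F}u$ in $\mathscr{S}'(\R^{n-1};E)$.

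Now fix any test function $\varphi \in \mathscr{S}(\R^{n-1})$ with $\operatorname{supp} \varphi \cap \overline{B(0,1)} = \emptyset$. Since $\operatorname{supp} \mathscr{F}u_k \subset \overline{B(0,1)}$ for every $k$, we have $\langle \mathscr{F}u_k, \varphi \rangle = 0$, and passing to the limit in $\mathscr{S}'(\R^{n-1};E)$ yields $\langle \mathscr{F}u, \varphi \rangle = 0$. As $\varphi$ was arbitrary outside $\overline{B(0,1)}$, we obtain $\operatorname{supp} \mathscr{F}u \subset \overline{B(0,1)}$, so $u \in \tilde{\mathscr{A}}^s$.

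There is no real obstacle here; the only point that requires some care is checking the embedding into $\mathscr{S}'(\R^{n-1};E)$ uniformly across all five admissible scales, in particular for the dual scales $\mathcal{B}^\bullet_{p,q}$ and $\mathcal{F}^\bullet_{p,q}$, where one invokes the density of $\mathscr{S}$ in the predual space. Once this is recorded, closedness and hence completeness follows by the two-line argument above.
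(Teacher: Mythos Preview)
Your proof is correct and follows essentially the same route as the paper: both arguments reduce to showing that $\tilde{\mathscr{A}}^s$ is closed in $\mathscr{A}^s$ by using the continuity of $\mathscr{F}\colon \mathscr{A}^s \to \mathscr{S}'(\R^{n-1};E)$ and testing $\mathscr{F}u$ against Schwartz functions supported away from $\overline{B(0,1)}$. Your additional remark on checking the embedding for the dual scales is a reasonable point of care, but otherwise there is no substantive difference.
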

\begin{proof}
 Let $(u_n)_{n\in\N}\subset\tilde{\mathscr{A}}^s$ be a Cauchy sequence. Since $\mathscr{A}^s$ is a Banach space, we only have to prove that the limit $u:=\lim_{n\to\infty}u_n$ satisfies $\operatorname{supp}\mathscr{F}u\subset \overline{B(0,1)}$. But since
 \[
  \mathscr{F}\colon \mathscr{A}^s\to \mathscr{S}'(\R^n;E)
 \]
 is continuous, it follows that
 \[
  [\mathscr{F}u](f)=\lim_{n\to\infty}[\mathscr{F}u_n](f)=0
 \]
for all $f\in\mathscr{S}(\R^n)$ such that $\operatorname{supp} f\subset \overline{B(0,1)}^c$. This shows the assertion.
\end{proof}

\begin{proposition}\label{Prop:CounterExampleEpsilon}
 Let $\sigma>0$, $r\in\R$ and $p\in[1,2)$. For $\lambda\geq\sigma$ and $g\in \mathscr{A}^s$ let $u_\lambda:=\operatorname{Poi}_{\Delta}(\lambda)g$ be the solution of
 \begin{align}
 \begin{aligned}\label{eq:DirLapInhomogeneous}
  \lambda u_{\lambda}(x)-\Delta u_{\lambda}(x)&=0\quad\quad(x\in\R^n_+),\\
  u_{\lambda}(x',0)&=g(x')\quad (x'\in\R^{n-1})
  \end{aligned}
 \end{align}
which is decaying in normal direction. Then the set of operators
\[
 \{|\lambda|^{\frac{1+r}{2p}}\operatorname{Poi}_{\Delta}(\lambda):\lambda\geq\sigma\}\subset\mathcal{B}(\mathscr{A}^s, L_p(\R_+,|\operatorname{pr}_n|^r;\mathscr{A}^s))
\]
is not $\mathcal{R}$-bounded.
\end{proposition}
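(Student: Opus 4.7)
The plan is to argue by contradiction using an explicit choice of $\lambda_l$, test functions, and a Hahn-Banach functional that reduces the Rademacher estimate to a scalar Khintchine bound with a logarithmically divergent integral. A tangential Fourier transform gives
\[
    \operatorname{Poi}_\Delta(\lambda)g(x',x_n) = [\mathscr{F}']^{-1}\bigl(e^{-\sqrt{\lambda+|\xi'|^2}\,x_n}\widehat g(\xi')\bigr)(x')\qquad(\lambda>0).
\]
Fix a nonzero $g_0\in\mathscr{S}(\R^{n-1})$ with $\operatorname{supp}\mathscr{F}'g_0\subset\overline{B(0,1)}$, a functional $\xi^*\in(\mathscr{A}^s)^*$ of norm one with $\xi^*(g_0)=\|g_0\|_{\mathscr{A}^s}$, and a cutoff $\chi\in C_c^\infty(\R^{n-1})$ with $\chi\equiv1$ on $\overline{B(0,1)}$. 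Decompose
\[
    \operatorname{Poi}_\Delta(\lambda)g_0(\cdot,x_n)=e^{-\sqrt{\lambda}\,x_n}g_0+r_\lambda(\cdot,x_n),
\]
where $r_\lambda(\cdot,x_n)$ is the Fourier multiplier with symbol $\chi(\xi')[e^{-\sqrt{\lambda+|\xi'|^2}x_n}-e^{-\sqrt{\lambda}x_n}]$ applied to $g_0$. The mean value theorem applied to $t\mapsto e^{-\sqrt{\lambda+t}x_n}$ and induction on the order of differentiation show that all $\xi'$-derivatives of this symbol up to any given order are $\lesssim \lambda^{-1/2}e^{-\sqrt{\lambda}x_n}$ uniformly for $\xi'\in\R^{n-1}$, $x_n\in[0,1]$, $\lambda\ge1$. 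Theorem~\ref{Thm:Mikhlin} then yields $\|r_\lambda(\cdot,x_n)\|_{\mathscr{A}^s}\lesssim \lambda^{-1/2}e^{-\sqrt{\lambda}x_n}\|g_0\|_{\mathscr{A}^s}$, and choosing $l_0$ large enough we obtain
\[
    \operatorname{Re}\xi^*\!\bigl(\operatorname{Poi}_\Delta(4^l)g_0(\cdot,x_n)\bigr)\ge\tfrac34\|g_0\|_{\mathscr{A}^s}e^{-2^l x_n}
\]
for all integers $l\ge l_0$ and all $x_n\in[0,1]$.

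Suppose now that the family is $\mathcal{R}$-bounded with constant $C$. For each $N>l_0$, apply the $\mathcal{R}$-bound to the choice $\lambda_l=4^l$, $f_l=g_0$ for $l_0\le l\le N$. Kahane's inequality makes the right-hand side comparable to $\|g_0\|_{\mathscr{A}^s}N^{1/2}$. For the left-hand side, Fubini and $\|\cdot\|_{\mathscr{A}^s}\ge|\xi^*(\cdot)|$ give the lower bound
\[
    \bigg(\int_0^\infty\Big\|\sum_{l=l_0}^N\epsilon_l|\lambda_l|^{(1+r)/(2p)}\xi^*\!\bigl(\operatorname{Poi}_\Delta(\lambda_l)g_0(\cdot,x_n)\bigr)\Big\|_{L_p(\Omega)}^p x_n^r\,dx_n\bigg)^{1/p}.
\]
Khintchine for complex scalar Rademacher sums combined with the pointwise estimate above yields, on each dyadic shell $x_n\in[2^{-L-1},2^{-L}]$ with $l_0\le L\le N-1$, a bound $\gtrsim\|g_0\|_{\mathscr{A}^s}x_n^{-(1+r)/p}$ for the $L_p(\Omega)$-norm; indeed, the single term $l=L$ in $\sum_l 4^{l(1+r)/p}e^{-2^{l+1}x_n}$ is already of size $x_n^{-2(1+r)/p}$ on that shell. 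Summing over shells and integrating the weight $x_n^r$ produces the logarithmically divergent integral $\int_{2^{-N}}^{2^{-l_0-1}}x_n^{-1}\,dx_n=(N-l_0-1)\log 2$, so the left-hand side is $\gtrsim\|g_0\|_{\mathscr{A}^s}N^{1/p}$.

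Combining both estimates forces $C\gtrsim N^{1/p-1/2}$. Since $p\in[1,2)$ the exponent is strictly positive, so $C$ would have to be infinite, a contradiction. The principal obstacle is the uniform-in-$x_n$ Mikhlin estimate for $r_\lambda(\cdot,x_n)$ with the $\lambda^{-1/2}$ decay: without uniformity in $x_n\in[0,1]$, the Hahn-Banach reduction to a scalar Khintchine lower bound would not produce the pointwise-in-$x_n$ control that drives the divergent logarithmic integral.
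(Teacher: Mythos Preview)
Your proof is correct, and its core mechanics---extracting the scalar factor $e^{-\sqrt{\lambda}x_n}$ via a Mikhlin estimate on a compactly supported correction, taking $\lambda_l$ geometric, and matching the dyadic shell $x_n\in[2^{-L-1},2^{-L}]$ with the index $l=L$ to produce a $\int x_n^{-1}\,dx_n$ divergence---are exactly those of the paper. The difference lies in how the contradiction is extracted. The paper works with \emph{arbitrary} $g_1,\ldots,g_N$ in the auxiliary space $\tilde{\mathscr{A}}^s=\{u\in\mathscr{A}^s:\operatorname{supp}\mathscr{F}u\subset\overline{B(0,1)}\}$ and shows that the assumed $\mathcal{R}$-bound forces the cotype-$p$ inequality $(\sum_m\|g_m\|^p)^{1/p}\lesssim\|\sum_l\epsilon_lg_l\|_{L_p(\Omega;\mathscr{A}^s)}$ for $\tilde{\mathscr{A}}^s$; since no nontrivial Banach space has cotype $<2$, this is impossible. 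You instead fix a single $g_0$ and a norming functional $\xi^*$, reduce to scalar Khintchine, and obtain directly $N^{1/p}\lesssim N^{1/2}$. Your route is more elementary in that it does not invoke the cotype concept at all (your computation is, in effect, the standard witness that cotype $<2$ fails). The paper's route buys something extra: it identifies the obstruction precisely as a cotype condition on $\tilde{\mathscr{A}}^s$, which ties the counterexample to the cotype hypotheses appearing in Proposition~\ref{Chapter3::Proposition::PoissonMappingPropertySobolevRBounded} and explains \emph{why} the restriction on $p$ there is natural.
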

\begin{proof}
 Applying Fourier transform in tangential direction to \eqref{eq:DirLapInhomogeneous} we obtain 
 \begin{align*}
  \partial_n^2 \hat{u}(\xi',x_n)&=(\lambda+|\xi'|^2)\hat{u}(\xi',x_n),\\
  &\hat{u}(\xi',0)=\hat{g}(\xi').
 \end{align*}
 The stable solution of this equation is given by $e^{-(\lambda+|\xi'|^2)^{1/2} x_n}\hat{g}(\xi')$ so that the decaying solution of \eqref{eq:DirLapInhomogeneous} is given by 
 \[
  u_{\lambda}(x',x_n)=\operatorname{Poi}_{\Delta}(\lambda)g= [\mathscr{F}_{x'\to\xi'}^{-1}e^{-(\lambda+|\xi'|^2)^{1/2} x_n}\mathscr{F}_{x'\to\xi'}g](x').
 \]
 Let $\chi\subset\mathscr{D}(\R^{n-1})$ be a test function with $\chi(\xi')=1$ for $\xi'\in B(0,1)$ and $\operatorname{supp}\chi\subset B(0,2)$. It holds that $\chi(\xi')e^{((\lambda+|\xi'|^2)^{1/2}-|\lambda|^{1/2})x_n}$ satisfies the Mikhlin condition uniformly in $\lambda\geq\sigma$ and $x_n\leq 1$. Hence, we have that
 \[
  \{\operatorname{op}[\chi(\xi')e^{((\lambda+|\xi'|^2)^{1/2}-|\lambda|^{1/2})x_n}]:\lambda\geq\sigma,x_n\in[0,1]\}\subset\mathcal{B}(\tilde{\mathscr{A}^s})
 \]
is $\mathcal{R}$-bounded, where $\tilde{\mathscr{A}^s}$ is defined as in Lemma~\ref{Lemma:AuxiliarySpace}. Using these observations and assuming the $\mathcal{R}$-boundedness of $\{|\lambda|^{\frac{1+r}{2p}}\operatorname{Poi}_{\Delta}(\lambda):\lambda\geq\sigma\}$, we can carry out the following calculation: Let $(\epsilon_l)_{l\in\N}$ be a Rademacher sequence on the probability space $(\Omega,\mathcal{F},\mathbb{P})$, $\lambda_l=(\sigma2^l)^2$ $(l\in\N)$, $N\in\N$ and $g_1,\ldots,g_N\in\tilde{\mathscr{A}}^s$. Then we obtain
{\allowdisplaybreaks
\begin{align*}
 \left\|\sum_{l=1}^N \epsilon_l g_l\right\|_{L_p(\Omega;\tilde{\mathscr{A}}^s)}&\gtrsim \left(\int_{\Omega}\int_{0}^{\sigma^{-1}} \bigg\| \sum_{l=1}^N \epsilon_l\lambda_l^{\frac{1+r}{2p}}[\operatorname{Poi}_{\Delta}(\lambda_l)g_l](\,\cdot,x_n)\bigg\|^p_{\mathscr{A}^s}x_n^r\,dx_n\,d\mathbb{P}\right)^{1/p}\\
  &= \left(\int_{\Omega}\int_{0}^{\sigma^{-1}} \bigg\| \sum_{l=1}^N \epsilon_l\lambda_l^{\frac{1+r}{2p}}\operatorname{op}[\chi(\xi')e^{-(\lambda+|\xi'|^2)^{1/2} x_n}]g_l\bigg\|^p_{\mathscr{A}^s}x_n^r\,dx_n\,d\mathbb{P}\right)^{1/p}\\
 &\gtrsim \left(\int_{\Omega}\int_{0}^{\sigma^{-1}} \bigg\| \sum_{l=1}^N \epsilon_l\lambda_l^{\frac{1+r}{2p}}e^{-|\lambda_l|^{1/2}x_n}g_l\bigg\|^p_{\mathscr{A}^s}x_n^r\,dx_n\,d\mathbb{P}\right)^{1/p}\\
 &\gtrsim\left(\int_{\Omega}\sum_{m=1}^N\int_{\sigma^{-1}2^{-m}}^{\sigma^{-1}2^{-m+1}} \bigg\| \sum_{l=1}^N \epsilon_l\lambda_l^{\frac{1+r}{2p}}e^{-|\lambda_l|^{1/2}x_n}g_l\bigg\|^p_{\mathscr{A}^s}x_n^r\,dx_n\,d\mathbb{P}\right)^{1/p}\\
 &\gtrsim \left(\int_{\Omega}\sum_{m=1}^N\int_{\sigma^{-1}2^{-m}}^{\sigma^{-1}2^{-m+1}}  \lambda_m^{\frac{1+r}{2}}e^{-p|\lambda_m|^{1/2}x_n}\| g_m\|^p_{\mathscr{A}^s}x_n^r\,dx_n\,d\mathbb{P}\right)^{1/p}\\
 &\gtrsim \left(\sum_{m=1}^N\| g_m\|^p_{\mathscr{A}^s} \right)^{1/p}.
\end{align*}}
This shows that $\tilde{\mathscr{A}}^s$ has cotype $p$. However, $\tilde{\mathscr{A}}^s$ is a nontrivial Banach space by Lemma~\ref{Lemma:AuxiliarySpace} and therefore its cotype must satisfy $p\geq2$. This contradicts $p\in[1,2)$ and hence $\{|\lambda|^{\frac{1+r}{2p}}\operatorname{Poi}_{\Delta}(\lambda):\lambda\geq\sigma\}$ can not be $\mathcal{R}$-bounded.
\end{proof}

\begin{remark} Proposition~\ref{Prop:CounterExampleEpsilon} shows that it is not possible in general to remove the $\epsilon$ in Proposition~\ref{Chapter3::Proposition::PoissonMappingPropertySobolevRBoundedEpsilon}.
Even though we only treat the Laplacian with Dirichlet boundary conditions in Proposition~\ref{Prop:CounterExampleEpsilon} it seems like the integrability parameter in normal direction may not be smaller than the cotype of the space in tangential directions in order to obtain the sharp estimate of Proposition~\ref{Chapter3::Proposition::PoissonMappingPropertySobolevRBounded}.
\end{remark}

\begin{remark}\label{Rem:Simplifications}
 Depending on what one aims for, it can also be better to substitute $t=\tilde{t}+m_j-k-\zeta$ in Proposition \ref{Chapter3::Proposition::PoissonMappingPropertySobolev}, Proposition \ref{Chapter3::Proposition::PoissonMappingPropertySobolevRBoundedEpsilon} or Proposition \ref{Chapter3::Proposition::PoissonMappingPropertySobolevRBounded}. In this case, we obtain the estimates
  \begin{align*}
   \|\operatorname{Poi}_{j}(\lambda)\|&\leq C|\lambda|^{\frac{-1-\gamma_1-\gamma_2}{2mp}},\quad(\text{Proposition \ref{Chapter3::Proposition::PoissonMappingPropertySobolev}}),\\
    \mathcal{R}\big(\{|\lambda|^{\frac{1+\gamma_1+\gamma_2-\epsilon}{2mp}}\operatorname{Poi}_{j}(\lambda):\lambda\in\Sigma_{\phi},\,|\lambda|\geq\sigma \}\big)&\leq C,\quad(\text{Proposition \ref{Chapter3::Proposition::PoissonMappingPropertySobolevRBoundedEpsilon}}),\\
    \mathcal{R}\big(\{|\lambda|^{\frac{1+\gamma_1+\gamma_2}{2mp}}\operatorname{Poi}_{j}(\lambda):\lambda\in\Sigma_{\phi},\,|\lambda|\geq\sigma \}\big)&\leq C,\quad(\text{Proposition \ref{Chapter3::Proposition::PoissonMappingPropertySobolevRBounded}}),
 \end{align*}
 where
 \[
  \gamma_1=r-p[t+k+\zeta-m_j-s]_+,\quad\gamma_2=p\zeta-p[-[t+k+\zeta-m_j-s]_+-m_j+k+\zeta]_+.
 \]
 and where the operator norms and the $\mathcal{R}$-bounds are taken in 
 $$\mathcal{B}(\mathscr{A}^{s}(\R^{n-1},w;E),W^{k}_{p}((\epsilon,\infty),|\operatorname{pr}_n|^{r};\mathscr{A}^{t}(\R^{n-1},w;E^{2m}))).$$
 If we now choose $\zeta:=[m_j+s-k-t]_+$, then we obtain
\[
 \gamma_1=r-p[t+k-m_j-s]_+,\quad\gamma_2=p[m_j+s-k-t]_+-p[s-t]_+.
\]
From this it follows that
\[
 -\gamma_1-\gamma_2=-r+p(k-m_j)+p([s-t]_++t-s)=-r+p(k-m_j)+p[t-s]_+
\]
This yields the following result:
\end{remark}

\begin{theorem}\label{Thm:MainThm1}
 Recall Assumption \ref{Assump:ELS} and Assumption \ref{Assump:Spaces}. Let $k\in\N_0$, $r,s,t\in\R$ and $p\in[1,\infty)$. Suppose that $r-p[t+k-m_j-s]_+>-1$.
 \begin{enumerate}[(a)]
  \item \label{Thm:MainThm1:UniformBounds} For all $\sigma>0$ there is a constant $C>0$ such that
 \begin{align*}
   \|\operatorname{Poi}_{j}(\lambda)\|\leq C|\lambda|^{\frac{-1-r+p(k-m_j)+p[t-s]_+}{2mp}}
 \end{align*}
 for all $\lambda\in\Sigma_{\phi}$ such that $|\lambda|\geq\sigma$ where the operator norms are taken in the space $\mathcal{B}(\mathscr{A}^{s}(\R^{n-1},w;E),W^{k}_{p}(\R_+,|\operatorname{pr}_n|^{r};\mathscr{A}^{t}(\R^{n-1},w;E^{2m})))$.
 \item Let $\epsilon\in(0,\gamma_1+1)$. Then for all $\sigma>0$ there is a constant $C>0$ such that
 \begin{align*}
   \mathcal{R}\big(\{|\lambda|^{\frac{1+r-\epsilon-p(k-m_j)-p[t-s]_+}{2mp}}\operatorname{Poi}_{j}(\lambda):\lambda\in\Sigma_{\phi},\,|\lambda|\geq\sigma \}\big)\leq C
 \end{align*}
 where the $\mathcal{R}$-bounds are taken in $\mathcal{B}(\mathscr{A}^{s}(\R^{n-1},w;E),W^{k}_{p}(\R_+,|\operatorname{pr}_n|^{r};\mathscr{A}^{t}(\R^{n-1},w;E^{2m})))$.
 \item Suppose that the assumptions of Proposition \ref{Prop:Contraction_Principle_Functions} hold true. Then for all $\sigma>0$ there is a constant $C>0$ such that
 \begin{align*}
   \mathcal{R}\big(\{|\lambda|^{\frac{1+r-p(k-m_j)-p[t-s]_+}{2mp}}\operatorname{Poi}_{j}(\lambda):\lambda\in\Sigma_{\phi},\,|\lambda|\geq\sigma \}\big)\leq C
 \end{align*}
 where the $\mathcal{R}$-bounds are taken in $\mathcal{B}(\mathscr{A}^{s}(\R^{n-1},w;E),W^{k}_{p}(\R_+,|\operatorname{pr}_n|^{r};\mathscr{A}^{t}(\R^{n-1},w;E^{2m})))$.
 \end{enumerate}
\end{theorem}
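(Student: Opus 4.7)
The theorem is, in effect, a packaging lemma: its three parts stand in one-to-one correspondence with Proposition \ref{Chapter3::Proposition::PoissonMappingPropertySobolev}, Proposition \ref{Chapter3::Proposition::PoissonMappingPropertySobolevRBoundedEpsilon}, and Proposition \ref{Chapter3::Proposition::PoissonMappingPropertySobolevRBounded}, and the only work is to unpack the parameter identifications already sketched in Remark \ref{Rem:Simplifications}. My plan is to apply those three propositions with the reparameterization $\tilde{t} := t + k + \zeta - m_j$, so that their target space $W^k_p(\R_+,|\op{pr}_n|^r;\mathscr{A}^{\tilde{t}+m_j-k-\zeta}(\R^{n-1},w;E^{2m}))$ coincides with the target space $W^k_p(\R_+,|\op{pr}_n|^r;\mathscr{A}^{t}(\R^{n-1},w;E^{2m}))$ of Theorem \ref{Thm:MainThm1}. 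The free parameter $\zeta\ge 0$ from Corollary \ref{Chapter3::Corollary::PoissonEvalEstimate} is then chosen as $\zeta := [m_j+s-k-t]_+$, which is the minimal non-negative value collapsing the $[\,\cdot\,]_+$ nested inside $[\tilde t-s]_+$.

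With this choice, the elementary identity $x+[-x]_+=[x]_+$ applied to $x=t+k-m_j-s$ yields $\tilde{t}-s=[t+k-m_j-s]_+$, so
\[
\gamma_1 = r - p[\tilde{t}-s]_+ = r - p[t+k-m_j-s]_+,
\]
and the hypothesis $\gamma_1>-1$ required by the three propositions is exactly the hypothesis $r-p[t+k-m_j-s]_+>-1$ of the theorem. A case split on the sign of $m_j+s-k-t$ shows $-[\tilde{t}-s]_+-m_j+k+\zeta=s-t$ in both cases, hence $\gamma_2=p[m_j+s-k-t]_+-p[s-t]_+$. Using $[x]_+-[-x]_+=x$ and then $x+[-x]_+=[x]_+$ one more time, the exponent simplifies to
\[
-\gamma_1-\gamma_2 \;=\; -r + p(k-m_j) + p[t-s]_+,
\]
which is precisely the power of $|\lambda|$ appearing in all three parts of the theorem.

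Given this translation dictionary, part (a) of Theorem \ref{Thm:MainThm1} follows by raising the estimate of Proposition \ref{Chapter3::Proposition::PoissonMappingPropertySobolev} to the $1/p$-th power; part (b) is Proposition \ref{Chapter3::Proposition::PoissonMappingPropertySobolevRBoundedEpsilon} transcribed under the same substitution, with the admissible range of $\epsilon$ unchanged (it depends only on $\gamma_1$, which is identified above); and part (c) is Proposition \ref{Chapter3::Proposition::PoissonMappingPropertySobolevRBounded}, whose cotype/integrability hypotheses on $E$ and the function spaces are inherited through Proposition \ref{Prop:Contraction_Principle_Functions}.

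The only genuine obstacle is the bookkeeping with the positive-part functions: one has to verify that no other admissible choice of $\zeta$ produces a sharper exponent, i.e.\ that $\zeta=[m_j+s-k-t]_+$ is optimal. This is clear once one notes that if $m_j+s-k-t>0$ then $\zeta\ge [m_j+s-k-t]_+$ is forced in order to make the penalty term $[-[\tilde t-s]_+-m_j+k+\zeta]_+$ vanish, while if $m_j+s-k-t\le 0$ then $\zeta=0$ is already the optimal non-negative value; in either case $\zeta=[m_j+s-k-t]_+$ is the correct minimizer. Beyond this algebraic check, the proof is purely notational and no new analytic input is required.
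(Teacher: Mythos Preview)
Your proof is correct and follows exactly the paper's own approach: the theorem is deduced from Propositions \ref{Chapter3::Proposition::PoissonMappingPropertySobolev}, \ref{Chapter3::Proposition::PoissonMappingPropertySobolevRBoundedEpsilon}, and \ref{Chapter3::Proposition::PoissonMappingPropertySobolevRBounded} via the reparameterization $\tilde t=t+k+\zeta-m_j$ with the specific choice $\zeta=[m_j+s-k-t]_+$, and the algebraic simplification of $\gamma_1,\gamma_2$ is precisely the content of Remark \ref{Rem:Simplifications}. Your additional remark on the optimality of this choice of $\zeta$ is not in the paper but is a welcome clarification; the only superfluous comment is the mention of ``raising to the $1/p$-th power'' for part (a), since Proposition \ref{Chapter3::Proposition::PoissonMappingPropertySobolev} is already stated with the correct exponent.
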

\begin{proof}
 This follows from Proposition \ref{Chapter3::Proposition::PoissonMappingPropertySobolev}, Proposition \ref{Chapter3::Proposition::PoissonMappingPropertySobolevRBoundedEpsilon} and Proposition \ref{Chapter3::Proposition::PoissonMappingPropertySobolevRBounded} together with the observations in Remark \ref{Rem:Simplifications}. 
\end{proof}

\begin{corollary}\label{Cor:MainThm1_negative}
Let $k\in\Z$, $s,t\in\R$, $p\in(1,\infty)$ and $r\in(-1,p-1)$. Suppose that $r-p[t+k-m_j-s]_+>-1$ and that $\mathscr{A}^s$ is reflexive.
\begin{enumerate}[(a)]
  \item For all $\sigma>0$ there is a constant $C>0$ such that
 \begin{align*}
   \|\operatorname{Poi}_{j}(\lambda)\|\leq C|\lambda|^{\frac{-1-r+p(k-m_j)+p[t-s]_+}{2mp}}
 \end{align*}
 for all $\lambda\in\Sigma_{\phi}$ such that $|\lambda|\geq\sigma$ where the operator norms are taken in the space $\mathcal{B}(\mathscr{A}^{s}(\R^{n-1},w;E),W^{k}_{p}(\R_+,|\operatorname{pr}_n|^{r};\mathscr{A}^{t}(\R^{n-1},w;E^{2m})))$.
 \item Let $\epsilon\in(0,\gamma_1+1)$. Then for all $\sigma>0$ there is a constant $C>0$ such that
 \begin{align*}
   \mathcal{R}\big(\{|\lambda|^{\frac{1+r-\epsilon-p(k-m_j)-p[t-s]_+}{2mp}}\operatorname{Poi}_{j}(\lambda):\lambda\in\Sigma_{\phi},\,|\lambda|\geq\sigma \}\big)\leq C
 \end{align*}
 where the $\mathcal{R}$-bounds are taken in $\mathcal{B}(\mathscr{A}^{s}(\R^{n-1},w;E),W^{k}_{p}(\R_+,|\operatorname{pr}_n|^{r};\mathscr{A}^{t}(\R^{n-1},w;E^{2m})))$.
 \item Suppose that the assumptions of Proposition \ref{Prop:Contraction_Principle_Functions} hold true. Then for all $\sigma>0$ there is a constant $C>0$ such that
 \begin{align*}
   \mathcal{R}\big(\{|\lambda|^{\frac{1+r-p(k-m_j)-p[t-s]_+}{2mp}}\operatorname{Poi}_{j}(\lambda):\lambda\in\Sigma_{\phi},\,|\lambda|\geq\sigma \}\big)\leq C
 \end{align*}
 where the $\mathcal{R}$-bounds are taken in $\mathcal{B}(\mathscr{A}^{s}(\R^{n-1},w;E),W^{k}_{p}(\R_+,|\operatorname{pr}_n|^{r};\mathscr{A}^{t}(\R^{n-1},w;E^{2m})))$.
 \end{enumerate}
\end{corollary}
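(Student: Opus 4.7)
The strategy is to reduce the case of negative $k$ to the case $k=0$ already covered by Theorem~\ref{Thm:MainThm1}, exploiting the embedding from Lemma~\ref{Lemma:EmbeddingNegitveSmoothness} which trades regularity in normal direction for a larger power weight on $\R_+$. For $k \in \N_0$ the assertion is a direct specialization of Theorem~\ref{Thm:MainThm1} to the parameter range $p \in (1,\infty)$, $r \in (-1,p-1)$, so the task genuinely reduces to $k < 0$.

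Setting $l := -k \in \N$, Lemma~\ref{Lemma:EmbeddingNegitveSmoothness} yields the continuous embedding
\[
L_p(\R_+, |\operatorname{pr}_n|^{r+lp}; \mathscr{A}^t) \hookrightarrow W^k_p(\R_+, |\operatorname{pr}_n|^r; \mathscr{A}^t),
\]
whose hypotheses $r\in(-1,p-1)$ and reflexivity of $\mathscr{A}^t$ are guaranteed by our assumptions (since $\mathscr{A}^t$ has the same integrability parameters and underlying Banach space $E$ as the reflexive $\mathscr{A}^s$). It therefore suffices to establish the claimed bounds for $\operatorname{Poi}_j(\lambda)$ with codomain $L_p(\R_+, |\operatorname{pr}_n|^{r+lp}; \mathscr{A}^t)$, i.e.\ the $k_{\text{new}}=0$ case with weight $r_{\text{new}}:=r+lp$.

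Applying Theorem~\ref{Thm:MainThm1} to these new parameters requires the hypothesis $(r+lp) - p[t-m_j-s]_+ > -1$, which I will deduce from the assumed $r - p[t+k-m_j-s]_+ > -1$ by a short case split on the sign of $t+k-m_j-s$: if this quantity is positive, then so is $t-m_j-s$, both $[\,\cdot\,]_+$ expressions equal their arguments, and substituting $lp=-kp$ shows the two inequalities literally coincide; otherwise $[t+k-m_j-s]_+=0$, the assumption reduces to $r>-1$, and the additional $+lp>0$ on the left-hand side strengthens the required inequality. Reading off the $|\lambda|$-exponents from Theorem~\ref{Thm:MainThm1}(a)--(c) with $k_{\text{new}}=0$ and $r_{\text{new}}=r+lp=r-kp$ yields
\[
\frac{-1-(r+lp) + p(0-m_j) + p[t-s]_+}{2mp} = \frac{-1-r + p(k-m_j) + p[t-s]_+}{2mp},
\]
which matches the three exponents claimed in parts (a), (b), (c) of the corollary. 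Composing the resulting uniform/$\mathcal{R}$-bounded estimates in $L_p$ with the continuous embedding above transports both uniform boundedness and $\mathcal{R}$-boundedness unchanged. The only nontrivial step is the elementary arithmetic verifying compatibility of hypotheses; no analytic obstacle arises beyond what Theorem~\ref{Thm:MainThm1} and Lemma~\ref{Lemma:EmbeddingNegitveSmoothness} already supply.
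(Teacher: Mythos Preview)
Your approach is essentially identical to the paper's: for $k<0$ you reduce to the case $k=0$ with the shifted weight exponent $r-pk$ via Lemma~\ref{Lemma:EmbeddingNegitveSmoothness}, then invoke Theorem~\ref{Thm:MainThm1} and check that the exponents match. The paper handles the weight hypothesis in one line via the chain $(r-pk)-p[t-m_j-s]_+\geq r-p[t+k-m_j-s]_+>-1$; your second case is not quite complete as written, since when $t+k-m_j-s\leq 0$ but $t-m_j-s>0$ you still need the observation $p[t-m_j-s]_+\leq p(-k)=lp$ (from $t-m_j-s\leq -k$) before the ``$+lp$ strengthens'' claim goes through.
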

\begin{proof}
	The case $k\in\N_0$ is already contained in Theorem \ref{Thm:MainThm1}. Hence, we only treat the case $k<0$. In this case it holds that
	\[
		(r-pk)-p[t-m_j-s]_+\geq r-p[t+k-m_j-s]_+>-1.
	\]
	Hence, Theorem \ref{Thm:MainThm1} holds with a weight of the power $r-pk$ and smoothness $0$ in normal direction. Combining this with Lemma \ref{Lemma:EmbeddingNegitveSmoothness} yields the assertion.
\end{proof}

\begin{corollary}\label{Cor:MainThm1_Corollary}
Let $s,t\in\R$, $k\in(0,\infty)\setminus\N$, $p\in(1,\infty)$, $r\in(-1,p-1)$ and $q\in[1,\infty]$. We write $k=\overline{k}-\theta$ with $\overline{k}\in\N_0$ and $\theta\in[0,1)$. Suppose that $r-p[t+\overline{k}-m_j-s]_+>-1$.
 \begin{enumerate}[(a)]
  \item For all $\sigma>0$ there is a constant $C>0$ such that for all $\lambda\in\Sigma_{\phi}$ with $|\lambda|\geq\sigma$ we have the estimate
 \begin{align*}
   \|\operatorname{Poi}_{j}(\lambda)\|\leq C|\lambda|^{\frac{-1-r+p(k-m_j)+p[t-s]_+}{2mp}}
 \end{align*}
  where the norm is taken in $\mathcal{B}(\mathscr{A}^{s}(\R^{n-1},w;E),H^{k}_{p}(\R_+,|\operatorname{pr}_n|^{r};\mathscr{A}^{t}(\R^{n-1},w;E^{2m})))$ or in $\mathcal{B}(\mathscr{A}^{s}(\R^{n-1},w;E),B^{k}_{p,q}(\R_+,|\operatorname{pr}_n|^{r};\mathscr{A}^{t}(\R^{n-1},w;E^{2m}))).$
 \item Let $\epsilon\in(0,\gamma_1+1)$ and let $E$ be a UMD space. Then for all $\sigma>0$ there is a constant $C>0$ such that
 \begin{align*}
   \mathcal{R}\big(\{|\lambda|^{\frac{1+r-\epsilon-p(k-m_j)-p[t-s]_+}{2mp}}\operatorname{Poi}_{j}(\lambda):\lambda\in\Sigma_{\phi},\,|\lambda|\geq\sigma \}\big)\leq C
 \end{align*}
 where the $\mathcal{R}$-bounds are taken in $\mathcal{B}(\mathscr{A}^{s}(\R^{n-1},w;E),H^{k}_{p}(\R_+,|\operatorname{pr}_n|^{r};\mathscr{A}^{t}(\R^{n-1},w;E^{2m})))$.
 \item Suppose that the assumptions of Proposition \ref{Prop:Contraction_Principle_Functions} hold true and let $E$ be a UMD space. Then for all $\sigma>0$ there is a constant $C>0$ such that
 \begin{align*}
   \mathcal{R}\big(\{|\lambda|^{\frac{1+r-p(k-m_j)-p[t-s]_+}{2mp}}\operatorname{Poi}_{j}(\lambda):\lambda\in\Sigma_{\phi},\,|\lambda|\geq\sigma \}\big)\leq C
 \end{align*}
 where the $\mathcal{R}$-bounds are taken in $\mathcal{B}(\mathscr{A}^{s}(\R^{n-1},w;E),H^{k}_{p}(\R_+,|\operatorname{pr}_n|^{r};\mathscr{A}^{t}(\R^{n-1},w;E^{2m})))$.
 \end{enumerate}
\end{corollary}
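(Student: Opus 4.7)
The plan is to interpolate between the integer-smoothness cases already furnished by Theorem \ref{Thm:MainThm1}. Since $k>0$ and $k\notin\mathbb{N}$, one has $\overline{k}\geq 1$ and $\theta\in(0,1)$; set $k_0:=\overline{k}-1\in\mathbb{N}_0$, $k_1:=\overline{k}$, and $\eta:=1-\theta\in(0,1)$, so that $k=(1-\eta)k_0+\eta k_1$. Monotonicity of $[\,\cdot\,]_+$ shows that the hypothesis $r-p[t+\overline{k}-m_j-s]_+>-1$ also holds with $\overline{k}$ replaced by $\overline{k}-1$, and so Theorem \ref{Thm:MainThm1} is applicable at both endpoints.

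Abbreviate $Y:=\mathscr{A}^t(\R^{n-1},w;E^{2m})$ and $w_n:=|\operatorname{pr}_n|^r$; note $w_n\in A_p(\R)$ since $r\in(-1,p-1)$. In the present setting the integer-order weighted Sobolev spaces agree with the corresponding Bessel potential spaces, $W^{k_i}_p(\R_+,w_n;Y)=H^{k_i}_p(\R_+,w_n;Y)$. Combining the endpoint bounds
\[
\|\operatorname{Poi}_j(\lambda)\|_{\mathcal{B}(\mathscr{A}^s,\,H^{k_i}_p(\R_+,w_n;Y))}\lesssim|\lambda|^{\alpha_i},\qquad \alpha_i:=\tfrac{-1-r+p(k_i-m_j)+p[t-s]_+}{2mp},
\]
furnished by Theorem \ref{Thm:MainThm1}(a) with the standard half-line interpolation identities
\[
[H^{k_0}_p(\R_+,w_n;Y),H^{k_1}_p(\R_+,w_n;Y)]_\eta=H^{k}_p(\R_+,w_n;Y),
\]
\[
(H^{k_0}_p(\R_+,w_n;Y),H^{k_1}_p(\R_+,w_n;Y))_{\eta,q}=B^{k}_{p,q}(\R_+,w_n;Y),
\]
which one obtains from the corresponding full-line identities by means of a Stein/Rychkov-type bounded extension operator for $A_p$-weights, yields part (a), since $(1-\eta)\alpha_0+\eta\alpha_1=\tfrac{-1-r+p(k-m_j)+p[t-s]_+}{2mp}$.

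For parts (b) and (c) one applies Theorem \ref{Thm:MainThm1}(b) or (c) at $k_0$ and $k_1$ with endpoint parameters $\epsilon_0,\epsilon_1\geq 0$ chosen so that $(1-\eta)\epsilon_0+\eta\epsilon_1=\epsilon$ (taking $\epsilon_0=\epsilon_1=0$ in case (c)). A direct computation using $\gamma_1=r-p[t+k-m_j-s]_+$ and the identities $(1-\eta)k_0+\eta k_1=k$ shows that every $\epsilon\in(0,\gamma_1+1)$ admits such a decomposition with $\epsilon_i$ in the admissible range $(0,r-p[t+k_i-m_j-s]_++1)$ required at the $i$-th endpoint. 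Setting $T(\lambda):=|\lambda|^{\beta_0}\operatorname{Poi}_j(\lambda)$ and $f(\lambda):=|\lambda|^{\beta_1-\beta_0}$ with $\beta_i:=\tfrac{1+r-\epsilon_i-p(k_i-m_j)-p[t-s]_+}{2mp}$, Proposition \ref{Prop:Rbounded_Interpolation} — applicable because $E$ is UMD, whence so are $\mathscr{A}^s$ and the two endpoint $H^{k_i}_p$-valued spaces — gives an $\mathcal{R}$-bound on $\{f(\lambda)^\eta T(\lambda):\lambda\in\Sigma_\phi,\,|\lambda|\geq\sigma\}$ in $\mathcal{B}(\mathscr{A}^s,[H^{k_0}_p,H^{k_1}_p]_\eta)=\mathcal{B}(\mathscr{A}^s,H^k_p(\R_+,w_n;Y))$ with exponent $(1-\eta)\beta_0+\eta\beta_1$, and the latter equals exactly the exponent asserted in the corollary.

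The main technical obstacle is to establish the half-line interpolation identities for weighted vector-valued Bessel and Besov spaces in the UMD setting with the required flexibility across the two endpoint smoothness indices; this reduces to the corresponding full-line statements via the existence of a Stein/Rychkov-type bounded extension operator in the weighted $A_p$, UMD-valued setting, which is classical but non-trivial.
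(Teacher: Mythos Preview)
Your approach is essentially the same as the paper's: interpolate the integer-smoothness bounds of Theorem~\ref{Thm:MainThm1} (real interpolation for the Besov target, complex for the Bessel potential target), transfer the full-line interpolation identities to $\R_+$ via a retraction--coretraction/extension argument, and invoke Proposition~\ref{Prop:Rbounded_Interpolation} for the $\mathcal{R}$-bounded parts. The paper cites \cite[Proposition~6.1,~(6.4)]{Meyries_Veraar_2012} and \cite[Proposition~5.6]{Lindemulder_Meyries_Veraar_2018} precisely for the weighted vector-valued interpolation identities you isolate as the ``main technical obstacle''.

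One small wrinkle: your assertion that \emph{every} $\epsilon\in(0,\gamma_1+1)$ decomposes as $(1-\eta)\epsilon_0+\eta\epsilon_1$ with $\epsilon_i$ in the admissible endpoint ranges is not literally true when $t+k_0-m_j-s<0<t+k_1-m_j-s$. By convexity of $[\cdot]_+$ one only gets $(1-\eta)a_0+\eta a_1\leq\gamma_1+1$ (possibly strictly), where $a_i=r-p[t+k_i-m_j-s]_++1$, so interpolation directly yields only $\epsilon\in(0,(1-\eta)a_0+\eta a_1)$. This is harmless, however: once the $\mathcal{R}$-bound holds for some $\epsilon_0>0$, it holds for every $\epsilon\geq\epsilon_0$ by Kahane's contraction principle, since $|\lambda|^{\beta'}=|\lambda|^{\beta'-\beta}|\lambda|^\beta$ with $|\lambda|^{\beta'-\beta}\leq\sigma^{\beta'-\beta}$ for $\beta'\leq\beta$ and $|\lambda|\geq\sigma$. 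So the full range $(0,\gamma_1+1)$ is covered after this trivial extension step.
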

\begin{proof}
 This follows from Theorem \ref{Thm:MainThm1} together with real and complex interpolation, see \cite[Proposition 6.1, $(6.4)$]{Meyries_Veraar_2012} together with a retraction-coretraction argument, \cite[Proposition 5.6]{Lindemulder_Meyries_Veraar_2018} and Proposition \ref{Prop:Rbounded_Interpolation}.  Note that the power weight $|\operatorname{pr}_n|^{r}$ is an $A_p$ weight, since $r\in(-1,p-1)$, see \cite[Example 9.1.7]{Grafakos_2009}.\\
\end{proof}

\noindent
\begin{lemma}\label{Lemma:Hilbert_Operator}
	Let $p\in(1,\infty)$, $r\in (-1,p-1)$ and $w_r(x):=x^r$ for $x\in\R_+$. Then the linear operator
	\[
		T\colon L_p(\R_+,w_r;\R)\to L_p(\R_+,w_r;\R),\;f\mapsto \int_{0}^{\infty} \frac{f(y)}{x+y}\,dy
	\]
	is bounded.
\end{lemma}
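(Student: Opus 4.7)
The plan is to exploit the homogeneity of the kernel $K(x,y)=1/(x+y)$ together with Minkowski's integral inequality. The key observation is that $T$ is essentially a convolution operator on the multiplicative group $(\R_+,\cdot)$, and the power weight $w_r(x)=x^r$ interacts cleanly with dilations. So I would begin by substituting $y=xz$ inside the integral defining $Tf$, giving
\[
[Tf](x)=\int_0^\infty \frac{f(xz)}{1+z}\,dz,
\]
which exhibits $Tf$ as a superposition (in $z$) of the dilates $f_z(x):=f(xz)$ with weights $1/(1+z)$.

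Next I would apply Minkowski's integral inequality in $L_p(\R_+,w_r)$ to pull the norm inside:
\[
\|Tf\|_{L_p(\R_+,w_r)}\le \int_0^\infty \frac{\|f_z\|_{L_p(\R_+,w_r)}}{1+z}\,dz.
\]
A straightforward change of variables $u=xz$ shows that $\|f_z\|_{L_p(\R_+,w_r)}=z^{-(r+1)/p}\|f\|_{L_p(\R_+,w_r)}$, so the problem reduces to verifying that
\[
C_{p,r}:=\int_0^\infty \frac{z^{-(r+1)/p}}{1+z}\,dz<\infty.
\]
This integral is convergent precisely when $0<(r+1)/p<1$, i.e.\ when $-1<r<p-1$, which is the hypothesis on $r$. (Its value can in fact be identified with $\pi/\sin(\pi(r+1)/p)$ via the standard beta-function evaluation, though we only need finiteness here.)

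The only subtlety is making sure Minkowski's inequality is applied in a setting where Fubini is legitimate; this is routine by first reducing to $f\ge 0$ and invoking Tonelli, or by density from $\mathscr{D}(\R_+)$. There is no real obstacle: once the scaling is set up correctly, the proof reduces to a one-line integrability check that matches the Muckenhoupt range $r\in(-1,p-1)$ exactly.
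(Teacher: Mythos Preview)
Your proof is correct and takes a genuinely different route from the paper's. The paper follows the Schur test approach from Grafakos, \emph{Classical Fourier Analysis}, Appendix~I: it rewrites the kernel with respect to the weighted measure, chooses test functions $u(x)=v(x)=x^{-(1+r)/(pp')}$, and verifies the two Schur conditions $T(u^{p'})\le Cv^{p'}$ and $T^t(v^p)\le Cu^p$, each of which reduces to a convergence check of the form $\int_0^\infty t^\alpha(1+t)^{-1}\,dt<\infty$ for a suitable $\alpha\in(-1,0)$. Your approach instead exploits the homogeneity of degree $-1$ of the kernel directly via the substitution $y=xz$ and Minkowski's integral inequality, reducing everything to a single convergence check of the same beta-type integral. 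This is cleaner and more elementary here: no auxiliary weight functions need to be guessed, and the exponent range $r\in(-1,p-1)$ falls out transparently from the dilation behaviour $\|f_z\|_{L_p(\R_+,w_r)}=z^{-(r+1)/p}\|f\|_{L_p(\R_+,w_r)}$. The Schur test is more robust for kernels lacking an exact scaling symmetry, but for this particular operator your argument is the natural one.
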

\begin{proof}
	In \cite[Appendix I.3]{Grafakos_2008} this was shown for $r=0$ using Schur's Lemma. We adjust the same proof to the weighted setting.\\
	Let $K(x,y):=\frac{1}{y^r(x+y)}$. Then we may write
	\[
		(Tf)(x)=\int_0^\infty K(x,y)f(y) y^r\,dy.
	\]
	We further define the transpose operator
		\[
		(T^tf)(y)=\int_0^\infty K(x,y)f(x) x^r\,dx=\frac{1}{y^r}\int_0^\infty \frac{f(x)}{x+y}x^r\,dx.
	\]
	By the lemma in \cite[Appendix I.2]{Grafakos_2008} it is sufficient to find $C>0$ and $u,v\colon\R_+\to(0,\infty)$ such that
	\[
		T(u^{p'})\leq Cv^{p'}\quad\text{and}\quad T^t(v^{p})\leq Cu^{p},
	\]
	where $1=\frac{1}{p}+\frac{1}{p'}$. Similar to \cite[Appendix I.3]{Grafakos_2008} we choose $$u(x):=v(x):=x^{-\frac{1+r}{pp'}}$$
	and
	\[
		C:=\max\left\{\int_0^\infty\frac{t^{-\frac{1+r}{p}}}{1+t}\,dt,\int_0^\infty \frac{t^{r-\frac{1+r}{p'}}}{1+t}\,dt \right\}.
	\]
	Note that $r\in(-1,p-1)$ ensures that both integrals are finite since
	\begin{align*}
		-\frac{1+r}{p}\in(-1,0)\Longleftrightarrow r\in(-1,p-1)
	\end{align*}
	and 
		\begin{align*}
		r-\frac{1+r}{p'}\in(-1,0)\Longleftrightarrow r\in(-1,p-1).
	\end{align*}
With this choice we obtain
	\begin{align*}
		(T u^{p'})(x)=\int_0^\infty \frac{y^{-\frac{1+r}{p}}}{x+y}\,dy=x^{-\frac{1+r}{p}}\int_0^\infty\frac{t^{-\frac{1+r}{p}}}{1+t}\,dt\leq C v(x)^{p'}
	\end{align*}
	and
	\begin{align*}
	(T^t v^{p})(y)=\frac{1}{y^r}\int_0^\infty \frac{x^{r-\frac{1+r}{p'}}}{x+y}\,dx=y^{-\frac{1+r}{p'}}\int_0^\infty \frac{t^{r-\frac{1+r}{p'}}}{1+t}\,dt\leq Cu(y)^p.
	\end{align*}
	This shows the assertion.
\end{proof}

From now on, we use the notation
\begin{align}
 \begin{aligned}\label{Eq:MoreSpaces}
 D^{k,\tilde{k},s}_{r}(I):= H_p^{k}(I,|\operatorname{pr}_n|^r,\mathscr{A}^{s+\tilde{k}}) &\cap H^{k+\tilde{k}}_p(I,|\operatorname{pr}_n|^r,\mathscr{A}^{s}),\\
 D^{k,2m,s}_{r,B}(I):= \{u\in H_p^{k}(I,|\operatorname{pr}_n|^r,\mathscr{A}^{s+2m}) &\cap H^{k+2m}_p(I,|\operatorname{pr}_n|^r,\mathscr{A}^{s}):\\&\operatorname{tr}_{x_n=0} B_j(D)u=0\text{ for all }j=1,\ldots,m\}
 \end{aligned}
\end{align}
for $p\in(1,\infty)$ $k,\tilde{k}\in[0,\infty)$, $s\in\R$, $r\in(-1,p-1)$ and $I\in\{\R_+,\R\}$. Moreover, we endow both spaces with the norm
\begin{align*}
 \|u\|_{D^{k,\tilde{k},s}_{r}(I)}=\max\{\|u\|_{H_p^{k}(I,|\operatorname{pr}_n|^r,\mathscr{A}^{s+\tilde{k}})}, \|u\|_{H^{k+\tilde{k}}_p(I,|\operatorname{pr}_n|^r,\mathscr{A}^{s})} \},\\
  \|u\|_{D^{k,2m,s}_{r,B}(I)}=\max\{\|u\|_{H_p^{k}(I,|\operatorname{pr}_n|^r,\mathscr{A}^{s+2m})}, \|u\|_{H^{k+2m}_p(I,|\operatorname{pr}_n|^r,\mathscr{A}^{s})} \},
\end{align*}
respectively, so that $(D^{k,\tilde{k},s}_{r}(I),\|\cdot\|_{D^{k,\tilde{k},s}_{r}(I)})$ and $(D^{k,2m,s}_{r,B}(I),\|\cdot\|_{D^{k,2m,s}_{r,B}(I)})$ are a Banach spaces.

\begin{proposition}\label{Prop:PoissonAfterTrace}
 Let $s\in\R$, $p\in(1,\infty)$, $r\in(-1,p-1)$ and $k\in\N_0$ such that $k\leq\min\{\beta_n:\beta\in\N_0^n,|\beta|=m_j,b_\beta^j\neq0\}$. Let further $u\in D^{k,2m,s}_{r}(\R_+)$ and $\theta\in[0,1]$ such that $2m\theta\in\N_0$. Then for all $\sigma>0$ there is a constant $C>0$ such that we have the estimate
 \begin{align*}
  \mathcal{R}(\{\lambda^{\theta}\operatorname{Poi}_j(\lambda)\operatorname{tr}_{x_n}B_j(D):\lambda\in\Sigma_{\phi},\,|\lambda|\geq\sigma\})\leq C
 \end{align*}
 where the $\mathcal{R}$-bound is taken in
 \[
  \mathcal{B}(D^{k,2m,s}_{r}(\R_+),D^{k,2m(1-\theta),s}_{r}(\R_+)).
 \]
\end{proposition}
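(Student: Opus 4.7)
The plan is to prove the statement by complex interpolation between the two endpoint cases $\theta=0$ and $\theta=1$, both of which are handled by combining a trace estimate for $\operatorname{tr}_{x_n=0}B_j(D)$ with the $\mathcal R$-boundedness of $\operatorname{Poi}_j(\lambda)$ established in Theorem~\ref{Thm:MainThm1}. The central structural input is the hypothesis $k\leq\min\{\beta_n:|\beta|=m_j,\,b^j_\beta\neq 0\}$, which allows the factorisation
\[
B_j(D)=D_n^{k}\tilde B_j(D),\qquad \tilde B_j(D)=\sum_{|\beta|=m_j,\,b^j_\beta\neq 0}b^j_\beta D_{x'}^{\beta'}D_n^{\beta_n-k},\qquad \operatorname{ord}\tilde B_j=m_j-k.
\]
For $u\in D^{k,2m,s}_{r}(\R_+)$ one has $D_n^{k}u\in H^{2m}_p(\R_+,w_r;\mathscr A^{s})\cap L_p(\R_+,w_r;\mathscr A^{s+2m})$, so $\tilde B_j(D)(D_n^{k}u)=B_j(D)u$ has a well-defined trace at $x_n=0$. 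The weighted trace theorem for Bessel potential spaces in the $A_p$-range $r\in(-1,p-1)$ then yields the boundary estimate
\[
\|\operatorname{tr}_{x_n=0}B_j(D)u\|_{\mathscr A^{s'}}\leq C\|u\|_{D^{k,2m,s}_{r}(\R_+)},\qquad s':=s+2m+k-m_j-\tfrac{1+r}{p},
\]
(or a small Besov-type refinement when the Bessel-potential trace is borderline).

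For the endpoint $\theta=0$ I would apply Theorem~\ref{Thm:MainThm1} twice, with tangential output regularity $t=s+2m$ and normal order $k$, and with $t=s$ and normal order $k+2m$, respectively. In both cases $t+k-m_j-s'=(1+r)/p$ precisely, so the exponent $\tfrac{1+r-p(k-m_j)-p[t-s']_+}{2mp}$ in the $\mathcal R$-bound vanishes, yielding
\[
\mathcal R\bigl(\{\operatorname{Poi}_j(\lambda)\operatorname{tr}_{x_n=0}B_j(D):\lambda\in\Sigma_\phi,\,|\lambda|\geq\sigma\}\bigr)\leq C
\]
in $\mathcal B(D^{k,2m,s}_r(\R_+),D^{k,2m,s}_r(\R_+))$. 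For the endpoint $\theta=1$ the same theorem with target $H^{k}_p(\R_+,w_r;\mathscr A^{s})$ produces an $\mathcal R$-bound with exponent $+1$, which after multiplication by $\lambda$ gives $\mathcal R$-boundedness of $\lambda\,\operatorname{Poi}_j(\lambda)\operatorname{tr}_{x_n=0}B_j(D):D^{k,2m,s}_r(\R_+)\to H^{k}_p(\R_+,w_r;\mathscr A^{s})$.

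For the interpolation step I would use the lifting property (Lemma~\ref{Lemma:Lifting_Property}) and the standard complex interpolation of weighted Bessel potential spaces, together with the fact that on the common ambient space $H^{k}_p(\R_+,w_r;\mathscr A^{s})$ the complex interpolation functor commutes with intersections in this setting, to identify
\[
\bigl[D^{k,2m,s}_{r}(\R_+),\,H^{k}_p(\R_+,w_r;\mathscr A^{s})\bigr]_\theta = D^{k,2m(1-\theta),s}_{r}(\R_+)
\]
for $2m\theta\in\N_0$ (this is precisely the role of the integrality hypothesis). Applying Proposition~\ref{Prop:Rbounded_Interpolation} with $f(\lambda)=\lambda$ to the two endpoint estimates then yields the claimed $\mathcal R$-boundedness of $\lambda^{\theta}\operatorname{Poi}_j(\lambda)\operatorname{tr}_{x_n=0}B_j(D)$ into $D^{k,2m(1-\theta),s}_{r}(\R_+)$.

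The main obstacle is the verification of the admissibility condition $r-p[t+k-m_j-s']_+>-1$ of Theorem~\ref{Thm:MainThm1} exactly at the natural choice $s'=s+2m+k-m_j-(1+r)/p$, where one is on the borderline $r-p\cdot(1+r)/p=-1$; this has to be circumvented either by using the Besov trace refinement of the $A_p$-weighted trace theorem (replacing $H^{k+2m}_p$ by a slightly larger Besov trace space, which places $g_j$ inside an $\mathscr A^{s'}$ compatible with strict inequality) or by a limiting argument that exploits the embedding $\mathscr A^{s'+\varepsilon}\hookrightarrow\mathscr A^{s'}$ combined with Corollary~\ref{Cor:MainThm1_Corollary}. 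The factorisation $B_j=D_n^{k}\tilde B_j$ from Step~1 is crucial here because it ensures the normal-derivative budget of $u\in D^{k,2m,s}_{r}$ matches $B_j$ exactly, so that the trace lands in a Bessel scale rather than forcing further losses of tangential regularity.
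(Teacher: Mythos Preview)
Your approach differs substantially from the paper's, and the obstacle you yourself flag is in fact fatal without a new idea. At the natural trace regularity $s'=s+2m+k-m_j-\tfrac{1+r}{p}$ the admissibility condition of Theorem~\ref{Thm:MainThm1} becomes $r-p[t+k-m_j-s']_+=-1$ exactly, and neither a Besov refinement nor an $\varepsilon$-limiting argument fixes this: if the trace lands in a slightly worse space $\mathscr A^{s'-\varepsilon}$ the condition gets strictly worse, and if it lands in $\mathscr A^{s'}$ you are still on the borderline. Moreover, to get $\mathcal R$-bounds (not just norm bounds) from Theorem~\ref{Thm:MainThm1} you would need part~(c), which carries cotype assumptions that Proposition~\ref{Prop:PoissonAfterTrace} does not impose; part~(b) costs you an additional $\varepsilon$. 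Finally, the interpolation identity $[D^{k,2m,s}_r(\R_+),H^k_p(\R_+,w_r;\mathscr A^s)]_\theta=D^{k,2m(1-\theta),s}_r(\R_+)$ for intersections on the half-line with $A_p$-weights is not established anywhere in the paper and is itself a nontrivial claim.

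The paper avoids all three issues by never separating the trace from the Poisson operator. It uses the Volevich trick: writing $\operatorname{Poi}_j(\lambda)\operatorname{tr}_{x_n=0}B_j(D)u$ via the fundamental theorem of calculus as $-\int_0^\infty \partial_{y_n}\bigl[\operatorname{ev}_{x_n+y_n}\operatorname{Poi}_j(\lambda)\,B_j(D)u(\cdot,y_n)\bigr]\,dy_n$, splitting with the product rule, and applying the pointwise-in-$x_n$ $\mathcal R$-bounds from Corollary~\ref{Chapter3::Corollary::PoissonEvalEstimate} to produce an $(x_n+y_n)^{-1}$ kernel. The resulting integral operator is then controlled by the weighted Hilbert-type operator of Lemma~\ref{Lemma:Hilbert_Operator}, which is bounded on $L_p(\R_+,|\operatorname{pr}_n|^r)$ precisely for $r\in(-1,p-1)$. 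This handles every $\theta$ with $2m\theta\in\N_0$ directly (the integrality is used to expand the $W^{k+2m(1-\theta)}_p$-norm as a sum over $D_{x_n}^{\tilde k}$), requires no interpolation of the target spaces, and needs no cotype hypothesis because the Rademacher sums are estimated via the $\mathcal R$-bounds of Corollary~\ref{Chapter3::Corollary::PoissonEvalEstimate} together with the scalar boundedness of the kernel operator.
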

\begin{proof}
 The proof uses an approach which is sometimes referred to as Volevich-trick. This approached is already standard in the treatment of parameter-elliptic and parabolic boundary value problems in classical Sobolev spaces, see for example Lemma 7.1 in \cite{DHP03} and how it is used to obtain the results therein. The idea is to use the fundamental theorem of calculus in normal directions and to apply the boundedness of the operator
 \[
T\colon L_p(\R_+,w_r;\R)\to L_p(\R_+,w_r;\R),\;f\mapsto \int_{0}^{\infty} \frac{f(y)}{x+y}\,dy
 \]
from Lemma~\ref{Lemma:Hilbert_Operator}. Using these ideas in connection with Corollary~\ref{Chapter3::Corollary::PoissonEvalEstimate} we can carry out the following computation: Let $(\epsilon_n)_{n\in\N}$ be a Rademacher sequence on the probability space $(\Omega,\mathcal{F},\mathbb{P})$, $N\in\N$, $\lambda_1,\ldots,\lambda_N$ and $u_1,\ldots,u_N\in H^{k}(\R_+,|\operatorname{pr}_n|^r;\mathscr{A}^{s+2m})\cap H^{k+2m}(\R_+,|\operatorname{pr}_n|^r;\mathscr{A}^{s})$. Then we obtain
{\allowdisplaybreaks
\begin{align*}
   &\quad  \left\| \sum_{l=1}^N \epsilon_l\lambda_l^{\theta}\operatorname{Poi}_j\operatorname{tr}_{x_n=0}B_j(D) u_l \right\|_{L_p(\Omega;D^{k,2m(1-\theta),s}_{r}(\R_+))}\\
    \lesssim&\sum_{\tilde{k}=0}^k\left\| \sum_{l=1}^N \epsilon_l\lambda_l^{\theta}D_{x_n}^{\tilde{k}}\operatorname{Poi}_j\operatorname{tr}_{x_n=0}B_j(D) u_l \right\|_{L_p(\Omega;L_p(\R_+,|\operatorname{pr}_n|^r;\mathscr{A}^{s+2m(1-\theta)})}\\
    &+ \sum_{\tilde{k}=0}^{k+(1-\theta)2m}\left\| \sum_{l=1}^N \epsilon_l\lambda_l^{\theta}D_{x_n}^{\tilde{k}}\operatorname{Poi}_j\operatorname{tr}_{x_n=0}B_j(D) u_l \right\|_{L_p(\Omega;L_p(\R_+,|\operatorname{pr}_n|^r;\mathscr{A}^{s})}\\
    \leq &\sum_{\tilde{k}=0}^k\left(\int_{\Omega}\int_{\R_+}\bigg\| \int_{\R_+}\sum_{l=1}^N \epsilon_l\lambda_l^{\theta}[\partial_{y_n}D_{x_n}^{\tilde{k}}\operatorname{ev}_{x_n+y_n}\operatorname{Poi}_j][B_j(D) u_l](\,\cdot\,,y_n)\,dy_n\bigg\|_{\mathscr{A}^{s+2m(1-\theta)}}^px_n^r\,dx_n\,d\mathbb{P}\right)^{1/p}\\
    &+\sum_{\tilde{k}=0}^k\left(\int_{\Omega}\int_{\R_+}\bigg\| \int_{\R_+}\sum_{l=1}^N \epsilon_l\lambda_l^{\theta}[D_{x_n}^{\tilde{k}}\operatorname{ev}_{x_n+y_n}\operatorname{Poi}_j][\partial_{y_n}B_j(D) u_l](\,\cdot\,,y_n)\,dy_n\bigg\|_{\mathscr{A}^{s+2m(1-\theta)}}^px_n^r\,dx_n\,d\mathbb{P}\right)^{1/p}\\
    &+\sum_{\tilde{k}=0}^{k+(1-\theta)2m}\left(\int_{\Omega}\int_{\R_+}\bigg\| \int_{\R_+}\sum_{l=1}^N \epsilon_l\lambda_l^{\theta}[\partial_{y_n}D_{x_n}^{\tilde{k}}\operatorname{ev}_{x_n+y_n}\operatorname{Poi}_j][B_j(D) u_l](\,\cdot\,,y_n)\,dy_n\bigg\|_{\mathscr{A}^{s}}^px_n^r\,dx_n\,d\mathbb{P}\right)^{1/p}\\
    &+ \sum_{\tilde{k}=0}^{k+(1-\theta)2m}\left(\int_{\Omega}\int_{\R_+}\bigg\| \int_{\R_+}\sum_{l=1}^N \epsilon_l\lambda_l^{\theta}[D_{x_n}^{\tilde{k}}\operatorname{ev}_{x_n+y_n}\operatorname{Poi}_j][\partial_{y_n}B_j(D) u_l](\,\cdot\,,y_n)\,dy_n\bigg\|_{\mathscr{A}^{s}}^px_n^r\,dx_n\,d\mathbb{P}\right)^{1/p}.
\end{align*}}
In order to keep the notation shorter, we continue the computation with just the first of the four terms. The steps we would have to carry out for the other three terms, are almost exactly the same with just minor changes on the parameters. We obtain
{\allowdisplaybreaks
\begin{align*}
 &\quad\sum_{\tilde{k}=0}^k\left(\int_{\Omega}\int_{\R_+}\bigg\| \int_{\R_+}\sum_{l=1}^N \epsilon_l\lambda_l^{\theta}[\partial_{y_n}D_{x_n}^{\tilde{k}}\operatorname{ev}_{x_n+y_n}\operatorname{Poi}_j][B_j(D) u_l](\,\cdot\,,y_n)\,dy_n\bigg\|_{\mathscr{A}^{s+2m(1-\theta)}}^px_n^r\,dx_n\,d\mathbb{P}\right)^{1/p}\\
\leq & \sum_{\tilde{k}=0}^k\left(\int_{\Omega}\int_{\R_+}\bigg( \int_{\R_+}\bigg\|\sum_{l=1}^N \epsilon_l\lambda_l^{\theta}[\partial_{y_n}D_{x_n}^{\tilde{k}}\operatorname{ev}_{x_n+y_n}\operatorname{Poi}_j][B_j(D) u_l](\,\cdot\,,y_n)\bigg\|_{\mathscr{A}^{s+2m(1-\theta)}}\,dy_n\bigg)^px_n^r\,dx_n\,d\mathbb{P}\right)^{1/p}\\
 \lesssim& \left(\int_{\Omega}\int_{\R_+}\bigg( \int_{\R_+}\bigg\|\sum_{l=1}^N \epsilon_l\frac{1}{x_n+y_n}[B_j(D) u_l](\,\cdot\,,y_n)\bigg\|_{\mathscr{A}^{s+k+2m-m_j}}\,dy_n\bigg)^px_n^r\,dx_n\,d\mathbb{P}\right)^{1/p}\\
 \lesssim & \left(\int_{\Omega}\int_{\R_+}\bigg\|\sum_{l=1}^N \epsilon_l[B_j(D) u_l](\,\cdot\,,x_n)\bigg\|_{\mathscr{A}^{s+k+2m-m_j}}^px_n^r\,dx_n\,d\mathbb{P}\right)^{1/p}\\
 \leq & \sum_{|\beta|=m_j}\bigg\| b_{\beta}^j\partial_n^{\beta_n}\partial_{x'}^{\beta'}\sum_{l=1}^N \epsilon_l   u_l\bigg\|_{L_p(\Omega;L_p(\R_+,|\operatorname{pr}_n|^r;\mathscr{A}^{s+k+2m-m_j}))}\\
  \lesssim & \sum_{\beta_n=k}^{m_j}\bigg\| \sum_{l=1}^N \epsilon_l   u_l\bigg\|_{L_p(\Omega;H^{\beta_n}_p(\R_+,|\operatorname{pr}_n|^r;\mathscr{A}^{s+k+2m-\beta_n}))}.
\end{align*}}
From the second to the third line we used Corollary~\ref{Chapter3::Corollary::PoissonEvalEstimate}, from the third to the fourth line we used Lemma~\ref{Lemma:Hilbert_Operator} and in the last step we used that $k\leq\min\{\beta_n:\beta\in\N_0^n,|\beta|=m_j,b_\beta^j\neq0\}$. The other three terms above can either also be estimated by 
\[
 \sum_{\beta_n=k}^{m_j}\bigg\| \sum_{l=1}^N \epsilon_l   u_l\bigg\|_{L_p(\Omega;H^{\beta_n}_p(\R_+,|\operatorname{pr}_n|^r;\mathscr{A}^{s+k+2m-\beta_n}))}
\]
or by 
\[
 \sum_{\beta_n=k}^{m_j}\bigg\| \sum_{l=1}^N \epsilon_l   u_l\bigg\|_{L_p(\Omega;H^{\beta_n+1}_p(\R_+,|\operatorname{pr}_n|^r;\mathscr{A}^{s+k+2m-\beta_n-1}))}
\]
if the derivative $\partial_{y_n}$ is taken of $g_j$ instead of $\operatorname{Poi}_j$. Since $m_j< 2m$, we obtain the estimate
\[
  \left\| \sum_{l=1}^N \epsilon_l\lambda_l^{\theta}\operatorname{Poi}_j\operatorname{tr}_{x_n=0}B_j(D) u_l \right\|_{L_p(\Omega;D^{k,2m(1-\theta),s}_{r}(\R_+))}\lesssim  \left\| \sum_{l=1}^N \epsilon_l u_l \right\|_{L_p(\Omega;D^{k,2m,s}_{r}(\R_+))}
\]
\end{proof}

%%%%%%%%%%%%%%%%%%%%%%%%%%%%%%%%%%%%%%%%%%%%%%%%%%%%%%%%%%%%%%%%%%%%%%%%%%%%%%%%%%%%%%%%%%%%%%
%%%%%%%%%%%%%%%%%%%%%%%%%%%%%%%%%%%%% New Section %%%%%%%%%%%%%%%%%%%%%%%%%%%%%%%%%%%%%%%%%%%%
%%%%%%%%%%%%%%%%%%%%%%%%%%%%%%%%%%%%%%%%%%%%%%%%%%%%%%%%%%%%%%%%%%%%%%%%%%%%%%%%%%%%%%%%%%%%%%

\section{Resolvent Estimates}\label{Section:Resolvent}

Now we study the resolvent problem, i.e. \eqref{EllipticBVP} with $g_j=0$. We show that the corresponding operator is $\mathcal{R}$-sectorial and thus has the property of maximal regularity in the UMD case. But first, we prove the $\mathcal{R}$-sectoriality in $\R^n$.
\begin{theorem}\label{Thm:Rboundedness_Rn}
 Let $k,s\in\R$. Suppose that $E$ satisfies Pisier's property $(\alpha)$ if one of the scales $\mathscr{A}, \mathscr{B}$ belongs to the Bessel potential scale. Then for all $\sigma>0$ the realization of $A(D)-\sigma$ in $\mathscr{B}^k(\mathscr{A}^s)$ given by
 \[
  A(D)-\sigma\colon \mathscr{B}^k(\mathscr{A}^s) \supset \mathscr{B}^{k+2m}(\mathscr{A}^s)\cap \mathscr{B}^k(\mathscr{A}^{s+2m})\to  \mathscr{B}^k(\mathscr{A}^s),\, u\mapsto A(D)u-\sigma u
 \]
is $\mathcal{R}$-sectorial in $\Sigma_{\phi}$ and there is a constant $C>0$ such that the estimate
\begin{align}\label{Eq:EstimateDomain_Rn}
 \|u\|_{\mathscr{B}^{k+2m}(\mathscr{A}^s)\cap \mathscr{B}^k(\mathscr{A}^{s+2m})}\leq C\|(\lambda+\sigma-A(D)u\|_{\mathscr{B}^k(\mathscr{A}^s)} 
\end{align}
holds for all $\lambda\in\Sigma_{\phi}$.
\end{theorem}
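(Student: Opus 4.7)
The plan is to realize the candidate resolvent as a Fourier multiplier $\operatorname{op}[m_\lambda]$ with
$$m_\lambda(\xi) := (\lambda + \sigma - A(\xi))^{-1},$$
and to deduce both $\mathcal{R}$-sectoriality and \eqref{Eq:EstimateDomain_Rn} from an $\mathcal{R}$-bounded symbol estimate on $m_\lambda$ fed into the iterated $\mathcal{R}$-Mikhlin theorem, Proposition \ref{Prop:IteratedMikhlinRbounded}.

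First I would carry out the rescaling that is pervasive throughout the paper: set $\rho = \rho(\xi,\lambda) := (1 + |\xi|^2 + |\lambda|^{1/m})^{1/2}$, $b := \xi/\rho$, and $\tilde{w}_\lambda := (\lambda + \sigma)/\rho^{2m}$. Homogeneity of $A$ of degree $2m$ gives
$$\lambda + \sigma - A(\xi) = \rho^{2m}\bigl(\tilde{w}_\lambda - A(b)\bigr).$$
The pair $(b,\tilde{w}_\lambda)$ stays in a bounded set as $(\xi,\lambda)$ runs through $\R^n\times\Sigma_\phi$; because $\sigma>0$ and $\phi<\phi'$, Assumption \ref{Assump:ELS} guarantees that $(\tilde{w}-A(b))^{-1}$ exists and, being operator-valued holomorphic in a compact variable, has $\mathcal{R}$-bounded range on a plump open neighborhood $U$ of this set (via \cite[Theorem 8.5.21]{HvNVW_2017}). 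The $\R^n$-analogue of Lemma \ref{ASymbolOrdnung0}, obtained by the same induction on multi-index length using the chain rule and Remark \ref{Rem:SymbolClasses}, then yields
$$\rho^{2m} m_\lambda \;\in\; S^0_{\mathcal{R}}(\R^n \times \Sigma_\phi;\mathcal{B}(E))$$
with seminorms uniform in the parameter.

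With this symbol estimate $\mathcal{R}$-sectoriality is almost automatic. Since $|\lambda|/\rho^{2m}\leq 1$, the scalar factor $\lambda\rho^{-2m}$ is an $S^0$-symbol with $\lambda$-uniform seminorms, so by Kahane's contraction principle together with the product property of symbol classes (Remark \ref{Rem:SymbolClasses}) the family $\{\lambda\,m_\lambda:\lambda\in\Sigma_\phi\}$ remains $\mathcal{R}$-bounded in $S^0_{\mathcal{R}}$. Proposition \ref{Prop:IteratedMikhlinRbounded}---whose Pisier $(\alpha)$ hypothesis is precisely the one imposed in the theorem whenever a Bessel potential scale is present---then gives the $\mathcal{R}$-boundedness of $\{\lambda(\lambda+\sigma-A(D))^{-1}\}$ in $\mathcal{B}(\mathscr{B}^k(\mathscr{A}^s))$ and in particular shows $\Sigma_\phi\subset\rho(A(D)-\sigma)$. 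For the graph-norm bound \eqref{Eq:EstimateDomain_Rn} I would invoke Proposition \ref{Prop:BesselPotentialInMixedScales} to reduce matters to
$$\|\langle D\rangle^{2m}(\lambda+\sigma-A(D))^{-1}\|_{\mathcal{B}(\mathscr{B}^k(\mathscr{A}^s))}\;\lesssim\; 1\qquad(\lambda\in\Sigma_\phi),$$
and this is immediate from the factorisation $\langle\xi\rangle^{2m}m_\lambda(\xi)=(\langle\xi\rangle/\rho)^{2m}\cdot\rho^{2m}m_\lambda(\xi)$ (with $\langle\xi\rangle\leq\rho$) plus one more application of Proposition \ref{Prop:IteratedMikhlinRbounded}.

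The only step requiring genuine work is the $\R^n$-analogue of Lemma \ref{ASymbolOrdnung0}. The version in the paper is tailored to the boundary setting with $\mu=\lambda^{1/2m}$ as parameter and tangential variable $\xi'\in\R^{n-1}$; here $\xi$ is $n$-dimensional and $\lambda$ itself plays the role of the parameter. One has to verify once more that $\partial_{\xi,\lambda}^{\alpha}$ applied to $F\circ(b,\tilde{w}_\lambda)$ produces linear combinations of terms $(D^{\tilde\alpha}F)\circ(b,\tilde{w}_\lambda)\cdot f$ with $f\in S^{-|\alpha|}_{\mathcal{R}}(\R^n\times\Sigma_\phi)$. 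The inputs are exactly the ones used in the proof of Lemma \ref{ASymbolOrdnung0}---degree-zero homogeneity of $(b,\tilde{w}_\lambda)$ under $\rho$ together with the product and derivative calculus of symbol classes---so no new ideas are needed, but this bookkeeping is the single non-cosmetic computation that has to be carried out explicitly.
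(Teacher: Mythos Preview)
Your argument is correct. The difference from the paper's proof is mainly one of packaging: the paper does not re-derive the symbol estimate but quotes \cite[Lemma 5.10]{Hummel_Lindemulder_2019}, which gives directly
\[
 \mathcal{R}\bigl(\{\langle\xi\rangle^{|\alpha|}D^{\alpha}_{\xi}\,(s_1+s_2\lambda+s_3|\xi|^{2m})(\lambda+\sigma-A(\xi))^{-1}:\lambda\in\Sigma_{\phi},\,\xi\in\R^n\}\bigr)<\infty
\]
for every $(s_1,s_2,s_3)$ and every $\alpha$. Choosing $(s_1,s_2,s_3)=(0,1,0)$ feeds Proposition~\ref{Prop:IteratedMikhlinRbounded} to get $\mathcal{R}$-sectoriality, and $(s_1,s_2,s_3)=(1,0,1)$ shows $[\xi\mapsto(\lambda+\sigma-A(\xi))^{-1}]\in S^{-2m}_{\mathcal{R}}(\R^n;\mathcal{B}(E))$ uniformly in $\lambda$, after which Proposition~\ref{Prop:Pseudo_Iterated_Mapping_Properties} yields \eqref{Eq:EstimateDomain_Rn}. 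Your route instead reproves this black box by the rescaling--compactness argument of Lemma~\ref{ASymbolOrdnung0}, which makes the proof self-contained and parallels the machinery already set up for the Poisson symbols. The cost is the explicit $\R^n$-version of that lemma, which you correctly flag; the chain-rule induction goes through because $b=\xi/\rho$ and $\tilde w_\lambda=(\lambda+\sigma)\rho^{-2m}$ are both order-zero in $\xi$ uniformly in $\lambda$, and their joint range avoids a neighbourhood of $(0,0)$ (when $b\to0$ one has $|\tilde w_\lambda|\gtrsim\min(\sigma,1)$, and when $\tilde w_\lambda\to0$ one has $|b|\to1$, where $A(b)$ is invertible by ellipticity).

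Two small points of hygiene. First, writing $\rho^{2m}m_\lambda\in S^0_{\mathcal{R}}(\R^n\times\Sigma_\phi;\mathcal{B}(E))$ literally invokes the parameter-dependent class whose seminorms are built with $\langle\xi,\lambda\rangle$, not $\rho$; what Proposition~\ref{Prop:IteratedMikhlinRbounded} actually needs is only the joint $\mathcal{R}$-bound of $\langle\xi\rangle^{|\alpha|}\partial_\xi^\alpha(\lambda m_\lambda)$ over $(\xi,\lambda)$, i.e.\ membership in $S^0_{\mathcal{R}}(\R^n;\mathcal{B}(E))$ with $\mathcal{R}$-bounds taken jointly in $\lambda$. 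Your compactness argument delivers exactly this, so the issue is purely notational (alternatively substitute $\mu=\lambda^{1/2m}$ so that $\rho=\langle\xi,\mu\rangle$ and the literal class statement becomes correct). Second, for \eqref{Eq:EstimateDomain_Rn} you should note that $\langle\xi\rangle^{2m}\rho^{-2m}$ is itself a uniform $S^0$-symbol in $\xi$ (same computation as in the proof of Proposition~\ref{Prop:PseudoMappingProperties_R_bounded}), so that the product with $\rho^{2m}m_\lambda$ stays in $S^0_{\mathcal{R}}$ via Remark~\ref{Rem:SymbolClasses}\eqref{Rem:SymbolClasses:Product}; this is what justifies the last Mikhlin application.
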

\begin{proof}
 It was shown in \cite[Lemma 5.10]{Hummel_Lindemulder_2019} that
 \begin{align}\label{Eq:Rbounded_aus_Paper_mit_Nick}
  \mathcal{R}(\{\langle\xi\rangle^{|\alpha|}D^{\alpha}_{\xi} (s_1+s_2\lambda+s_3|\xi|^{2m})(\lambda+1-A(\xi))^{-1}:\lambda\in\Sigma_{\phi},\,\xi\in\R^n\})<\infty\quad\text{in }\mathcal{B}(E)
 \end{align}
holds for all $\alpha\in\mathcal{B}(E)$ and all $(s_1,s_2,s_3)\in\R^3$. Note that the authors of \cite{Hummel_Lindemulder_2019} use a different convention concerning the sign of $A$. Taking $(s_1,s_2,s_3)=(0,1,0)$ shows together with the iterated $\mathcal{R}$-bounded versions of Mihklin's theorem, Proposition \ref{Prop:IteratedMikhlinRbounded}, show that
\[
 \mathcal{R}(\{\lambda(\lambda+1-A(D))^{-1}:\lambda\in\Sigma_{\phi}\})<\infty.
\]
Thus, it only remains to prove that $\mathscr{B}^{k+2m}(\mathscr{A}^s)\cap \mathscr{B}^k(\mathscr{A}^{s+2m})$ is the right domain and that \eqref{Eq:EstimateDomain_Rn} holds. But \eqref{Eq:Rbounded_aus_Paper_mit_Nick} with $(s_1,s_2,s_3)=(1,0,1)$ shows that
\[
 [\xi\mapsto (1+|\xi|^{2m})(\lambda+1-A(\xi))^{-1}]\in S^0_{\mathcal{R}}(\R^n;\mathcal{B}(E))
\]
so that
\[
  [\xi\mapsto (\lambda+1-A(\xi))^{-1}]\in S^{-2m}_{\mathcal{R}}(\R^n;\mathcal{B}(E))
\]
uniformly in $\lambda\in\Sigma_{\phi}$. Now the assertion follows from Proposition \ref{Prop:Pseudo_Iterated_Mapping_Properties}.
\end{proof}

\begin{remark}\phantomsection \label{Rem:Rboundedness_Rn}
 \begin{enumerate}[(a)]
  \item If both $\mathscr{A}$ and $\mathscr{B}$ belong to the Bessel potential scale, then Theorem \ref{Thm:Rboundedness_Rn} can be improved in the following way: Lemma \ref{Lemma:Lifting_Property} together with Fubini's theorem yields that
  \[
   \langle D' \rangle^{-s}\langle D_n\rangle^{-k} L_p(\R^n_x,w_0\otimes w_1;E)\stackrel{\eqsim}{\to} H^k_p(\R_{x_n},w_1;H^{s}_p(\R^{n-1}_{x'},w_0,E)).
  \]
Moreover, we have
\begin{align*}
 \langle D' \rangle^{-s}\langle D_n\rangle^{-k} &H^{2m}_p(\R^n_x,w_0\otimes w_1;E)\\
 &=H^{k+2m}_p(\R_{x_n},w_1;H^{s}_p(\R^{n-1}_{x'},w_0,E))\cap H^{k}_p(\R_{x_n},w_1;H^{s+2m}_p(\R^{n-1}_{x'},w_0,E)).
\end{align*}
But it is well-known that the realization of $A(D)$ even admits a bounded $\mathcal{H}^{\infty}$-calculus in $ L_p(\R^n_x,w_0\otimes w_1;E)$ with domain $H^{2m}_p(\R^n_x,w_0\otimes w_1;E)$ no matter whether Pisier's property $(\alpha)$ is satisfied or not (recall that the weights in the Bessel potential case are in $A_p$). This can be derived by using the weighted versions of Mihklin's theorem  in the proof of \cite[Theorem 5.5]{DHP03}. Since $\langle D' \rangle^{-s}\langle D_n\rangle^{-k}$ is an isomorphism, $A(D)$ also admits a bounded $\mathcal{H}^{\infty}$-calculus in $H^k_p(\R_{x_n},w_1;H^{s}_p(\R^{n-1}_{x'},w_0,E))$ with domain $$H^{k+2m}_p(\R_{x_n},w_1;H^{s}_p(\R^{n-1}_{x'},w_0,E))\cap H^{k}_p(\R_{x_n},w_1;H^{s+2m}_p(\R^{n-1}_{x'},w_0,E)),$$
even if Pisier's property $(\alpha)$ is not satisfied.
\item \label{Rem:Rboundedness_Rn:Sec} In the proof of Theorem \ref{Thm:Rboundedness_Rn}, one can also use Proposition \ref{Prop:IteratedMikhlin} instead of Proposition \ref{Prop:IteratedMikhlinRbounded} if one only needs sectoriality. In this case, we can again drop the assumption that $E$ has to satisfy Pisier's property $(\alpha)$.
 \end{enumerate}
\end{remark}

\begin{remark}\label{Rem:k_max_explanation}
	For the $\mathcal{R}$-sectoriality of the boundary value problem, which we are going to derive in Theorem~\ref{Thm:RSectorial_Rnplus}, we have a restriction on the regularity in normal direction. It may not be larger than $k_{max}\in\N_0$ which we define by 
	 	\[
 		k_{\max}:=\min\{\beta_n|\,\exists j\in\{1,\ldots,m\}\exists\beta\in\N_0^n,|\beta|=m_j: b^j_{\beta}\neq 0\},
 	\]
 	i.e. $k_{\max}$ is the minimal order in normal direction of all non-zero differential operators which appear in any of the boundary operators
 	\[
 		B_j(D)=\sum_{|\beta|=m_j} b^j_{\beta}D^{\beta}\quad(j=1,\ldots,m).
 	\]
Therefore, if there is a non-zero term with no normal derivatives in one of the $B_1,\ldots, B_n$, then $k_{\max}=0$. In particular, it holds that $k_{\max}=0$ if one of the operators $B_1,\ldots, B_n$ corresponds to the Dirichlet trace at the boundary. This includes the case of the Dirichlet Laplacian. On the other hand, for the Neumann Laplacian we have $k_{\max}=1$. In this sense, our results will be analogous to the usual isotropic case: We will be able to derive $\mathcal{R}$-sectoriality of the Neumann Laplacian in $L_{p}(\R_+,;\mathscr{A}^s)$ and $H_{p}^1(\R_+,;\mathscr{A}^s)$, but for the Dirichlet Laplacian we can only derive it in  $L_{p}(\R_+,;\mathscr{A}^s)$.
\end{remark}

\begin{theorem}\label{Thm:RSectorial_Rnplus}
 Recall Assumption \ref{Assump:ELS} and Assumption \ref{Assump:Spaces}. Suppose that $E$ satisfies Pisier's property $(\alpha)$. Let $k\in[0,k_{\max}]\cap \N_0$, $p\in(1,\infty)$, $r\in(-1,p-1)$ and $s\in\R$. We define the operator
 \[
  A_B\colon H_{p}^k(\R_+,|\operatorname{pr}_n|^r;\mathscr{A}^s)\supset D(A_B) \to H_{p}^k(\R_+,|\operatorname{pr}_n|^r;\mathscr{A}^s),\,u\mapsto A(D)u
 \]
on the domain
\begin{align*}
 D(A_B):=\{ u\in H_{p}^{k}(\R_+,|\operatorname{pr}_n|^r;\mathscr{A}^s): &A_B u\in H_{p}^{k}(\R_+,|\operatorname{pr}_n|^r;\mathscr{A}^s)\\&\operatorname{tr}_{x_n=0} B_j(D)u=0\text{ for all }j=1,\ldots,m\}
\end{align*}
Then for all $\sigma>0$ we have that $A_B-\sigma$ is $\mathcal{R}$-sectorial in $\Sigma_{\phi}$. Moreover, there is a constant $C$ such that for all $\lambda\in\Sigma_\phi$ with $|\lambda|\geq\sigma$ we have the estimate
\begin{align}\label{Eq:InclusionDomain}
  \|u\|_{D^{k,2m,s}_{r,B}(\R_+)} \leq C \| (\lambda-A(D))u\|_{H_{p}^k(\R_+,|\operatorname{pr}_n|^r;\mathscr{A}^s)}.
\end{align}
In particular, it holds that $D^{k,2m,s}_{r,B}(\R_+)=D(A_B)$.
\end{theorem}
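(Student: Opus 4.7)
The proof will follow the classical reduction of a boundary value problem to the full-space problem plus a Poisson operator correction, combined with the $\mathcal{R}$-sectoriality on $\R^n$ from Theorem \ref{Thm:Rboundedness_Rn} and the $\mathcal{R}$-estimates of Proposition \ref{Prop:PoissonAfterTrace}. The condition $k \leq k_{\max}$ enters exactly where Proposition \ref{Prop:PoissonAfterTrace} is invoked: for each $j$, the required bound $k \leq \min\{\beta_n : |\beta|=m_j,\,b_\beta^j \neq 0\}$ is forced by taking the minimum over $j$.

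First, I would construct the resolvent. Given $f \in H^k_p(\R_+,|\operatorname{pr}_n|^r;\mathscr{A}^s)$, extend it to $\tilde{f} \in H^k_p(\R,|\operatorname{pr}_n|^r;\mathscr{A}^s)$ boundedly (a Stein-type extension works here since $|\operatorname{pr}_n|^r \in A_p$ for $r\in(-1,p-1)$). For $\lambda \in \Sigma_{\phi}$ with $|\lambda|\geq \sigma$, Theorem \ref{Thm:Rboundedness_Rn} applied with the choice $\mathscr{B}^\bullet = H_p^\bullet(\R,|\operatorname{pr}_n|^r;\cdot)$ (which is allowed since $E$ has Pisier's $(\alpha)$) produces $\tilde{v} = (\lambda - A(D))^{-1} \tilde{f}$, and the set $\{\lambda(\lambda+\sigma-A(D))^{-1}\}$ is $\mathcal{R}$-bounded on $H^k_p(\R,|\operatorname{pr}_n|^r;\mathscr{A}^s)$ with $\tilde{v}$ also $\mathcal{R}$-controlled in the intersection domain $H^{k+2m}_p(\R,|\operatorname{pr}_n|^r;\mathscr{A}^s) \cap H^{k}_p(\R,|\operatorname{pr}_n|^r;\mathscr{A}^{s+2m})$. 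Set $v := \tilde{v}|_{\R^n_+} \in D^{k,2m,s}_r(\R_+)$ and $g_j := \operatorname{tr}_{x_n=0} B_j(D) v$. Define
\[
  u := v - \operatorname{pr}_1 \sum_{j=1}^m \operatorname{Poi}_j(\lambda) g_j.
\]
The Lopatinskii-Shapiro condition guarantees that $\operatorname{pr}_1 \operatorname{Poi}_j(\lambda) g_j$ solves the homogeneous equation $(\lambda - A(D)) w = 0$ in $\R^n_+$ and realizes the boundary data $B_i(D) w|_{x_n=0} = \delta_{ij} g_j$; therefore $(\lambda - A(D)) u = f$ and $B_i(D) u|_{x_n=0} = 0$ for all $i$.

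Next, I would establish the $\mathcal{R}$-bounds. Proposition \ref{Prop:PoissonAfterTrace} with $\theta = 1$ (legal since $2m \in \N_0$ and $k \leq k_{\max} \leq \min\{\beta_n : |\beta|=m_j\}$ for every $j$) gives
\[
  \mathcal{R}\bigl(\{\lambda\, \operatorname{pr}_1 \operatorname{Poi}_j(\lambda) \operatorname{tr}_{x_n=0} B_j(D) : \lambda \in \Sigma_\phi,\, |\lambda|\geq \sigma\}\bigr) \leq C
\]
in $\mathcal{B}(D^{k,2m,s}_r(\R_+), H^k_p(\R_+,|\operatorname{pr}_n|^r;\mathscr{A}^s))$, while $\theta = 0$ yields the analogous $\mathcal{R}$-bound in $\mathcal{B}(D^{k,2m,s}_r(\R_+), D^{k,2m,s}_r(\R_+))$. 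Combining with the full-space $\mathcal{R}$-bound for $\lambda(\lambda+\sigma-A(D))^{-1}$ and the closedness of restriction gives the $\mathcal{R}$-boundedness of $\{\lambda(\lambda - (A_B - \sigma))^{-1}\}$ on $H^k_p(\R_+,|\operatorname{pr}_n|^r;\mathscr{A}^s)$, together with the domain estimate \eqref{Eq:InclusionDomain}.

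Finally, for the identification $D(A_B) = D^{k,2m,s}_{r,B}(\R_+)$: the inclusion $\supset$ is immediate since $A(D)$ preserves the regularity. For $\subset$, any $u \in D(A_B)$ satisfies $(\lambda + \sigma - A(D))u = f \in H^k_p(\R_+,|\operatorname{pr}_n|^r;\mathscr{A}^s)$ with vanishing boundary data, and the constructed solution of the same problem lies in $D^{k,2m,s}_{r,B}(\R_+)$; uniqueness (from injectivity of $\lambda + \sigma - A_B$ given the resolvent estimate) then forces $u$ into that space. The main technical obstacle is the joint $\mathcal{R}$-control across \emph{both} pieces of the intersection norm defining $D^{k,2m,s}_r$: the full-space resolvent handles this via Theorem \ref{Thm:Rboundedness_Rn}, and the boundary correction via Proposition \ref{Prop:PoissonAfterTrace}, but one must verify that the extension and the trace $\operatorname{tr}_{x_n=0} B_j(D)$ behave well on the weighted Sobolev space, which is where the assumption $r \in (-1,p-1)$ (i.e.\ $|\operatorname{pr}_n|^r \in A_p$) is essential.
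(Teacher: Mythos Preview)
Your proposal is correct and follows essentially the same route as the paper: the resolvent is built as the restriction of the full-space resolvent (applied to an extension of $f$) minus the Poisson correction, and the $\mathcal{R}$-bounds come from Theorem \ref{Thm:Rboundedness_Rn} combined with Proposition \ref{Prop:PoissonAfterTrace} at $\theta=1$ (for $\mathcal{R}$-sectoriality) and $\theta=0$ (for the domain estimate \eqref{Eq:InclusionDomain}). One small point worth tightening: the paper deduces injectivity of $\lambda-A_B$ on $D(A_B)$ directly from the Lopatinskii--Shapiro condition (Assumption \ref{Assump:ELS}) rather than ``from the resolvent estimate,'' which cleanly avoids the mild circularity in your uniqueness step and then yields $R(\lambda)(\lambda-A_B)=\operatorname{id}_{D(A_B)}$ by the standard left-inverse/right-inverse argument.
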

\begin{proof}
 Consider
 \begin{align}\label{Eq:Resolvent_Solution_Formula2}
  R(\lambda)f=r_+(\lambda-A(D))_{\R^n}^{-1} \mathscr{E} f - \sum_{j=1}^m\operatorname{pr}_{1}\operatorname{Poi}_j(\lambda)\operatorname{tr}_{x_n=0}B_j(D)(\lambda-A(D))_{\R^n}^{-1} \mathscr{E} f,
 \end{align}
where $\lambda\in\Sigma_{\phi}$, $f\in H_{p}^k(\R_+,|\operatorname{pr}_n|^r;\mathscr{A}^s)$, $(\lambda-A(D))_{\R^n}^{-1}$ denotes the resolvent on $\R^n$ as in Theorem \ref{Thm:Rboundedness_Rn}, $r_+$ denotes the restriction of a distribution on $\R^n$ to $\R^n_+$ and $\mathscr{E}$ denotes an extension operator mapping $H_{p}^t(\R_+,|\operatorname{pr}_n|^r;\mathscr{A}^s)$ into $H_{p}^t(\R,|\operatorname{pr}_n|^r;\mathscr{A}^s)$ for arbitrary $t\in\R$. $\mathscr{E}$ can for example be chosen to be Seeley's extension, see \cite{Seeley_1964}.\\
Combining Proposition \ref{Prop:PoissonAfterTrace} with $\theta=1$ and Theorem \ref{Thm:Rboundedness_Rn} yields that the set
\begin{align}\label{Eq:RSec_Estimate}
 \{\lambda R(\lambda):\lambda\in\Sigma_{\phi},|\lambda|\geq\sigma\}\subset \mathcal{B}(H_{p}^k(\R_+,|\operatorname{pr}_n|^r;\mathscr{A}^s))
\end{align}
is $\mathcal{R}$-bounded. Next, we show that $R(\lambda)$ is indeed the resolvent so that we obtain $\mathcal{R}$-sectoriality. To this end we show that
\[
 R(\lambda)\colon H_{p}^k(\R_+,|\operatorname{pr}_n|^r;\mathscr{A}^s) \to D(A_B)
\]
is a bijection with inverse $\lambda-A_B$. Let $f\in H_{p}^k(\R_+,|\operatorname{pr}_n|^r;\mathscr{A}^s)$. Since $$\operatorname{tr}_{x_n=0}B_k(D)\operatorname{pr}_1\operatorname{Poi}_j(\lambda)=\delta_{k,j}\operatorname{id}_{\mathscr{A}}$$ by construction, it follows from applying $B_j(D)$ to \eqref{Eq:Resolvent_Solution_Formula2} that $\operatorname{tr}_{x_n=0}B_j(D)R(\lambda)f=0$ for all $j=1,\ldots,m$. Moreover, we have $(\lambda-A(D))\operatorname{pr}_1\operatorname{Poi}_j(\lambda)=0$ by the definition of $\operatorname{Poi}_j(\lambda)$. This shows that 
\begin{align}\label{Eq:Left_Inverse}
 (\lambda-A(D))R(\lambda)=\operatorname{id}_{H_{p}^k(\R_+,|\operatorname{pr}_n|^r;\mathscr{A}^s)}
\end{align}
and therefore
\[
 A(D)R(\lambda)f = \lambda R(\lambda)f - (\lambda-A(D))R(\lambda)f=\lambda R(\lambda)f-f.
\]
But it is already contained in \eqref{Eq:RSec_Estimate} that $ \lambda R(\lambda)f\in H_{p}^k(\R_+,|\operatorname{pr}_n|^r;\mathscr{A}^s)$. This shows that $R(\lambda)$ maps $H_{p}^k(\R_+,|\operatorname{pr}_n|^r;\mathscr{A}^s)$ into $D(A_B)$. In addition, \eqref{Eq:Left_Inverse} shows the injectivity of $R(\lambda)$. But also 
\[
 (\lambda-A(D))\colon D(A_B)\to H_{p}^k(\R_+,|\operatorname{pr}_n|^r;\mathscr{A}^s)
\]
is injective as a consequence of the Lopatinskii-Shapiro condition. Hence, there is a mapping
\[
 T(\lambda)\colon H_{p}^k(\R_+,|\operatorname{pr}_n|^r;\mathscr{A}^s)\to D(A_B)
\]
such that $T(\lambda)(\lambda-A(D))=\id_{D(A_B)}$. But from this we obtain
\[
 T(\lambda)=T(\lambda)(\lambda-A_B)R(\lambda)=R(\lambda)
\]
so that
\[
 R(\lambda)(\lambda-A_B)=\id_{D(A_B)},\quad(\lambda-A_B)R(\lambda)=\operatorname{id}_{H_{p}^k(\R_+,|\operatorname{pr}_n|^r;\mathscr{A}^s)},
\]
i.e. $R(\lambda)=(\lambda-A_B)^{-1}$ is indeed the resolvent and we obtain the $\mathcal{R}$-sectoriality.\\
It remains to show that the estimate \eqref{Eq:InclusionDomain} holds. To this end, we can again use the formula for the resolvent \eqref{Eq:Resolvent_Solution_Formula2} in connection with Proposition \ref{Prop:PoissonAfterTrace} ($\theta=0$) and Theorem \ref{Thm:Rboundedness_Rn}. Then we obtain for $u\in D(A_B)$ that
\begin{align*}
 \|u\|_{D^{k,2m,s}_{r,B}(\R_+)}&\leq \|r_+ (\lambda-A(D))_{\R^n}^{-1}\mathscr{E}(\lambda-A_B)u\|_{D^{k,2m,s}_{r,B}(\R_+)}
 \\&\qquad+\sum_{j=1}^m\|\operatorname{pr}_{1}\operatorname{Poi}_j(\lambda)\operatorname{tr}_{x_n=0}B_j(D)(\lambda-A(D))_{\R^n}^{-1} \mathscr{E} (\lambda-A_B)u\|_{D^{k,2m,s}_{r,B}(\R_+)}\\
 &\lesssim\|(\lambda-A_B)u\|_{H^k(\R_+,|\operatorname{pr}_n|^r\mathscr{A}^s)}+\sum_{j=1}^m\|r_+ (\lambda-A(D))_{\R^n}^{-1} \mathscr{E} (\lambda-A_B)u\|_{D^{k,2m,s}_{r,B}(\R_+)}\\
 &\lesssim\|(\lambda-A_B)u\|_{H^k(\R_+,|\operatorname{pr}_n|^r\mathscr{A}^s)}
\end{align*}
This also implies that $D(A_B)=D^{k,2m,s}_{r,B}(\R_+)$. Indeed, it follows from Proposition \ref{Prop:Pseudo_Iterated_Mapping_Properties} that
\begin{align*}
 \|(\lambda-A_B)u\|_{H^k(\R_+,|\operatorname{pr}_n|^r\mathscr{A}^s)}\lesssim \|(\lambda-A(D))\mathscr{E}u\|_{H^k(\R,|\operatorname{pr}_n|^r\mathscr{A}^s)}
 \lesssim \|\mathscr{E}u\|_{D^{k,2m,s}_{r,B}(\R)}\lesssim\|u\|_{D^{k,2m,s}_{r,B}(\R_+)}
\end{align*}
for $u\in D^{k,2m,s}_{r,B}(\R_+)$. Hence, we have
\[
	D^{k,2m,s}_{r,B}(\R_+)\hookrightarrow D(A_B) \hookrightarrow D^{k,2m,s}_{r,B}(\R_+).
\]
\end{proof}
\begin{remark}
	Since $E$ is a UMD space, the results of Theorem~\ref{Thm:RSectorial_Rnplus} also hold for $k\in[0,k_{\max}]$, i.e. $k$ does not have to be an integer. This follows from complex interpolation, see Proposition~\ref{Prop:Rbounded_Interpolation} and \cite[Proposition 5.6]{Lindemulder_Meyries_Veraar_2018}. Note that unlike in Proposition~\ref{Prop:Rbounded_Interpolation} we can not replace the UMD space $E$ by a K-convex Banach space here, since the UMD property is needed for the complex interpolation of Bessel potential spaces in \cite[Proposition 5.6]{Lindemulder_Meyries_Veraar_2018}. Moreover, in Assumption~\ref{Assump:Spaces} we require $E$ to be a UMD space if one of the spaces in tangential or normal direction belongs to the Bessel potential scale.
\end{remark}

Two canonical applications of Theorem \ref{Thm:RSectorial_Rnplus} are Dirichlet and Neumann Laplacian.
\begin{corollary} Let $E=\C$, $p\in(1,\infty)$, $r\in(1,p-1)$ and $s\in\R$.
 \begin{enumerate}[(a)]
  \item We consider the Laplacian with Dirichlet boundary conditions
  \[
   \Delta_D\colon L_p(\R_+,|\operatorname{pr}_n|^{r};\mathscr{A}^s)\supset D(\Delta_D)\to  L_p(\R_+,|\operatorname{pr}_n|^{r};\mathscr{A}^s)
  \]
on the domain $D(\Delta_D)$ given by
\[
 D(\Delta_D):=\{u\in H^{2}_p(\R_+,|\operatorname{pr}_n|^{r};\mathscr{A}^s)\cap L_p(\R_+,|\operatorname{pr}_n|^{r};\mathscr{A}^{s+2m}): \operatorname{tr}_{x_n=0}u=0\}.
\]
For all $\sigma>0$ it holds that $\Delta_D-\sigma$ is $\mathcal{R}$-sectorial in any sector $\Sigma_{\psi}$ with $\psi\in(0,\pi)$.
\item Let $k\in\{0,1\}$. We consider the Laplacian with Neumann boundary conditions
 \[
   \Delta_N\colon H^k_p(\R_+,|\operatorname{pr}_n|^{r};\mathscr{A}^s)\supset D(\Delta_D)\to  H^k_p(\R_+,|\operatorname{pr}_n|^{r};\mathscr{A}^s)
  \]
on the domain $D(\Delta_N)$ given by
\[
 D(\Delta_N):=\{u\in H^{k+2}_p(\R_+,|\operatorname{pr}_n|^{r};\mathscr{A}^s)\cap H^k_p(\R_+,|\operatorname{pr}_n|^{r};\mathscr{A}^{s+2m}): \operatorname{tr}_{x_n=0}\partial_n u=0\}.
\]
For all $\sigma>0$ it holds that $\Delta_N-\sigma$ is $\mathcal{R}$-sectorial in any sector $\Sigma_{\psi}$ with $\psi\in(0,\pi)$.
 \end{enumerate}
\end{corollary}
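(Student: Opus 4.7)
The plan is to verify the hypotheses of Theorem~\ref{Thm:RSectorial_Rnplus} for the two cases and read off the conclusion. The trivial choice $E=\mathbb{C}$ is a Hilbert space, hence has Pisier's property $(\alpha)$, and the power weight $|\operatorname{pr}_n|^r$ with $r\in(-1,p-1)$ lies in $A_p(\mathbb{R})$, so all the standing Bessel--potential hypotheses of Assumption~\ref{Assump:Spaces} are met for the normal variable. Therefore the only nontrivial tasks are (i) to verify Assumption~\ref{Assump:ELS}, and (ii) to identify the parameter $k_{\max}$ of Remark~\ref{Rem:k_max_explanation} in each case.

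For (i) with $A(D)=\Delta$, we have $A(\xi)=-|\xi|^2$ acting on $\mathbb{C}$, so $\sigma(A(\xi))=\{-|\xi|^2\}\subset(-\infty,0)$. Hence for any $\phi'\in(0,\pi)$ the resolvent set of $A(\xi)$ contains $\overline{\Sigma}_{\phi'}$ for all $\xi\neq 0$, which is the parameter--ellipticity condition. For the Lopatinskii--Shapiro condition one solves the ODE
\[
 \lambda u(x_n)+|\xi'|^2 u(x_n)-u''(x_n)=0,\qquad x_n>0,
\]
whose unique decaying solution is $u(x_n)=c\,e^{-\tau x_n}$ with $\tau=\sqrt{\lambda+|\xi'|^2}$ (principal branch with $\operatorname{Re}\tau>0$, which is well-defined because $\lambda+|\xi'|^2\in\Sigma_{\phi'}$ for $\phi'<\pi$). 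The Dirichlet datum $u(0)=g$ fixes $c=g$, and the Neumann datum $-\tau c=g$ fixes $c=-g/\tau$; since $\tau\neq 0$, the solution is unique in both cases. Thus Assumption~\ref{Assump:ELS} holds with any $\phi'\in(0,\pi)$.

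For (ii), since for the Dirichlet Laplacian $m_1=0$ and $B_1(D)=\operatorname{id}$ (hence $\beta_n=0$), Remark~\ref{Rem:k_max_explanation} gives $k_{\max}=0$; for the Neumann Laplacian $m_1=1$ and $B_1(D)=\partial_n$ (hence $\beta_n=1$), so $k_{\max}=1$. The admissible regularity in normal direction in Theorem~\ref{Thm:RSectorial_Rnplus} is therefore $k\in\{0\}$ in case (a) and $k\in\{0,1\}$ in case (b), matching the statement. Applying Theorem~\ref{Thm:RSectorial_Rnplus} with these choices and $\sigma>0$ produces $\mathcal{R}$-sectoriality of $\Delta_D-\sigma$, respectively $\Delta_N-\sigma$, in the sector $\Sigma_\phi$ for any $\phi\in(0,\phi')$. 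Since $\phi'\in(0,\pi)$ was arbitrary, a final sup over $\phi'$ yields $\mathcal{R}$-sectoriality in any $\Sigma_\psi$ with $\psi\in(0,\pi)$, which is the claim. The only minor point requiring care is checking that the domain identified in Theorem~\ref{Thm:RSectorial_Rnplus}, namely $D^{k,2,s}_{r,B}(\mathbb{R}_+)$ with $B$ the Dirichlet resp.\ Neumann trace, coincides with the domains $D(\Delta_D)$ and $D(\Delta_N)$ written in the corollary; this is precisely the content of the identity $D(A_B)=D^{k,2m,s}_{r,B}(\mathbb{R}_+)$ established at the end of the proof of that theorem, with $2m=2$.
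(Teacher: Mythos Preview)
Your proof is correct and follows exactly the approach of the paper, which simply states that both parts follow directly from Theorem~\ref{Thm:RSectorial_Rnplus}. You have merely filled in the routine verifications (Pisier's property $(\alpha)$ for $E=\C$, parameter-ellipticity and the Lopatinskii--Shapiro condition for $\Delta$, the computation of $k_{\max}$, and the domain identification) that the paper leaves implicit.
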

\begin{proof}
 Both statements follow directly from Theorem \ref{Thm:RSectorial_Rnplus}.
\end{proof}

%%%%%%%%%%%%%%%%%%%%%%%%%%%%%%%%%%%%%%%%%%%%%%%%%%%%%%%%%%%%%%%%%%%%%%%%%%%%%%%%%%%%%%%%%%%%%%%%%%%%%%%%%%%%
%%%%%%%%%%%%%%%%%%%%%%%-----------New Section -----------%%%%%%%%%%%%%%%%%%%%%%%%%%%%%%%%%%%%%%%%%%%%%%%%%%%
%%%%%%%%%%%%%%%%%%%%%%%%%%%%%%%%%%%%%%%%%%%%%%%%%%%%%%%%%%%%%%%%%%%%%%%%%%%%%%%%%%%%%%%%%%%%%%%%%%%%%%%%%%%%

\section{Application to Boundary Value Problems}\label{Section:BVP}
 
\begin{theorem}\label{Thm:EllipticBVP1}
 Let $s_1,\ldots,s_m\in\R$ and $g_j\in\mathscr{A}^{s_j}$ $(j=1,\ldots,m)$. Then the equation
 \begin{align*}
  \lambda u-A(D)u&=0\quad\;\text{in }\R^n_+,\\
  B_j(D)u&=g_j\quad\text{on }\R^{n-1}
 \end{align*}
 has a unique solution $u\in\mathscr{S}'(\R^n_+;E)$ for all $\lambda\in\Sigma_{\phi}$. This solution satisfies 
 \[
  u\in\sum_{j=1}^m\bigcap_{r,t\in\R,\,k\in\N_0,\,p\in[1,\infty)\atop r-p[t+k-m_j-s_j]_+>-1} W_p^k(\R_+,|\operatorname{pr}_n|^r;\mathscr{A}^t).
 \]
Moreover, for all $\sigma>0$, $t,r\in\R$, $p\in[1,\infty)$ and $k\in\N_0$ such that $r-p[t+k-m_j-s_j]_+>-1$ for all $j=1,\ldots,m$ there is a constant $C>0$ such that
\[
 \|u\|_{W_p^k(\R_+,|\operatorname{pr}_n|^r;\mathscr{A}^t)}\leq C\sum_{j=1}^m |\lambda|^{\frac{-1-r+p(k-m_j)+p[t-s_j]_+}{2mp}}\|g_j\|_{\mathscr{A}^{s_j}}
\]
for all $\lambda\in\Sigma_{\phi}$ with $|\lambda|\geq\sigma$
\end{theorem}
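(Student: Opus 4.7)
My plan is to split the argument into three parts: construction, mapping/estimate, and uniqueness.

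First, I would define the candidate solution by the Poisson representation already introduced in Section~\ref{Section:Poisson}, namely
\[
 u:=\operatorname{pr}_1\sum_{j=1}^{m}\operatorname{Poi}_j(\lambda)g_j.
\]
That this $u$ solves the homogeneous elliptic equation $\lambda u-A(D)u=0$ in $\R^n_+$ is immediate from the construction of $\operatorname{Poi}_j(\lambda)$ via the stable solution of the first-order ODE system in normal direction (each $e^{i\rho A_0(b,\sigma)x_n}M_{\rho,j}(b,\sigma)\hat g_j$ lies in the stable subspace, on which $\lambda-A(D)$ acts as zero after undoing the reduction). The boundary condition $B_k(D)u=g_k$ on $\R^{n-1}$ follows from the identity $\operatorname{tr}_{x_n=0}B_k(D)\operatorname{pr}_1\operatorname{Poi}_j(\lambda)=\delta_{kj}\operatorname{id}_{\mathscr{A}}$, which is built into the definition of $M(b,\sigma)$ through the Lopatinskii-Shapiro condition.

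Next, the regularity and quantitative estimate follow directly from Theorem~\ref{Thm:MainThm1}\eqref{Thm:MainThm1:UniformBounds}. For any admissible quadruple $(r,t,k,p)$ satisfying $r-p[t+k-m_j-s_j]_+>-1$ for every $j$, that theorem yields
\[
 \|\operatorname{Poi}_j(\lambda)g_j\|_{W^k_p(\R_+,|\operatorname{pr}_n|^r;\mathscr{A}^t(\R^{n-1},w;E^{2m}))}\leq C|\lambda|^{\frac{-1-r+p(k-m_j)+p[t-s_j]_+}{2mp}}\|g_j\|_{\mathscr{A}^{s_j}}
\]
for $|\lambda|\geq\sigma$. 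Projecting with $\operatorname{pr}_1$ and summing over $j$ gives both the stated membership in the intersection over all admissible parameters and the claimed bound. Intersection and sum are interchanged in the conclusion because for each $j$ the single summand $\operatorname{Poi}_j(\lambda)g_j$ simultaneously lies in all the spaces whose indices are admissible relative to $s_j$, which is exactly what Theorem~\ref{Thm:MainThm1} provides.

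The main obstacle is uniqueness in the very large class $\mathscr{S}'(\R^n_+;E)$, since at this generality one has no a priori trace for a distributional solution. The plan is to argue as follows: let $v\in\mathscr{S}'(\R^n_+;E)$ satisfy the homogeneous problem (all $g_j=0$) and extend $v$ by $0$ beyond some fixed growth class, reducing to the case where a tangential Fourier transform $\mathscr{F}'v(\xi',\cdot)$ makes sense as a distribution in $x_n$ on $\R_+$ for $\xi'$ in a dense set. For $\lambda\in\Sigma_\phi$ and almost every $\xi'$, $\mathscr{F}'v(\xi',\cdot)$ solves the ODE $(\lambda-A(\xi',D_n))\mathscr{F}'v(\xi',\cdot)=0$. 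Parameter-ellipticity (Assumption~\ref{Assump:ELS}(a)) gives a spectral splitting of the companion matrix $A_0(\xi',\lambda)$ into a stable and an unstable part with a uniform gap on $\Sigma_\phi$; temperedness in $x_n$ forces the unstable component of the lifted solution to vanish, so $\mathscr{F}'v(\xi',x_n)$ belongs to the finite-dimensional stable subspace and is determined by its Cauchy data at $x_n=0$. The Lopatinskii-Shapiro condition (Assumption~\ref{Assump:ELS}(b)) then states precisely that vanishing of $B_j(\xi',D_n)\mathscr{F}'v(\xi',\cdot)|_{x_n=0}$ for all $j$ forces the corresponding stable initial vector to be zero, hence $\mathscr{F}'v(\xi',\cdot)=0$; inverting the tangential Fourier transform gives $v=0$. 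Combined with the existence argument above, this yields the claimed uniqueness and concludes the proof.
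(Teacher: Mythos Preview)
Your argument for existence and the quantitative estimate is exactly the paper's: the one-line proof there simply cites Theorem~\ref{Thm:MainThm1}\eqref{Thm:MainThm1:UniformBounds}, and you have unpacked that citation correctly, including the identity $\operatorname{tr}_{x_n=0}B_k(D)\operatorname{pr}_1\operatorname{Poi}_j(\lambda)=\delta_{kj}\operatorname{id}$ and the decomposition $u=\operatorname{pr}_1\sum_j\operatorname{Poi}_j(\lambda)g_j$.

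Where you go beyond the paper is in the uniqueness discussion. The paper's proof does not address uniqueness at all; it is left implicit in the construction of the Poisson operator (Assumption~\ref{Assump:ELS}(b) literally asserts unique solvability of the ODE problem in the stable class). Your sketch is the right idea---ellipticity splits off the unstable subspace, temperedness kills the unstable part, and Lopatinskii--Shapiro forces the stable Cauchy data to vanish---but as written it is not rigorous in the class $\mathscr{S}'(\R^n_+;E)$: the phrase ``extend $v$ by $0$ beyond some fixed growth class'' is undefined, and for a general tempered distribution there is no pointwise-in-$\xi'$ tangential Fourier transform. To make this airtight you would need either to restrict the uniqueness claim to a class where traces exist (which is what the paper effectively does, since the solution it produces lives in $W^k_p(\R_+,|\operatorname{pr}_n|^r;\mathscr{A}^t)$ spaces with $k$ large enough for $\operatorname{tr}_{x_n=0}B_j(D)$ to be classically defined, cf.\ Remark~\ref{Remark:EllipticBVP1}), or to interpret $\mathscr{F}'v$ as an $\mathscr{S}'(\R_+;E)$-valued distribution on $\R^{n-1}_{\xi'}$ and argue via pairing with test functions. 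The paper sidesteps this entirely by not isolating uniqueness as a separate step.
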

\begin{proof}
 All the assertions follow directly from Theorem \ref{Thm:MainThm1} (\ref{Thm:MainThm1:UniformBounds}).
\end{proof}

\begin{remark} \phantomsection \label{Remark:EllipticBVP1}
 \begin{enumerate}[(a)]
  \item Note that the smoothness parameters $k$ and $t$ of the solution in Theorem \ref{Thm:EllipticBVP1} can be chosen arbitrarily large if one accepts a strong singularity at the boundary. On the other hand, if $t$ is chosen small enough, then the singularity can be removed.
  \item In Theorem \ref{Thm:EllipticBVP1} we can take $k=1+\max_{j=1,\ldots,m}m_j$, $r=0$ and $t$ such that $r-p[t+k-m_j-s_j]_+>-1$ for all $j=1,\ldots,m$. This means that the boundary conditions $B_j(D)u=g_j$ can be understood in a classical sense. Indeed, \cite[Proposition 7.4]{Meyries_Veraar_2012} in connection with \cite[Proposition 3.12]{Meyries_Veraar_2012} shows that 
  \[
  	W^k_p(\R_+;\mathscr{A}^t)\hookrightarrow BUC^{k-1}(\R_+;\mathscr{A}^t).
  \]
  Hence, $\operatorname{tr}_{x_n=0}B_j(D)u$ can be defined in the classical sense.
  \item One can again use interpolation techniques or one can directly work with Corollary \ref{Cor:MainThm1_Corollary} in order to obtain results for the Bessel potential or the Besov scale in normal direction. Note however that this comes with some restrictions on the weight $|\operatorname{pr}_n|^r$.
 \end{enumerate}
\end{remark}

\begin{theorem}\label{Thm:EllipticBVP2}
As defined in Remark~\ref{Rem:k_max_explanation} we set
	 	\[
 		k_{\max}:=\min\{\beta_n|\,\exists j\in\{1,\ldots,m\}\exists\beta\in\N_0^n,|\beta|= m_j: b^j_{\beta}\neq 0\}.
 	\]
 	Let $s\in\R$, $p\in(1,\infty)$, $r\in(-1,p-1)$, $k\in[0,k_{\max}]\cap\N_0$ and $f\in W^k_p(\R_+,|\operatorname{pr}_n|^r;\mathscr{A}^s)$. Let further $s_j\in(s+2m+k-m_j-\frac{1+r}{p},\infty)$ and $g_j\in \mathscr{A}^{s_j}$ $(j=1,\ldots,m)$. Then the equation
 \begin{align*}
  \lambda u-A(D)u&=f\quad\;\text{in }\R^n_+,\\
  B_j(D)u&=g_j\quad\text{on }\R^{n-1}
 \end{align*}
 has a unique solution
 \[
  u\in W^{k+2m}_p(\R_+,|\operatorname{pr}_n|^r;\mathscr{A}^s) \cap W^{k}_p(\R_+,|\operatorname{pr}_n|^r;\mathscr{A}^{s+2m})
 \]
 and for all $\sigma>0$ there is a constant $C>0$ such that for all $\lambda\in\Sigma_{\phi}$ with $|\lambda|\geq\sigma$ we have the estimate
 \begin{align*}
  \|u\|_{W^{k+2m}_p(\R_+,|\operatorname{pr}_n|^r;\mathscr{A}^s)}+&\|u\|_{W^{k}_p(\R_+,|\operatorname{pr}_n|^r;\mathscr{A}^{s+2m})}+|\lambda|\,\|u\|_{W^{k}_p(\R_+,|\operatorname{pr}_n|^r;\mathscr{A}^s)}\\
  &\leq C\left(\|f\|_{W^k_p(\R_+,|\operatorname{pr}_n|^r;\mathscr{A}^s)}+\sum_{j=1}^m|\lambda|^{\frac{-1-r+p(k+2m-m_j)}{2mp}}\|g\|_{\mathscr{A}^{s_j}}\right).
 \end{align*}
\end{theorem}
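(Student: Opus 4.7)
The plan is to use the standard splitting $u = u_1 + u_2$, solving $(\lambda - A(D)) u_1 = f$ with homogeneous boundary data and $(\lambda - A(D)) u_2 = 0$ with boundary data $g_j$ separately. First I would set $u_1 := (\lambda - A_B)^{-1} f$, where $A_B$ is the realization from Theorem~\ref{Thm:RSectorial_Rnplus}. Since $k \in [0, k_{\max}] \cap \N_0$ and $r \in (-1, p-1)$, that theorem produces a unique $u_1 \in D(A_B) = D^{k,2m,s}_{r,B}(\R_+)$ satisfying $\operatorname{tr}_{x_n=0} B_j(D) u_1 = 0$ together with the two-sided bound
\[
  \|u_1\|_{D^{k,2m,s}_{r,B}(\R_+)} + |\lambda|\,\|u_1\|_{W^k_p(\R_+,|\operatorname{pr}_n|^r;\mathscr{A}^s)} \leq C\|f\|_{W^k_p(\R_+,|\operatorname{pr}_n|^r;\mathscr{A}^s)},
\]
uniform in $\lambda \in \Sigma_\phi$ with $|\lambda| \geq \sigma$, which controls all three left-hand-side norms in the target estimate for the $u_1$-contribution.

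For $u_2$ I would set $u_2 := \sum_{j=1}^m \operatorname{pr}_1 \operatorname{Poi}_j(\lambda) g_j$, which by construction of the Poisson operators satisfies $\lambda u_2 - A(D) u_2 = 0$ in $\R^n_+$ and $\operatorname{tr}_{x_n=0} B_k(D) \operatorname{pr}_1 \operatorname{Poi}_j(\lambda) = \delta_{jk} \id$, hence $B_k(D) u_2 = g_k$ on $\R^{n-1}$. The hypothesis $s_j > s + 2m + k - m_j - (1+r)/p$ rearranges to $[s + 2m + k - m_j - s_j]_+ < (1+r)/p$, which verifies the condition $r - p[t + k' - m_j - s_j]_+ > -1$ of Theorem~\ref{Thm:MainThm1}\,\eqref{Thm:MainThm1:UniformBounds} simultaneously for the three parameter choices $(k', t) \in \{(k+2m, s), (k, s+2m), (k, s)\}$. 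That theorem then yields $u_2$ in each target space with the bound $C|\lambda|^{\alpha(k',t,j)} \|g_j\|_{\mathscr{A}^{s_j}}$, where $\alpha(k',t,j) := (-1 - r + p(k' - m_j) + p[t - s_j]_+)/(2mp)$. Using $m_j < 2m$, $k \geq 0$ and $(1+r)/p < 1$ one checks $s + 2m + k - m_j - (1+r)/p > s$, so $s_j > s$ and $[s-s_j]_+ = 0$; this collapses $\alpha(k+2m, s, j)$ to $(-1 - r + p(k+2m - m_j))/(2mp)$ and, simultaneously, $1 + \alpha(k, s, j)$ to the same value, absorbing the factor $|\lambda|$ on the third left-hand term. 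The elementary bookkeeping inequality $[s + 2m - s_j]_+ < m_j + (1+r)/p < 2m$ then forces $\alpha(k, s+2m, j) \leq \alpha(k+2m, s, j)$, so for $|\lambda| \geq \sigma$ the resulting $|\lambda|$-powers are comparable up to a constant $C_\sigma$.

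For uniqueness, the difference $v$ of two solutions satisfies $(\lambda - A(D))v = 0$ and $B_j(D) v = 0$, lies in $D^{k,2m,s}_{r,B}(\R_+) = D(A_B)$, and therefore vanishes since $\lambda \in \rho(A_B)$ by Theorem~\ref{Thm:RSectorial_Rnplus}. The argument is not concentrated in a single hard step; the one mildly technical point is the parameter bookkeeping above, ensuring that the three distinct exponents produced by Theorem~\ref{Thm:MainThm1} for the $u_2$-contribution collapse to the single worst-case exponent $(-1 - r + p(k + 2m - m_j))/(2mp)$ displayed in the statement.
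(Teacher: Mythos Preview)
Your proof is correct and follows essentially the same route as the paper: split $u = u_1 + u_2$, invoke Theorem~\ref{Thm:RSectorial_Rnplus} for the homogeneous-boundary part $u_1$, and Theorem~\ref{Thm:MainThm1}\,\eqref{Thm:MainThm1:UniformBounds} (the paper routes this through Theorem~\ref{Thm:EllipticBVP1}) for the Poisson part $u_2$; your exponent bookkeeping collapsing the three $|\lambda|$-powers to the single displayed one is in fact more explicit than the paper's. One small gap: Theorem~\ref{Thm:RSectorial_Rnplus} carries the standing hypothesis that $E$ has Pisier's property~$(\alpha)$, which is \emph{not} assumed in the present statement; the paper closes this by citing Remark~\ref{Rem:Rboundedness_Rn}\,\eqref{Rem:Rboundedness_Rn:Sec}, which says that for plain sectoriality and the estimate~\eqref{Eq:InclusionDomain} (as opposed to $\mathcal{R}$-sectoriality) the property~$(\alpha)$ assumption can be dropped. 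You should add that one-line justification when you invoke Theorem~\ref{Thm:RSectorial_Rnplus}.
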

\begin{proof}
 By Theorem \ref{Thm:RSectorial_Rnplus} we have a unique solution $$u_1\in W^{k+2m}_p(\R_+,|\operatorname{pr}_n|^r;\mathscr{A}^s) \cap W^{k}_p(\R_+,|\operatorname{pr}_n|^r;\mathscr{A}^{s+2m})$$ to the equation 
  \begin{align*}
  \lambda u_1-A(D)u_1&=f\quad\text{in }\R^n_+,\\
  B_j(D)u_1&=0\quad\text{on }\R^{n-1}
 \end{align*}
 which satisfies the estimate
  \begin{align*}
  \|u_1\|_{W^{k+2m}_p(\R_+,|\operatorname{pr}_n|^r;\mathscr{A}^s)}+\|u_1\|_{W^{k}_p(\R_+,|\operatorname{pr}_n|^r;\mathscr{A}^{s+2m})}+|\lambda|\,\|u_1\|_{W^{k}_p(\R_+,|\operatorname{pr}_n|^r;\mathscr{A}^s)}\leq C\|f\|_{W^k_p(\R_+,|\operatorname{pr}_n|^r;\mathscr{A}^s)}.
 \end{align*}
 By Remark \ref{Rem:Rboundedness_Rn} \eqref{Rem:Rboundedness_Rn:Sec}, we do not need Pisier's property $(\alpha)$ for this. Moreover, by Theorem \ref{Thm:EllipticBVP1} the unique solution $u_2$ to the equation
  \begin{align*}
  \lambda u_2-A(D)u_2&=0\quad\;\text{in }\R^n_+,\\
  B_j(D)u_2&=g_j\quad\text{on }\R^{n-1}
 \end{align*}
 satisfies the estimates
 {\allowdisplaybreaks
 \begin{align*}
  \|u_2\|_{W^{k+2m}_p(\R_+,|\operatorname{pr}_n|^r;\mathscr{A}^s)}&\leq C \sum_{j=1}^m|\lambda|^{\frac{-1-r+p(k+2m-m_j)}{2mp}}\|g\|_{\mathscr{A}^{s_j}},\\
  \|u_2\|_{W^{k}_p(\R_+,|\operatorname{pr}_n|^r;\mathscr{A}^{s+2m})}&\leq C \sum_{j=1}^m|\lambda|^{\frac{-1-r+p(k-m_j)}{2mp}}\|g\|_{\mathscr{A}^{s_j}},\\
  \|u_2\|_{W^{k}_p(\R_+,|\operatorname{pr}_n|^r;\mathscr{A}^{s})}&\leq C \sum_{j=1}^m|\lambda|^{\frac{-1-r+p(k-m_j)}{2mp}}\|g\|_{\mathscr{A}^{s_j}}.
 \end{align*}}
 Note that by our choice of $s_j$, we have
 \[
  r-[s+2m+k-m_j-s_j]_+>r-(s+2m+k-m_j-s-2m-k+m_j+\tfrac{1+r}{p})=-1
 \]
 for $s+2m+k-m_j-\frac{1+r}{p}<s_j\leq s+2m+k-m_j$ and
  \[
  r-[s+2m+k-m_j-s_j]_+=r>-1
 \]
 for $s_j\geq s+2m+k-m_j$. The unique solution $u$ of the full system is given by $u=u_1+u_2$ and therefore summing up yields the assertion.
\end{proof}

\begin{theorem}\label{Thm:PBVP}
Recall from Assumption~\ref{Assump:Spaces} that $\mathscr{C}$ stands for the Bessel potential, Besov, Triebel-Lizorkin or one of their dual scales and that we impose some conditions on the corresponding parameters. Let $\sigma>0$, $s_1,\ldots,s_m,l_1,\ldots,l_m\in\R$ and $g_j\in\mathscr{C}^{l_j}(\R_+,w_2;\mathscr{A}^{s_j})$. Let further
 \begin{align*}
    P_j=\{(r,t_0,l,k,p): t_0,l\in\R,&r\in(-1,\infty).k\in\N_0,p\in [1,\infty),\\
    &r-p[t_0+k-m_j-s_j]_+>-1,\\
    &r-2mp(l-l_j)-p(k-m_j)-p[t_0-s_j]_+>-1\}
 \end{align*}
the set of admissible parameters. Then the equation
 \begin{align}
 \begin{aligned}\label{Eq:PBVP}
  \partial_t u +\sigma u- A(D) u &= 0\quad\;\text{in }\R\times\R^n_+,\\
  B_j(D)u&=g_j\quad\text{on }\R_+\times\R^{n-1},
  \end{aligned}
 \end{align}
has a unique solution $u\in\mathscr{S}'(\R\times\R^n_+;E)$. This solution satisfies
\[
 u\in\sum_{j=1}^m\bigcap_{(r,t_0,l,k,p)\in P_j}\mathscr{C}^{l}(\R,w_2;W^{k}_p(\R_+,|\operatorname{pr}_n|^r;\mathscr{A}^{t_0}))
\]
and for all $(r,t_0,l,k,p)\in \bigcap_{j=1}^mP_j$ there is a constant $C>0$ independent of $g_1,\ldots,g_m$ such that
\[
 \|u\|_{\mathscr{C}^{l}(\R,w_2;W^{k}_p(\R_+,|\operatorname{pr}_n|^r;\mathscr{A}^{t_0}))}\leq C\sum_{j=1}^m\|g_j\|_{\mathscr{C}^{l_j}(\R,w_2;\mathscr{A}^{s_j})}.
\]
\end{theorem}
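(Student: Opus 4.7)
The plan is to perform a partial Fourier transform in the time variable $t$, which converts the parabolic problem into a family of elliptic boundary value problems parametrized by $\tau\in\R$. Writing $\lambda(\tau):=\sigma+i\tau$, the problem becomes, formally,
\begin{align*}
\lambda(\tau)\hat u(\tau,\,\cdot\,)-A(D)\hat u(\tau,\,\cdot\,)&=0 \quad\text{in }\R^n_+,\\
B_j(D)\hat u(\tau,\,\cdot\,)&=\hat g_j(\tau,\,\cdot\,) \quad\text{on }\R^{n-1}.
\end{align*}
Since $\sigma>0$ and we are in the regime $\phi>\pi/2$ imposed for time-dependent equations, the line $\{\lambda(\tau):\tau\in\R\}$ sits in a closed subsector $\overline{\Sigma}_{\psi}\subset\Sigma_{\phi}$ with $\pi/2<\psi<\phi$ and satisfies $|\lambda(\tau)|\geq\sigma$ uniformly. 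The solution formula \eqref{Boutet:Equation:SolutionFormulaPoisson} then gives the candidate
\[
u=\sum_{j=1}^m\operatorname{pr}_1\operatorname{op}_t\!\big[\operatorname{Poi}_j(\lambda(\,\cdot\,))\big]g_j,
\]
where $\operatorname{op}_t$ denotes the operator-valued Fourier multiplier in $t$ with symbol taking values in $\mathcal{B}\big(\mathscr{A}^{s_j},W^k_p(\R_+,|\operatorname{pr}_n|^r;\mathscr{A}^{t_0}(\R^{n-1},w_0;E^{2m}))\big)$.

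First I would note that $\lambda\mapsto\operatorname{Poi}_j(\lambda)$ is holomorphic on $\Sigma_\phi$: this is visible from \eqref{Eq:FormulaPoisson}, because $\sigma=\lambda/\rho^{2m}$ depends holomorphically on $\lambda$ (with the branch of $\mu=\lambda^{1/2m}$ fixed in the excerpt). By Theorem~\ref{Thm:MainThm1}(a), for every admissible $(r,t_0,k,p)$,
\[
\big\|\operatorname{Poi}_j(\lambda)\big\|_{\mathcal{B}(\mathscr{A}^{s_j},W^k_p(\R_+,|\operatorname{pr}_n|^r;\mathscr{A}^{t_0}))}\leq C|\lambda|^{d_j},\qquad d_j:=\tfrac{-1-r+p(k-m_j)+p[t_0-s_j]_+}{2mp},
\]
for $\lambda\in\Sigma_\phi$ with $|\lambda|\geq\sigma$. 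Applied to the normalized function $N(\lambda):=\lambda^{-d_j}\operatorname{Poi}_j(\lambda)$ on a slightly smaller subsector, Lemma~\ref{Lemma:HolomorphicSectorRBounded} gives $\mathcal{R}$-boundedness of $\lambda^N(d/d\lambda)^N N(\lambda)$ for every $N\in\N_0$, and unwinding the product rule yields that $\lambda^{N-d_j}(d/d\lambda)^N\operatorname{Poi}_j(\lambda)$ is $\mathcal{R}$-bounded uniformly in $\lambda\in\overline{\Sigma}_\psi$ with $|\lambda|\geq\sigma$. For the scales in which Theorem~\ref{Thm:Mikhlin} only needs uniform bounds (Besov, Triebel–Lizorkin and their duals), the norm version suffices and follows equally well from Theorem~\ref{Thm:MainThm1}(a) by a Cauchy integral on circles of radius $\eqsim|\lambda|$.

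Next I would check Mikhlin's condition for the rescaled symbol $m_j(\tau):=\langle\tau\rangle^{l-l_j}\operatorname{Poi}_j(\lambda(\tau))$ viewed as an operator-valued function of $\tau$. Using $|\lambda(\tau)|\eqsim\langle\tau\rangle$, the Leibniz rule, and the derivative estimates above,
\[
\big\|\langle\tau\rangle^N\partial_\tau^N m_j(\tau)\big\|\leq C_N\langle\tau\rangle^{l-l_j+d_j}\qquad(\tau\in\R,\;N\in\N_0),
\]
with the analogous $\mathcal{R}$-bounded version in the Bessel potential case (combining the $\mathcal{R}$-bound from Proposition~\ref{Chapter3::Proposition::PoissonMappingPropertySobolevRBoundedEpsilon} or Corollary~\ref{Cor:MainThm1_negative}, applied with some $\epsilon>0$, with the $\mathcal{R}$-boundedness of derivatives from Lemma~\ref{Lemma:HolomorphicSectorRBounded}). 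The third condition defining $P_j$ rearranges exactly to $l-l_j<-d_j$, whence $l-l_j+d_j<0$ and $m_j$ is an operator-valued Mikhlin symbol of strictly negative order; the strict inequality is what absorbs the $\epsilon$-loss present in Proposition~\ref{Chapter3::Proposition::PoissonMappingPropertySobolevRBoundedEpsilon}. Applying the relevant part of Theorem~\ref{Thm:Mikhlin} yields that $\operatorname{op}_t[m_j]$ is bounded from $\mathscr{C}^0(\R,w_2;\mathscr{A}^{s_j})$ into $\mathscr{C}^0(\R,w_2;W^k_p(\R_+,|\operatorname{pr}_n|^r;\mathscr{A}^{t_0}))$. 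The lifting isomorphism $\langle D_t\rangle^{l_j-l}$ then promotes this to the desired estimate for $\operatorname{op}_t[\operatorname{Poi}_j(\lambda(\,\cdot\,))]\colon\mathscr{C}^{l_j}\to\mathscr{C}^l$, and summing over $j=1,\dots,m$ gives the displayed inequality.

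For uniqueness, if $v\in\mathscr{S}'(\R\times\R^n_+;E)$ solves the homogeneous problem ($g_j\equiv0$), then taking partial Fourier transform in $t$ shows that for each $\tau\in\R$ the distribution $\hat v(\tau,\,\cdot\,)$ is a tempered solution of $(\lambda(\tau)-A(D))\hat v=0$ with $B_j(D)\hat v|_{x_n=0}=0$; the Lopatinskii–Shapiro condition in Assumption~\ref{Assump:ELS} (after a further tangential Fourier transform and the stability requirement at $x_n\to\infty$ encoded in temperedness) forces $\hat v(\tau,\,\cdot\,)=0$, so $v\equiv0$. I expect the main obstacle to be the $\mathcal{R}$-bounded symbol estimate in the Bessel potential case: one must track $\mathcal{R}$-bounds through the rescaling by $\langle\tau\rangle^{l-l_j}$ and through the holomorphy-based derivative step, while keeping the $\epsilon$-loss in Proposition~\ref{Chapter3::Proposition::PoissonMappingPropertySobolevRBoundedEpsilon} absorbed by the strict hypothesis $l-l_j<-d_j$; once this symbol analysis is in place, Mikhlin's theorem and the lifting property close the argument.
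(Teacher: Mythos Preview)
Your proposal is correct and follows essentially the same route as the paper: Fourier transform in time, solution formula via $\operatorname{Poi}_j(\sigma+i\tau)$, holomorphy plus Theorem~\ref{Thm:MainThm1} and Lemma~\ref{Lemma:HolomorphicSectorRBounded} to obtain an operator-valued symbol of order $d_j+\epsilon$, and then a Mikhlin-type argument together with the strict inequality in $P_j$ to absorb the $\epsilon$. The only cosmetic difference is that the paper packages the symbol estimate as membership in $S^{d_j+\epsilon}_{\mathcal{R}}(\R;\mathcal{B}(\mathscr{A}^{s_j},W^k_p))$ and then invokes the pseudodifferential mapping result Proposition~\ref{Prop:PseudoMappingProperties_R_bounded} (in its parameter-independent form), whereas you rescale by $\langle\tau\rangle^{l-l_j}$ and appeal to Theorem~\ref{Thm:Mikhlin} directly followed by the lifting isomorphism; these are equivalent formulations of the same step.
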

\begin{proof}
 We apply the Fourier transform $\mathscr{F}_{t\mapsto\tau}$ in time to \eqref{Eq:PBVP} and obtain 
  \begin{align}
 \begin{aligned}
  (\sigma+i\tau) \hat{u}- A(D) \hat{u} &= 0\quad\;\text{in }\R\times\R^n_+,\\
  B_j(D)\hat{u}&=\hat{g}_j\quad\text{on }\R_+\times\R^{n-1}.
  \end{aligned}
 \end{align}
 Hence, the solution of \eqref{Eq:PBVP} is given by
 \[
  u(t,x)=\sum_{j=1}^m\mathscr{F}_{t\to\tau}^{-1}\operatorname{Poi}_j(\sigma+i\tau)\mathscr{F}_{t\to\tau}g_j.
 \]
From Theorem \ref{Thm:MainThm1} together with Lemma \ref{Lemma:HolomorphicSectorRBounded} it follows that
\[
 [\tau\mapsto \operatorname{Poi}_j(\sigma+i\tau)]\in S^{\frac{-1-r+p(k-m_j)+p[{t_0}-s_j]_+}{2mp}+\epsilon}_{\mathcal{R}}(\R,\mathcal{B}(\mathscr{A}^{s_j},W^k_p(\R_+,|\operatorname{pr}_n|^r;\mathscr{A}^{t_0})))
\]
for arbitrary $\epsilon>0$ if the parameters satisfy $r-p[{t_0}+k-m_j-s_j]_+>-1$. Hence, the parameter-independent version of Proposition~\ref{Prop:PseudoMappingProperties_R_bounded} (as in Remark \ref{Rem:SymbolClasses} \eqref{Rem:SymbolClasses:DependentIndependent}) yields
\[
 \mathscr{F}_{t\to\tau}^{-1}\operatorname{Poi}_j(\sigma+i\tau)\mathscr{F}_{t\to\tau}g_j\in \mathscr{C}^{l_j-\epsilon+\frac{1+r-p(k-m_j)-p[{t_0}-s_j]_+}{2mp}}(\R,w_2;W^{k}_p(\R_+,|\operatorname{pr}_n|^r;\mathscr{A}^{t_0}))
\]
as well as the estimate
\[
 \|\mathscr{F}_{t\to\tau}^{-1}\operatorname{Poi}_j(\sigma+i\tau)\mathscr{F}_{t\to\tau}g_j\|_{\mathscr{C}^{l_j-\epsilon+\frac{1+r-p(k-m_j)-p[{t_0}-s_j]_+}{2mp}}(\R,w_2;W^{k}_p(\R_+,|\operatorname{pr}_n|^r;\mathscr{A}^{t_0})}\leq C\|g_j\|_{\mathscr{C}^{l_j}(\R,w_2;\mathscr{A}^{s_j})}.
\]
But the condition
\begin{align}\label{Eq:AParameterEstimate1}
 r-2mp(l-l_j)-p(k-m_j)-p[{t_0}-s_j]_+>-1
\end{align}
implies
\begin{align}\label{Eq:AParameterEstimate2}
 l\leq l_j-\epsilon+\frac{1+r-p(k-m_j)-p[{t_0}-s_j]_+}{2mp}.
\end{align}
if $\epsilon>0$ is chosen small enough. Therefore, we obtain
\[
 \mathscr{F}_{t\to\tau}^{-1}\operatorname{Poi}_j(\sigma+i\tau)\mathscr{F}_{t\to\tau}g_j\in \mathscr{C}^{l}(\R,w_2;W^{k}_p(\R_+,|\operatorname{pr}_n|^r;\mathscr{A}^{t_0}))
\]
and the estimate
\[
 \|\mathscr{F}_{t\to\tau}^{-1}\operatorname{Poi}_j(\sigma+i\tau)\mathscr{F}_{t\to\tau}g_j\|_{\mathscr{C}^{l}(\R,w_2;W^{k}_p(\R_+,|\operatorname{pr}_n|^r;\mathscr{A}^{t_0})}\leq C\|g_j\|_{\mathscr{C}^{l_j}(\R,w_2;\mathscr{A}^{s_j})}.
\]
if $(r,{t_0},l,k,p)\in P_j$. Taking the sum over all $j=1,\ldots,m$ yields the assertion.
\end{proof}

\begin{remark}
 \begin{enumerate}[(a)]
  \item If $\mathscr{C}$ does not stand for the Bessel potential scale or if $p>\max\{p_0,q_0,q_E\}$ where $q_E$ denotes the cotype of $E$, then the parameter set $P_j$ in Theorem \ref{Thm:PBVP} can potentially be chosen slightly larger, namely 
   \begin{align*}
    P_j=\{(r,{t_0},l,k,p): {t_0},l\in\R,&r\in(-1,\infty).k\in\N_0,p\in [1,\infty),\\
    &r-p[{t_0}+k-m_j-s_j]_+>-1,\\
    &r-2mp(l-l_j)-p(k-m_j)-p[{t_0}-s_j]_+\geq-1\}.
 \end{align*}
 Indeed, if $p>\max\{p_0,q_0,q_E\}$, then
\[
 [\tau\mapsto \operatorname{Poi}_j(\sigma+i\tau)]\in S^{\frac{-1-r+p(k-m_j)+p[{t_0}-s_j]_+}{2mp}}_{\mathcal{R}}(\R,\mathcal{B}(\mathscr{A}^{s_j},W^k_p(\R_+,|\operatorname{pr}_n|^r;\mathscr{A}^{t_0})))
\]
by Theorem \ref{Thm:MainThm1}. If one continues the proof of Theorem \ref{Thm:PBVP} with this information, then one will find that the $\epsilon$ in \eqref{Eq:AParameterEstimate2} can be removed so that the inequality \eqref{Eq:AParameterEstimate1} does not have to be strict. The same holds for Besov and Triebe-Lizorkin scale, as in this case
\[
 [\tau\mapsto \operatorname{Poi}_j(\sigma+i\tau)]\in S^{\frac{-1-r+p(k-m_j)+p[{t_0}-s_j]_+}{2mp}}(\R,\mathcal{B}(\mathscr{A}^{s_j},W^k_p(\R_+,|\operatorname{pr}_n|^r;\mathscr{A}^{t_0})))
\]
is good enough and holds without restriction on $p$.
\item As in Remark \ref{Remark:EllipticBVP1} we can take the trace $\operatorname{tr}_{x_n=0}B_j(D)u$ in the classical sense if $k$ is large enough and if $l$ and ${t_0}$ are small enough.
\item Again, we can use interpolation techniques to extend the result in Theorem \ref{Thm:PBVP} to the case in which the Bessel potential or Besov scale are taken in normal direction. However, this can only be done for $r\in(-1,p-1)$.
 \end{enumerate}
\end{remark}

\begin{theorem}\label{Thm:PIBVP1}
 Let $\alpha\in(0,1)$, $T>0$, $s,t_0\in\R$, $p\in(1,\infty)$, $r\in(-1,p-1)$, $\mu\in(-1,\infty)$, $v_{\mu}(t)=t^{\mu}$ $(t\in(0,T])$ and $s_1,\ldots,s_m,l_1,\ldots,l_m\in\R$.  Assume that $\mu\in (-1,q_2)$ if $\mathscr{C}$ belongs to the Bessel potential scale. Let again $$k_{\max}:=\min\{\beta_n|\,\exists j\in\{1,\ldots,m\}\exists\beta\in\N_0^n,|\beta|= m_j: b^j_{\beta}\neq 0\}.$$ and $k\in[0,k_{\max}]\cap\N_0$.  We further assume that
 \begin{align}\label{Eq:RestrictionTimeRegularity}
  l_j>\frac{1+\mu}{q_2}-\frac{1+r}{2mp}+\frac{k-m_j+[t_0-s_j]_+}{2m}\quad\text{and}\quad s_j>t_0+k-m_j-\frac{1+r}{p}
 \end{align}
for all $j=1,\ldots,m$. Suppose that $E$ satisfies Pisier's property $(\alpha)$.\\
Then for all $u_0\in H_p^k(\R_+,|\operatorname{pr}_n|^{r};\mathscr{A}^{t_0})$, all $\alpha$-H\"older continuous $f\in C^{\alpha}((0,T);H_p^k(\R_+,|\operatorname{pr}_n|^{r};\mathscr{A}^{t_0}))$ with $\alpha(0,1)$ and $g_j\in\mathscr{C}^{l_j}([0,T],v_{\mu};\mathscr{A}^{s_j})$ there is a unique solution $u$ of the equation 
\begin{align}
    \begin{aligned}\label{Eq.PIBVP1}
    \partial_t u- A(D) u&=f\quad\;\text{in }(0,T]\times\R^{n}_+,\\
    B_j(D)u&=g_j\quad\text{on }(0,T]\times\R^{n-1},\\
    u(0,\,\cdot\,)&=u_0
    \end{aligned}
\end{align}
which satisfies
\begin{align*}
 u&\in C([0,T];H^{k}_{p}(\R_{+,x_n},|\operatorname{pr}_n|^{r};\mathscr{A}^{t_0})),\\
 u&\in \mathscr{C}^{l^*}((0,T],v_{\mu};H^{k+2m}_p(\R_{+,x_n},|\operatorname{pr}_n|^{r};\mathscr{A}^{t_0-2m})),\\
 u&\in C^1((0,T];H^{k}_{p}([\delta,\infty)_{x_n},|\operatorname{pr}_n|^{r};\mathscr{A}^{t_0})),\\
 u&\in C((0,T];H^{k+2m}_p([\delta,\infty)_{x_n},|\operatorname{pr}_n|^{r};\mathscr{A}^{t_0})\cap H^k_p([\delta,\infty)_{x_n},|\operatorname{pr}_n|^{r};\mathscr{A}^{t_0+2m}))
\end{align*}
for all $\delta>0$ and some $l^*\in\R$.
\end{theorem}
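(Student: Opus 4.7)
The plan is to decompose $u = u_1 + u_2$ linearly, where $u_2$ handles \eqref{Eq.PIBVP1} with $g_j \equiv 0$ (keeping the data $u_0$ and $f$) and $u_1$ handles \eqref{Eq.PIBVP1} with $u_0 = 0$, $f = 0$ but the full boundary data $g_j$. For $u_2$, set $X := H^k_p(\R_+, |\operatorname{pr}_n|^r; \mathscr{A}^{t_0})$; Theorem~\ref{Thm:RSectorial_Rnplus} gives that $A_B - \sigma$ is $\mathcal{R}$-sectorial on $X$ with angle $\phi > \pi/2$ for every $\sigma > 0$, so $A_B$ generates an analytic semigroup on $X$. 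Together with the H\"older continuity of $f$, the classical parabolic theory for analytic semigroups produces the unique classical solution
\[
u_2(t) = e^{tA_B}u_0 + \int_0^t e^{(t-s)A_B}f(s)\,ds \in C([0,T];X) \cap C^1((0,T];X) \cap C((0,T]; D(A_B)).
\]
Since the boundary conditions defining $D(A_B) = D^{k,2m,t_0}_{r,B}(\R_+)$ are inactive on $[\delta,\infty)$ for $\delta > 0$, the required interior regularity for $u_2$ follows.

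For $u_1$, the plan is to solve the problem first on all of $\R$ in time via the Poisson operator and restrict back to $(0,T]$. Extend $g_j$ by zero to $\tilde g_j$ on $\R$, extending $v_\mu$ to $|t|^\mu$; the assumption $\mu \in (-1, q_2)$ in the Bessel potential case ensures $|t|^\mu$ is an $A_{q_2}(\R)$ weight. Fix $\sigma > 0$ and define
\[
u_1(t,x) := e^{\sigma t} \sum_{j=1}^m \operatorname{pr}_1\, \mathscr{F}^{-1}_{\tau \to t} \operatorname{Poi}_j(\sigma + i\tau)\, \mathscr{F}_{t \to \tau}[e^{-\sigma \,\cdot\,}\tilde g_j](t, x').
\]
Since $\lambda \mapsto \operatorname{Poi}_j(\lambda)$ is holomorphic and uniformly operator-bounded on the sector $\Sigma_\phi$ with $\phi > \pi/2$ (Corollary~\ref{Chapter3::Corollary::PoissonEvalEstimate}), a Paley--Wiener argument shows that the temporal Fourier multiplier $\mathscr{F}^{-1} \operatorname{Poi}_j(\sigma + i\tau) \mathscr{F}$ is causal, so $u_1 \equiv 0$ on $(-\infty, 0]$, which in particular gives $u_1(0,\cdot) = 0$.

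The quantitative estimates now follow the pattern of the proof of Theorem~\ref{Thm:PBVP}. Combining Lemma~\ref{Lemma:HolomorphicSectorRBounded} with Theorem~\ref{Thm:MainThm1}/Corollary~\ref{Cor:MainThm1_Corollary} (applicable by virtue of the second constraint $s_j > t_0 + k - m_j - \tfrac{1+r}{p}$ in \eqref{Eq:RestrictionTimeRegularity}), one obtains
\[
\tau \mapsto \operatorname{Poi}_j(\sigma + i\tau) \in S^{-\frac{1+r-p(k-m_j)-p[t_0-s_j]_+}{2mp}+\epsilon}_{\mathcal{R}}\bigl(\R;\, \mathcal{B}(\mathscr{A}^{s_j}, X)\bigr),
\]
so Proposition~\ref{Prop:PseudoMappingProperties_R_bounded} translates this into a loss of exactly that order of time regularity. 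The first inequality in \eqref{Eq:RestrictionTimeRegularity} is precisely the statement that the residual time smoothness $l^* := l_j - \epsilon - \frac{1+r-p(k-m_j)-p[t_0-s_j]_+}{2mp}$ exceeds the Sobolev embedding threshold $\tfrac{1+\mu}{q_2}$ for $\mathscr{C}^{l^*}(\R, |\cdot|^\mu; X) \hookrightarrow C(\R; X)$, which yields $u_1 \in C([0,T]; X)$. Rerunning the argument with $t_0 - 2m$ in place of $t_0$ (via Corollary~\ref{Cor:MainThm1_negative}) produces the $\mathscr{C}^{l^*}((0,T], v_\mu; H^{k+2m}_p(\R_+, |\operatorname{pr}_n|^r; \mathscr{A}^{t_0-2m}))$ bound, and the interior regularity follows because $\operatorname{Poi}_j(\lambda)$ is of order $-\infty$ in tangential directions once $x_n \geq \delta > 0$ (Proposition~\ref{Chapter3::Proposition::PoissonSymbolOrderSingularity}). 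Uniqueness reduces to the case $u_0 = 0$, $f = 0$, $g_j = 0$, which is zero by the $\mathcal{R}$-sectoriality of $A_B$ in $X$.

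The main technical obstacle is making the zero-extension of $g_j$ rigorous in the weighted mixed-scale setting while preserving the precise time-regularity bookkeeping; if $l_j$ is so large that zero extension would degrade the space, one first takes any bounded extension $\overline{g}_j$ on $\R$, applies the Poisson formula to obtain $\overline{u}_1$, and subtracts the semigroup evolution of $\overline{u}_1(0,\cdot)$ to enforce the zero initial condition. The causality-plus-embedding argument then has to be executed carefully enough that the two constraints of \eqref{Eq:RestrictionTimeRegularity} turn out to be not only sufficient but essentially sharp, and the restriction $\mu < q_2$ in the Bessel case is exactly what is needed so that $|t|^\mu$ is admissible as an $A_{q_2}(\R)$ weight after extension.
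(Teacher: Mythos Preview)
Your overall architecture matches the paper's: split $u$ into a semigroup piece (homogeneous boundary data) and a Poisson-in-Fourier-time piece (inhomogeneous boundary data), invoking Theorem~\ref{Thm:RSectorial_Rnplus} for the former and the machinery behind Theorem~\ref{Thm:PBVP} for the latter. The difference is how the two pieces are coupled at $t=0$. You attempt to force $u_1(0,\cdot)=0$ directly by zero-extending $g_j$ and appealing to a Paley--Wiener causality argument for the operator-valued multiplier $\tau\mapsto\operatorname{Poi}_j(\sigma+i\tau)$. The paper instead takes an arbitrary bounded extension $\mathscr{E}\tilde g_j$ to $\R$, accepts that the resulting $v_1(0,\cdot)$ is generally nonzero, and compensates by feeding $u_0-v_1(0,\cdot)$ as initial datum into the semigroup problem. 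The first inequality in \eqref{Eq:RestrictionTimeRegularity} then enters exactly as the condition that some admissible $l>\tfrac{1+\mu}{q_2}$ exists, so that the time-trace embedding $\mathscr{C}^{l}\hookrightarrow BUC$ puts $v_1(0,\cdot)$ in $X$.

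Your causality route is not wrong in principle, but it is heavier than you indicate: zero extension need not preserve $\mathscr{C}^{l_j}$-regularity for large $l_j$, and an operator-valued Paley--Wiener statement (holomorphic extension of $\tau\mapsto\operatorname{Poi}_j(\sigma+i\tau)$ into a half-plane with controlled growth, in the mixed-scale target space) is an extra ingredient that is nowhere set up in the paper. You recognise this yourself and propose as a remedy precisely the paper's manoeuvre---general extension plus subtracting the semigroup evolution of the time trace. That remedy is the paper's \emph{primary} argument, not a fallback; adopting it from the outset removes the Paley--Wiener detour entirely and makes the role of \eqref{Eq:RestrictionTimeRegularity} transparent. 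What your route would buy, if carried out, is a slightly cleaner decomposition with no cross-term in the initial data; the cost is the additional analytic input just described.
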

\begin{proof}
 First, we substitute $v(t,\,\cdot\,)=e^{-\sigma t}u(t,\,\cdot\,)$ for some $\sigma>0$. Since we work on a bounded time interval $[0,T]$, this multiplication is an automorphism of all the spaces we consider in this theorem. Hence, it suffices to look for a solution of the equation
 \begin{align*}
    \partial_t v+\sigma v- A(D) v&=\tilde{f}\quad\;\text{in }[0,T]\times\R^{n}_+,\\
    B_j(D)v&=\tilde{g}_j\quad\text{on }[0,T]\times\R^{n-1},\\
    v(0,\,\cdot\,)&=u_0,
\end{align*}
where $\tilde{f}(t)=e^{-\sigma t} f(t)$ and $\tilde{g}_j(t)=e^{-\sigma t} g_j(t)$. We split $v$ into two parts $v=r_{[0,T]}v_1+v_2$ which are defined as follows: $v_1$ solves the equation
 \begin{align*}
    \partial_t v_1+\sigma v_1- A(D) v_1&=0\quad\;\text{in }\R\times\R^{n}_+,\\
    B_j(D)v_1&=\mathscr{E}\tilde{g}_j\quad\text{on }\R\times\R^{n-1},
\end{align*}
where $\mathscr{E}$ is a suitable extension operator and $r_{[0,T]}$ is the restriction to $[0,T]$. Moreover, $v_2$ is the solution of
\begin{align}
\begin{aligned}\label{Eq:InitialPart}
    \partial_t v_2+\sigma v_2- A(D) v_2&=\tilde{f}\quad\;\text{in }[0,T]\times\R^{n}_+,\\
    B_j(D)v_2&=0\quad\text{on }[0,T]\times\R^{n-1},\\
    v_2(0,\,\cdot\,)&=v_0-v_1(0,\,\cdot\,).
    \end{aligned}
\end{align}

For $v_1$ it follows from Theorem \ref{Thm:PBVP} that
\[
 v_1\in\sum_{j=1}^m\bigcap_{(r',t',l',k',p')\in P_j}\mathscr{C}^{l'}(\R,v_{\mu};W^{k'}_{p'}(\R_+,|\operatorname{pr}_n|^{r'};\mathscr{A}^{t'}))
\]
and for all $(r',t',l',k',p')\in \bigcap_{j=1}^mP_j$ there is a constant $C>0$ independent of $g_1,\ldots,g_m$ such that
\[
 \|v_1\|_{\mathscr{C}^{l'}(\R,v_{\mu};W^{k'}_{p'}(\R_+,|\operatorname{pr}_n|^{r'};\mathscr{A}^{t'}))}\leq C\sum_{j=1}^m\|\mathscr{E}\tilde{g}_j\|_{\mathscr{C}^{l_j}(\R,v_{\mu};\mathscr{A}^{s_j})}.
\]
 In particular, if $l'>\frac{1+\mu}{q_2}$ we have $v_1\in BUC([0,T];W^{k'}_{p'}(\R_+,|\operatorname{pr}_n|^{r'};\mathscr{A}^{t'}))$ so that we can take the time trace $v_1(0)$, see \cite[Proposition 7.4]{Meyries_Veraar_2012}. If condition \eqref{Eq:RestrictionTimeRegularity} is satisfied, then we can choose  $l>\frac{1+\mu}{q_2}$ small enough such that $(r,t_0,l,k,p)\in \bigcap_{j=1}^mP_j$. Hence, under this condition we obtain
 \[
  v_1(0)\in W^{k}_p(\R_+,|\operatorname{pr}_n|^r;\mathscr{A}^{t_0})
 \]
is well-defined. 
But since we have $r\in(-1,p-1)$, it follows from Theorem \ref{Thm:RSectorial_Rnplus} that $A_B-\sigma$ generates a holomorphic $C_0$-semigroup $(T(t))_{t\geq0}$ in $W^{k}_p(\R_+,|\operatorname{pr}_n|^r;\mathscr{A}^{t_0})$. In addition, $v_2$ is given by
\[
	v_2(t)=T(t)[v_0-v_1(0,\,\cdot\,)]+\int_{0}^t T(t-s) f(s)\,ds.
\]
 It follows from standard semigroup theory that $$v_2\in C((0,T];D(A_B))\cap C^1((0,T];X)\cap C([0,T];X),$$ 
 where
 \[
 X=H^{k}_{p}(\R_{+,x_n},|\operatorname{pr}_n|^{r};\mathscr{A}^{t_0}),\quad D(A_B)=D^{k,2m,s}_{r,B}(\R_+)\hookrightarrow H^{k+2m}_p(\R_{+,x_n},|\operatorname{pr}_n|^{r};\mathscr{A}^{t_0})\cap H^k_p(\R_{+,x_n},|\operatorname{pr}_n|^{r};\mathscr{A}^{{t_0}+2m}),
 \]
 see for example \cite[Chapter 4, Corollary 3.3]{Pazy_1983}. Since also $v_1\in BUC([0,T];W^{k}_{p}(\R_+,|\operatorname{pr}_n|^{r};\mathscr{A}^{t_0}))$, we obtain 
 \[
 u\in C([0,T];H^{k}_{p}(\R_{+,x_n},|\operatorname{pr}_n|^{r};\mathscr{A}^{t_0})),
 \]
 and, since $v_1$ is arbitrarily smooth away from the boundary, also
 \begin{align*}
  u&\in C^1((0,T];H^{k}_{p}([\delta,\infty)_{x_n},|\operatorname{pr}_n|^{r};\mathscr{A}^{t_0})),\\
 u&\in C((0,T];H^{k+2m}_p([\delta,\infty)_{x_n},|\operatorname{pr}_n|^{r};\mathscr{A}^{t_0})\cap H^k_p([\delta,\infty)_{x_n},|\operatorname{pr}_n|^{r};\mathscr{A}^{t_0+2m}))
 \end{align*}
 for all $\delta>0$. Concerning the value of $l^*$, we note that if $(r,{t_0},l,k,p)\in \bigcap_{j=1}^mP_j$, then also $(r,{t_0}-2m,l-1,k+2m,p)\in \bigcap_{j=1}^mP_j$. Hence, we just have to take $l^*\leq l-1$ such that
 \[
 C((0,T];H^{k+2m}_p(\R_{+,x_n},|\operatorname{pr}_n|^{r};\mathscr{A}^{t_0}))\hookrightarrow \mathscr{C}^{l^*}((0,T],v_{\mu};H^{k+2m}_p(\R_{+,x_n},|\operatorname{pr}_n|^{r};\mathscr{A}^{t_0-2m})).
 \]
 Altogether, this finishes the proof.
\end{proof}

\begin{remark}
 While we can treat arbitrary space regularity of the boundary data in Theorem \ref{Thm:PIBVP1}, it is important to note that \eqref{Eq:RestrictionTimeRegularity} poses a restriction on the time regularity of the boundary data. Even if we take ${t_0}\leq\min_{j=1,\ldots,m} s_j$, $k=0$, $r$ very close to $p-1$ and $q_2$ very large, we still have the restriction
 \[
  l_j>-\frac{1+m_j}{2m}.
 \]
In the case of the heat equation with Dirichlet boundary conditions, this would mean that the boundary data needs to have a time regularity strictly larger than $-\frac{1}{2}$. Having boundary noise in mind, it would be interesting to go beyond this border. It would need further investigation whether this is possible or not. In fact, \eqref{Eq:RestrictionTimeRegularity} gives a restriction on the time regularity only because we do not allow $r\geq p-1$, otherwise we could just take $r$ very large and allow arbitrary regularity in time. The reason why we have to restrict to $r<p-1$ is that we want to apply the semigroup  to the time trace $v_1(0)$. However, until now we can only do this for $r\in(-1,p-1)$. Hence, if one wants to improve Theorem \ref{Thm:PIBVP1} to the case of less time regularity, there are at least two possible directions:
\begin{enumerate}
 \item One could try to generalize Theorem \ref{Thm:RSectorial_Rnplus} to the case in which $r>p-1$. In fact, in \cite{Lindemulder_Veraar_2018} Lindemulder and Veraar derive a bounded $\mathcal{H}^{\infty}$-calculus for the Dirichlet Laplacian in weighted $L_p$-spaces with power weights of order $r\in(-1,2p-1)\setminus\{p-1\}$. It would be interesting to see whether their methods also work for $L_p(\R_+,|\operatorname{pr}|^r;\mathscr{A}^s)$ with $r\in(p-1,2p-1)$.
 \item One could try to determine all initial data $u_0$ which is given by $u_0=\tilde{u}_0+v_1(0)$ where $\tilde{u}_0\in H^k_p(\R_+,|\operatorname{pr}_n|^r;\mathscr{A}^t)$ and $v_1$ is the solution to 
 \begin{align*}
    \partial_t v_1+\sigma v_1- A(D) v_1&=0\quad\;\text{in }\R\times\R^{n}_+,\\
    B_j(D)v_1&=\tilde{g}_j\quad\text{in }\R\times\R^{n}_+,
\end{align*}
for some $\tilde{g}_j\in \mathscr{C}^{l_j}(\R,v_{\mu};\mathscr{A}^{s_j})$ satisfying $\tilde{g}_j\vert_{[0,T]}=g_j$. For such initial data, the initial boundary value problem can be solved with our methods for arbitrary time regularity of the boundary data. Indeed, in this case we just have to take the right extension of $g_j$ so that $u_0-v_1(0)\in H^k_p(\R_+,|\operatorname{pr}|^r;\mathscr{A}^t)$. Then we can just apply the semigroup in order to obtain the solution of \eqref{Eq:InitialPart}.
\end{enumerate}

\end{remark}

\section*{Acknowledgment} Since most of the material in this work is based on some results of my Ph.D. thesis and just contains generalizations, simplifications and corrections of mistakes, I would like to thank my Ph.D. supervisor Robert Denk again for his outstanding supervision.\\
I thank Mark Veraar for the instructive discussion on the necessity of finite cotype in certain estimates, which helped me to prove Proposition \ref{Prop:CounterExampleEpsilon}.\\
I also thank the Studienstiftung des deutschen Volkes for the scholarship during my doctorate and the EU for the partial support within the TiPES project funded by the European Union's Horizon 2020 research and innovation programme under grant agreement No 820970. Moreover, I acknowledge partial support of the SFB/TR109 “Discretization in Geometry and Dynamics”.

%bibliography

%\bibliographystyle{abbrv}
%\bibliography{references}
%

\end{document}